\newtheorem{satz}{Theorem}[section]
\newtheorem{lemma}{Lemma}[section]
\newtheorem{bemerk1}{Remark}[section]
\newtheorem{bei}{Example}[section]
\newtheorem{korollar}{Corollary}[section]
\newtheorem{prop}{Proposition}[section]
\newcommand{\bvr}{\overline{\Phi}(r)}
\newcommand{\iR}{\mathbb{R}}
\newcommand{\iN}{\mathbb{N}}
\newcommand{\oH}{\hspace*{0.39em}\raisebox{0.6ex}{\textdegree}\hspace{-0.72em}H}
\DeclareMathOperator*{\eosc}{ess\,osc}
\DeclareMathOperator*{\esup}{ess\,sup}
\DeclareMathOperator*{\einf}{ess\,inf}
\newcommand{\dd}{d\mu(\alpha)}
\newcommand{\ki}{k_{1}}
\newcommand{\kjr}{k_{1}(\Phi(r))}
\newcommand{\tl}{\tilde{l}}
\newcommand{\supp}{\mathop{\mathrm{supp}}\limits}
\newcommand{\vs}{\varsigma}
\newcommand{\ka}{\kappa}
\newcommand*{\norm}[1]{\left\Vert{#1}\right\Vert}
\newcommand*{\abs}[1]{\left\vert{#1}\right\vert}
\newcommand*{\om}{\omega}
\newcommand*{\Om}{\Omega}
\newcommand*{\izi}{\int_{0}^{\infty}}
\newcommand*{\izt}{\int_{0}^{t}}
\newcommand*{\izj}{\int_{0}^{1}}
\newcommand*{\al}{\alpha}
\newcommand*{\mg}{\frac{\mu(\alpha)}{\Gamma(1-\alpha)}}
\newcommand*{\vf}{\varphi}
\newcommand*{\ve}{\varepsilon}
\def\divv{\operatorname {div}}
\newcommand{\eqq}[2]{\begin{equation}  #1  \label{#2}\end{equation}    }
\newcommand{\hd}{\hspace{0.2cm}}
\newcommand{\no}{\noindent}
\newcommand{\m}[1]{\mbox{#1}}
\newcommand{\R}{\mathbb{R}}
\newcommand{\jd}{\frac{1}{2}}
\newcommand*{\esssup}{\mathop{\mathrm{ess\hspace{0.05cm}sup}}\limits}
\newcommand*{\essinf}{\mathop{\mathrm{ess\hspace{0.05cm}inf}}\limits}
\newcommand*{\gmb}{\gamma_{-}}
\newcommand{\vr}{\Phi(r)}
\newcommand{\vdr}{\Phi(2r)}
\newcommand{\eqns}[1]{
\begin{eqnarray*}
\begin{split}
#1
\end{split}
\end{eqnarray*}
}
\newcommand{\eqnsl}[2]{
\begin{equation}
\label{#2}
\begin{split}
#1
\end{split}
\end{equation}
}
\newcommand{\vsi}{\varsigma}
\newcommand{\muf}{\tilde{\nu}}
\newcommand{\ti}[1]{\tilde{#1}}
\newcommand{\tss}{t_{**}}
\newcommand{\tit}{\ti{t}}
\newcommand{\vkrl}{\overline{\Phi}(2^{-l}r)}
\newcommand{\vkrll}{\overline{\Phi}(2^{-(l+1)}r)}
\newcommand{\blrj}{B_{2^{-l}r}(x_{1})}
\newcommand{\bllrj}{B_{2^{-(l+1)}r}(x_{1})}
\newcommand{\bllrt}{B_{2^{-(l+1)}\cdot \frac{3}{2}r}(x_{1})}
\newcommand{\tw}{\tilde{w}}
\newcommand{\tv}{\tilde{v}}
\newcommand{\Qdl}{Q(2^{-l})}
\newcommand{\Qdll}{Q(2^{-(l+1)})}
\newcommand{\dml}{2^{-l}r}
\newcommand{\nic}[1]{ }
\newcommand{\Gf}{\tilde{G}}
\newcommand{\lf}{\tilde{l}}
\newcommand{\Pk}{\overline{\Phi}}
\newcommand{\Pkr}{\overline{\Phi}(r)}
\newcommand{\Pkro}{\overline{\Phi}(\rok)}
\newcommand{\tQ}{\tilde{Q}}
\newcommand{\rok}{\tilde{\rho}}
\newcommand{\nuNj}{\nu_{N+1}}
\newcommand{\nuN}{\nu_{N}}
\begin{document}
%*************************************************************************
\begin{center}
{\bf\Large H\"older regularity for nonlocal in time subdiffusion equations with general kernel}
\renewcommand{\thefootnote}{\fnsymbol{footnote}}
%\footnote[1]{Work partially supported by }
\end{center}
\vspace{0.7em}
\begin{center}
Adam Kubica*, Katarzyna Ryszewska*, Rico Zacher${}^{\diamond}$
\end{center}

\vspace{1cm}
{\footnotesize \noindent \nic{{\bf address:} }*Department of Mathematics and Information Sciences\\
Warsaw University of Technology\\
Koszykowa 75, 00-662 Warsaw, Poland \\
Adam.Kubica@pw.edu.pl \\
Katarzyna.Ryszewska@pw.edu.pl \\
\\
${}^{\diamond}$Institute of Applied Analysis \\
University of Ulm \\
89069 Ulm, Germany \\
rico.zacher@uni-ulm.de \\
} \vspace{0.7em}
\begin{abstract}
We study the regularity of weak solutions to nonlocal in time subdiffusion equations
for a wide class of weakly singular kernels appearing in the generalised fractional derivative operator.
We prove a weak Harnack inequality for nonnegative weak
supersolutions and Hölder continuity of weak solutions to such problems. Our results
substantially extend the results from our previous work \cite{harnackdistr} by leaving the framework of distributed order
fractional time derivatives and considering a general $\mathscr{PC}$ kernel and by also allowing for an inhomogeneity in the PDE from a Lebesgue space of mixed type.
\end{abstract}
\vspace{0.7em}
\begin{center}
{\bf AMS subject classification:} 35R09, 45K05, 35B65
\end{center}

\noindent{\bf Keywords:} subdiffusion equations, anomalous diffusion, generalised fractional derivative, weak Harnack inequality, regularity of weak solutions, Moser iterations
%*************************************************************************
\section{Introduction and main results}
%*************************************************************************
The aim of this article is to establish a weak Harnack inequality for nonnegative weak supersolutions and H\"older continuity of weak solutions to subdiffusion equations of the
form

\begin{equation} \label{MProb}
\partial_t\big(k*(u-u_0)\big)-\mbox{div}\,\big(A(t,x)Du\big)=f,\quad t\in (0,T),\,x\in
\Omega.
\end{equation}
Here $T > 0$, $\Omega$ is a bounded domain in $\mathbb{R}^{\mathbb{N}}$, $N \geq 1$, the functions $u_0$ (the initial data), $f$ and $A=(a_{ij})$ (which takes values in $\mathbb{R}^{N\times N}$) are given functions. By $Du$ we denote the gradient w.r.t.\ to the spatial variables and $f_1\ast f_2$ stands for the
convolution on the positive half-line w.r.t.\ time, that is
$(f_1\ast f_2)(t)=\int_0^t f_1(t-\tau)f_2(\tau)\,d\tau$, $t\ge 0$. Throughout this paper,
the coefficients $A(t,x)$ are merely assumed to be measurable, bounded and uniformly elliptic.
For the kernel $k$ we assume that it is of type $\mathscr{PC}$, i.e. $k\in L_{1,\,loc}(\iR_+)$ is nonnegative and nonincreasing, and there
exists a nonnegative kernel $l\in L_{1,\,loc}(\iR_+)$ such that $k\ast l=1$ in
$(0,\infty)$. We call $(k,l)$ a $\mathscr{PC}$ pair.

The results established in this paper are substantial generalisations of the results from  \cite{harnackdistr}, where the authors considered the problem (\ref{MProb}) with a {\em bounded} function $f$ and a kernel $k$ leading to a {\em distributed order fractional time derivative}, that is $k$ is of the special form
\begin{equation} \label{distorder}
k(t) = \izj \frac{t^{-\al}}{\Gamma(1-\al)}\dd,\quad t>0,
\end{equation}
which in turn generalises the single order fractional time derivative case, where
\begin{equation} \label{sing}
k(t)=\frac{t^{-\al}}{\Gamma(1-\al)},\quad t>0,\quad \mbox{with some}\; \alpha\in (0,1).
\end{equation}
In the present paper, we develop the theory from \cite{harnackdistr} further in two directions. Firstly, we leave the framework of the distributed order derivative and consider a {\em general} $\mathscr{PC}$ kernel. The wider perspective allows us to recognize the key properties of the kernel $k$ which are essential for deriving the desired regularity results. This systematic treatment gives us better insight into the mechanisms of the De Giorgi-Nash-Moser techniques applied in the nonlocal framework. Secondly, we consider the problem with a function $f$ belonging merely to a Lebesgue space of mixed type, $L_{q_1}(L_{q_2})$, which is of great importance
with regard to possible applications, such as the global solvability of related quasilinear problems.

Since De Giorgi-Nash-Moser techniques rely strongly on scaling properties of the equation, the choice of a suitable geometry for the local sets (time-space cylinders) used in the iteration procedures of De Giorgi and Moser is of fundamental importance. In the first two contributions
\cite{base} (weak Harnack inequality) and \cite{Zhol} (H\"older continuity) on local regularity estimates for nonlocal in time subdiffusion equations, the author studies the case \eqref{sing} of a fractional time derivative of order $\alpha\in (0,1)$ and uses time-space cylinders of the form $(t_0,t_0+\Phi(r))\times B_{r}(x_0)$ with $\Phi(r) = r^{\frac{2}{\al}}$, which is a natural choice since it reflects the fact that the time-fractional diffusion
problem admits a scaling with the similarity variable $s=|x|^{2}t^{-\al}$. At this point we would also like to mention important contributions for the problem with
single order fractional time derivative and fractional diffusion in space; Hölder continuity
of weak solutions was proved in \cite{Caf}, and a weak Harnack inequality was established
in \cite{JPY}. Again, the equation has a scaling property, which determines the geometry of the local time-space cylinders. However, the problem with general kernel $k$ lacks a scaling. In the case of a distributed order fractional derivative \eqref{distorder} treated in \cite{harnackdistr}, this crucial difficulty has been overcome by introducing the scaling function $\Phi(r)$ (for sufficiently small $r$) as the solution to $\izj (\Phi(r))^{-\al}\dd = r^{-2}$, which boils down to $r^{\frac{2}{\al}}$ in the single order case, where
$\mu=\delta(\cdot-\alpha)$ is the Dirac measure at $\alpha$. In the present paper, looking at the problem from a wider perspective, we are able to understand even better than in \cite{harnackdistr} how the time-space cylinders should be selected. It turns out that
(up to a multiplicative constant) the function $\Phi(r)$ shall be chosen as the solution to the functional equation
\begin{equation} \label{PHIEQUA}
(1*l)(\Phi(r)) = r^2,
\end{equation}
for sufficiently small $r>0$. Note that in the case of a single order fractional derivative the
$\mathscr{PC}$ pair is given by $(k,l) = (\frac{t^{-\al}}{\Gamma(1-\al)},\frac{t^{\al-1}}{\Gamma(\al)})$ and we then obtain $\Phi(r) = r^\frac{2}{\al}(\Gamma(\al+1))^{\frac{1}{\al}}$, which coincides up to a constant with the previously discussed choice $\Phi(r) = r^\frac{2}{\al}$.
The new definition of the scaling function $\Phi$ also leads to the same results in the general distributed order fractional derivative case, since for sufficiently small $t>0$ we obtained for $l$ associated with $k$ given by
\eqref{distorder}
 that
\[
c_1 \izj t^{-\al}\dd \leq \frac{1}{(1*l)(t)} \leq c_{2} \izj t^{-\al}\dd,
\]
with some constants $c_1,c_2>0$ (see Lemma 2.1 and formula (38) in \cite{harnackdistr}).

We point out that the problem of determining a suitable intrinsic scale depending on properties of the kernel has also been studied in the context of nonlocal in space operators structurally defined like the fractional Laplacian but with a more general kernel, see (\cite{KaMi}).

Although, the basic structure of our arguments follows the one introduced in \cite{base} and developed further in \cite{harnackdistr}, the proofs given in this paper are much more involved. Surprisingly, it turns out that establishing the weak Harnack inequality for nonnegative weak supersolutions to  (\ref{MProb}) with unbounded function $f$ is much more challenging than in the case with bounded inhomogeneity. It is worth mentioning that this phenomenon does not appear in the classical parabolic case. The main difficulty in case of unbounded $f$ arises in the logarithmic estimates, where we now have to
treat inequalities of the form
\[
\partial_{t}(k*v)(t) + \theta(t) v(t) \geq g(t),
\]
with a {\em time-dependent coefficient} $\theta$ belonging merely to some $L_{q}$ - space with $q \in (1,\infty)$. The theory for such evolution problems with time-dependent $\theta$ is much less developed than in the time-independent case. Hence, we were forced to find completely new and more involved arguments than in \cite{base} and \cite{harnackdistr}.

Before formulating the main results we describe the assumptions on the considered pairs of kernels $(k,l)$.
%The first assumption says that $(k,l) \in \mathscr{PC}$ with $k,l$ being continuously differentiable on $(0,\infty)$ and $l$ nonincreasing.
${}$\\

{\em 1. $\mathscr{PC}$ property, regularity and monotonicity:}
\eqq{
 k,l \in L_{1,\,loc}(\iR_+)\cap C^{1}((0,\infty)), \hd  k,l \m{ are nonnegative and nonincreasing, and } k*l=1.
\tag{K0}}{pc}

{\em 2. Higher integrability of $l$ and comparability with the average:} There exist $p_{0}>1$ and $t_0 > 0,\overline{c} \geq 1 $ such that
\eqq{l \in L_{p_0}((0,t_0)) \hd \m{ and }\;
\frac{1}{t} \int_{0}^{t}\big(l(s)\big)^{p_0} ds \leq \overline{c} \big(l(t)\big)^{p_0},\quad 0<t\le t_0.\tag{K1}}{ak1}

{\em 3. Upper estimate of $k$ via its derivative:} There exist $\tilde{c}\in (0,1)$ and $\tilde{t}_0 > 0$ such that
\eqq{
- t\dot{k}(t) \geq \tilde{c} {k(t)},\quad t \in (0,\tilde{t}_0). \tag{K2}
}{ak2}

{\em 4. Additional assumption for the proof of H\"older continuity:}
There exist a constant $\beta > 0$ and a nondecreasing function $c:(0,\infty)\to (0,\infty)$ such that for all $D>0$
\eqq{-xy\dot{k}(xy) \leq c(D) \frac{k_1(y)}{x^{\beta}},\quad 0<y \leq 1,\; 1\leq  x \leq \frac{D}{y}, \hd \tag{K3}}{asholder}
where $k_1$ is defined as
\eqq{
\ki(t) = \frac{1}{(1*l)(t)},\quad t>0.
}{kidef}

The following examples of $\mathscr{PC}$ kernels satisfy the assumptions (\ref{pc}) - (\ref{asholder}), see also the appendix.
\begin{itemize}
\item the time fractional derivative with exponential weight
\[
k(t) = \frac{t^{-\al}}{\Gamma(1-\al)}e^{-\gamma t}, \hd l(t) = \frac{t^{\al-1}}{\Gamma(\al)}e^{-\gamma t} + \gamma \izt e^{-\gamma \tau}\frac{\tau^{\al-1}}{\Gamma(\al)}d\tau, \hd \hd \gamma \geq 0, \hd \al \in (0,1),
\]
\item the distributed order fractional derivative
\[
k(t) = \izj \frac{t^{-\al}}{\Gamma(1-\al)} d\mu(\al), \hd l(t) = \frac{1}{\pi}\izi e^{-pt}\frac{\izj p^{\al} \sin (\pi \al)\dd}{(\izj p^{\al} \sin (\pi \al)\dd)^{2} + (\izj p^{\al} \cos (\pi \al)\dd)^{2}}dp,
\]
where the measure $\mu$ is of the form $d\mu=\sum_{n=1}^M q_nd\delta(\cdot-\alpha_n)+wd\nu_1$, $\alpha_n\in (0,1)$, $q_n\ge 0$ for all $n=1,\ldots,M$,
$w\in L_1((0,1))$ is nonnegative and $\mu \not \equiv 0$ ($\delta(\cdot - \al_{n})$ is the Dirac measure at $\alpha_n$ and $\nu_1$ denotes the one-dimensional Lebesgue measure)
\item the operator with $l$ decaying exponentially
\[
k(t) =\frac{t^{\al-1}}{\Gamma(\al)}e^{-\gamma t} + \gamma \izt e^{-\gamma \tau}\frac{\tau^{\al-1}}{\Gamma(\al)}d\tau, \hd l(t) = \frac{t^{-\al}}{\Gamma(1-\al)}e^{-\gamma t}, \hd \hd \gamma > 0, \hd \al \in (0,1),
\]
\item the distributed kernel case with switched kernels under the assumption $\supp \mu \subset [0,\al_{*}], \al_{*} \in (0,1)$
\[
k(t) = \frac{1}{\pi}\izi e^{-pt}\frac{\izj p^{\al} \sin (\pi \al)\dd}{(\izj p^{\al} \sin (\pi \al)\dd)^{2} + (\izj p^{\al} \cos (\pi \al)\dd)^{2}}dp, \hd l(t) = \izj \frac{t^{-\al}}{\Gamma(1-\al)} d\mu(\al).
\]
\end{itemize}

Let us next introduce the basic assumptions imposed on $A,u_0$ and $f$. Denoting $\Omega_T=(0,T)\times \Omega$ we  assume that
\begin{itemize}
\item [{(H1)}] $A\in L_\infty(\Omega_T;\iR^{N\times
N})$, and
$\sum_{i,j=1}^N|a_{ij}(t,x)|^2\le \Lambda^2,\quad \mbox{for
a.a.}\;(t,x)\in \Omega_T.$

\item [{(H2)}] There exists $\nu>0$ such that
\[
\big(A(t,x)\xi|\xi\big)\ge \nu|\xi|^2,\quad\mbox{for a.a.}\;
(t,x)\in\Omega_T,\; \mbox{and all}\;\xi\in \iR^N.
\]
\item [{(H3)}] $u_0\in L_2(\Omega)$, $f \in L_{q_1}((0,T);L_{q_{2}}(\Omega))$, where
\[
\frac{p_{0}'}{q_{1}} + \frac{N}{2q_{2}} = 1-d,
\]
\[
q_1 \geq \frac{p_{0}'}{(1-d)},\hd q_2 \geq \frac{N}{2(1-d)}, \hd d \in (0,1)  \hd \m{ for } \hd N \geq 2,
\]
and
\[
 q_1 \in\big[\frac{p_0'}{1-d},\frac{2p'_0}{(1-2d)}\big], \hd q_2 \geq 1, \hd d \in (0,\frac{1}{2}) \hd  \m{ for } \hd N = 1.
\]
Here $p_0$ comes from (\ref{ak1}) and
$\frac{1}{p_0}+\frac{1}{p_0'}=1$.
\end{itemize}

We say that a function $u$ is a {\em weak solution (subsolution,
supersolution)} of equation (\ref{MProb}) in $\Omega_T$, if $u$ belongs to
the space
\[
Z:=\{v \in L_{2}((0,T);H^1_2(\Omega)):\, k*(u-u_{0}) \in C([0,T];L_{2}(\Omega))\;\mbox{and}\; (k*u)|_{t=0} = 0\}
\]
and for any nonnegative test function
\[
\eta\in \oH^{1,1}_2(\Omega_T):=H^1_2((0,T);L_2(\Omega))\cap
L_2((0,T);\oH^1_2(\Omega)) \quad\quad
\Big(\oH^1_2(\Omega):=\overline{C_0^\infty(\Omega)}\,{}^{H^1_2(\Omega)}\Big)
\]
with $\eta|_{t=T}=0$ we have
\begin{equation} \label{BWF}
\int_{0}^{T} \int_\Omega \Big(-\eta_t [k\ast (u-u_0)]+
(ADu|D \eta)\Big)\,dxdt= \,(\le,\,\ge )\, \int_{0}^{T} \int_\Omega f \eta\, dxdt.
\end{equation}

To formulate our main result, let $B(x,r)$ denote the open ball with
radius $r>0$ centered at $x\in \iR^N$. The $N$-dimensional Lebesgue measure will be denoted by
 $\nu_{N}$. We set
%\eqq{
%\ki(t) = \frac{1}{(1*l)(t)}
%}{kidef}
\eqq{
r_0 = \left(\izi l(t)dt \right)^{\frac{1}{2}}.
}{r0}
Note that if $l$ is not integrable on $\mathbb{R}_{+}$, then $r_0 = \infty$. If we write $\frac{1}{r_0}$ in the following, we use the convention $\frac{1}{\infty} = 0$.
Let $\Phi$ be the function from Lemma~\ref{fi},
which is defined by \eqref{PHIEQUA}. For $\delta\in(0,1)$, $t_0\ge 0$,
$\tau>0$, $r \in (0,r_0/2)$ and $x_0\in \iR^N$, define the boxes
\eqnsl{
Q_-(t_0,x_0,r, \delta)&=(t_0,t_0+\delta\tau \Phi(2r))\times B(x_0,\delta r),\\
Q_+(t_0,x_0,r,\delta)&=(t_0+(2-\delta)\tau \Phi(2r),t_0+2\tau
\Phi(2r))\times B(x_0,\delta r).
}{defQpm}

\nic{
\begin{bei}
For $\al \in (0,1)$ we introduce the pair
\[
(k,l) = (t^{-\al}W_{1,1-\al}(t),t^{\al-1}W_{1,\al}(-t)),
\]
where
\[
W_{\lambda,\mu}(t) = \sum_{n=0}^{\infty}\frac{t^{n}}{n!\Gamma(\lambda n +\mu)}.
\]
Then, $k,l$ are nonnegative and nonincreasing (add a reference).
Let us denote by $\hat{f}$ the Laplace transform of function $f$. Then $\hat{k}(p) = p^{\al-1}\exp(p^{-1})$ and $\hat{l}(p) = p^{-\al}\exp(-p^{-1})$, thus  $k*l = 1$. add a reference that they are nonnegative and nonincreasing. We note that $l \in L_{p_0}(0,1)$ for any $p_0 < \frac{1}{1-\al}$. Furthermore, applying the formula
\[
\frac{d}{dt}W_{\lambda,\mu}(t) = W_{\lambda,\lambda+\mu}(t),
\]
we have
\[
\dot{l}(t) = (\al-1)t^{\al-2}W_{1,\al}(-t) - t^{\al-1}W_{1,1+\al}(-t),
\]
hence
\[
-t\dot{l}(t) = (1-\al)l(t) + t^{\al}W_{1,1+\al}(-t).
\]
We note that the second term on the RHS tends to zero as $t\rightarrow 0^{+}$, while the first one tends to infinity. Hence, for every $\ve > 0$, there exists $t_0 > 0$ such that for any $t \in (0,t_0)$ [COME BACK AND DO BETTER]
\[
-t\dot{l}(t) \leq  (1-\al + \ve)l(t).
\]
Fix $\ve$, then for any $p_0 < \frac{1}{1-\al+\ve}$ we may choose $\delta = (1-\al+\ve)p_0$ such that $-t\dot{l}(t) \leq \frac{\delta}{p}l(t)$ for any $p \in [1,p_0]$ and by Remark 1.1. the assumption (\ref{ak1}) is satisfied.\\
To show (\ref{ak2}) we note that
\[
\dot{k}(t) = -\al t^{-\al-1}W_{1,1-\al}(t) + t^{-\al}W_{1,2-\al}(t)
\]
and for $t$ small enough
\[
-t\dot{k}(t) = \al k(t) - t^{1-\al}W_{1,2-\al}(t) \geq \frac{1}{2}k(t),
\]
since $k(t)$ tends to infinity as $t\rightarrow 0^{+}$ and $W_{1,2-\al}(0) = \frac{1}{\Gamma(2-\al)}$.
It remains to prove (\ref{asholder}). For $0<y\leq 1$, $1\leq x \leq \frac{D}{y}$ we have
\[
-\dot{k}(xy) = \frac{1}{xy}[ \al k(xy)-(xy)^{1-\al}W_{1,2-\al}(xy)] \leq \frac{\al}{xy}k(xy) \leq W_{1,1-\al}(D)\frac{\al}{(xy)^{1+\al}}.
\]
Since $y \in (0,1]$ and $W_{1,1-\al}(\cdot)$ is increasing, we have $W_{1,1-\al}(y) \geq W_{1,1-\al}(0) = \frac{1}{\Gamma(1-\al)}$. Hence,
\[
-\dot{k}(xy)  \leq W_{1,1-\al}(D)\frac{\al}{(xy)^{1+\al}} \Gamma(1-\al)W_{1,1-\al}(y) = W_{1,1-\al}(D)\Gamma(1-\al) \frac{\al}{xy}\frac{k(y)}{x^{\al}}
\]
and (\ref{asholder}) follows by Lemma \ref{kernels}.
\end{bei}
}
%*************************************************************************
\noindent Our result on the weak Harnack inequality for nonnegative supersolutions reads as follows.

\begin{satz} \label{localweakHarnack}
Let $T>0, N \geq 1$, $\Omega\subset \iR^N$ be a bounded
domain. Suppose the assumptions (H1)--(H3) and
(\ref{pc})-- (\ref{ak2}) are satisfied.
Let $\delta\in(0,1)$ and $\Phi \in C([0,r_0))\cap C^{1}((0,r_0))$ be the function
from Lemma~\ref{fi}, which satisfies \eqref{PHIEQUA}.
There exist constants  $r^* \in (0,r_0/2)$ and  $\tau^{*} \in (0,1)$ with $ \tau^{*}=\tau^{*}(\delta,q_1, p_0, N,\nu,\Lambda,\overline{c}) > 0$ such that for every $\tau \in (0,\tau^{*}]$ and for any $0<p < \frac{2p_0+N(p_0-1)}{2+N(p_0-1)}$,  $t_0\ge 0$, $r\in (0,r^*]$  with $t_{0}+2\tau \Phi(2r) \leq T$ and any ball $B(x_{0},r) \subset \Omega$ and any nonnegative weak
supersolution $u$ of (\ref{MProb}) in $(0,t_0+2\tau
\Phi(2r))\times B(x_0, r)$ with $u_0\ge 0$ in $B(x_0,
r)$ there holds
\begin{equation} \label{localwHarnackF}
\Big(\frac{1}{\nuNj\big(Q_-\big)}\,\int_{Q_-}u^p\,d\nuNj\Big)^{1/p}
\le C \left( \einf_{Q_+} u + r^{2-\frac{N}{q_{2}}} (\Phi(2r))^{-\frac{1}{q_{1}}}\norm{f}_{L_{q_{1}}((0,T);L_{q_{2}}(\Omega))}\right),
\end{equation}
where $Q_-=Q_-(t_0,x_0,r, \delta)$, $Q_+=Q_+(t_0,x_0,r, \delta)$ and $C=C(\nu,\Lambda,\delta,\tau,N,p,p_0,q_1,q_2,\overline{c},\tilde{c})$.
\end{satz}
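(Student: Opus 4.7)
The plan is to follow the De~Giorgi-Nash-Moser strategy in the nonlocal setting, as initiated in \cite{base} and developed in \cite{harnackdistr}. The inequality \eqref{localwHarnackF} is reduced to three building blocks: (i) a \emph{negative} Moser iteration giving $\einf_{Q_+}u\ge C\bigl(\frac{1}{\nuNj(\tilde Q)}\int_{\tilde Q} u^{-s}\,d\nuNj\bigr)^{-1/s}$ for some $s>0$ on an intermediate cylinder $\tilde Q$ linking $Q_-$ and $Q_+$; (ii) a \emph{positive} Moser iteration bounding the $L^p$-average on $Q_-$ by a small positive-exponent average on $\tilde Q$, for every $p$ in the stated range; (iii) a logarithmic estimate providing a BMO-type bound on $\log u$ which bridges (i) and (ii). These three ingredients are then combined through a Bombieri-Giusti-type abstract iteration lemma. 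The choice of intrinsic scale $\Phi(2r)$ from \eqref{PHIEQUA} is precisely what makes the diffusion and memory terms quantitatively comparable on cylinders $(t_0,t_0+\tau\Phi(2r))\times B(x_0,\delta r)$, and is essential for the energy estimates to close; the smallness $\tau\le\tau^*$ is needed to absorb the constants generated by the convolution terms.

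For the Moser iterations, I would plug test functions of the form $\eta=\psi^{2}(u+\varepsilon)^{q-1}\zeta(t)$ into \eqref{BWF}, with $\psi$ a spatial cut-off supported in $B(x_0,r)$ and $\zeta$ a Lipschitz time cut-off, and pass to $\varepsilon\to 0^{+}$ at the end. The crux for handling the nonlocal term is a fundamental convexity inequality of the form $H'(v)\,\partial_t(k*v)\ge \partial_t(k*H(v))$ for convex $H$, combined with (\ref{ak2}) to compare $\partial_t(k*\cdot)$ with pointwise-in-time quantities; these are the tools already developed in \cite{base,harnackdistr}. Uniform ellipticity (H2) then yields Caccioppoli estimates, while the Sobolev embedding in space combined with the higher integrability assumption (\ref{ak1}) produces a reverse-Hölder inequality between averages on concentric cylinders, with a gain factor depending on $p_0$ and $N$. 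Iterating and summing the resulting geometric series leads to (i) and (ii); the range $0<p<\frac{2p_0+N(p_0-1)}{2+N(p_0-1)}$ corresponds exactly to the Sobolev-type exponent one extracts from these ingredients. The inhomogeneity $f$ is absorbed via Hölder's inequality applied to $\izT\iO f\eta\,dx\,dt$, using (H3); the scaling identity $\frac{p_0'}{q_1}+\frac{N}{2q_2}=1-d$ is precisely what makes this absorption compatible with the intrinsic scale and produces the correction term $r^{2-N/q_2}\Phi(2r)^{-1/q_1}\norm{f}_{L_{q_1}(L_{q_2})}$.

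The main obstacle, and the step that departs substantially from \cite{base} and \cite{harnackdistr}, is the logarithmic estimate in the presence of unbounded $f$. Testing formally with $\eta=\psi^{2}/(u+\varepsilon)$ and integrating in $x$ leads to a nonlocal differential inequality of the form $\partial_t(k*v)(t)+\theta(t)v(t)\ge g(t)$, where $v(t)$ behaves like $\iO\psi^{2}\log\bigl((u+\varepsilon)^{-1}\bigr)dx$, the coefficient $\theta$ now \emph{depends on time} and lies merely in an $L_{q_1/2}(0,T)$ space, and $g$ encodes the Dirichlet energy together with the contribution from $f$. To extract the required level-set decay of $\log u$ (hence a BMO-type bound), one has to develop a new comparison/estimation principle for this nonlocal evolution inequality with time-dependent coefficient, which is not available in the existing literature; this is precisely the novel difficulty flagged by the authors in the introduction, and I expect it to be the main technical hurdle. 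Once the BMO control on $\log u$ is in place, a standard Bombieri-Giusti iteration lemma combines it with (i) and (ii) to yield \eqref{localwHarnackF}, completing the proof.
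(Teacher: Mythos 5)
Your proposal follows exactly the architecture of the paper's proof: a negative Moser iteration (the paper's Theorem~\ref{superest1}), a positive Moser iteration (Theorem~\ref{superest2}), a weak-$L_1$ level-set estimate for $\log u_b$ (Theorem~\ref{logest}), combined via the Bombieri--Giusti lemma (Lemma~\ref{abslemma}); the test functions, the fundamental identity~\eqref{fundidentity} for convex $H$, the reverse-H\"older gain through \eqref{ak1} and \eqref{sobolev}, and the role of \eqref{PHIEQUA} and $\tau\le\tau^{*}$ are all as you describe. One structural detail you gloss over: the paper works throughout with the \emph{fixed} shift $u_b=u+b$ where $b=r^{2-N/q_2}\Phi(2r)^{-1/q_1}\|f\|_{L_{q_1}(L_{q_2})}$, not with a vanishing regularization $\varepsilon\to 0^{+}$. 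This choice of $b$ is what normalizes the inhomogeneity so that the factor $|f|/b$ appearing in the energy and logarithmic estimates is absorbable, and it is precisely why the correction term with that exact $r,\Phi(2r)$ dependence appears in~\eqref{localwHarnackF}. Also, the coefficient $\theta$ in the nonlocal inequality lands in $L_{q_1}(0,T)$, not $L_{q_1/2}$, and the quantity playing the role of your $v(t)$ is $\overline{w}=e^{W}$ (after multiplying by $e^W$), not a spatial average of $\log u_b^{-1}$ directly.

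The genuine gap is that you stop short of resolving the step you yourself flag as the crux: the treatment of $\partial_t(k*\overline{w})+\theta(t)\overline{w}\ge g(t)$ with merely $L_{q_1}$-integrable, time-dependent $\theta$. You correctly observe that no off-the-shelf comparison principle applies, but a proof must supply one. In the paper this is done by two linked devices. First, convolving the approximate inequality with $l$, multiplying by $\theta_n$, convolving with $l$ again, and substituting back produces a Neumann-type iteration in which the quadratic term $l*(\theta\,l*\cdot)$ is shown, via Remark~\ref{ak31}, Lemma~\ref{lpoint} and Lemma~\ref{scaling}, to be bounded by $\tfrac12\,l*(\cdot)$ once $\tau\le\tau^{*}$; the smallness exponent $\tfrac{1}{p_0'}-\tfrac{1}{q_1}>0$ from (H3) is exactly what makes this contraction possible and is the source of $\tau^{*}$. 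This yields the estimate of $I_2$. Second, for $I_4$ one needs positivity of $g=\tilde u_m-v$ where $v$ solves the resolvent equation $v+l*(\theta v)=l*\Upsilon_m$; the paper establishes it through an $L_1$ Picard iteration with the positive operator $(K_n g)(s)=(k_n(0)+\theta(s))^{-1}\int_0^s[-\dot k_n](s-\xi)g(\xi)\,d\xi$, whose operator norm is strictly below $1$ because $\theta\ge C_1 r^{-2}>0$, followed by summing the geometric series and letting $n\to\infty$. Only then does \eqref{ak2prim} enter to extract the pointwise lower bound~\eqref{West5}. Without these ideas your outline cannot close the logarithmic estimate, and hence cannot establish~\eqref{localwHarnackF}.
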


\begin{bemerk1}
Since in Theorem \ref{localweakHarnack} we work with supersolutions, it is enough to assume only that $f^{-} \in L_{q_{1}}((0,T);L_{q_{2}}(\Omega))$, where  $f^{-}$ denotes the negative part of $f$. One may replace $f$ by $f^{-}$ in (\ref{localwHarnackF}).
\end{bemerk1}
\begin{bemerk1}
Note that in the single order case \eqref{sing}, we can choose in Theorem \ref{localweakHarnack} any $p_0<\frac{1}{1-\alpha}$.
Thus the weak Harnack inequality holds for all $0<p<\frac{2+N\alpha}{2+N\alpha-2\alpha}$, in accordance with the main result from \cite{base}, which is optimal with  respect to the critical exponent. Note that as $\alpha\to 1$ we obtain the number $1+\frac{2}{N}$, which is the sharp exponent in the classical parabolic case. The critical exponent for $p$ in Theorem \ref{localweakHarnack} further coincides with the one
from \cite{harnackdistr} in the distributed order case, see also Example \ref{distexamp}.
\end{bemerk1}

Similarly as in \cite{harnackdistr}, we apply the weak Harnack estimate to deduce H\"older regularity of weak solutions to (\ref{MProb}).

\begin{satz}\label{holder}
Let $T>0, N \geq 1$ and $\Omega\subset \iR^N$ be a bounded
domain. Suppose the assumptions (H1)--(H3) and
(\ref{pc}) -- (\ref{asholder}) are satisfied and assume that $u_{0} \in L_{\infty}(\Omega)$. If $u$ is a bounded weak solution to \eqref{MProb} in $\Omega_T$,
then for any $V \subset \Omega_{T}$ separated from the parabolic  boundary of $\Omega_{T}$ by a positive distance, there exist $C > 0$ and $\ve \in (0,1)$ depending only on $V$, $\Lambda$, $\nu$, $N$, $q_1$, $q_2$, $p_0$, $\overline{c}$ and  $\tilde{c}$ such that
\eqq{
\|u\|_{C^{0,\ve}(V)} \leq C\big(\|u\|_{L_{\infty}(\Omega_{T})} +\|u_{0}\|_{L_{\infty}(\Omega)} +\|f\|_{ L_{q_1}((0,T);L_{q_2}(\Omega))}\big).
}{holderkoniec}
\end{satz}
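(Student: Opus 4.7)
I would derive the Hölder estimate from the weak Harnack inequality via a standard oscillation-decay iteration on intrinsic cylinders, following the scheme of \cite{harnackdistr,Zhol} but adapted to the general $\mathscr{PC}$ setting with scale function $\Phi$ from (\ref{PHIEQUA}) and the mixed-norm inhomogeneity of (H3). By a covering argument it suffices to prove the estimate near a single fixed interior point $(t_*,x_*)\in V$.

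Fix $R_0>0$ small enough that the cylinder $(t_*-2\tau\Phi(2R_0),t_*)\times B(x_*,R_0)$ is contained in $\Omega_T$, with $\delta,\tau\in(0,1)$ chosen as in Theorem~\ref{localweakHarnack}. For a contraction factor $\sigma\in(0,\delta]$ to be fixed, I consider the sequence of intrinsic cylinders
\[
Q_j:=(t_*-2\tau\Phi(2r_j),\,t_*)\times B(x_*,r_j),\qquad r_j:=\sigma^j R_0.
\]
From (\ref{PHIEQUA}) (equivalently $k_1(\Phi(r))=r^{-2}$) together with (\ref{asholder}) I obtain a lower bound $\Phi(r)\ge c\,r^{2/\gamma}$ for small $r$ with some definite $\gamma\in(0,1)$; hence $\sigma$ can be fixed so small that $Q_{j+1}$ sits inside the $Q_+$-box of $Q_j$. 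Setting $M_j:=\esssup_{Q_j}u$, $m_j:=\essinf_{Q_j}u$, $\omega_j:=M_j-m_j$, the functions $v_j:=M_j-u$ and $w_j:=u-m_j$ are nonnegative on $Q_j$ and satisfy~(\ref{MProb}) there with right-hand sides $-f$ and $+f$ respectively (after the localization described below). Since $v_j+w_j\equiv\omega_j$, at least one of them has $L^p$-mean on the $Q_-$-box bounded below by $\omega_j/2$; Theorem~\ref{localweakHarnack} (combined with Remark~1.1 to accommodate the sign of $f$) and the inclusion $Q_{j+1}\subset Q_+$ then give the one-step decay
\[
\omega_{j+1}\le\lambda\,\omega_j+C\eta_j,\quad \lambda:=1-\tfrac{1}{2C}\in(0,1),\quad \eta_j:=r_j^{2-N/q_2}\Phi(2r_j)^{-1/q_1}\|f\|_{L_{q_1}((0,T);L_{q_2}(\Omega))}.
\]

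The quantitative core is the polynomial decay of $\eta_j$. Combining $k_1(\Phi(r))=r^{-2}$ with the controls (\ref{ak1})--(\ref{asholder}) on $k,l$ yields $\Phi(2r)^{-1/q_1}\le Cr^{-2p_0'/q_1}$ for small $r$; together with the balance identity $\frac{p_0'}{q_1}+\frac{N}{2q_2}=1-d$ with $d>0$ from (H3), this gives $\eta_j\le Cr_j^{2d}\|f\|$. A standard geometric-series iteration of $\omega_{j+1}\le\lambda\omega_j+Cr_j^{2d}\|f\|$ with $r_j=\sigma^j R_0$ produces $\omega_j\le C_0\sigma^{j\varepsilon}$ for some $\varepsilon\in(0,1)$ and $C_0\lesssim\|u\|_{L_\infty(\Omega_T)}+\|u_0\|_{L_\infty(\Omega)}+\|f\|_{L_{q_1}((0,T);L_{q_2}(\Omega))}$; translating this geometric decay of oscillations into a pointwise modulus of continuity at $(t_*,x_*)$ (and running the analogous argument for pairs of nearby points at the appropriate intrinsic distance) yields~(\ref{holderkoniec}).

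The principal technical obstacle I anticipate is the correct localization of~(\ref{MProb}) to $Q_j$: because the nonlocal-in-time operator retains the full past of $u$, one cannot simply restrict the equation and obtain a clean problem with nonnegative initial datum on $Q_j$. Following the strategy of \cite{harnackdistr}, I would absorb the memory contribution $\partial_t\bigl(k*[(u-u_0)\chi_{\{t<t_*-2\tau\Phi(2r_j)\}}]\bigr)$ into an effective right-hand side on $Q_j$ and verify, using (\ref{ak1}) and (\ref{asholder}), that it has the same structural form as the $f$-perturbation and is controlled by $Cr_j^{2d}(\|u\|_{L_\infty(\Omega_T)}+\|u_0\|_{L_\infty(\Omega)})$; this preserves the polynomial decay of $\eta_j$ and accounts for the appearance of the $L_\infty$-norms of $u$ and $u_0$ on the right-hand side of~(\ref{holderkoniec}).
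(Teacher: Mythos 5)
The overall scheme you describe (intrinsic cylinders, oscillation-decay via the weak Harnack inequality, case analysis on $M_j-u$ versus $u-m_j$) matches the paper, but your treatment of the memory term in the last paragraph contains a genuine gap that would break the iteration. You propose to absorb the history contribution
\[
\int_{0}^{t_*-2\tau\Phi(2r_j)}\!\!\bigl[-\dot{k}(t-\sigma)\bigr]\,\bigl(u(\sigma,x)-u_0(x)\bigr)\,d\sigma
\]
into the right-hand side and bound it by $C\,r_j^{2d}\bigl(\|u\|_{L_\infty}+\|u_0\|_{L_\infty}\bigr)$. This cannot be correct in general: estimating $u-u_0$ crudely by $\|u\|_\infty+\|u_0\|_\infty$ and integrating the kernel yields roughly $k_1\bigl(c\,\Phi(2r_j)\bigr)\bigl(\|u\|_\infty+\|u_0\|_\infty\bigr)\sim r_j^{-2}\bigl(\|u\|_\infty+\|u_0\|_\infty\bigr)$, and when this is fed into the weak Harnack right-hand side through the prefactor $r_j^{2-N/q_2}\Phi(2r_j)^{-1/q_1}$ multiplied by the $L_{q_1}(L_{q_2})$-norm over the local cylinder, all powers cancel and one is left with an $O(1)$ contribution — no decay in $j$, so the geometric iteration does not close.

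The actual resolution in the paper is more delicate and is precisely what assumption (\ref{asholder}) is for. One does \emph{not} bound $u-u_0$ on the far past by its sup; instead one uses the \emph{inductive hypothesis} that the oscillation has already decayed geometrically on every larger nested cylinder (the bound (\ref{abclaim})), which yields the pointwise estimate (\ref{history}): $|v(t,x)-m_l|$ at a past time $t$ is controlled by $(b_l-m_l)$ times a factor growing at the \emph{controlled} rate $\bigl(k_1(t_1-\tilde t)/k_1(t_1-t)\bigr)^{\kappa/2}$, where $\kappa$ is the tentative Hölder exponent. Plugging this into the memory integral and using (\ref{asholder}) to compare $-xy\,\dot k(xy)$ with $k_1(y)x^{-\beta}$ one arrives at the bound (\ref{hw}): the memory contribution is of the form $(b_l-a_l)\,\ve(\theta,\theta_2,\eta,\kappa)\,k_1\bigl(\overline{\Phi}(2^{-l}r)\bigr)$, where crucially $\ve\to 0$ as $\kappa\to 0$. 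Thus the memory term is a small fraction of the \emph{current} oscillation, not an $L_\infty$-size quantity, and after cancellation with the Harnack prefactor it produces a contribution $(b_l-a_l)\ve(\kappa)$ that can be absorbed by choosing $\kappa$ small. Your sketch also needs the normalization $F=f+u_0k(t)$ and the separation of the $G$-norm from the initial time (cf.\ (\ref{osc3}), (\ref{vges})); without this the kernel singularity at $t=0$ spoils the $L_{q_1}(L_{q_2})$-estimate. The remaining steps (choosing $\theta_1,\theta_2,\tau$ so that the relevant sub-cylinders nest and then invoking Theorem~\ref{localweakHarnack}) match your outline.
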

\nic{\eqq{
\partial_t (k*(u-u_{0}))-\divv\,\big(A(t,x)Du\big)=f,\quad t\in (0,T),\,x\in
\Omega
}{holdeq}
}

As another application of Theorem \ref{localweakHarnack} we immediately obtain the strong maximum principle for weak subsolutions. The proof is the same as for Theorem 3.4 in \cite{harnackdistr}.
%******************************************************
\begin{korollar} \label{strongmax}
Let $T>0$, $N \geq 1$ and $\Omega\subset \iR^N$ be a bounded
domain. Suppose the assumptions (H1)--(H3) and (\ref{pc})--(\ref{ak2}) are fulfilled. Let $u\in
Z$ be a weak subsolution of (\ref{MProb}) in $\Omega_T$ with $f\equiv 0$ such that $0\le \esup_{\Omega_T}u<\infty$ and $\esup_{\Omega}
u_0\le \esup_{\Omega_T}u$. Then, if for some cylinder
$Q=(t_0,t_0+\tau \vdr)\times B(x_0,r/2)\subset \Omega_T$ with
$t_0,\tau,r>0$ and $\overline{B(x_0,r)}\subset \Omega$ we have
\begin{equation} \label{strrel}
\esup_{Q}u \,=\,\esup_{\Omega_T}u,
\end{equation}
then the function $u$ is constant on $(0,t_0)\times \Omega$.
\end{korollar}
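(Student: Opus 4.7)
The plan is to reduce to a nonnegative supersolution and then apply the weak Harnack inequality from Theorem~\ref{localweakHarnack} in an iterative chaining scheme to propagate the vanishing set, exactly as in \cite{harnackdistr}. Set $M=\esup_{\Omega_T} u$ and $v:=M-u$. Since the equation is linear with $f\equiv 0$ and the kernel $k$ acts only in time, $v$ is a nonnegative bounded weak supersolution of \eqref{MProb} in $\Omega_T$ with initial datum $v_0=M-u_0$, which is nonnegative a.e.\ in $\Omega$ by hypothesis. Moreover, \eqref{strrel} translates into $\einf_Q v = 0$.

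The key geometric observation is that the cylinder $Q=(t_0,t_0+\tau\Phi(2r))\times B(x_0,r/2)$ occurring in the hypothesis can be identified (after relabeling $\tau$ by $\tau/2$ and shifting the time origin to $t_0-\tau\Phi(2r)$) with a $Q_+$-type box of Theorem~\ref{localweakHarnack}, provided $r$ is small enough so that $r\le r^*$ and $\tau/2\le \tau^*$. Applying the weak Harnack inequality with $f=0$ to $v$ on this configuration yields
\[
\Big(\frac{1}{\nuNj(Q_-)}\int_{Q_-} v^p\,d\nuNj\Big)^{1/p}\le C\,\einf_{Q_+} v = 0,
\]
so that $v\equiv 0$ a.e.\ on the corresponding $Q_-$. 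Thus $u=M$ a.e.\ on a nontrivial time-space box lying strictly below $Q$ in time and with the same spatial center.

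The remaining task is to iterate this to cover all of $(0,t_0)\times \Omega$. Given any point $(\bar t,\bar x)\in(0,t_0)\times\Omega$, one chooses, via the connectedness of $\Omega$, a finite polygonal chain of overlapping balls from $B(x_0,r/2)$ to $\bar x$ inside $\Omega$, and a finite decreasing sequence of time levels from $t_0$ down below $\bar t$. At each step one uses the already-established vanishing set as the $Q_+$-box in a fresh application of Theorem~\ref{localweakHarnack}, obtaining vanishing of $v$ on the corresponding $Q_-$-box, chosen to include the next point of the chain. Because of the uniform smallness conditions $r\le r^*$ and $\tau\le \tau^*$, one may at each step use boxes of sufficiently small spatial and temporal size, while the monotonicity of $\Phi$ and the relation \eqref{PHIEQUA} ensure that the time-reach $\tau\Phi(2r)$ of each application of the inequality is bounded below away from zero on any compact subset of $(0,t_0)\times\Omega$. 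After finitely many steps one reaches $(\bar t,\bar x)$ and concludes $v(\bar t,\bar x)=0$, i.e.\ $u=M$ a.e.\ on $(0,t_0)\times \Omega$.

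The main obstacle is purely geometric/bookkeeping: verifying that the chaining can be carried out using only boxes admissible in Theorem~\ref{localweakHarnack}, in particular that the nested $Q_-$, $Q_+$ pairs fit inside $\Omega_T$ and strictly below the time level $t_0$, and that the constants $r^*,\tau^*$ (which depend only on structural data, not on the base point) allow a uniform chain. Since the geometry of the boxes $Q_\pm$ in \eqref{defQpm} and the scaling function $\Phi$ play exactly the same role here as in the distributed-order setting of \cite{harnackdistr}, the chaining argument from Theorem~3.4 there transfers verbatim, which is why no new proof is needed.
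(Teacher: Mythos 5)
Your reduction to the nonnegative supersolution $v=M-u$ with $v_0=M-u_0\ge 0$ and $\einf_Q v=0$ is correct, and the overall strategy (apply the weak Harnack inequality and then chain, exactly as in Theorem~3.4 of \cite{harnackdistr}, which is all the paper itself does) is the right one. However, the specific step where you try to identify the given $Q$ with a single $Q_+$-box is wrong, and this is more than a typo: with your choice $\tilde\tau=\tau/2$, $\tilde t_0=t_0-\tau\Phi(2r)$ and $\delta=1/2$, the formula \eqref{defQpm} gives
\[
Q_+(\tilde t_0,x_0,r,1/2)=\big(t_0-\tfrac{1}{4}\tau\Phi(2r),\,t_0\big)\times B(x_0,r/2),
\]
which lies entirely \emph{before} $t_0$ and is therefore disjoint from $Q=(t_0,t_0+\tau\Phi(2r))\times B(x_0,r/2)$. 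Matching durations and spatial radius with $\delta=1/2$ forces $\tilde\tau=2\tau$ and $\tilde t_0=t_0-3\tau\Phi(2r)$. But then two constraints from Theorem~\ref{localweakHarnack} are not addressed at all: one needs $\tilde\tau\le\tau^*$ and $\tilde t_0\ge 0$ (and $r\le r^*$, $\overline{B(x_0,r)}\subset\Omega$), none of which are guaranteed by the corollary's hypotheses, which allow arbitrary $r,\tau,t_0>0$.

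The correct way to launch the iteration is not to identify $Q$ with a $Q_+$-box but to \emph{cover} $Q$ by finitely many admissible boxes $Q_+(\tilde t_j,\tilde x_j,\tilde r,\tilde\delta)$ with parameters chosen small enough that $\tilde r\le\min\{r^*,r/2\}$, $\tilde\tau\le\tau^*$, $B(\tilde x_j,\tilde r)\subset\Omega$ and $\tilde t_j\ge 0$. Since $v\ge 0$ and $\einf_Q v=0$, at least one box in the cover satisfies $\einf_{Q_+}v=0$, and only then does Theorem~\ref{localweakHarnack} with $f\equiv 0$ yield $v\equiv 0$ on the corresponding $Q_-$. The rest of your sketch (chaining through overlapping $Q_-$/$Q_+$ pairs to sweep $(0,t_0)\times\Omega$) is the standard argument and, as you note, transfers from \cite{harnackdistr}; but without the covering step and without checking the admissibility of the parameters at each application, the proof as written does not go through. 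In short, the approach is right, but the single ``identification'' cannot replace the covering argument, and the hedges ``provided $r$ is small enough'' leave the general case (large $r$, large $\tau$, or $t_0$ too close to $0$) unresolved.
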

%*******************************************************

Let us now give some motivation for defining the function $\Phi$ via the relation \eqref{PHIEQUA}. Let $r>0$ be sufficiently small and consider the equation
\begin{equation}
\partial_{t}(k*u) - \Delta u = g,\quad t\in (0,\Phi(r))\times B(x_0,r),
\end{equation}
together with the boundary condition
$u|_{\partial B(x_0,r)} = 0$, $t\in (0,\Phi(r))$.
Here $g$ might also contain a term involving an initial value for $u$.
Testing the equation with $u$, integrating over $B_r:=B(x_0,r)$ and by parts, applying the fundamental identity (see \eqref{fundidentity} below) and convolving  with $l$ formally leads to
\begin{equation} \label{moti1}
\frac{1}{2}\int_{B_r} u^{2} \,dx + l* \int_{B_r} |Du|^{2}\, dx
\leq l*\int_{B_r} g u\, dx.
\end{equation}
We will show that after integration in time, the two terms on the left-hand side of \eqref{moti1} are up to some sharp inequalities (in different directions) ''in balance'' with respect to the behaviour in $r$. In fact, integrating \eqref{moti1} over
$(0,\Phi(r))$, the first term becomes $J_1:=\frac{1}{2}\|u\|_{L_2((0,\Phi(r))\times B_r)}^2$ while for the second we obtain
\[
(1\ast l* \int_{B_r} |Du|^{2}\, dx)(\Phi(r)),
\]
which can be estimated from below by the Poincar\'e inequality on balls by a term of the form
\[
J_2:=\frac{C}{r^2}(1\ast l* \int_{B_r} u^{2}\, dx)(\Phi(r))
\]
with $C>0$ only depending on $N$. To compare $J_2$ with
$J_1$, note that by Young's inequality and the relation \eqref{PHIEQUA} we have
\[
J_2 \leq (1*l)(\Phi(r))\frac{C}{r^2}\int_0^{\Phi(r)}\int_{B_r} u^{2}\, dx\,dt
= 2C J_1.
\]
This should provide some motivation for the chosen geometry of the local boxes. The use of certain inequalities in these considerations seems to be unavoidable as the equation lacks a scaling property.

The paper is organised as follows. Section 2 is of preliminary character. We provide important properties of the involved kernels and the function $\Phi$ and recall
some auxiliary tools such as Moser's iteration technique, the crucial Bombieri-Giusti lemma as well as the fundamental identity for operators of the form $\frac{d}{dt}k*\cdot$. Section 3 contains the proof of the weak Harnack inequality and Section 4 is devoted to H\"older continuity
of weak solutions. Finally, Section 5 is an appendix which
deals with several examples of admissible kernels.

%*********************************************************************************

%*************************************************************************

%*************************************************************************
\section{Preliminaries and auxiliary results}
\subsection{Properties of the kernels and cylinders}
In the whole paper we work in the general framework of kernels satisfying (\ref{pc}).
In this subsection we look at various properties of the $\mathscr{PC}$ pair $(k,l)$ which are induced by the additional assumptions (\ref{ak1})-(\ref{asholder}). Note that the majority of the results in this part are alone a
consequence of (\ref{ak1}), which is thus a crucial assumption in our approach. Furthermore, we introduce the function $\Phi$ which induces the shape of time-space cylinders suitable for the Giorgi-Nash-Moser estimates and derive further properties of $\Phi$.

We begin with a simple remark.
\begin{bemerk1} \label{RemarkAllp}
The assumption (\ref{ak1}) together with H\"older's inequality implies
\[
\frac{1}{t} \int_{0}^{t}\big(l(s)\big)^{p} ds \leq \overline{c} \big(l(t)\big)^{p},\quad 0<t\le t_0, \hd \m{ for } \hd p \in [1,p_0].
\]
\end{bemerk1}
\begin{lemma}\label{kernels}
Assume only (\ref{pc}). Then
\[
k(t) \leq  k_1(t) =  \frac{1}{(1*l)(t)}, \quad t>0.
\]
Furthermore, if in addition (\ref{ak1}) holds, we have
\[
(1*k)(t) \leq \overline{c} t k_1(t), \quad t \in (0,t_0),
\]
where $t_0$ and $\overline{c}$ come from the assumption (\ref{ak1}).
\end{lemma}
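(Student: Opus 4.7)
My plan is to derive both bounds directly from the defining identity $k*l=1$, exploiting the monotonicity assumed in (\ref{pc}); the second bound additionally requires (\ref{ak1}), but only in its $p=1$ form from Remark \ref{RemarkAllp}.

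For the first inequality, I would write $1 = (k*l)(t) = \int_0^t k(t-s)\, l(s)\,ds$ and estimate the factor $k(t-s)$ from below. Since $k$ is nonincreasing and $t-s\le t$ for $s\in[0,t]$, one has $k(t-s)\ge k(t)$ pointwise, so pulling $k(t)$ out of the integral gives $1\ge k(t)(1*l)(t)$. Dividing by $(1*l)(t)$, which is strictly positive for $t>0$, yields $k(t)\le 1/(1*l)(t)=k_1(t)$, exactly the first claim.

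For the second inequality, I would run the symmetric argument on $l$ instead of $k$. After the substitution $u=t-s$, the convolution identity reads $1 = \int_0^t k(u)\, l(t-u)\,du$; monotonicity of $l$ together with $t-u\le t$ gives $l(t-u)\ge l(t)$ on $(0,t)$, so pulling $l(t)$ out produces $1\ge l(t)(1*k)(t)$, whence $(1*k)(t)\le 1/l(t)$. To convert this into the stated form, I would invoke Remark \ref{RemarkAllp} with $p=1$: for $t\in(0,t_0]$ one has $(1*l)(t)\le \overline{c}\,t\,l(t)$, equivalently $1/l(t)\le \overline{c}\,t/(1*l)(t)=\overline{c}\,t\,k_1(t)$. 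Combining the two estimates delivers $(1*k)(t)\le \overline{c}\,t\,k_1(t)$ on $(0,t_0)$, as required.

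There is no real obstacle in the argument. The only mildly nontrivial observation is that it suffices to apply (\ref{ak1}) via its $p=1$ consequence from Remark \ref{RemarkAllp}, rather than use the full strength of the $p_0$-hypothesis; otherwise each inequality is a clean one-line consequence of $k*l=1$ together with the monotonicity of one of the kernels.
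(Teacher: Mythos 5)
Your proof is correct and follows essentially the same route as the paper's: estimate $k*l=1$ from below by freezing the nonincreasing factor at its value at $t$ (first $k$, then $l$), and then upgrade $1/l(t)$ to $\overline{c}\,t\,k_1(t)$ via the $p=1$ case of (\ref{ak1}). The only cosmetic difference is that you cite Remark~\ref{RemarkAllp} explicitly, whereas the paper simply says ``apply (\ref{ak1}) with $p=1$''.
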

\begin{proof}
To show the first estimate we note that since $k*l=1$ and $k$ is nonincreasing we have
\[
k(t) \izt l(\tau)d\tau \leq k*l = 1.
\]
Since $l$ is also nonincreasing, we obtain
\[
l(t)\izt k(s)ds \leq 1,
\]
thus, applying  the assumption (\ref{ak1}) with $p=1$ we obtain  for $t < t_0$
\[
(1*k)(t) \leq \frac{1}{l(t)} \leq \frac{\overline{c}t}{(1*l)(t)}.
\]
\end{proof}

We next present a simple but useful lemma.
\begin{lemma}\label{ba}
For every $x,y > 0$ we have
    \[
k_1(xy) \leq \max\{1,y^{-1}\}k_1(x).
    \]
\end{lemma}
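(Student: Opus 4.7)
The plan is to reformulate the claim in terms of the function $(1\ast l)$, whose reciprocal is $k_1$. The inequality $k_1(xy) \le \max\{1,y^{-1}\}k_1(x)$ is equivalent to
\[
(1\ast l)(xy) \;\ge\; \min\{1,y\}\,(1\ast l)(x),
\]
so it is enough to establish this lower bound for $\int_0^{xy} l(s)\,ds$ in terms of $\int_0^x l(s)\,ds$. The only properties of $l$ needed are those granted by (\ref{pc}): nonnegativity and monotonic decrease.

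I would split into two cases according to whether $y\ge 1$ or $y<1$. If $y\ge 1$, then $xy\ge x$ and $\min\{1,y\}=1$, so the bound is immediate from nonnegativity of $l$: $\int_0^{xy} l(s)\,ds\ge \int_0^x l(s)\,ds$. If $0<y<1$, I would make the change of variables $s=yt$ in the integral on the left, obtaining
\[
\int_0^{xy} l(s)\,ds \;=\; y\int_0^x l(yt)\,dt \;\ge\; y\int_0^x l(t)\,dt,
\]
where the last inequality uses that $yt\le t$ combined with the fact that $l$ is nonincreasing. Dividing by $(1\ast l)(xy)(1\ast l)(x)$ and recalling the definition of $k_1$ yields $k_1(xy)\le y^{-1}k_1(x)$ in this case.

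There is no real obstacle here; the argument is essentially a one-line substitution combined with monotonicity. The only thing to be slightly careful about is the edge case where one of the integrals might degenerate, but since $l\in L_{1,\mathrm{loc}}(\mathbb{R}_+)$ and is nonnegative and nonincreasing, $(1\ast l)(t)>0$ for every $t>0$ (otherwise $l\equiv 0$ near the origin, contradicting $k\ast l=1$), so all reciprocals are well defined and the two cases combine to give the stated estimate with the factor $\max\{1,y^{-1}\}$.
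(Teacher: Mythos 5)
Your proof is correct and follows essentially the same route as the paper: the change of variables $s=yt$ together with monotonicity of $l$ for $y<1$, and the trivial observation that $(1\ast l)$ is nondecreasing (hence $k_1$ is nonincreasing) for $y\ge 1$. The only cosmetic difference is that the paper phrases the $y>1$ case via $\dot k_1=-lk_1^2<0$, whereas you argue directly from $\int_0^{xy}l\ge\int_0^x l$, which is the same fact.
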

\begin{proof}
If $y \in (0,1]$ we have
\[
k_1(xy) = \frac{1}{ (1*l)(xy)} = \frac{1}{y \int_{0}^{x}l(ys)ds} \leq \frac{1}{y(1*l)(x)}=y^{-1}k_{1}(x).
\]
On the other hand, the function $k_1$ is decreasing, because $\dot{k}_{1}=-lk^{2}_{1}$, and thus if $y>1$ then trivially $k_1(xy) \le k_1(x)$.
\end{proof}
\begin{lemma}\label{lpoint}
Assume (\ref{ak1}). Then for every $t \leq t_0$ and $a \in (0,1)$ there holds
\[
l(at) \leq \overline{c}a^{-1}l(t).
\]
\end{lemma}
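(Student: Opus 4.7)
The plan is to exploit the monotonicity of $l$ together with the averaging estimate from (\ref{ak1}) (used with $p=1$ via Remark \ref{RemarkAllp}). The target inequality has the shape ``pointwise value $\le$ average value times factor'', so the natural route is to sandwich $l(at)$ between a pointwise lower bound coming from monotonicity and an upper bound supplied by (K1).

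First, since $l$ is nonincreasing and $a \in (0,1)$, we have $l(s) \ge l(at)$ for every $s \in (0,at)$, which yields
\[
at \cdot l(at) \le \int_{0}^{at} l(s)\, ds.
\]
Because the integrand is nonnegative and $at \le t$, the integral on the right is further bounded by $\int_{0}^{t} l(s)\, ds$.

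Next, applying Remark \ref{RemarkAllp} with $p=1$ (which is valid since $t \le t_0$), we obtain
\[
\int_{0}^{t} l(s)\, ds \le \overline{c}\, t\, l(t).
\]
Chaining the two estimates gives $at \cdot l(at) \le \overline{c}\, t\, l(t)$, and dividing by $at$ yields the claim.

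There is no real obstacle here: the argument is a two-line chain, and the only subtlety is noticing that one should use (\ref{ak1}) with the exponent $p=1$, which is available precisely because of Remark \ref{RemarkAllp}.
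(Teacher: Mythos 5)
Your proof is correct and follows essentially the same route as the paper's: monotonicity of $l$ to bound $l(at)$ by its average over $(0,at)$, enlarging the domain of integration to $(0,t)$, and then applying (\ref{ak1}) with $p=1$ via Remark \ref{RemarkAllp}. (The paper's displayed chain begins with an ``$=$'' where a ``$\leq$'' is meant, which you have correctly written as an inequality.)
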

\begin{proof}
Indeed, from monotonicity of $l$ and Remark \ref{RemarkAllp} we infer that
\[
l(at) =\frac{1}{at} \int_{0}^{at} l(s)ds\leq \frac{1}{at} (1*l)(at) \leq \frac{1}{at} (1*l)(t) \leq \overline{c} a^{-1}l(t).
\]
\end{proof}
Now, let us show that (\ref{ak1}) implies that the value of $l$ integrated over the smaller interval may be controlled by a small constant multiplied by the value of $l$ integrated over the bigger interval.
\begin{lemma} \label{ak3}
Assume (\ref{ak1}). Then for every $x < 1$ and $y \leq t_0$ there holds
\[
(1*l)(xy) \leq (\overline{c})^{\frac{1}{p_0}} x^{\frac{1}{p'_0}}(1*l)(y),
\]
where $\overline{c} > 0$ is the constant from (\ref{ak1}).
\end{lemma}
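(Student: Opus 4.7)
The plan is to apply Hölder's inequality to the integral defining $(1*l)(xy)$ and then exploit the comparability condition (\ref{ak1}) together with the monotonicity of $l$. Since $x<1$ and $y\le t_0$, both $xy$ and $y$ lie in the interval $(0,t_0]$, so (\ref{ak1}) is at our disposal.

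First I would split the $p_0$ and $p_0'$ factors via Hölder's inequality to get
\[
(1*l)(xy)=\int_0^{xy} l(s)\,ds \leq (xy)^{1/p_0'}\left(\int_0^{xy} l(s)^{p_0}\,ds\right)^{1/p_0}.
\]
Next, since $x<1$ we have $xy\le y$, so the monotonicity of the integrand (or just enlarging the domain) gives
\[
\int_0^{xy} l(s)^{p_0}\,ds \leq \int_0^{y} l(s)^{p_0}\,ds,
\]
and then assumption (\ref{ak1}) bounds this by $\overline{c}\,y\,(l(y))^{p_0}$. Combining these two inequalities yields
\[
(1*l)(xy) \leq (xy)^{1/p_0'}\,\overline{c}^{1/p_0}\, y^{1/p_0}\, l(y) = \overline{c}^{1/p_0}\, x^{1/p_0'}\, y\, l(y).
\]

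Finally, the monotonicity of $l$ from (\ref{pc}) gives the trivial lower bound $y\,l(y)\leq \int_0^{y} l(s)\,ds = (1*l)(y)$, which absorbs the $y\,l(y)$ factor into $(1*l)(y)$ and produces the claimed inequality. No step looks like a real obstacle here; the only point to keep an eye on is making sure the two sub-intervals on which (\ref{ak1}) is invoked are both within $(0,t_0]$, which is guaranteed by the hypotheses $x<1$ and $y\le t_0$.
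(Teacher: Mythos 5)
Your proof is correct and follows essentially the same argument as the paper: Hölder's inequality on $(1*l)(xy)$, enlarging the domain of the $L_{p_0}$ integral from $(0,xy)$ to $(0,y)$, applying (K1), and finally $y\,l(y)\le(1*l)(y)$ by monotonicity. The paper just compresses the first two steps into a single display.
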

\begin{proof}
Using H\"older's inequality, the assumption (\ref{ak1}) and monotonicity of $l$ we see that
\[
(1*l)(xy) \leq (xy)^{\frac{1}{p'_0}} \left(\int_{0}^{y} (l(s))^{p_0}ds\right)^{\frac{1}{p_0}} \leq (\overline{c})^{\frac{1}{p_0}}x^{\frac{1}{p'_0}}yl(y) \leq (\overline{c})^{\frac{1}{p_0}} x^{\frac{1}{p'_0}} (1*l)(y).
\]
\end{proof}
\begin{bemerk1}\label{ak31}
    From Lemma \ref{ak3} one may further deduce that for every $x < 1$, $y \leq t_0$ and $p \in (1,p_0)$
    \[
    \left(\int_{0}^{xy}(l(s))^{p}ds\right)^{\frac{1}{p}} \leq \overline{c}^{\frac{1}{p}+\frac{1}{p_0}} x^{\frac{1}{p'_0}-\frac{1}{p'}}
    \left(\int_{0}^{y}(l(s))^{p}ds\right)^{\frac{1}{p}}.
    \]
    Indeed, by Remark \ref{RemarkAllp} and using monotonicity of $l$, Lemma \ref{ak3} and H\"older's inequality we have
    \[
    \left(\int_{0}^{xy}(l(s))^{p}ds\right)^{\frac{1}{p}} \leq \overline{c}^{\frac{1}{p}} (xy)^{\frac{1}{p}}l(xy) \leq \overline{c}^{\frac{1}{p}} (xy)^{\frac{1}{p}-1}(1*l)(xy) \leq \overline{c}^{\frac{1}{p}+\frac{1}{p_0}} (xy)^{-\frac{1}{p'}}x^{\frac{1}{p'_0}}(1*l)(y)
    \]
    \[
    \leq  \overline{c}^{\frac{1}{p}+\frac{1}{p_0}}x^{\frac{1}{p'_0}-\frac{1}{p'}} \left(\int_{0}^{y}(l(s))^{p}ds\right)^{\frac{1}{p}}.
    \]
\end{bemerk1}

The next lemma introduces the function $\Phi$ which defines the shape of our time-space cylinders. The proof of Lemma \ref{fi} follows the lines of the proof of \cite{harnackdistr}[Lemma 2.4]. Since the result is crucial and the argument is short, we provide a proof for the reader's convenience.

\begin{lemma}\label{fi}
Let $r_0\in (0,\infty]$ be defined by (\ref{r0}). Then there exists a unique strictly increasing function $\Phi \in C([0,r_0))\cap C^{1}((0,r_0))$ such that $\Phi(0) = 0$, $\lim\limits_{r\rightarrow r_0}\Phi(r) = \infty$ and
\eqq{
 \kjr = r^{-2}.
}{zn1}
\end{lemma}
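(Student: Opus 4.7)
The plan is to define $\Phi$ by inverting the elementary function $L(t):=(1\ast l)(t)=\int_{0}^{t}l(s)\,ds$, because by the definition of $k_{1}$ the identity \eqref{zn1} is simply $L(\Phi(r))=r^{2}$. Hence, once it is shown that $L$ is a continuous, strictly increasing bijection from $[0,\infty)$ onto $[0,r_{0}^{2})$, one can just set $\Phi(r):=L^{-1}(r^{2})$ for $r\in[0,r_{0})$ and be done.

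From (K0) it is immediate that $L\in C([0,\infty))\cap C^{1}((0,\infty))$ with $L(0)=0$, $L'=l\ge 0$, and $\lim_{t\to\infty}L(t)=r_{0}^{2}$ (with $r_{0}=\infty$ when $l\notin L_{1}(\iR_{+})$). The main obstacle is to show that $L$ is in fact \emph{strictly} increasing, i.e.\ that $l>0$ on all of $(0,\infty)$; this is not directly guaranteed by the monotonicity and sign conditions in (K0), and is where the identity $k\ast l\equiv 1$ must enter. I would argue by contradiction. Suppose $l(t_{0})=0$ for some $t_{0}>0$; then, since $l$ is nonnegative and nonincreasing, $l\equiv 0$ on $[t_{0},\infty)$ and $l>0$ on $(0,t_{0})$ (the case $l\equiv 0$ is excluded by $k\ast l\equiv 1$). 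For $t>t_{0}$ the relation $k\ast l\equiv 1$ reduces to $\int_{0}^{t_{0}}k(t-s)l(s)\,ds=1$; differentiating in $t$ (the boundary term vanishes because $l(t_{0})=0$) gives $\int_{0}^{t_{0}}\dot k(t-s)l(s)\,ds=0$. Since $\dot k\le 0$ and $l>0$ on $(0,t_{0})$, the integrand is pointwise nonpositive and must vanish, so $\dot k\equiv 0$ on the open interval $(t-t_{0},t)$ for every $t>t_{0}$, hence on all of $(0,\infty)$. But then $k$ is a positive constant $c>0$, and $k\ast l=cL$ cannot be identically $1$ on $(0,\infty)$ because $L$ is nonconstant, a contradiction.

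Consequently $L:[0,\infty)\to[0,r_{0}^{2})$ is a continuous strictly increasing bijection, and by the inverse function theorem its inverse $L^{-1}$ is continuous on $[0,r_{0}^{2})$, strictly increasing, and of class $C^{1}$ on $(0,r_{0}^{2})$ with $(L^{-1})'=1/(l\circ L^{-1})$. Setting $\Phi(r):=L^{-1}(r^{2})$ yields a function $\Phi\in C([0,r_{0}))\cap C^{1}((0,r_{0}))$ which is strictly increasing, satisfies $\Phi(0)=0$ and $\lim_{r\to r_{0}^{-}}\Phi(r)=\infty$, and obeys $L(\Phi(r))=r^{2}$, i.e.\ $k_{1}(\Phi(r))=r^{-2}$. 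Uniqueness follows at once from the strict monotonicity of $L$: any $\tilde\Phi$ satisfying \eqref{zn1} must verify $L(\tilde\Phi(r))=r^{2}=L(\Phi(r))$, whence $\tilde\Phi=\Phi$.
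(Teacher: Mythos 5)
Your proof is correct and, up to the substitution $k_1 = 1/L$ with $L = 1\ast l$, follows the same basic route as the paper: show the relevant function is a strictly monotone, continuous, $C^1$ bijection and invert it (the paper applies the intermediate value theorem and the implicit function theorem to $k_1$; you invert $L$ directly, which amounts to the same thing). The one substantive addition is that you do not take for granted that $l>0$ on all of $(0,\infty)$, but derive it from $k\ast l\equiv 1$ by differentiating the truncated convolution identity; the paper simply asserts $\dot k_1<0$, and since $\dot k_1=-l\,k_1^2$ this strict inequality is precisely what your argument supplies. One small imprecision worth fixing: from $l(t_0)=0$ alone you cannot conclude $l>0$ on all of $(0,t_0)$; you should first replace $t_0$ by $\inf\{t>0:\,l(t)=0\}$, which is positive unless $l\equiv 0$ (the case you have already excluded), and then run the differentiation argument from there. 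You might also note explicitly that the constant-$k$ contradiction covers $c=0$ as well as $c>0$, though this is immediate.
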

\begin{proof}
We note that $k_{1}$ is a smooth, decreasing function on $(0,\infty)$.
Furthermore,
 $\lim\limits_{t\rightarrow 0^{+}}k_{1}(t) = \infty$ and $\lim\limits_{t\rightarrow \infty}k_{1}(t) = r_0^{-2}$.

Thus, by the Darboux property, for every $r\in (0,r_0)$ there exists $\Phi(r) >0$ such that
\[
k_{1}(\Phi(r)) = r^{-2}
\]
and $\Phi(r)$ is uniquely determined because $\dot{k}_{1}<0$. In particular, from the implicit function theorem we deduce that  $\Phi \in C^{1}((0,r_0))$ and $\Phi'(r)>0$. Moreover,
\[
\lim_{r\rightarrow r_0}\Phi(r) = \lim_{r\rightarrow r_0} k_1^{-1}(r^{-2}) = \infty.
\]
If $y_{0}:=\inf\limits_{r>0}\Phi(r)=\lim\limits_{r\rightarrow 0^{+}} \Phi(r)$ were positive, then we would have
\[
k_{1}(y_{0})= \lim_{r\rightarrow 0^{+}}k_{1}(\Phi(r))=\lim_{r\rightarrow 0^{+}} \frac{1}{r^{2}}=\infty,
\]
a contradiction. Hence $y_{0}=0$.
\end{proof}

Next, we establish further properties of the function $\Phi$, which will be needed later.

\begin{prop}\label{philambda}
    For every $r \in (0,r_0)$ and for every $\lambda \in (0,1]$ there holds
    \[
    \Phi(\lambda r) \leq \lambda^2 \vr.
    \]
\end{prop}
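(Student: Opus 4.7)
The plan is to translate the desired bound on $\Phi$ into a comparison for $(1*l)$ via the defining equation \eqref{zn1}, then exploit the fact that the average of a nonincreasing function is itself nonincreasing. Concretely, since $(1*l)(\Phi(\lambda r)) = (\lambda r)^{2}$ and $(1*l)(\Phi(r)) = r^{2}$, the assertion $\Phi(\lambda r)\le \lambda^{2}\Phi(r)$ is equivalent (modulo monotonicity of $1*l$) to
\[
(1*l)(\lambda^{2}\Phi(r)) \;\ge\; \lambda^{2}(1*l)(\Phi(r)).
\]
So the first step is to prove the pointwise inequality $(1*l)(s) \ge (s/t)(1*l)(t)$ for $0<s\le t$, i.e. that $t\mapsto (1*l)(t)/t$ is nonincreasing on $(0,r_{0})$.

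The core computation is a change of variables: for $0<s\le t$, setting $\tau=\sigma t/s$,
\[
(1*l)(t) \;=\; \int_{0}^{t} l(\tau)\,d\tau \;=\; \frac{t}{s}\int_{0}^{s} l(\sigma t/s)\,d\sigma \;\le\; \frac{t}{s}\int_{0}^{s} l(\sigma)\,d\sigma \;=\; \frac{t}{s}(1*l)(s),
\]
where the inequality uses $\sigma t/s \ge \sigma$ and the monotonicity of $l$ from (\ref{pc}). Taking $t=\Phi(r)$ and $s=\lambda^{2}\Phi(r)$, which lies in $(0,\Phi(r)]$ because $\lambda\in(0,1]$, yields
\[
(1*l)(\lambda^{2}\Phi(r)) \;\ge\; \lambda^{2}(1*l)(\Phi(r)) \;=\; \lambda^{2}r^{2} \;=\; (\lambda r)^{2} \;=\; (1*l)(\Phi(\lambda r)).
\]

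To conclude $\lambda^{2}\Phi(r)\ge \Phi(\lambda r)$, I would invoke monotonicity of $1*l$. The only (trivial) subtlety is that $1*l$ need not be \emph{strictly} increasing: if one had $\lambda^{2}\Phi(r)<\Phi(\lambda r)$, then equality would have to hold throughout the chain of inequalities above, so $1*l$ would be constant on $[\lambda^{2}\Phi(r),\Phi(\lambda r)]$ and hence, by the monotonicity of $l$, on $[\lambda^{2}\Phi(r),\infty)$. Combined with $(1*l)(\Phi(r))=r^{2}$ and $\Phi(r)\ge \lambda^{2}\Phi(r)$, this would force $r^{2}=(\lambda r)^{2}$, i.e.\ $\lambda=1$, which contradicts the strict inequality $\lambda^{2}\Phi(r)<\Phi(\lambda r)$.

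No step is really an obstacle here; the only thing to be a little careful about is the last strict-monotonicity argument, but it is resolved directly from the monotonicity of $l$ and the definition of $\Phi$ without needing any of the higher assumptions (\ref{ak1})--(\ref{asholder}). In particular, the whole lemma follows only from (\ref{pc}) and the defining relation \eqref{PHIEQUA}.
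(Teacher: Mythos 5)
Your proof is correct and follows essentially the same approach as the paper: both exploit the defining relation \eqref{zn1}, the monotonicity of $l$ to compare integrands after a scaling change of variables, and the monotonicity of $1*l$ (equivalently $k_1$) to conclude. The only cosmetic differences are that the paper phrases everything in terms of $k_1=1/(1*l)$ rather than $1*l$ directly, and the paper simply invokes that $k_1$ is decreasing (established earlier) rather than treating the potential non-strictness separately as you do at the end.
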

\begin{proof}
    From (\ref{zn1}) and the fact that $l$ is nonincreasing we have
    \begin{align*}
    k_1(\Phi(\lambda r)) & = \lambda^{-2}r^{-2} = \lambda^{-2}k_1(\vr) = \frac{1}{\lambda^{2}(1*l)(\vr)}\\
    & \geq \frac{1}{\lambda^{2}\int_{0}^{\vr}l(\lambda^2 t)dt} = \frac{1}{\int_{0}^{\lambda^{2}\vr}l(s)ds} = k_1(\lambda^2 \Phi( r)).
    \end{align*}
    Since $k_1$ is decreasing  we obtain the claim.
\end{proof}

The next lemma is a key tool in the proof of weak Harnack estimate. It corresponds to \cite{harnackdistr}[Lemma 2.5].

\begin{lemma}\label{scaling}
Assume (\ref{ak1}). Then there exists $r^{*} \in (0,r_0/2)$ such that $\Phi(2r^{*})~\leq~\min\{1,t_{0}\}$ and for every  $1 \leq  p \leq p_0$ and for every $r \in (0,r^{*}]$, there holds

\[
\norm{l}_{L_{p}(0,\Phi(2r))}^{p}(\Phi(2r))^{p-1} \leq C r^{2p},
\]
where $C=4^{p_0}\overline{c}$ and $\overline{c}$ comes from the assumption (\ref{ak1}).
\end{lemma}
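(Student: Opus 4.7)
The plan is to reduce everything to the defining relation \eqref{zn1} for $\Phi$, namely $(1*l)(\Phi(2r)) = (2r)^2$, via the integral bound from Remark~\ref{RemarkAllp} and the monotonicity of $l$.

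First I would handle the existence of $r^{*}$. Since $\Phi \in C([0,r_0))$ with $\Phi(0)=0$ and $\Phi$ is strictly increasing by Lemma~\ref{fi}, I can pick $r^{*} \in (0, r_0/2)$ small enough so that $\Phi(2r^{*}) \leq \min\{1, t_0\}$; then for every $r \in (0, r^{*}]$ one has $\Phi(2r) \leq \Phi(2r^{*}) \leq \min\{1, t_0\}$, so subsequent applications of assumption (\ref{ak1}) and Remark~\ref{RemarkAllp} are legitimate at $t = \Phi(2r)$.

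Next I would bound the left-hand side in four short steps. Fix $p \in [1, p_0]$ and $r \in (0, r^{*}]$, and set $t := \Phi(2r) \in (0, t_0]$. By Remark~\ref{RemarkAllp} applied at this $t$,
\[
\int_{0}^{t} l(s)^{p}\, ds \leq \overline{c}\, t\, l(t)^{p}.
\]
Multiplying through by $t^{p-1}$ gives
\[
\|l\|_{L_p(0,t)}^{p}\, t^{p-1} \leq \overline{c}\, t^{p}\, l(t)^{p} = \overline{c}\, \bigl(t\, l(t)\bigr)^{p}.
\]
Since $l$ is nonincreasing, $t\, l(t) \leq \int_{0}^{t} l(s)\, ds = (1 \ast l)(t)$, hence
\[
\|l\|_{L_p(0,t)}^{p}\, t^{p-1} \leq \overline{c}\, \bigl((1\ast l)(t)\bigr)^{p}.
\]
Now I invoke the defining equation \eqref{zn1} of $\Phi$: since $t = \Phi(2r)$,
\[
(1 \ast l)(t) = \frac{1}{k_{1}(\Phi(2r))} = (2r)^{2} = 4 r^{2},
\]
so the preceding inequality becomes
\[
\|l\|_{L_p(0,\Phi(2r))}^{p}\, (\Phi(2r))^{p-1} \leq \overline{c}\, (4 r^{2})^{p} = 4^{p}\, \overline{c}\, r^{2p} \leq 4^{p_{0}}\, \overline{c}\, r^{2p},
\]
using $p \leq p_{0}$ in the last step. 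This is exactly the claim with $C = 4^{p_{0}}\overline{c}$.

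There is essentially no obstacle here: the argument is a compact chain using Remark~\ref{RemarkAllp} (which packages the $L^p$ version of (\ref{ak1})), the monotonicity built into (\ref{pc}), and the very definition of $\Phi$ via \eqref{zn1}. The only mildly subtle point is recognizing that the factor $(\Phi(2r))^{p-1}$ combines cleanly with the $\overline{c}\, \Phi(2r)\, l(\Phi(2r))^{p}$ coming out of Remark~\ref{RemarkAllp} to produce the $p$-th power of $\Phi(2r)\, l(\Phi(2r))$, which is precisely the quantity that can be dominated by the average $(1\ast l)(\Phi(2r))$ and hence by $4r^{2}$.
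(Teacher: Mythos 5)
Your proof is correct and follows essentially the same path as the paper's: apply Remark~\ref{RemarkAllp} at $t=\Phi(2r)$, dominate $l(t)$ by the average $(1*l)(t)/t$ via monotonicity, and then invoke the defining relation \eqref{zn1} to turn $(1*l)(\Phi(2r))$ into $4r^2$. The only cosmetic difference is that you multiply by $t^{p-1}$ up front whereas the paper divides by $\Phi(2r)^p$ at an intermediate step; the content is identical.
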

\begin{proof}
We take $r^{*} \in (0,r_0/2)$ such that $\Phi(2r^{*})~\leq~1$ and  $\Phi(2r^{*})~\leq~t_{0}$, where $t_0$ comes from the assumption~(\ref{ak1}). Then, by (\ref{ak1}) and monotonicity of $l$ we have
\[
\int_{0}^{\vdr}(l(s))^{p}ds \leq \overline{c} \vdr (l(\vdr))^{p} \leq \overline{c} \vdr \frac{1}{(\vdr)^{p}}((1*l)(\vdr))^{p} = \overline{c} (\vdr)^{1-p}(k_1(\vdr))^{-p}.
\]
We recall that thanks to (\ref{zn1})
\[
(k_1(\vdr))^{-p} = r^{2p}4^{p}.
\]
Thus
\[
\norm{l}_{L_{p}(0,\vdr)}^{p}  \leq 4^{p_0} \overline{c} r^{2p} (\vdr)^{1-p} ,
\]
which finishes the proof.
%Let us denote $x:=\vdr$, then it is enough to show
%\eqq{
%\int_{0}^{x} (l(\tau))^{p} d\tau \leq c x (l(x))^{p}.
%}{lmon}
%We note that since by monotonicity of $l$ we have $x (l(x))^{p} \leq \int_{0}^{x} (l(\tau))^{p} d\tau$. Thus, (\ref{ak1}) implies that both sides of (\ref{lmon}) tend to zero if $x$ tends to zero.
%Thus it is enough to show that
%\[
%(l(x))^{p} \leq c [(l(x))^{p} + px(l(x))^{p-1}\dot{l}(x)],
%\]
%which is equivalent with
%\[
%(l(x))^{p} \leq c(l(x))^{p} [1 + \frac{px\dot{l}(x)}{l(x)}]
%\]
%The last estimate is true, because from (\ref{ak1})
%\[
%\frac{px\dot{l}(x)}{l(x)} \geq -\delta.
%\]
\end{proof}

\nic{
\begin{lemma} \label{ak3}
For every $x < 1$ and $y \leq \vdr$, with $r \leq r^{*}(p_0)$, where $r^{*}(p_0)$ comes from Lemma \ref{scaling}, there exists positive constant such that $\bar{c} = \bar{c}(p_0)$
\[
l(xy) \leq \overline{c} x^{-\frac{1}{p_0}}l(y).
\]
\end{lemma}
\begin{proof}
We note that from (\ref{ak1}) we have
\[
l(xy) \leq l(y) - \int_{xy}^{y}\dot{l}(s)ds \leq l(y) + \frac{\delta}{p_0}\int_{xy}^{y} l(s) s^{-1}ds \leq
l(y) + \frac{\delta}{p_0} \left(\int_{xy}^{y} (l(s))^{p_0}ds\right)^{\frac{1}{p_0}} \left(\int_{xy}^{y} s^{-p'_0}ds \right)^{\frac{1}{p'_0}}
\]
Applying (\ref{lmon}) we have
\[
l(xy) \leq l(y) + \frac{\delta}{p_0}c y^{\frac{1}{p_0}}l(y) \frac{1}{(1-p_0')^{\frac{1}{p'_0}}}[(xy)^{1-p'_0} - y^{1-p'_0}]^{\frac{1}{p'_0}} =  l(y)\left(1 + \frac{\delta}{p_0}c  \frac{1}{(1-p_0')^{\frac{1}{p'_0}}}[x^{1-p'_0} -1]^{\frac{1}{p'_0}}\right) \leq \bar{c}x^{-\frac{1}{p_0}}l(y).
\]
\end{proof}
}

\begin{lemma}\label{fixy}
Assume (\ref{ak1}). Then for any  $x,y > 0$ such that $xy<r_0$, $y >1$ and $\Phi(xy) \leq t_0$, where $t_0$ comes from (\ref{ak1}),  there holds
\[
\Phi(xy) \leq \overline{c}^{\frac{1}{p_0-1}} y^{2p'_0} \Phi(x),
\]
where $\overline{c} \geq 1$ is the constant from (\ref{ak1}) .
\end{lemma}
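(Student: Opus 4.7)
The plan is to reduce the inequality to a direct application of Lemma \ref{ak3}, using the defining equation \eqref{zn1} of $\Phi$ together with the fact that $\Phi$ is strictly increasing.

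First, I would introduce $\alpha := \Phi(x)/\Phi(xy)$. Since $y>1$ and $\Phi$ is strictly increasing on $(0,r_0)$, we have $\alpha \in (0,1)$, and we may write $\Phi(x) = \alpha\,\Phi(xy)$ with $\Phi(xy) \leq t_0$ by assumption. This is exactly the setting of Lemma \ref{ak3}, so applying it (with $a = \alpha$ and $b = \Phi(xy)$) yields
\[
(1*l)(\Phi(x)) = (1*l)\bigl(\alpha \Phi(xy)\bigr) \leq \overline{c}^{1/p_0}\,\alpha^{1/p'_0}\,(1*l)(\Phi(xy)).
\]

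Next, by the defining relation \eqref{zn1}, i.e.\ $(1*l)(\Phi(r)) = r^{2}$, the previous display turns into
\[
x^{2} \leq \overline{c}^{1/p_0}\,\alpha^{1/p'_0}\,(xy)^{2},
\]
so that $\alpha^{1/p'_0} \geq \overline{c}^{-1/p_0}\,y^{-2}$, which is equivalent to
\[
\alpha \geq \overline{c}^{-p'_0/p_0}\,y^{-2 p'_0}.
\]
Recalling that $\Phi(xy) = \alpha^{-1}\Phi(x)$ and using the identity $p'_0/p_0 = 1/(p_0-1)$, I immediately obtain the stated bound
\[
\Phi(xy) \leq \overline{c}^{\,1/(p_0-1)}\,y^{2 p'_0}\,\Phi(x).
\]

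There is no real obstacle: the only subtlety is verifying that both hypotheses of Lemma \ref{ak3} are in force, namely $\alpha<1$ (which follows from monotonicity of $\Phi$ together with $y>1$) and $\Phi(xy) \leq t_0$ (which is part of the standing assumption). The rest is bookkeeping with the exponents of $\overline{c}$ via the conjugacy relation between $p_0$ and $p'_0$.
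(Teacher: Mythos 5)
Your proof is correct and takes essentially the same route as the paper's: both reduce the claim to Lemma \ref{ak3} combined with the defining relation \eqref{zn1} and the conjugacy identity $p'_0/p_0 = 1/(p_0-1)$. The paper organizes the computation by verifying $k_1(\Phi(xy)/[\overline{c}^{1/(p_0-1)}y^{2p'_0}]) \geq k_1(\Phi(x))$ directly and using monotonicity of $k_1$, whereas you introduce $\alpha=\Phi(x)/\Phi(xy)$ and solve for a lower bound on $\alpha$; this is a cosmetic reorganization of the same argument.
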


\begin{proof}
Since $k_1$ is decreasing it is enough to prove
\[
k_{1}\left(\frac{\Phi(xy)}{\overline{c}^{\frac{1}{p_0-1}}y^{2p'_0}}\right) \geq k_{1}(\Phi(x)).
\]
We apply Lemma~\ref{ak3} with $x=\overline{c}^{-\frac{1}{p_0-1}}y^{-2p'_0}, y =\Phi(xy) $ and (\ref{zn1}) to the result
\[
k_{1}\left(\frac{\Phi(xy)}{\overline{c}^{\frac{1}{p_0-1}}y^{2p'_0}}\right) = \frac{1}{(1*l)(\overline{c}^{-\frac{1}{p_0-1}}y^{-2p'_0} \Phi(xy))}  \geq \frac{1}{\overline{c}^{\frac{1}{p_0}}\overline{c}^{-\frac{1}{(p_0-1)p'_0}}(y^{2})^{-\frac{p'_0}{p'_0}} (1*l)(\Phi(xy)) }
\]
\[
= y^{2}k_1(\Phi(xy)) = k_1(\Phi(x)),
\]
which finishes the proof.
\end{proof}

\begin{prop}\label{estiprop}
Assume (\ref{ak1}). Then there exists a constant $c \in (0,1)$ such that
\[
\vdr \geq c r^{2p'_0}, \quad r \in (0,r^*),
\]
where $r^*$ is the number from Lemma \ref{scaling}.
\end{prop}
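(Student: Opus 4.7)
The plan is to exploit the defining equation $(1*l)(\Phi(2r)) = 4r^2$ from \eqref{zn1} together with an appropriate sublinear upper bound on $1*l$ near zero, and then solve for $\Phi(2r)$. Concretely, if I can show that
\[
(1*l)(t) \leq C_{0}\, t^{1/p_0'}, \quad t \in (0,t_0],
\]
where $t_0$ comes from (\ref{ak1}), then substituting $t = \Phi(2r)$, which is at most $t_0$ for $r \in (0,r^*)$ by Lemma \ref{scaling}, yields $4r^2 \leq C_0 (\Phi(2r))^{1/p_0'}$, i.e.\ $\Phi(2r) \geq (4/C_0)^{p_0'}\, r^{2p_0'}$.

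The sublinear bound on $1*l$ is essentially immediate: by H\"older's inequality together with the $L_{p_0}$-integrability of $l$ near zero guaranteed by (\ref{ak1}),
\[
(1*l)(t) \;\leq\; t^{1/p_0'}\, \|l\|_{L_{p_0}(0,t)} \;\leq\; t^{1/p_0'}\, \|l\|_{L_{p_0}(0,t_0)},\qquad t \in (0,t_0],
\]
so one may take $C_0 := \|l\|_{L_{p_0}(0,t_0)}$. Equivalently, Lemma \ref{ak3} applied with $y = t_0$ and $x = t/t_0 \in (0,1)$ produces the same estimate with $C_0 = \overline{c}^{1/p_0} t_0^{-1/p_0'}(1*l)(t_0)$.

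Combining these two ingredients gives the proposition with explicit constant $(4/C_0)^{p_0'}$; should this happen to exceed $1$, any smaller positive number still works, so we can always arrange $c \in (0,1)$. The argument is entirely elementary, and I do not foresee any genuine obstacle: the only mild subtlety is verifying that we stay in the regime $\Phi(2r) \leq t_0$ where assumption (\ref{ak1}) is in force, which is exactly what the choice of $r^*$ in Lemma \ref{scaling} ensures.
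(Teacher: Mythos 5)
Your proof is correct and reaches the same conclusion, but it is organised a bit differently from the paper's. The paper first reduces, via monotonicity of $k_1$, to proving $\frac{1}{4}r^{-2} \le k_1(cr^{2p_0'})$, and then invokes Lemma~\ref{ak3} (itself a consequence of (K1) and H\"older) together with the bound $(1*l)(r^{2p_0'}) \le \|l\|_{L_{p_0}((0,1))}\,r^2$. You instead apply H\"older directly to the defining relation $(1*l)(\Phi(2r)) = 4r^2$, obtaining $4r^2 \le \|l\|_{L_{p_0}((0,t_0))}\,(\Phi(2r))^{1/p_0'}$ (using $\Phi(2r)\le t_0$ for $r\le r^*$, which Lemma~\ref{scaling} supplies) and solving for $\Phi(2r)$. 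Both arguments rest on the same H\"older estimate, but your route is slightly leaner: it bypasses Lemma~\ref{ak3}, needs only the $L_{p_0}$-integrability part of (K1) rather than the comparability-with-the-average part, and avoids feeding the auxiliary points $cr^{2p_0'}$ and $r^{2p_0'}$ into $1*l$ (for which one would otherwise want $r^{2p_0'}\le\min\{1,t_0\}$). Your observation that one may always shrink the explicit constant $(4/C_0)^{p_0'}$ to land in $(0,1)$ is also fine.
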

\begin{proof}
Since $k_1$ is decreasing and in view of (\ref{zn1}), it is enough to show that there exists a positive $c\in (0,1)$ such that
\eqq{
\frac{1}{4} r^{-2} \leq k_1(c r^{2p'_0}).
}{estipropc1}
We note that by Lemma \ref{ak3} and H\"older's inequality
\[
(1*l)(cr^{2p'_0}) \leq \overline{c}^{\frac{1}{p_0}}c^{\frac{1}{p'_0}} (1*l)(r^{2p'_0})\leq  \overline{c}^{\frac{1}{p_0}}c^{\frac{1}{p'_0}}\norm{l}_{L_{p_0}((0,1))}r^{2},
\]
thus
\[
 k_1(cr^{2p'_0}) \geq  \overline{c}^{-\frac{1}{p_0}}c^{-\frac{1}{p'_0}}\norm{l}^{-1}_{L_{p_0}((0,1))}r^{-2},
\]
and (\ref{estipropc1}) is satisfied with any $0<c\leq \min\{1,(4\overline{c}^{-\frac{1}{p_0}}\norm{l}^{-1}_{L_{p_0}((0,1))})^{p'_0}\}$.
\end{proof}

\begin{bemerk1}
We note that the assumption (\ref{ak2}) is equivalent to
\eqq{k(x)-k(y) \geq \tilde{c} k(x)\frac{y-x}{y} \hd \m{ for every } \hd 0<x<y<\tilde{t}_0.}{ak2prim}
Indeed, integrating the inequality in (\ref{ak2}) from $x$ to $y$ gives
\[
-k(y)y+k(x)x + \int_{x}^{y}k(t)dt \geq \tilde{c} \int_{x}^{y}k(t)dt.
\]
Adding and subtracting $yk(x)$ leads to
\[
(1-\tilde{c})\int_{x}^{y}k(t)dt + y(k(x)-k(y)) \geq k(x)(y-x)
\]
and since $k$ is nonincreasing we obtain
\[
(1-\tilde{c})k(x)(y-x)+ y(k(x)-k(y)) \geq k(x)(y-x),
\]
which gives (\ref{ak2prim}). To show that (\ref{ak2prim}) implies (\ref{ak2}) it is enough to divide (\ref{ak2prim}) by $y-x$ and pass to the limit $y \rightarrow x$.

\end{bemerk1}

\subsection{ Moser iterations and embeddings}
%*************************************************************************

Below we present two Moser iteration lemmas and an important abstract lemma by Bombieri and Giusti. The proofs of the Moser iteration lemmas may be found in  \cite{base}[Lemma 2.1, Lemma 2.2], \cite{harnackdistr}[Lemma 2.7, Lemma 2.8], (see also \cite{AS}, \cite{CZ}). For the proof of Lemma \ref{abslemma} we refer to \cite{BomGiu}, \cite[Lemma
2.2.6]{SalCoste} and \cite[Lemma 2.6]{CZ}.

We introduce the following notation. Let $U_\sigma$, $0<\sigma\le 1$ be a collection of measurable subsets of a fixed finite measure space $U_1$
endowed with the measure $\mu$, such that $U_{\sigma'}\subset
U_\sigma$ if $\sigma'\le \sigma$. For $p\in (0,\infty)$ and
$0<\sigma\le 1$, by $L_p(U_\sigma)$ we mean the Lebesgue space
$L_p(U_\sigma,d\mu)$ of all $\mu$-measurable functions
$f:U_\sigma\rightarrow \iR$ with
$\|f\|_{L_p(U_\sigma)}:=(\int_{U_\sigma}|f|^p\,d\mu)^{1/p}<\infty$.
%*************************************************************************
\begin{lemma} \label{moserit1}
Let $\kappa>1$, $\bar{p}\ge 1$, $C>0$, and $a>0$. Suppose
$f$ is a $\mu$-measurable function on $U_1$ such that
\begin{equation} \label{mositer1}
\|f\|_{L_{\gamma\kappa}(U_{\sigma'})}\le
\Big(\frac{C(1+\gamma)^{a}}{(\sigma-\sigma')^{a}}\Big)^{1/\gamma}\,\|f\|_{L_{\gamma}(U_{\sigma})},
\quad 0<\sigma'<\sigma\le 1,\;\gamma>0.
\end{equation}
Then there exists a constant $M=M(a,\kappa,\bar{p})>0$  such that
\[
\esup_{U_{ \varsigma  }}{|f|} \le
\Big(\frac{M C^{\frac{\kappa}{\kappa-1}}}{(1-\varsigma)^{a_0}}\Big)^{1/p}
\|f\|_{L_{p}(U_1)}\quad \mbox{for all}\;\;\varsigma\in(0,1),\;p\in
(0,\bar{p}] ,\]
where $a_{0}= \frac{a \kappa }{\kappa -1}$.
\end{lemma}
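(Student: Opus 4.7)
The statement is the classical conclusion of a Moser iteration applied to the reverse H\"older-type inequality (\ref{mositer1}), so the strategy is to iterate along a geometric sequence of exponents while simultaneously shrinking the domains by a geometric rule. Fix $p\in(0,\bar p]$ and $\varsigma\in(0,1)$. Set
\[
\gamma_n := p\kappa^n,\qquad \sigma_n := \varsigma+(1-\varsigma)\,2^{-n},\qquad n=0,1,2,\ldots,
\]
so that $\sigma_0=1$, $\sigma_n\searrow\varsigma$, and $\sigma_n-\sigma_{n+1}=(1-\varsigma)\,2^{-n-1}$. Applying (\ref{mositer1}) with $\gamma=\gamma_n$, $\sigma=\sigma_n$, $\sigma'=\sigma_{n+1}$ and iterating from $n=0$ gives
\[
\|f\|_{L_{p\kappa^{n+1}}(U_{\sigma_{n+1}})}\le \prod_{j=0}^{n}\left(\frac{C(1+p\kappa^{j})^{a}\,2^{(j+1)a}}{(1-\varsigma)^{a}}\right)^{\!1/(p\kappa^{j})}\|f\|_{L_{p}(U_1)}.
\]

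Taking logarithms, the exponent on the right equals
\[
\frac{1}{p}\sum_{j=0}^{\infty}\kappa^{-j}\bigl[\log C + a\log(1+p\kappa^{j}) + a(j+1)\log 2 - a\log(1-\varsigma)\bigr],
\]
and each of the four series converges. The $\log C$ term contributes $\frac{\kappa}{\kappa-1}\log C$ and, after multiplication by $1/p$ and exponentiation, yields the factor $C^{\kappa/(\kappa-1)}$; the $-a\log(1-\varsigma)$ term contributes $a_0\log(1/(1-\varsigma))$ with $a_0=a\kappa/(\kappa-1)$, yielding the factor $(1-\varsigma)^{-a_0}$. The remaining two series are absorbed into a constant $M$: the $\log(1+p\kappa^{j})$ series is bounded uniformly in $p\in(0,\bar p]$ via $\log(1+p\kappa^{j})\le\log(1+\bar p\kappa^{j})$, and $\sum_{j\ge 0}(j+1)\kappa^{-j}<\infty$ depends only on $\kappa$, so $M$ may be taken to depend only on $a,\kappa,\bar p$.

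To pass from the iterated norm to the essential supremum, note that $U_\varsigma\subset U_{\sigma_n}$ for every $n$, whence $\|f\|_{L_{p\kappa^{n+1}}(U_\varsigma)}\le\|f\|_{L_{p\kappa^{n+1}}(U_{\sigma_{n+1}})}$, while the right-hand side of the iterated bound is uniformly controlled by $(MC^{\kappa/(\kappa-1)}(1-\varsigma)^{-a_0})^{1/p}\|f\|_{L_p(U_1)}$. Since $U_1$ has finite measure, for any $\lambda<\esup_{U_\varsigma}|f|$ one has $\|f\|_{L_q(U_\varsigma)}\ge\lambda\,\mu(\{|f|>\lambda\}\cap U_\varsigma)^{1/q}\to\lambda$ as $q\to\infty$; letting $\lambda\nearrow\esup_{U_\varsigma}|f|$ then produces the claimed estimate. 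The only delicate point is ensuring that $M$ depends on $p$ only through the constraint $p\le\bar p$: this is exactly where the monotone bound $\log(1+p\kappa^{j})\le\log(1+\bar p\kappa^{j})$ and the geometric choice of $\sigma_n$, which makes $\sum\kappa^{-j}\log(1/(\sigma_j-\sigma_{j+1}))$ split cleanly into a $\varsigma$-dependent part and a purely constant part, are essential.
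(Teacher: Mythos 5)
The paper itself does not include a proof of this lemma; it refers to \cite{base}, \cite{harnackdistr}, \cite{AS}, \cite{CZ}. Your proof is correct and is precisely the standard Moser iteration argument given in those references: geometric exponent growth $\gamma_n=p\kappa^n$, geometric domain shrinking $\sigma_n=\varsigma+(1-\varsigma)2^{-n}$, summing the resulting geometric-and-arithmetic series in the log of the constants, and passing to the essential supremum via the Chebyshev estimate on the finite-measure set $U_\varsigma$; the uniform bound $\log(1+p\kappa^j)\le\log(1+\bar p\kappa^j)$ correctly secures the dependence of $M$ on $\bar p$ only.
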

%************************************************************************

%*************************************************************************

$\mbox{}$

%*************************************************************************
\begin{lemma} \label{moserit2}
Assume that $\mu(U_1)\le 1$. Let $\kappa>1$, $0<p_0<\kappa$, and
$C>0,\,a>0$. Suppose $f$ is a $\mu$-measurable function on
$U_1$ such that
\begin{equation} \label{mositer2}
\|f\|_{L_{\gamma\kappa}(U_{\sigma'})}\le
\Big(\frac{C}{(\sigma-\sigma')^{a}}\Big)^{1/\gamma}\,\|f\|_{L_{\gamma}(U_{\sigma})},
\quad 0<\sigma'<\sigma\le 1,\;0<\gamma\le \frac{p_0}{\kappa}<1.
\end{equation}
Then
\[ \|f\|_{L_{p_0}(U_{\varsigma})}\le
\Big(\frac{M}{(1-\varsigma)^{a_0}}\Big)^{1/p-1/p_0}
\|f\|_{L_{p}(U_1)}\quad \mbox{for all}\;\;\varsigma\in(0,1),\;p\in
(0,\frac{p_0}{\kappa}],\]
where $M=C^{\frac{\kappa (\kappa+1)}{\kappa-1}}\cdot 2^{\frac{a \kappa^{3}}{(\kappa-1)^{3}}}$ and $a_{0}= \frac{a \kappa (\kappa+1)}{\kappa-1}$.
\end{lemma}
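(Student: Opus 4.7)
The plan is to run a finite backward Moser iteration starting from the target norm $\|f\|_{L_{p_0}(U_\varsigma)}$ and stepping down through the geometric ladder of exponents $p_0,\, p_0/\kappa,\, p_0/\kappa^2,\ldots$ until the exponent first falls below $p$. The approach parallels that of Lemma~\ref{moserit1}, but because hypothesis \eqref{mositer2} is only available for $\gamma\le p_0/\kappa$, the iteration cannot be pushed all the way to $L_\infty$; the resulting finite termination is exactly what produces the exponent $1/p-1/p_0$ (as opposed to the $1/p$ familiar from $L_\infty$-bounds) in the conclusion.

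Concretely, I would set $J:=\lceil\log_\kappa(p_0/p)\rceil$, so that $p_0/\kappa^J\le p$, and choose an increasing sequence $\varsigma=\sigma_{-1}<\sigma_0<\sigma_1<\cdots<\sigma_{J-1}\le 1$ with geometrically decaying increments $\sigma_j-\sigma_{j-1}$ summing to at most $1-\varsigma$. For each $j=0,1,\ldots,J-1$, applying \eqref{mositer2} with $\gamma=p_0/\kappa^{j+1}\in(0,p_0/\kappa]$ and radii $\sigma=\sigma_j$, $\sigma'=\sigma_{j-1}$, and then chaining the resulting inequalities, produces
\[
\|f\|_{L_{p_0}(U_\varsigma)}\le \bigg[\prod_{j=0}^{J-1}\bigg(\frac{C}{(\sigma_j-\sigma_{j-1})^a}\bigg)^{\kappa^{j+1}/p_0}\bigg]\,\|f\|_{L_{p_0/\kappa^J}(U_{\sigma_{J-1}})}.
\]
Since $p_0/\kappa^J\le p$ and $\mu(U_{\sigma_{J-1}})\le 1$, H\"older's inequality bounds the trailing factor by $\|f\|_{L_p(U_1)}$, closing the chain.

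The remaining task is to convert the product of constants into the announced form $\bigl(M/(1-\varsigma)^{a_0}\bigr)^{1/p-1/p_0}$. The key arithmetic identity is the telescoping
\[
\frac{1}{p_0}\sum_{j=0}^{J-1}\kappa^{j+1}=\frac{\kappa(\kappa^J-1)}{p_0(\kappa-1)},
\]
which, via the two elementary bounds $\kappa^J\le \kappa p_0/p$ and $(\kappa p_0-p)/(p_0-p)\le \kappa+1$ on $(0,p_0/\kappa]$ (the latter obtained by monotonicity of the ratio in $p$), factorizes as a multiple of $1/p-1/p_0$ with coefficient $\kappa(\kappa+1)/(\kappa-1)$; this directly produces the exponent $a_0(1/p-1/p_0)$ on $(1-\varsigma)^{-1}$. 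A slightly more delicate estimate --- using the derivative-of-geometric-series identity for $\sum_{j=0}^{J-1}(j+1)\kappa^{j+1}$ --- handles the logarithmic contribution coming from the geometric choice of increments and yields the stated $M$. I expect this final arithmetic bookkeeping to be the only real obstacle; the iteration itself is entirely mechanical once the geometric ladder has been chosen, and the bound is the precise analogue of Lemma~\ref{moserit1}, sharpened by the fact that the geometric series is here truncated rather than summed to infinity.
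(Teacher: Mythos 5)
The paper does not actually prove this lemma; it defers to the references \cite{base}, \cite{harnackdistr}, \cite{CZ}, so there is no in-text proof to compare against. Your chaining skeleton is the natural one and is correctly set up: the iterated application of \eqref{mositer2} with $\gamma=p_0/\kappa^{j+1}$ and radii $\sigma'=\sigma_{j-1}<\sigma=\sigma_j$ produces exactly the product you wrote, the choice $J=\lceil\log_\kappa(p_0/p)\rceil$ is right, the H\"older step using $\mu(U_1)\le 1$ is right, and your bound $\sum_{j=0}^{J-1}\kappa^{j+1}/p_0\le\frac{\kappa(\kappa+1)}{\kappa-1}\big(\frac{1}{p}-\frac{1}{p_0}\big)$ (via $\kappa^J\le\kappa p_0/p$ and $p\le p_0/\kappa$) is correct and delivers the exponent $a_0$ on $(1-\varsigma)^{-1}$ and the power of $C$ in $M$.

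However, the choice of \emph{geometrically decaying} increments $\sigma_j-\sigma_{j-1}$ is the wrong direction and the resulting product cannot be bounded in the required form. In your chaining the weight at step $j$ is $1/\gamma=\kappa^{j+1}/p_0$, which \emph{grows} geometrically in $j$; the heaviest-weighted factors are thus the ones near the outer radius $\sigma_{J-1}$. If the increments there are the smallest, the contribution is $\prod_j(\sigma_j-\sigma_{j-1})^{-a\kappa^{j+1}/p_0}=[(1-\varsigma)(1-\kappa^{-1})]^{-aS}\,\kappa^{(a/p_0)\sum_{j=0}^{J-1}j\kappa^{j+1}}$, and the derivative-of-geometric-series identity gives $\sum_{j=0}^{J-1}j\kappa^{j+1}=\frac{(J-1)\kappa^{J+2}-J\kappa^{J+1}+\kappa^2}{(\kappa-1)^2}\sim\frac{J\kappa^{J+2}}{(\kappa-1)^2}$, which carries an \emph{extra factor of $J$} relative to $\sum_j\kappa^{j+1}\sim\frac{\kappa^{J+1}}{\kappa-1}$. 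Since $\kappa^J\approx p_0/p$ and $J\approx\log_\kappa(p_0/p)$, the offending factor is of size $\exp\!\big(c\,p^{-1}\ln(p_0/p)\big)$, whereas the target $\big(M/(1-\varsigma)^{a_0}\big)^{1/p-1/p_0}$ is only of size $\exp\!\big(c'/p\big)$ as $p\to 0$. No finite $M$ can absorb the logarithm, so the estimate you anticipate (that this identity ``handles the logarithmic contribution'') fails; a concrete check with $\kappa=2$, $p_0=1$, $p=2^{-10}$ already violates the stated bound.

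The correct choice is increments that \emph{grow} geometrically with $j$: take $\sigma_j-\sigma_{j-1}\propto\kappa^j$, normalized so the sum is $\le 1-\varsigma$. Then the $\kappa$-power that appears has exponent proportional to $\sum_{j=0}^{J-1}(J-j)\kappa^{j+1}\le\frac{\kappa^{J+2}}{(\kappa-1)^2}$ (a bounded geometric series, with no stray factor of $J$), which after using $\kappa^{J}\le\kappa p_0/p$ and $1/p\le\frac{\kappa}{\kappa-1}\big(\frac1p-\frac1{p_0}\big)$ is of the admissible order $\mathcal{O}\big(\frac{1}{p}-\frac1{p_0}\big)$ and can be absorbed into $M^{1/p-1/p_0}$ (up to the precise form of the constant). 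The rest of your argument then goes through.
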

%*************************************************************************

%*************************************************************************

$\mbox{}$

%*************************************************************************
\begin{lemma} \label{abslemma}
Let $\delta,\,\eta\in(0,1)$, and let $\gamma,\,C$ be positive
constants and $0<\beta_0\le \infty$. Suppose $f$ is a positive
$\mu$-measurable function on $U_1$ which satisfies the following two
conditions:

(i)
\[
\|f\|_{L_{\beta_0}(U_{\sigma'})}\le
[C(\sigma-\sigma')^{-\gamma}\mu(U_1)^{-1}]^{1/\beta-1/\beta_0}\|f\|_{L_{\beta}(U_{\sigma})},
\]
for all $\sigma,\,\sigma',\,\beta$ such that $0<\delta\le
\sigma'<\sigma\le 1$ and $0<\beta\le \min\{1,\eta\beta_0\}$.

(ii)
\[
\mu(\{ \log f>\lambda\}) \le C\mu(U_1)\lambda^{-1}
\]
for all $\lambda>0$.

Then
\[
\|f\|_{L_{\beta_0}(U_{\delta})}\le M \mu(U_1)^{1/\beta_0},
\]
where $M$ depends only on $\delta,\,\eta,\,\gamma,\,C$, and
$\beta_0$.
\end{lemma}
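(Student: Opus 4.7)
My plan is to follow the classical Bombieri--Giusti bootstrap, combining the reverse H\"older--type estimate (i) with the weak log bound (ii) to produce a self-improving recursion for the $L_{\beta_0}$ norm of $f$ on a shrinking family of sets $U_\sigma$. After normalising so that $\mu(U_1)=1$ (harmless by scaling, which absorbs a factor $\mu(U_1)^{1/\beta_0}$ into the final constant) I set $\psi(\sigma):=\|f\|_{L_{\beta_0}(U_\sigma)}$ and aim to bound $\psi(\delta)$ by a constant $M$ depending only on $\delta$, $\eta$, $\gamma$, $C$, $\beta_0$.

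The central technical step is to use (ii) to control the $L_\beta$ norm appearing on the right-hand side of (i). For any threshold $t>1$, split
\[
\int_{U_\sigma} f^\beta\,d\mu = \int_{U_\sigma\cap\{f\le t\}} f^\beta\,d\mu + \int_{U_\sigma\cap\{f>t\}} f^\beta\,d\mu.
\]
The first piece is at most $t^\beta$. For the second, H\"older's inequality with conjugate exponents $\beta_0/\beta$ and $\beta_0/(\beta_0-\beta)$ combined with $\mu(\{f>t\})=\mu(\{\log f>\log t\})\le C/\log t$ from (ii) gives $\int_{\{f>t\}} f^\beta\,d\mu \le \psi(\sigma)^\beta(C/\log t)^{1-\beta/\beta_0}$. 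Substituting into (i) raised to the $\beta$-th power yields the master inequality
\[
\psi(\sigma')^\beta\le\bigl[C(\sigma-\sigma')^{-\gamma}\bigr]^{1-\beta/\beta_0}\left[t^\beta+\psi(\sigma)^\beta\left(\frac{C}{\log t}\right)^{1-\beta/\beta_0}\right],\qquad \delta\le\sigma'<\sigma\le 1,
\]
valid for any admissible $\beta\in(0,\min\{1,\eta\beta_0\}]$.

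With the master inequality in hand one chooses $\log t$ large enough (concretely $\log t\gtrsim C^2(\sigma-\sigma')^{-\gamma}$, adjusted by a free parameter) so that the coefficient of $\psi(\sigma)^\beta$ reduces to some fixed $\lambda<1$. This produces the recursion $\psi(\sigma')^\beta\le T(\sigma,\sigma')+\lambda\,\psi(\sigma)^\beta$, with $T(\sigma,\sigma')$ explicit in $\sigma-\sigma'$ and the data. Iterating along a suitable increasing sequence $\sigma_j\in[\delta,1)$ and telescoping gives $\psi(\delta)^\beta\le\sum_j\lambda^j T(\sigma_j,\sigma_{j+1})+\lim_j\lambda^j\psi(\sigma_j)^\beta$. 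Provided $\psi(1)<\infty$ (otherwise one first truncates $f\wedge N$, verifies that (i) and (ii) are inherited with constants uniform in $N$, runs the argument, and passes to the monotone limit $N\to\infty$), the tail vanishes and the sum is controlled by the desired constant $M$.

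The main obstacle is precisely the balancing in this last step: the optimal choice forces $t^\beta=\exp\bigl(O((\sigma-\sigma')^{-\gamma})\bigr)$, so a naive geometric partition $\sigma_{j+1}-\sigma_j\sim 2^{-j}$ would make the $T(\sigma_j,\sigma_{j+1})$'s grow super-exponentially and the iterative series would diverge. Overcoming this requires coupling the partition spacing with the contraction constant $\lambda_j$, and, if necessary, allowing $\beta=\beta_j$ to vary along the iteration so that the exponential blow-up in $t^\beta$ is defeated by the decay of $\prod_k\lambda_k$. This combinatorial tuning, rather than either (i) or (ii) individually, is the technical heart of the argument and is what fixes the precise dependence of $M$ on $\delta$, $\eta$, $\gamma$, $C$, $\beta_0$.
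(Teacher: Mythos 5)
The paper does not prove this lemma; it is quoted from the literature, with the proof delegated to \cite{BomGiu}, \cite[Lemma 2.2.6]{SalCoste} and \cite[Lemma 2.6]{CZ}. Your attempt therefore has to be measured against the classical Bombieri--Giusti argument rather than against anything in the text. Your setup is correct as far as it goes: after normalising $\mu(U_1)=1$, splitting $\int_{U_\sigma}f^\beta\,d\mu$ at a threshold $t>1$, estimating the tail by H\"older with exponents $\beta_0/\beta$ and $\beta_0/(\beta_0-\beta)$ together with (ii), and inserting this into (i) does give the master inequality you display, and it is the right starting point.

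The proposal, however, stops precisely where the proof begins. The iteration you sketch---choosing $\sigma_j$, $t_j$, $\beta_j$ and contraction factors $\lambda_j$ so that everything fits---is not a routine ``combinatorial tuning'' to be deferred; it \emph{is} the Bombieri--Giusti lemma, and you leave it unresolved. Two concrete holes remain. First, when $\beta_j$ varies, the recursion $\psi(\sigma_{j+1})^{\beta_j}\le T_j+\lambda_j\,\psi(\sigma_j)^{\beta_j}$ does not telescope: the exponent on $\psi(\sigma_{j+1})$ exiting step $j$ is $\beta_j$, whereas the one entering step $j+1$ is $\beta_{j+1}$, and the conversion between them (typically by reducing to $\psi\ge 1$, passing to $\log\psi$, and taking $\beta\sim 1/\log\psi$ so that the nonlinear recursion linearises in $\log\psi$) is the actual content of the lemma; nothing in your sketch indicates how to do it, and your own estimate $t^\beta=\exp\bigl(O((\sigma-\sigma')^{-\gamma})\bigr)$ shows that without it the scheme diverges. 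Second, the reduction to $f$ with $\|f\|_{L_{\beta_0}(U_1)}<\infty$ by truncating to $f\wedge N$ is not as harmless as claimed: truncation shrinks \emph{both} sides of (i), and $\|f\wedge N\|_{L_\beta(U_\sigma)}$ may drop far more than $\|f\wedge N\|_{L_{\beta_0}(U_{\sigma'})}$ if $f$ is large mainly on $U_\sigma\setminus U_{\sigma'}$, so (i) is not inherited with a constant uniform in $N$ without an additional argument. As it stands you have correctly identified the ingredients and the obstacle, but not produced a proof of the key step.
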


\nic{
\begin{lemma} \label{abslemma}
Let $\delta,\,\eta\in(0,1)$, and let $\gamma,\,C$ be positive
constants and $0<\beta_0\le \infty$, $\beta_1,\beta_2 \geq 1$. Suppose $f$ is a positive
$\mu$-measurable function on $U_1$ which satisfies the following two
conditions:

(i)
\[
\|f\|_{L_{\beta_0 \beta_1,\beta_0 \beta_2}(U_{\sigma'})}\le
[C(\sigma-\sigma')^{-\gamma}|U_1|^{-1}]^{1/\beta-1/\beta_0}\|f\|_{L_{\beta \beta_1,\beta \beta_2}(U_{\sigma})},
\]
for all $\sigma,\,\sigma',\,\beta$ such that $0<\delta\le
\sigma'<\sigma\le 1$ and $0<\beta\le \min\{1,\eta\beta_0\}$.

(ii)
\[
|\{ \log f>\lambda\}| \le C|U_1|\lambda^{-1}
\]
for all $\lambda>0$.

Then
\[
\|f\|_{L_{\beta_0 \beta_1,\beta_0 \beta_2}(U_{\delta})}\le M |U_1|^{1/\beta_0},
\]
where $M$ depends only on $\delta,\,\eta,\,\gamma,\,C$, and
$\beta_0$.
\end{lemma}
}

We finish this chapter by recalling a parabolic embedding result and a weighted Poincar\'e inequality, which goes back to Moser.

The following proposition is a
consequence of the Gagliardo-Nirenberg and H\"older's inequality, cf.\ for example, \cite[Section 2]{Za2}, \cite[Proposition 2.1.]{VZd}.

\begin{prop}
Let $N \geq 1$, $T>0$ and $\Omega$ be a bounded domain in $\iR^N$ and assume that $\partial \Omega$ satisfies the property
of positive density. For $1<p\le
\infty$ we define the space
\begin{equation} \label{Vdef}
V_p:=V_p((0,T)\times \Omega)=L_{2p}((0,T);L_2(\Omega))\cap
L_2((0,T);H^1_2(\Omega)),
\end{equation}
endowed with the norm
\[
\|u\|_{V_p((0,T)\times \Omega)}:=\|u\|_{L_{2p}((0,T);L_2(\Omega))}
+\|Du\|_{L_2((0,T);L_2(\Omega))}.
\]
%Set
%\begin{equation} \label{kappa}
%\kappa:=\kappa_p:=\,\frac{2p+N(p-1)}{2+N(p-1)}
%\end{equation}
%with $\kappa_\infty=1+2/N$. Then %$V_p\hookrightarrow
%L_{2\kappa}((0,T)\times\Omega)$, and
%\begin{equation} \label{Vembedding}
%\|u\|_{L_{2\kappa}((0,T)\times\Omega)}\le %C(N,p)\|u\|_{V_p((0,T)\times
%\Omega)},
%\end{equation}
%for all $u\in V_p\cap L_2((0,T);\oH^1_2(\Omega))$.
Then, if
\[
p'\left(1-\frac{2}{a}\right) + N\left(\frac{1}{2}-\frac{1}{b}\right) = 1,
\]
where $p'=\frac{p}{p-1}$ and
\[
a\in \left[ \frac{4p}{p+1},2p \right], \hd b\in[2, \infty] \hd \m{ for } \hd N=1,
\]
\[
 a \in (2,2p], \hd b \in [2,\infty) \m{ \hd for \hd  } N = 2,
\]
and
\[
a \in [2,2p], \hd b \in \left[2,\frac{2N}{N-2}\right] \hd \m{for} \hd N > 2,
\]
then
$V_p\hookrightarrow
L_{a}((0,T); L_{b}(\Omega))$ and there exists $C=C(N,b)$ such that
\eqq{
\norm{u}_{L_{a}((0,T), L_{b}(\Omega))} \leq C\norm{u}_{L_{2p}((0,T), L_{2}(\Omega))}^{1-\theta}\norm{D u}_{L_{2}((0,T), L_{2}(\Omega))}^{\theta}
}{sobolev}
for all $u\in V_{p}\cap L_{2}((0,T);\oH^1_2 (\Om))$
where $\theta = N(\frac{1}{2}-\frac{1}{b})$.
\end{prop}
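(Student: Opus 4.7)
The proof will combine the classical Gagliardo--Nirenberg inequality applied at a.e.\ time $t\in(0,T)$ with H\"older's inequality in the time variable. Since $u\in L_2((0,T);\oH^{1}_2(\Omega))$, for a.e. $t$ the slice $u(\cdot,t)$ lies in $\oH^1_2(\Omega)$, so after extension by zero it lies in $H^1_2(\R^N)$ and the standard Gagliardo--Nirenberg inequality yields
\[
\|u(\cdot,t)\|_{L_b(\Omega)} \le C \|u(\cdot,t)\|_{L_2(\Omega)}^{1-\theta}\|Du(\cdot,t)\|_{L_2(\Omega)}^{\theta},
\]
with $\theta=N(\tfrac12-\tfrac1b)$, for the ranges of $b$ listed (the upper bound $2N/(N-2)$, resp. $\infty$, encodes the Sobolev exponent constraint $\theta\le 1$). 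The positive density assumption on $\partial\Omega$ guarantees that the same estimate holds for general $u\in V_p$ via a uniform extension operator, which will be needed should one later apply the embedding without the $\oH^1_2$-restriction.

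Next I would raise this pointwise inequality to the power $a$ and integrate in time:
\[
\|u\|_{L_a((0,T);L_b(\Omega))}^{a}
\le C^{a}\int_0^T \|u(\cdot,t)\|_{L_2(\Omega)}^{a(1-\theta)}\|Du(\cdot,t)\|_{L_2(\Omega)}^{a\theta}\,dt.
\]
Now I would apply H\"older's inequality in $t$ with conjugate exponents
\[
q_1=\frac{2p}{a(1-\theta)},\qquad q_2=\frac{2}{a\theta},
\]
to produce exactly $\|u\|_{L_{2p}(L_2)}^{a(1-\theta)}$ and $\|Du\|_{L_2(L_2)}^{a\theta}$ on the right-hand side, then take the $a$-th root. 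It remains to verify that $1/q_1+1/q_2=1$ holds precisely under the stated scaling relation; a short algebraic manipulation shows
\[
\tfrac{1}{q_1}+\tfrac{1}{q_2}=1 \iff a\bigl(1+(p-1)\theta\bigr)=2p \iff 1-\tfrac{2}{a}=\tfrac{1-\theta}{p'},
\]
which rearranges exactly to $p'(1-\tfrac{2}{a})+N(\tfrac12-\tfrac1b)=1$.

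The main technical points to verify are the admissibility of the H\"older exponents ($q_1,q_2\in[1,\infty]$ for all $(a,b)$ in the allowed ranges, including the endpoints $b=\infty$ in dimension $1$ and $\theta=0$ which corresponds to $a=2p$, where the interpolation is trivial) and the boundary cases $p=\infty$ or $\theta=1$, which must be read with the standard conventions. One should also separately verify the cases $N=1$ and $N=2$: for $N=1$ one uses the Gagliardo--Nirenberg inequality up to $L_\infty$, giving the extended range $a\in[4p/(p+1),2p]$ and $\theta$ up to $1/2$, while for $N=2$ the range of $b$ is $[2,\infty)$ and $\theta$ may be any value in $[0,1)$. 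The main obstacle is not the argument itself, which is routine, but rather keeping the exponent bookkeeping consistent across the three dimension cases; once the algebraic identification above is made, the rest of the proof is a direct computation.
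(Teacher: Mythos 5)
Your proof is correct, and it fills in exactly the argument the paper indicates but does not write out: the paper merely states that the proposition "is a consequence of the Gagliardo--Nirenberg and H\"older's inequality" and defers to references. Your application of Gagliardo--Nirenberg in the spatial slice followed by H\"older in time with exponents $q_1=\frac{2p}{a(1-\theta)}$, $q_2=\frac{2}{a\theta}$ is the standard route, and your algebraic verification that the conjugacy relation $1/q_1+1/q_2=1$ is equivalent to $p'(1-2/a)+N(1/2-1/b)=1$ is correct; the admissibility checks ($a\theta\le 2$ follows automatically from the conjugacy condition, $a(1-\theta)\le 2p$ from $a\le 2p$, the degenerate endpoints $\theta=0$ and $\theta=1$ handled by convention) are the right ones to make.
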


The following result can be found in \cite[Lemma 3]{Moser64}, see
also \cite[Lemma 6.12]{Lm}.
%**************************************************************************
\begin{prop} \label{WeiPI}
Let $\varphi\in C(\iR^N)$ with non-empty compact support of diameter
$d$ and assume that $0\le \varphi\le 1$. Suppose that the domains
$\{x\in\iR^N:\varphi(x)\ge a\}$ are convex for all $a\le 1$. Then
for any function $u\in H^{1}_2(\iR^N)$,
\[
\int_{\iR^N} \big(u(x)-u_\varphi\big)^2 \varphi(x)\,dx \le \,\frac{2
d^2\mu_N(\mbox{{\em supp}}\,\varphi)}{|\varphi|_{L_1(\iR^N)}}\,
\int_{\iR^N} |Du(x)|^2 \varphi(x)\,dx,
\]
where
\[
u_\varphi=\frac{\int_{\iR^N} u(x)\varphi(x)\,dx}{\int_{\iR^N}
\varphi(x)\,dx}.
\]
\end{prop}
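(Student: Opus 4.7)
\textbf{Proof plan for Proposition \ref{WeiPI}.}

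The plan is to follow the classical Moser argument, whose essential mechanism is to express the weighted variance as a symmetric double integral in $(x,y)$, to connect $x$ and $y$ by a straight segment, and then to exploit convexity of the superlevel sets of $\varphi$ to change variables along the segment. First I would rewrite
\[
2|\varphi|_{L_1(\iR^N)}\int_{\iR^N}(u-u_\varphi)^2\varphi(x)\,dx
= \iint_{\iR^N\times\iR^N}(u(x)-u(y))^2\varphi(x)\varphi(y)\,dx\,dy,
\]
which is a direct expansion once the definition of $u_\varphi$ is substituted in. This reduces the inequality to producing an estimate for the symmetric double integral on the right.

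Next, on the support of $\varphi(x)\varphi(y)$ one has $|x-y|\le d$, so the fundamental theorem of calculus combined with Cauchy--Schwarz yields
\[
(u(x)-u(y))^2 \le d^{\,2}\int_0^1 |Du((1-t)y+tx)|^2\,dt.
\]
Plugging this into the double integral and interchanging the order of integration, it suffices to bound, uniformly for $t\in(0,1)$,
\[
I(t):=\iint \varphi(x)\varphi(y)\,|Du((1-t)y+tx)|^2\,dx\,dy.
\]
The decisive observation is that, since the superlevel sets $\{\varphi\ge a\}$ are convex and $0\le \varphi\le 1$, for $z=(1-t)y+tx$ one has $\varphi(z)\ge \min(\varphi(x),\varphi(y))$, hence
\[
\varphi(x)\varphi(y)\le \varphi(z)\cdot\mathbf{1}_{\mathrm{supp}\,\varphi}(\,\cdot\,),
\]
where one of the two factors on the left can always be majorised by $1$ on the support.

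I would then split $t\in[0,1/2]\cup[1/2,1]$ and perform a linear change of variables in one of the two space variables: for $t\in[1/2,1]$, write $z=(1-t)y+tx$ with $y$ fixed, giving $dx=t^{-N}dz$ with $t^{-N}\le 2^N$; symmetrically for $t\in[0,1/2]$ use $y\mapsto z$ with $x$ fixed and $(1-t)^{-N}\le 2^N$. The remaining integration in the frozen variable is confined to $\mathrm{supp}\,\varphi$ (thanks to the surviving factor of $\varphi$), contributing a factor $\mu_N(\mathrm{supp}\,\varphi)$. Combining these estimates yields
\[
I(t)\le 2^N\,\mu_N(\mathrm{supp}\,\varphi)\int_{\iR^N}|Du(z)|^2\varphi(z)\,dz,
\]
and putting everything together, optimising the numerical constants so as to obtain the factor $2$, gives the claimed inequality.

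The main obstacle is bookkeeping of the change of variables together with tracking which factor of $\varphi$ is being dropped (bounded by $1$) versus which one is being transported to the $z$-variable via convexity of level sets; this step has to be done carefully in order not to lose the sharp constant and to be sure the remaining integration in the frozen variable is genuinely supported in $\mathrm{supp}\,\varphi$. Everything else is an assembly of elementary estimates, and this is why the statement is customarily just quoted from \cite{Moser64} or \cite{Lm}.
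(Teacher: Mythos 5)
The paper does not give a proof of Proposition~\ref{WeiPI}: it is a quoted auxiliary result, with an explicit reference to \cite[Lemma 3]{Moser64} and \cite[Lemma 6.12]{Lm}. Your outline reproduces the standard Moser argument correctly in its three essential steps, namely the symmetrization identity expressing the weighted variance as the double integral $\iint(u(x)-u(y))^2\varphi(x)\varphi(y)\,dx\,dy$, the pointwise bound $(u(x)-u(y))^2\le d^2\int_0^1|Du((1-t)y+tx)|^2\,dt$ by the fundamental theorem of calculus and Cauchy--Schwarz, and the use of convexity of the superlevel sets of $\varphi$ to transport one weight factor to the intermediate point $z=(1-t)y+tx$. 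This is indeed the route in the cited sources.

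The one place you are not entitled to write ``optimising the numerical constants gives the factor $2$'' is the Jacobian bookkeeping. With the linear parametrization $z=(1-t)y+tx$ and the split $t\in[0,\tfrac12]\cup[\tfrac12,1]$, the change of variables costs $t^{-N}\le 2^N$ (resp.\ $(1-t)^{-N}\le 2^N$), and what you obtain after assembling the pieces is
\[
\int_{\iR^N}(u-u_\varphi)^2\varphi\,dx \le \frac{2^{N-1}\,d^2\,\mu_N(\mathrm{supp}\,\varphi)}{|\varphi|_{L_1}}\int_{\iR^N}|Du|^2\varphi\,dx,
\]
i.e.\ a dimension-dependent constant $2^{N-1}$ rather than $2$ (for $N=1,2$ this happens to be $\le 2$, but for $N\ge 3$ it exceeds $2$). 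The dimension-free constant $2$ in Moser's original statement comes from a finer argument -- essentially using the arc-length parametrization $z=x+s\,\omega$ of the segment and a careful choice of which variable to integrate out so that the radial weight $r^{N-1}$ is absorbed into the co-area identity $\int_{S^{N-1}}\int_0^\infty\mathbf{1}_{z+\rho\omega\in\mathrm{supp}\,\varphi}\,\rho^{N-1}\,d\rho\,d\omega=\mu_N(\mathrm{supp}\,\varphi)$ -- and that step is genuinely more delicate than the naive $t^{-N}\le 2^N$ bound. For the purposes of the present paper this makes no difference, since the constant is immediately absorbed into generic constants in the logarithmic estimates; but as a proof of the stated inequality with the factor $2$, your sketch leaves that numerical refinement unjustified.
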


%***********************************************************************
\subsection{Approximation of the kernel $k$} \label{SecYos}
%***********************************************************************
In this subsection, we briefly recall the Yosida approximation of the operator $\frac{d}{dt}k*\cdot$, cf.\ \cite{Grip1}, \cite{base}, \cite{harnackdistr}, \cite{VZ} and some important identities for this and related nonlocal operators.

Let  $1\le p<\infty$, $T>0$, and $X$ be a real Banach
space. Then the operator $B$ defined by
\[ B u=\,\frac{d}{dt}\,(k\ast u),\;\;D(B)=\{u\in L_p((0,T);X):\,k\ast u\in \mbox{}_0 H^1_p((0,T);X)\},
\]
where the zero means vanishing trace at $t=0$, is known to be
$m$-accretive in $L_p((0,T);X)$, see (\cite{Phil1}, \cite{CP},
\cite{Grip1}).
Let $h_{n}\in L_{1,\,loc}(\iR_+)$, $n \in \mathbb{N}$, be the
resolvent kernel of $nl$, that is, $h_n+nh_n\ast l=nl$
and set $k_n:=k*h_n$, $n\in \iN$. Then it is well known that $h_n$ is nonnegative, $k_n$ is nonnegative and nonincreasing and that $k_n\in H^1_1((0,T))$ for all $n\in \iN$. Furthermore, the Yosida approximation $B_n=nB(n+B)^{-1}$ of $B$ has the form
$B_n u=\frac{d}{dt}(k_n\ast u)$, in particular
$ \frac{d}{dt}(k_n*u)\rightarrow \frac{d}{dt}(k*u)$ in $L_p((0,T);X)$ as
$n\to \infty$ for all $u\in D(B)$ as well as
$h_{n}\ast f\to f$ in $L_p((0,T);X)$
as $n\to \infty$.

The "fundamental identity" for integro-differential
operators of the form $\frac{d}{dt}(k\ast u)$ (see also \cite{Za2},\cite{GLS}) plays a crucial role in all estimates for weak supersolutions of (\ref{MProb}).
Since the identity requires a certain regularity of the kernel, it is often applied with $k_n$ introduced above.

Suppose $k\in H^1_1((0,T))$ and $H\in C^1(U)$ where $U$ is an open subset of $\iR$. Then it follows
from a straightforward computation that for any sufficiently smooth
function $u$ on $(0,T)$ taking values in $U$ one has for a.a.\ $t\in (0,T)$,
%*****************************************************************************
\begin{align} \label{fundidentity}
H'(u(t))&\frac{d}{dt}\,(k \ast u)(t) =\;\frac{d}{dt}\,(k\ast
H(u))(t)+
\Big(-H(u(t))+H'(u(t))u(t)\Big)k(t) \nonumber\\
 & +\int_0^t
\Big(H(u(t-s))-H(u(t))-H'(u(t))[u(t-s)-u(t)]\Big)[-\dot{k}(s)]\,ds.
\end{align}
%****************************************************************************

We will apply the fundamental identity in the same way as in \cite{base} and \cite{harnackdistr}, in particular we will again use the fact that the last term in (\ref{fundidentity}) is nonnegative in
case $H$ is convex and $k$ is nonincreasing. However, we point out that although, in Section \ref{logsection} we work only with convex or concave functions,
the full identity (\ref{fundidentity}) is required.

We conclude this section by recalling two simple but useful lemmas (see \cite{base}, \cite{harnackdistr}). The first lemma follows from integration by parts. Note that the identity is satisfied, since the monotonicity of $l$ implies $tl(t) \leq (1*l)(t) \rightarrow 0$ as $t\rightarrow 0$.

%*****************************************************************************
\begin{lemma} \label{comm}
Let $T>0$ and $l\in L_{1,loc}(\iR)\cap C^1((0,T))$ be nonnegative and nonincreasing. Suppose that $v\in
{}_0H^1_1([0,T])$ and $\varphi\in C^1([0,T])$. Then
\[
\big(l\ast(\varphi \dot{v}))(t)=\varphi(t)(l\ast
\dot{v})(t)+\int_0^t
v(\sigma)\partial_\sigma\big(l(t-\sigma)
[\varphi(t)-\varphi(\sigma)]\big)\,d\sigma,\;\;\mbox{a.a.}\;t\in
(0,T).
\]
If in addition $v$ is nonnegative and $\varphi$ is nondecreasing
there holds
\[
\big(l\ast(\varphi \dot{v}))(t)\ge \varphi(t)(l\ast
\dot{v})(t)-\int_0^t l(t-\sigma)
\dot{\varphi}(\sigma)v(\sigma)\,d\sigma,\;\;\mbox{a.a.}\;t\in
(0,T).
\]
\end{lemma}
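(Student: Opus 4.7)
The first identity is essentially integration by parts after splitting $\varphi(\sigma)$ around its value at the endpoint. I would first write
\[
(l\ast(\varphi\dot v))(t)=\int_0^t l(t-\sigma)\varphi(t)\dot v(\sigma)\,d\sigma+\int_0^t l(t-\sigma)\bigl[\varphi(\sigma)-\varphi(t)\bigr]\dot v(\sigma)\,d\sigma,
\]
and identify the first summand with $\varphi(t)(l\ast\dot v)(t)$. The second summand is the one to be reshaped: the integrand is now a product where the factor in brackets vanishes at $\sigma=t$, which is precisely what is needed to tame the singularity of $l$ at $0$.

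Next I would integrate the second summand by parts in $\sigma$, moving the derivative off $\dot v$ onto the kernel $l(t-\sigma)[\varphi(\sigma)-\varphi(t)]$. The boundary term at $\sigma=0$ vanishes because $v\in{}_0H^1_1([0,T])$ gives $v(0)=0$. The boundary term at $\sigma=t$ is of the form $l(0^+)\cdot 0\cdot v(t)$; here I would use precisely the observation recalled just above the lemma: since $l$ is nonincreasing, $sl(s)\le(1\ast l)(s)\to 0$ as $s\to 0^+$, so together with $\varphi(\sigma)-\varphi(t)=O(|t-\sigma|)$ (from $\varphi\in C^1$) and the continuity of $v$, the limit at $\sigma\to t^-$ is $0$. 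After flipping the sign of the bracket so that $[\varphi(t)-\varphi(\sigma)]$ appears, this yields exactly
\[
\int_0^t v(\sigma)\,\partial_\sigma\bigl(l(t-\sigma)[\varphi(t)-\varphi(\sigma)]\bigr)\,d\sigma,
\]
giving the first identity.

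For the inequality under the extra sign assumptions, I would expand the $\sigma$-derivative:
\[
\partial_\sigma\bigl(l(t-\sigma)[\varphi(t)-\varphi(\sigma)]\bigr)=-\dot l(t-\sigma)[\varphi(t)-\varphi(\sigma)]-l(t-\sigma)\dot\varphi(\sigma).
\]
Since $l$ is nonincreasing we have $-\dot l\ge 0$, and since $\varphi$ is nondecreasing and $\sigma\le t$, the factor $\varphi(t)-\varphi(\sigma)$ is nonnegative; together with $v\ge 0$ the first contribution is nonnegative and may be discarded, leaving the stated lower bound by $-\int_0^t l(t-\sigma)\dot\varphi(\sigma)v(\sigma)\,d\sigma$.

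\textbf{Main obstacle.} The only nontrivial point is the rigorous justification of the boundary term at $\sigma=t$, which requires the interplay between the mild singularity of $l$ at the origin and the $C^1$-regularity of $\varphi$. The vanishing of $sl(s)$ as $s\to 0^+$, recorded in the paragraph preceding the lemma, is precisely what makes the integration by parts legitimate; without it the integral in the first identity would have to be interpreted as an improper integral and the simple boundary term argument would break. All remaining manipulations are straightforward once the kernel has been rewritten in the form $l(t-\sigma)[\varphi(t)-\varphi(\sigma)]$ whose pointwise vanishing at $\sigma=t$ absorbs the singularity.
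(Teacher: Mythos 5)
Your proof is correct and follows the same route the paper indicates: split off $\varphi(t)(l\ast\dot v)(t)$, integrate the remainder by parts, and use $v(0)=0$ together with $s\,l(s)\le(1\ast l)(s)\to 0$ (combined with $\varphi\in C^1$) to kill the boundary term at $\sigma=t$. The derivative expansion and sign argument for the second inequality are likewise exactly as intended.
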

%********************************************************************
\begin{lemma} \label{comm2}
Let $T>0$, $k\in H^1_1([0,T])$, $v\in L_1([0,T])$, and $\varphi\in
C^1([0,T])$. Then
\[
\varphi(t)\,\frac{d}{dt}\,(k\ast v)(t)=\,\frac{d}{dt}\,\big(k\ast
[\varphi v]\big)(t)+\int_0^t
\dot{k}(t-\tau)\big(\varphi(t)-\varphi(\tau)\big)v(\tau)\,d\tau,\;\;\mbox{a.a.}\;t\in
(0,T).
\]

\end{lemma}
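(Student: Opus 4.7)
The plan is to reduce the identity to the elementary formula for differentiating a convolution when the kernel lies in $H^1_1$. Since $k\in H^1_1([0,T])$, $k$ is absolutely continuous on $[0,T]$ and we may write
\[
k(t) = k(0) + (1\ast \dot{k})(t),\quad t\in[0,T].
\]
First I would establish the auxiliary formula
\[
\frac{d}{dt}(k\ast h)(t) = k(0)\,h(t) + (\dot{k}\ast h)(t),\quad \mbox{a.a.}\;t\in (0,T),
\]
valid for every $h\in L_1([0,T])$. This is obtained by substituting the representation of $k$ above into $(k\ast h)(t)$, using associativity of convolution to write $(1\ast\dot{k})\ast h = 1\ast(\dot{k}\ast h)$ (which is legal by Fubini since $\dot{k},h\in L_1$), and then differentiating the resulting primitives via the Lebesgue differentiation theorem.

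Next I would apply this formula in two ways: once with $h=v$, which gives
\[
\varphi(t)\,\frac{d}{dt}(k\ast v)(t) = k(0)\,\varphi(t)v(t) + \varphi(t)\int_0^t \dot{k}(t-\tau)v(\tau)\,d\tau,
\]
and once with $h=\varphi v\in L_1([0,T])$ (which is indeed in $L_1$ since $\varphi$ is bounded on $[0,T]$), which yields
\[
\frac{d}{dt}\big(k\ast[\varphi v]\big)(t) = k(0)\,\varphi(t)v(t) + \int_0^t \dot{k}(t-\tau)\varphi(\tau)v(\tau)\,d\tau.
\]

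Finally, subtracting these two identities the boundary term $k(0)\varphi(t)v(t)$ cancels and I am left precisely with
\[
\varphi(t)\,\frac{d}{dt}(k\ast v)(t) - \frac{d}{dt}\big(k\ast[\varphi v]\big)(t) = \int_0^t \dot{k}(t-\tau)\big(\varphi(t)-\varphi(\tau)\big)v(\tau)\,d\tau,
\]
which is the claimed identity. There is no serious obstacle here; the only point that requires care is the justification of the auxiliary differentiation formula for $L_1$-data, but this is routine given that $k\in H^1_1$, and no regularity of $v$ beyond integrability is needed. Note that continuity of $\varphi$ (rather than its $C^1$ character) is what suffices in this argument, so the $C^1$ hypothesis is more than enough.
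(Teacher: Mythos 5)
Your proof is correct and self-contained. The paper does not provide a proof of this lemma (it is recalled from \cite{base} and \cite{harnackdistr}), so there is nothing in the text to compare against directly; your argument is the natural direct one. Writing $k=k(0)+1\ast\dot{k}$ using absolute continuity of $k\in H^1_1$, deducing $\frac{d}{dt}(k\ast h)=k(0)h+\dot{k}\ast h$ a.e.\ for $h\in L_1([0,T])$, applying this with $h=v$ and $h=\varphi v$, and subtracting is exactly the right route, and the cancellation of the boundary term $k(0)\varphi(t)v(t)$ produces the claimed identity. Your closing observation that only continuity (indeed boundedness and measurability) of $\varphi$ is used here is also correct; the $C^1$ hypothesis is relevant for the companion Lemma~\ref{comm}, whose conclusion involves $\dot{\varphi}$.
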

%*******************************************************************
%***********************************************************************

%***********************************************************************

%***********************************************************************************
%***********************************************************************************
\section{Proof of the weak Harnack inequality}
%***********************************************************************************
This section has a similar structure as corresponding sections in \cite{base} and \cite{harnackdistr}, where the weak Harnack inequality for single and distributed order time fractional derivatives, respectively, with $f=0$ was proven. However, as already mentioned in the introduction, the presence of the source term induces new difficulties, especially with the logarithmic estimates. It also makes Moser's already technically demanding approach even more complex. Thus, even though, the structure of the proof is the same as in previous results (\cite{base}, \cite{harnackdistr}), we present here
the whole argument except the final step, which is the same as in \cite{harnackdistr}).

\subsection{The regularized weak formulation and  time shifts}\label{SSS}
%***********************************************************************************

At first, we recall a lemma which provides a starting point for  {\em a priori} estimates
for weak (sub-/super-) solutions of (\ref{MProb}). It roughly states that one may replace the singular
kernel $k$ by its regular approximation $k_{n}$ ($n\in\iN$) in the weak formulation of (\ref{MProb}).
%****************************************************************************
\begin{lemma} \label{LemmaReg}
Let  $T>0$, and $\Omega\subset \iR^N$ be a bounded
domain. Suppose the assumptions (H1)--(H3) are satisfied. Then $u\in
Z$ is a weak solution (subsolution, supersolution) of
(\ref{MProb}) in $\Omega_T$ if and only if for any nonnegative
function $\psi\in \oH^1_2(\Omega)$ one has
\begin{equation} \label{LemmaRegF}
\int_\Omega \Big(\psi \partial_t[k_{n}\ast
(u-u_0)]+(h_n\ast [ADu]|D\psi)\Big)\,dx\nonumber\\
=\,(\le,\,\ge)\,\int_{\Om}(h_{n}*f)\psi dx ,\quad\mbox{a.a.}\;t\in (0,T),\,n\in \iN.
\end{equation}
\end{lemma}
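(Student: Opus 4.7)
The plan is to exploit the Yosida approximation toolkit recalled in Section 2.3, specifically the identity $k_n = h_n \ast k$, the convergences $h_n \ast f \to f$ in $L_{q_1}((0,T);L_{q_2}(\Omega))$, $k_n\ast(u-u_0) \to k\ast(u-u_0)$ in $C([0,T];L_2(\Omega))$, and $h_n \ast (ADu) \to ADu$ in $L_2(\Omega_T)$, together with the fact that convolving against the nonnegative $L_1$ kernel $h_n$ regularises $\frac{d}{dt}(k\ast\cdot)$ so that it may be interpreted pointwise in time.

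For the implication (\ref{BWF}) $\Rightarrow$ (\ref{LemmaRegF}), I first specialise the test functions in (\ref{BWF}) to separable ones $\eta(t,x) = \phi(t)\psi(x)$ with nonnegative $\psi \in \oH^1_2(\Omega)$ and $\phi \in C^1([0,T])$ satisfying $\phi(T)=0$. Writing $G(t) = \int_\Omega \psi\,[k\ast(u-u_0)](t,x)\,dx$, $H(t) = \int_\Omega (ADu|D\psi)\,dx$, $F(t) = \int_\Omega f\psi\,dx$, the weak formulation reduces to $\frac{d}{dt}G = F - H$ in the distributional sense on $(0,T)$, and the membership $u\in Z$ combined with $k\ast u_0 = u_0(1\ast k) \to 0$ as $t\to 0^+$ gives $G\in C([0,T])$ with $G(0)=0$. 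Convolving this distributional identity with $h_n$ and interchanging convolution with the spatial integral via Fubini yields
\[
\partial_t\!\int_\Omega \psi\,[k_n\ast(u-u_0)]\,dx = \int_\Omega (h_n\ast f)\psi\,dx - \int_\Omega (h_n\ast [ADu]|D\psi)\,dx
\]
for a.e.\ $t \in (0,T)$; the absolute continuity of $h_n \ast G$ legitimises the pointwise reading. This is precisely (\ref{LemmaRegF}).

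The converse is obtained by multiplying (\ref{LemmaRegF}) by $\phi(t) \in C^1([0,T])$ with $\phi(T)=0$, integrating over $(0,T)$, and integrating by parts in time. The boundary contributions vanish thanks to $\phi(T)=0$ and $[k_n\ast(u-u_0)]|_{t=0}=0$ (the latter holds since $k_n \in H^1_1$ and $u \in Z$). Passing to the limit $n \to \infty$ with the strong convergences above produces (\ref{BWF}) for product test functions $\eta = \phi\psi$, and a standard density argument—tensor products of $C^1([0,T])$ functions vanishing at $T$ with $\oH^1_2(\Omega)$ functions are dense in $\{\eta\in\oH^{1,1}_2(\Omega_T):\eta|_{t=T}=0\}$—extends the identity to arbitrary admissible $\eta$. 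The sub- and supersolution variants follow by the same scheme applied to the corresponding inequalities, preserved throughout because $\psi\ge 0$, $\phi$ may be chosen nonnegative, and both $h_n, k_n$ are nonnegative.

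The main technical subtlety, and essentially the only step that requires care, is the pointwise commutation $\partial_t \int_\Omega \psi\,[k_n\ast(u-u_0)]\,dx = \int_\Omega \psi\,\partial_t[k_n\ast(u-u_0)]\,dx$ implicit in recognising the limit as (\ref{LemmaRegF}) and in reversing the procedure. This is justified because, by the Yosida theory recalled in Section 2.3, $\frac{d}{dt}(k_n\ast u) = B_n u$ is well-defined in $L_p((0,T);L_2(\Omega))$ for $u \in Z$, so Fubini applies to the time convolution against $h_n$ and moves the derivative under the spatial integral.
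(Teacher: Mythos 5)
The paper does not prove this lemma; it refers to Lemma 3.1 in \cite{Za2}. Your proof uses the same Yosida-approximation machinery and the same key identity $k_n=h_n\ast k$, and the forward direction is sound: testing \eqref{BWF} with separable $\eta=\phi\psi$ reduces to the one-dimensional identity $G'=F-H$ distributionally with $G(0)=0$ (from $u\in Z$), and convolving with $h_n$ gives $h_n\ast G\in W^1_1((0,T))$, yielding \eqref{LemmaRegF} a.e.\ in $t$. The Fubini step is justified exactly as you say, since $k_n\in H^1_1$ makes $\partial_t(k_n\ast(u-u_0))\in L_2((0,T);L_2(\Omega))$. For the inequality cases one should, however, say explicitly that $G'-(F-H)$ is then a nonnegative Radon measure $\mu$ and that the nonnegativity of $h_n\ast\mu$ (because $h_n\ge 0$) is what preserves the sign; you gesture at this in one sentence without isolating the measure step.

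The real gap is in the converse. You deduce \eqref{BWF} for tensor products $\eta=\phi\psi$ and then appeal to density. For the equation case this is harmless: \eqref{LemmaRegF} extends to signed $\psi$ by splitting into positive and negative parts, and tensor products are dense in $\{\eta\in\oH^{1,1}_2(\Omega_T):\eta|_{t=T}=0\}$. For sub- and supersolutions, though, you only have the inequality for $\psi\ge 0$, so what you would actually need is that the nonnegative cone of the test space is the closed convex hull of nonnegative separable functions $\phi(t)\psi(x)$. That is not obvious, and a naive partition-of-unity approximation of a nonnegative $\eta$ by $\sum_i\eta(t,x_i)\psi_i(x)$ does not converge in the $H^1$ spatial gradient. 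The standard remedy, which sidesteps density entirely, is to insert $\psi=\eta(t,\cdot)\in\oH^1_2(\Omega)$ directly into \eqref{LemmaRegF} for a.e.\ $t$, integrate over $(0,T)$, integrate by parts in time using $\eta|_{t=T}=0$ and $(k_n\ast(u-u_0))|_{t=0}=0$, and then pass to the limit $n\to\infty$; this produces \eqref{BWF} for arbitrary admissible nonnegative $\eta$ at one stroke with the sign manifestly preserved.
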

%************************************************************************
For a proof we refer to Lemma 3.1 in \cite{Za2}, where a slightly more
general situation is considered.

Let $u\in Z$ be a weak supersolution of (\ref{MProb}) in
$\Omega_T$ and assume that $u_0\ge 0$ in $\Omega$. Then Lemma~\ref{LemmaReg} and the positivity of $k_{n}$ imply that
\begin{equation} \label{u0weg}
\int_\Omega \Big(\psi \partial_t(k_{n}\ast u)+(h_n\ast
[ADu]|D\psi)\Big)\,dx \ge \,\int_{\Om}(h_{n}*f)\psi dx,\quad\mbox{a.a.}\;t\in (0,T),\,n\in
\iN,
\end{equation}
for any nonnegative function $\psi\in \oH^1_2(\Omega)$.

For the reader's convenience we recall the inequality which is satisfied by (in time) shifted positive weak supersolutions of (\ref{MProb}). It will be a starting point for all the estimates in this section.
We fix $t_1\in (0,T)$. For $t\in (t_1,T)$ we introduce the
shifted time $s=t-t_1$ and set $\tilde{g}(s)=g(s+t_1)$, $s\in
(0,T-t_1)$, for functions $g$ defined on $(t_1,T)$. From the
decomposition
\[
(k_{n}\ast u)(t,x)=\int_{t_1}^t
k_{n}(t-\tau)u(\tau,x)\,d\tau+\int_{0}^{t_1}
k_{n}(t-\tau)u(\tau,x)\,d\tau,\quad t\in (t_1,T),
\]
we then deduce that
\begin{equation} \label{shiftprop}
\partial_t(k_{n}\ast u)(t,x)=\partial_s(k_{n}\ast
\tilde{u})(s,x)+\int_0^{t_1}\dot{k}_{n}(s+t_1-\tau)u(\tau,x)\,d\tau.
\end{equation}
Assuming in addition that $u\ge 0$ on $(0,t_1)\times \Omega$ it
follows from (\ref{u0weg}), (\ref{shiftprop}), and the positivity
of $\psi$ and of $-\dot{k}_{n}$ that
\begin{equation} \label{shiftprob}
\int_\Omega \Big(\psi \partial_s(k_{n}\ast
\tilde{u})+\big((h_n\ast [ADu])\,\tilde{}\;|D\psi\big)\Big)\,dx \ge
\,\int_{\Om}(h_{n}*f)\,\tilde{} \, \psi dx,\quad\mbox{a.a.}\;s\in (0,T-t_1),\,n\in \iN,
\end{equation}
for any nonnegative function $\psi\in \oH^1_2(\Omega)$.

%***********************************************************************************
\subsection{Mean value inequalities} \label{mvi}
%***********************************************************************************
To shorten the notation we put $B_r(x):=B(x, r)$ for $r>0$ and $x\in \iR^N$. Recall that
$\nuN$ stands for the Lebesgue measure in $\iR^N$ and that we always assume (\ref{pc}).
Given $r \in (0,r_0/2)$ we introduce a shift of the function $u$ as $u_b = u + b$, where
\[ b = r^{2-\frac{N}{q_{2}}}(\vdr)^{-\frac{1}{q_1}}\norm{f}_{L_{q_1}((0,T);L_{q_2}(\Omega))}
\]
if $f \not\equiv 0$ and $b=\ve$ with arbitrarily fixed $\ve > 0$  otherwise.
%*************************************************************************
\begin{satz} \label{superest1}
Let $T>0$ and $\Omega\subset \iR^N$ be a bounded
domain. Suppose the assumptions (H1)--(H3) and (\ref{ak1}) are satisfied. Let
$\eta>0$ and $\delta\in (0,1)$ be fixed, and let $r^{*}\in (0,r_0/2)$ be the number provided by Lemma \ref{scaling}. Then for any $0<r \leq r^{*}$, any $t_0\in(0,T]$
 with $t_0-\eta \vdr \ge 0$, any ball
$B_r(x_{0})\subset\Omega$, and any weak supersolution $u\ge 0$ of (\ref{MProb}) in $(0,t_0)\times B_r(x_{0})$ with $u_0\ge 0$
in $B_r(x_{0})$, we have
\[
\esup_{U_{\sigma'}}{u_b^{-1}} \le \Big(\frac{C \nuNj(U_1)^{-1}
}{(\sigma-\sigma')^{\tau_0}}\Big)^{1/\gamma}
\|u_b^{-1}\|_{L_{\gamma }(U_{\sigma})},\quad \delta\le \sigma'<\sigma\le
1,\; \gamma\in (0,1].
\]
Here $U_\sigma=(t_0-\sigma\eta \vdr,t_0)\times B_{\sigma r}(x_0)$,
$0<\sigma\le 1$
and $C=C(\nu,\Lambda,\delta,\eta,N,p_0,q_1,q_2,\overline{c})$ and
$\tau_0=\tau_0(N,p_0,d)$.
\end{satz}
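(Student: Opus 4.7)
The plan is a Moser iteration for negative powers of $u_b=u+b$, in the spirit of \cite{base, harnackdistr}, adapted to the unbounded source term and to a general $\mathscr{PC}$ kernel. No time shift is needed since the cylinder $U_\sigma$ reaches up to $t_0$ and the hypothesis $u_0\ge 0$ kills the initial contribution directly through (\ref{u0weg}). I would test the regularized supersolution inequality with $\psi=(\alpha+1)\zeta(x)^2\,u_b(t,x)^{-\alpha-1}$ for $\alpha>0$, where $\zeta\in C_c^\infty(B_r(x_0))$ satisfies $\zeta\equiv 1$ on $B_{\sigma' r}(x_0)$, $\mathop{\mathrm{supp}}\zeta\subset B_{\sigma r}(x_0)$, and $|D\zeta|\le C/((\sigma-\sigma')r)$. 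Applying the fundamental identity (\ref{fundidentity}) with the convex function $H(u)=u^{-\alpha}$ allows me to discard the nonnegative history term with the correct sign. Combining this with ellipticity (H2), the bound (H1), and Young's inequality to absorb the cross term from $D\zeta$ produces, pointwise in $(t,x)$,
\[
-\partial_t\bigl(k_n*u_b^{-\alpha}\bigr)\zeta^2 + c(\alpha,\nu)\bigl|D(u_b^{-\alpha/2}\zeta)\bigr|^2
\le C\tfrac{(\alpha+1)^2\Lambda^2}{\alpha\nu}\,u_b^{-\alpha}|D\zeta|^2 + (\alpha+1)|f|\,u_b^{-\alpha-1}\zeta^2 + R_n,
\]
where the remainder $R_n$ is dominated by $k_n(t)u_b^{-\alpha}(t)\zeta^2$ and vanishes after convolution with $l$ and $n\to\infty$.

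Convolving with $l$, passing to the limit $n\to\infty$, and using (\ref{pc}) to recover $u_b^{-\alpha}(t,x)\zeta(x)^2$ on the left from $l*\partial_t(k*u_b^{-\alpha})$, I obtain a pointwise-in-$t$ energy inequality. The inhomogeneity is handled by writing $|f|u_b^{-\alpha-1}\le (|f|/b)\,u_b^{-\alpha}$ and applying H\"older's inequality in the space $L_{q_1'}L_{q_2'}$; the shift $b=r^{2-N/q_2}(\vdr)^{-1/q_1}\|f\|_{L_{q_1}L_{q_2}}$ is the precise choice for which $|f|/b$ carries the right weight to be absorbed via the parabolic embedding (\ref{sobolev}) with $p=p_0$ and exponents determined by (H3). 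Integrating in time over $(t_0-\sigma\eta\vdr,t_0)$, the factor $\|l\|_{L_{p_0}(0,\vdr)}$ that appears from convolving with $l$ is controlled by the key bound
\[
\|l\|_{L_{p_0}(0,\vdr)}^{p_0}(\vdr)^{p_0-1}\le C r^{2p_0}
\]
of Lemma \ref{scaling}, which is exactly what couples the $l$-weighted time average to the spatial scale $r^{-2}$ produced by the Poincar\'e/cut-off weight.

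The outcome is an energy estimate of the form
\[
\esup_{t\in(t_0-\sigma'\eta\vdr,t_0)}\int_{B_{\sigma r}(x_0)}u_b^{-\alpha}\zeta^2\,dx + \|D(u_b^{-\alpha/2}\zeta)\|_{L_2(U_{\sigma'})}^2
\le \frac{C(1+\alpha)^a}{(\sigma-\sigma')^a\,r^2}\int_{U_\sigma}u_b^{-\alpha}\,d\nu_{N+1}.
\]
The parabolic embedding (\ref{sobolev}) with $p=p_0$ (exponents $a,b$ chosen within the admissible range according to $N=1$, $N=2$, or $N\ge 3$) upgrades this to a reverse H\"older-type iteration inequality
\[
\|u_b^{-\alpha/2}\|_{L_{2\kappa}(U_{\sigma'})} \le \Bigl(\tfrac{C(1+\alpha)^{a}}{(\sigma-\sigma')^{\tau_0}\,\nu_{N+1}(U_1)}\Bigr)^{1/\alpha}\|u_b^{-\alpha/2}\|_{L_{2}(U_\sigma)},
\]
with some $\kappa=\kappa(N,p_0)>1$. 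Rewriting this in terms of $\gamma=\alpha$ and $f=u_b^{-1}$, the hypothesis of Lemma \ref{moserit1} is satisfied with $\bar p=1$, and the lemma yields the claimed $L^\infty$ bound for every $\gamma\in(0,1]$.

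The main technical obstacle is the absorption of the source term: unlike in \cite{base,harnackdistr}, where $f$ was bounded, the definition of $b$ must here match exactly the scaling demanded by the embedding, which is the reason the balance $\frac{p_0'}{q_1}+\frac{N}{2q_2}=1-d$ appears in (H3) and the weight $r^{2-N/q_2}(\vdr)^{-1/q_1}$ appears in $b$. A secondary delicate point is the interplay between the $l$-weighted energy estimate and Lemma \ref{scaling}; in the absence of a scaling law for the equation, it is precisely this lemma that couples the time weight to the spatial scale and closes the iteration.
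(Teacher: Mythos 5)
Your overall architecture (test with a negative power of $u_b$, apply the fundamental identity with a convex $H$, absorb the unbounded source via the shift $b$, invoke Lemma~\ref{scaling} to couple time and space scales, and close by Lemma~\ref{moserit1}) is the right shape, and your observation that the choice of $b$ is dictated by the balance in (H3) is correct. However, there are three concrete gaps that would stop the argument from closing.

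First and most seriously, your claim that no time shift is needed because ``$u_0\ge 0$ kills the initial contribution through \eqref{u0weg}'' confuses two different issues. The relation \eqref{u0weg} only removes the $u_0$-term at $t=0$; it does nothing about the memory on the interval $(0,t_1)$ with $t_1=t_0-\rho\sigma\eta\vdr$, which lies outside $U_\rho$ but still enters the nonlocal operator $\partial_t(k*\cdot)$ at all later times. In particular, when you multiply by a time cut-off $\varphi^2$ and apply Lemma~\ref{comm}, the commutator term involves $(k_n*\int\psi^2 w^2\,dx)(\tau)$ on $\supp\dot\varphi$, and this convolution integrates over the whole history $[0,\tau]$, not just the part inside $U_\rho$. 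Consequently your claimed inequality between norms on $U_{\sigma'}$ and $U_\sigma$ would actually depend on $u_b$ on $(0,t_0)\times B_r$, and the nested Moser iteration does not close. The paper's proof performs the shift $s=t-t_1$ precisely so that, via \eqref{shiftprop}, the pre-$t_1$ history contributes a term $\int_0^{t_1}\dot k_n(s+t_1-\tau)u(\tau,x)\,d\tau$ of favourable sign ($\dot k_n\le 0$, $u\ge 0$), which can be discarded in \eqref{shiftprob}.

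Second, the statement that the remainder $R_n$ ``dominated by $k_n(t)u_b^{-\alpha}\zeta^2$ vanishes after convolution with $l$ and $n\to\infty$'' is false. The term $\frac{\beta}{1+\beta}\int\psi^2 \tilde u_b^{1+\beta}k_n\,dx$ survives the limit and is one of the three genuine contributions to the forcing functional $F$ in \eqref{defeF}; it is estimated via $k\le k_1$ (Lemma~\ref{kernels}) and Lemma~\ref{ba}, contributing a factor $k_1(\vdr)\sim r^{-2}$ to the final constant. You cannot absorb it for free. Third, the energy estimate you state has $\esup_t\int u_b^{-\alpha}\zeta^2\,dx$ on the left, but the nonlocal operator $\partial_t(k*\cdot)$ does not yield an $L_\infty$ bound in time after convolution with $l$; the structure only gives the $L_{p_0}$ bound in time (see \eqref{sup10}, obtained by Young's convolution inequality combined with Lemma~\ref{scaling}), and it is this $L_{p_0}$-in-time control that the parabolic embedding \eqref{sobolev} with $p=p_0$ is set up to exploit, producing $\kappa=\frac{2p_0+N(p_0-1)}{2+N(p_0-1)}>1$. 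Your $\esup$ claim overstates what the estimate produces and is not where the critical exponent $\kappa$ comes from.

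Your parametrization ($\alpha>0$, $\gamma=\alpha$) is an equivalent relabelling of the paper's ($\beta<-1$, $\gamma=|1+\beta|$), and the handling of the source term via H\"older and the embedding is in the right spirit, so once the time shift and the $k_n$-term contribution are reinstated and the $L_{p_0}$-in-time mechanism replaces the $\esup$ step, the proof would line up with the paper's.
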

%*************************************************************************
\begin{proof}
The strategy of our proof is similar to the proof of \cite[Theorem 3.1]{base}, see also \cite[Theorem 3.1]{harnackdistr}. Concerning the inhomogeneity, which is new here, we adapt the methods introduced in \cite{AS}. Analogously to the argument in \cite{harnackdistr}, the key idea is to apply Lemma \ref{scaling}. We will use the  shorter notation $B_{r} := B_{r}(x_{0})$, because we only consider balls centered at the fixed $x_0$.

Let $r \in (0,r^{*}]$ and fix $\sigma'$ and $\sigma$ such that $\delta\le \sigma'<\sigma\le
1$. We set
$V_\rho=U_{\rho\sigma}$ for $\rho\in (0,1]$. For any $0<\rho'<\rho\le 1$, we introduce
$t_{1} = t_{0}-\rho\sigma\eta\vdr$ and $t_{2} = t_{0}-\rho'\sigma\eta\vdr$. Then $0\le t_1<t_2<t_0$. We
shift the time by setting ${s}=t-t_1$ and  $\tilde{g}(s)=g(s+t_1)$, $s\in
(0,t_0-t_1)$, for functions $g$ defined on $(t_1,t_0)$. Since
$u_0\ge 0$ in $B_r$ and $u$ is a positive weak supersolution of
(\ref{MProb}) in $(0,t_0)\times B_r$, we have (cf. (\ref{shiftprob}))
\begin{equation} \label{sup0}
\int_{B_r} \Big(v \partial_s(k_{n}\ast \tilde{u_{b}})+\big((h_n\ast
[ADu_b])\,\tilde{}\;|Dv\big)\Big)\,dx  \ge \,\int_{B_r} v (h_{n} * f)^{\tilde{}}dx,\quad\mbox{a.a.}\;s\in
(0,t_0-t_1),\,n\in \iN,
\end{equation}
for any nonnegative function $v\in \oH^1_2(B_r)$. In (\ref{sup0}) we choose the test function $v=\psi^2
\tilde{u_{b}}^{\beta}$, where $\beta<-1$ and $\psi\in C^1_0(B_{r\sigma})$ is a cut-off function which satisfies:
$0\le \psi\le 1$, $\psi=1$ in $B_{\rho' r\sigma}$, supp$\,\psi\subset B_{\rho r\sigma}$, and $|D \psi|\le 2/[r \sigma (\rho-\rho')]$. The fundamental
identity (\ref{fundidentity}) applied to $k=k_{n}$ and the
convex function $H(y)=-(1+\beta)^{-1}y^{1+\beta}$, $y>0$, implies that for a.a.\ $(s,x)\in (0,t_0-t_1)\times B_r$
\begin{align}
 -\tilde{u_{b}}^{\beta}\partial_{s}(k_{n}\ast \tilde{u_{b}}) & \ge -\,\frac{1}{1+\beta}\,\partial_{s}
(k_{n}\ast\tilde{u_{b}}^{1+\beta})+\Big(\frac{\tilde{u_{b}}^{1+\beta}}{1+\beta}\,-\tilde{u_{b}}^{1+\beta}\Big)k_{n}\nonumber\\
 & =
-\,\frac{1}{1+\beta}\,\partial_{s}
(k_{n}\ast\tilde{u_{b}}^{1+\beta})-\,\frac{\beta}{1+\beta}\,\tilde{u_{b}}^{1+\beta}
k_{n}. \label{sup1}
\end{align}
Using
\[ Dv=2\psi D\psi \,\tilde{u_{b}}^{\beta}+\beta\psi^2 \tilde{u_{b}}^{\beta-1}D \tilde{u_{b}}\]
 and (\ref{sup1}) it follows from (\ref{sup0}) that for
a.a.\ $s\in (0,t_0-t_1)$
\begin{align}
-\,\frac{1}{1+\beta}\,& \int_{B_{r\sigma}}\psi^2\partial_{s}
(k_{n}\ast\tilde{u_{b}}^{1+\beta})\,dx+|\beta|\int_{B_{r\sigma}}\big((h_n\ast
[ADu_b])\,\tilde{}\;|\psi^2 \tilde{u_{b}}^{\beta-1}D
\tilde{u_{b}}\big)\,dx \nonumber\\
\le  & \,2\int_{B_{r\sigma}}\big((h_n\ast [ADu_b])\,\tilde{}\;|\psi D\psi
\,\tilde{u_{b}}^{\beta}\big)\,dx+\,\frac{\beta}{1+\beta}\,\int_{B_{r\sigma}}\psi^2\tilde{u_{b}}^{1+\beta}
k_{n}\,dx+ \int_{B_{r\sigma}} \psi^{2}\tilde{u_b}^{\beta}(h_n*f)^{\tilde{}} dx. \label{sup2}
\end{align}

Next we choose another cut-off function $\varphi\in C^1([0,t_0-t_1])$ with the following properties: $0\le
\varphi\le 1$, $\varphi=0$ in $[0,(t_2-t_1)/2]$, $\varphi=1$ in
$[t_2-t_1,t_0-t_1]$, and $0\le \dot{\varphi}\le 4/(t_2-t_1)$.
Multiplying (\ref{sup2}) by $-(1+\beta)>0$ and by $\varphi^{2}$
and convolving the resulting inequality with $l$ yields
\begin{align}
\int_{B_{r\sigma}} & l\ast
\big(\varphi^2\partial_{s}(k_{n}\ast
[\psi^2\tilde{u_{b}}^{1+\beta}])\big)\,dx+\beta(1+\beta)\,l\ast\int_{B_{r\sigma}}\big((h_n\ast
[ADu_b])\,\tilde{}\;|\psi^2 \tilde{u_{b}}^{\beta-1}D
\tilde{u_{b}}\big)\varphi^2\,dx \nonumber\\
\le \, & \,2|1+\beta|\,l\ast\int_{B_{r\sigma}}\big((h_n\ast
[ADu_b])\,\tilde{}\;|\psi D\psi
\,\tilde{u_{b}}^{\beta}\big)\varphi^2\,dx+|\beta|\,l\ast\int_{B_{r\sigma}}\psi^2\tilde{u_{b}}^{1+\beta}
k_{n}\varphi^2\,dx \nonumber\\
& + |1+\beta| l*\int_{B_{r\sigma}} \psi^{2}\tilde{u_b}^{\beta}(h_n*f)^{\tilde{}}\varphi^2 dx, \label{sup3}
\end{align}
for a.a. $s\in (0,t_0-t_1)$. Lemma \ref{comm} implies
\begin{align}
\int_{B_{r\sigma}} l\ast &
\big(\varphi^2\partial_{s}(k_{n}\ast
[\psi^2\tilde{u_{b}}^{1+\beta}])\big)\,dx \ge \int_{B_{r\sigma}} \varphi^2
l\ast \big(\partial_{s}(k_{n}\ast
[\psi^2\tilde{u_{b}}^{1+\beta}])\big)\,dx\nonumber\\
& -2\int_0^s l(s-\tau)\dot{\varphi}(\tau)\vf(\tau)
\big(k_{n}\ast
\int_{B_{r\sigma}}\psi^2\tilde{u_{b}}^{1+\beta}\,dx\big)(\tau)\,d\tau.
\label{sup4}
\end{align}
Furthermore, in view of $l*k=1$, $k_{n}=k\ast h_n$  and
\[
k_{n}\ast [\psi^2\tilde{u_{b}}^{1+\beta}]\in
{}_0H^1_1([0,t_0-t_1];L_1(B_{r\sigma}))
\]
 we get
\begin{equation} \label{sup5}
l\ast \partial_{s}(k_{n}\ast
[\psi^2\tilde{u_{b}}^{1+\beta}])=\partial_s(l\ast
k_{n}\ast [\psi^2\tilde{u_{b}}^{1+\beta}])=h_n\ast
(\psi^2\tilde{u_{b}}^{1+\beta}).
\end{equation}
If we combine (\ref{sup3}) and (\ref{sup4}) with (\ref{sup5}), and pass to the limit with
$n$ (taking a suitable subsequence, if
necessary), we arrive at
\begin{align}
& \int_{B_{r\sigma}}\varphi^2\psi^2\tilde{u_{b}}^{1+\beta}\,dx+
\beta(1+\beta)\,l\ast\int_{B_{r\sigma}}\big(\tilde{A}D\tilde{u_{b}}|\psi^2
\tilde{u_{b}}^{\beta-1}D \tilde{u_{b}}\big)\varphi^2\,dx\nonumber\\
& \le
\,2|1+\beta|\,l\ast\int_{B_{r\sigma}}\big(\tilde{A}D\tilde{u_{b}}|\psi
D\psi
\,\tilde{u_{b}}^{\beta}\big)\varphi^2\,dx+|\beta|\,l\ast\int_{B_{r\sigma}}\psi^2\tilde{u_{b}}^{1+\beta}
k\varphi^2\,dx \nonumber\\
& \;\;\;\;+2\int_0^s l(s-\sigma)\dot{\varphi}(\sigma)\vf(\sigma)
\big(k\ast
\int_{B_{r\sigma}}\psi^2\tilde{u_{b}}^{1+\beta}\,dx\big)(\sigma)\,d\sigma   + |1+\beta|l*\int_{B_{r\sigma}} \psi^{2}\tilde{u_b}^{\beta}\tilde{f}\varphi^2\, dx,
\label{sup6}
\end{align}
for a.a.\ $s\in(0,t_0-t_1)$.

In order to estimate the second term on the left, it
is convenient to set $w=\tilde{u_{b}}^{\frac{\beta+1}{2}}$. Then we have $Dw=\frac{\beta+1}{2}
\tilde{u_{b}}^{\frac{\beta-1}{2}} D\tilde{u_{b}}$ and by the ellipticity assumption (H2)
\begin{align}
\beta(1+\beta)\,l\ast\int_{B_{r\sigma}}\big(\tilde{A}D\tilde{u_{b}}|\psi^2
\tilde{u_{b}}^{\beta-1}D \tilde{u_{b}}\big)\varphi^2\,dx & \,\ge \nu
\beta(1+\beta)\,l\ast\int_{B_{r\sigma}} \varphi^2
\psi^2\tilde{u_{b}}^{\beta-1}|D\tilde{u_{b}}|^2\,dx \nonumber\\
& \, = \,\frac{4\nu \beta}{1+\beta}\,l\ast\int_{B_{r\sigma}}\varphi^2
\psi^2|Dw|^2\,dx. \label{sup7}
\end{align}
As to the first term on the right of \eqref{sup6} note that (H1) and Young's inequality give
\begin{align}
2\big|\big(\tilde{A}D\tilde{u_{b}}|\psi D\psi
\,\tilde{u_{b}}^{\beta}\big)\varphi^2\big| & \le 2\Lambda\psi|D\psi|\,|D
\tilde{u_{b}}|\tilde{u_{b}}^\beta \varphi^2=2\Lambda\psi|D\psi|\,|D
\tilde{u_{b}}|\tilde{u_{b}}^{\frac{\beta-1}{2}}\tilde{u_{b}}^{\frac{\beta+1}{2}}\varphi^2\nonumber\\
& \le \,\frac{\nu |\beta|}{2}\, \psi^2\varphi^2 |D \tilde{u_{b}}|^2
\tilde{u_{b}}^{\beta-1}+\,\frac{2}{\nu |\beta|}\,\Lambda^2
|D\psi|^2\varphi^2 \tilde{u_{b}}^{\beta+1}\nonumber\\
& = \,\frac{2\nu |\beta|}{(1+\beta)^2}\,
\psi^2\varphi^2|Dw|^2+\,\frac{2}{\nu |\beta|}\,\Lambda^2
|D\psi|^2\varphi^2 w^2. \label{sup8}
\end{align}
Combining (\ref{sup6}), (\ref{sup7}), and (\ref{sup8}) we infer that
\begin{equation} \label{sup9}
\int_{B_{r\sigma}}\varphi^2\psi^2w^2\,dx+\,\frac{2\nu
|\beta|}{|1+\beta|}\,l\ast\int_{B_{r\sigma}}\varphi^2 \psi^2|Dw|^2\,dx
\le l\ast F,\quad\mbox{a.a.}\;s\in(0,t_0-t_1),
\end{equation}
where
\eqnsl{
F(s) =\, & \,\frac{2\Lambda^2|1+\beta|}{\nu |\beta|}\,  \int_{B_{r\sigma}}
|D\psi|^2\varphi^2 w^2\,dx
+|\beta|\varphi^2(s)k(s)\int_{B_{r\sigma}}\psi^2 w^2
\,dx \\
& +2\dot{\varphi}(s)\vf(s) \big(k\ast \int_{B_{r\sigma}}\psi^2
w^2\,dx\big)(s)
 + |1+\beta|\int_{B_{r\sigma}} \psi^{2}w^{2}\frac{|\tilde{f}|}{b}\varphi^2\, dx \geq 0,
}{defeF}
for a.a.\ $s\in(0,t_0-t_1)$.
Evidently, both terms on the left-hand side of (\ref{sup9}) are nonnegative. Dropping the second one, applying Young's inequality for convolutions  we obtain
\begin{equation} \label{sup10}
\Big(\int_{0}^{t_0-t_1} (\int_{B_{r\sigma}}
[\vf(s) \psi(x)w(s,x)]^2\,dx)^{p_0}\,ds\Big)^{1/p_{0}} \,\le
\|l\|_{L_{p_{0}}([0,t_0-t_1])} \int_0^{t_0-t_1} \!\!\!\!F(s)\,ds,
\end{equation}
 where $p_0$ comes from the assumption (\ref{ak1}).

Returning to (\ref{sup9}), we may also drop the first term, convolve
the resulting inequality with $k$ and evaluate at
$s=t_0-t_1$, thereby finding that
\begin{equation} \label{sup12}
\int_{0}^{t_0-t_1}\int_{B_{r\sigma}}\vf^2 \psi^2|Dw|^2\,dx\,ds \le
\,\frac{|1+\beta|}{2\nu |\beta|}\,\int_0^{t_0-t_1} \!\!\!\!F(s)\,ds.
\end{equation}

We now estimate the term $\int_0^{t_0-t_1}F(s)ds$. Firstly, we have

\eqq{
\int_{0}^{t_{0}-t_{1}}\int_{B_{r\sigma}}\abs{D \psi}^{2}w^{2}\,dx\,ds \leq \frac{4}{r^{2}\sigma^{2}(\rho-\rho')^{2}}\int_{0}^{t_{0}-t_{1}}\int_{B_{\rho r\sigma}}w^{2}\,dx\,ds.
}{pierw1}
Next, note that Lemma \ref{kernels} and Lemma \ref{ba} allow to estimate
\[
\vf^2(s)k(s) \leq k\left(\frac{t_{2}-t_{1}}{2}\right) \leq k_1\left(\jd (\rho-\rho') \sigma \eta \Phi(2r)  \right) \leq \frac{2 \max\{1,\eta^{-1}\}}{\sigma (\rho-\rho') } k_{1}(\vdr),
\]
and thus we obtain
\eqq{
\vf^2(s)k(s)\int_{B_{r\sigma}}\psi^{2}w^{2}dx \leq c(\eta,\delta)(\rho-\rho')^{-1}k_1(\vdr) \int_{B_{\rho r\sigma}}w^{2}dx.
}{fa}
We further have
\[
\dot{\vf}(s)\vf(s)(k*\int_{B_{r\sigma}}\psi^{2}w^{2}dx)(s)\leq \frac{8}{\sigma\eta(\rho-\rho')\vdr}(k*\int_{B_{\rho \sigma r}}w^{2}dx)(s),
\]
and consequently
\begin{align}\label{inha}
\int_{0}^{t_{0}-t_{1}}\dot{\vf}(s)\vf(s)\big( k* &\int_{B_{\rho \sigma r}} w^{2}dx\big)(s)\,ds  \leq  \frac{4}{\sigma\eta(\rho-\rho')\vdr} \big(1*k*\int_{B_{\rho \sigma r}}w^{2}dx\big)(t_0-t_1) \nonumber
\\
& \leq \frac{4}{\sigma\eta(\rho-\rho')\vdr} (1*k)\big(\rho\sigma\eta\vdr\big)\int_{0}^{t_{0}-t_{1}}\int_{B_{\rho \sigma r}}w^{2}\,dx\,ds \nonumber\\
& \leq \frac{4}{(\rho-\rho')}  \frac{(1*k)\big(\sigma\eta \vdr\big)}{\sigma\eta \vdr}\int_{0}^{t_{0}-t_{1}}\int_{B_{\rho \sigma r}}w^{2}\,dx\,ds \nonumber \\
& \leq  \frac{4 \overline{c} \max\{1,\eta^{-1}\}}{\sigma(\rho-\rho')} \ki(\vdr)
 \int_{0}^{t_{0}-t_{1}}\int_{B_{\rho \sigma r}}w^{2}\,dx\,ds,
\end{align}
where we used the fact that $k$ is nonincreasing and Lemma \ref{kernels}. By Lemma~\ref{fi}, $\ki(\Phi(2r))=r^{-2}/4$.

Finally, let us estimate the $f$ - term. Applying H\"older's inequality and recalling the definition of $b$ we have
\[
\int_{0}^{t_0-t_1} \int_{B_{r\sigma}} \vf^{2}\psi^{2}w^{2}\frac{|\tilde{f}|}{b}\, dx\, ds \leq
\frac{1}{b}\norm{f}_{L_{q_{1}}((0,T);L_{q_2}(\Omega))} \norm{\vf \psi w}_{L_{2q'_{1}}((0,t_0-t_1);L_{2q'_2}(B_{ r \sigma }))}^{2}
\]
\eqq{
= r^{\frac{N}{q_2}-2} (\vdr)^{\frac{1}{q_{1}}}\norm{\vf \psi w}_{L_{2q'_{1}}((0,t_0-t_1);L_{2q'_2}(B_{ r \sigma }))}^{2}.
}{inhabef}
In case, $q_1<\infty$ and $q_2 < \infty$, let us define
\eqq{
\tilde{q}_{2} = \frac{N}{2}+\frac{p'_0}{q_1} q_2, \hd \hd \tilde{q}_{1} = p'_0 +\frac{N }{2q_2} q_1.
}{qtilde}
Then $\tilde{q}_{2} < q_2$, $\tilde{q}_{1} < q_1$ and
\[
\frac{p'_0}{\tilde{q}_1} + \frac{N}{2\tilde{q}_2} = 1.
\]
In case $q_1=\infty$ we define $\tilde{q}_1 = \infty$ and $\tilde{q}_2 = \frac{N}{2}$, then $\tilde{q}_2 < q_2$. Analogously, if $q_2=\infty$ we define $\tilde{q}_2 = \infty$ and $\tilde{q}_1 = p'_0$, then $\tilde{q}_1 < q_1$.
In case $q_2 < \infty$, using H\"older's inequality and the interpolation inequality (\ref{sobolev}) with $a=2\tilde{q}'_1$, $b = 2\tilde{q}'_2$ we arrive at
\[
\norm{\vf \psi w}_{L_{2q'_{1}}((0,t_0-t_1);L_{2q'_2}(B_{ r \sigma }))} \leq \norm{\vf \psi w}_{L_{2\tilde{q}'_{1}}((0,t_0-t_1);L_{2\tilde{q}'_2}(B_{ r \sigma }))}^{\frac{\tilde{q}_2}{q_2}}\norm{\vf \psi w}_{L_{2}((0,t_0-t_1);L_{2}(B_{ r \sigma }))}^{1-\frac{\tilde{q}_2}{q_2}}
\]
\[
\leq C(N,\tilde{q}_2) \left[\norm{\vf \psi w}^{1-\theta}_{L_{2p_0}((0,t_0-t_1);L_{2}(B_{ r \sigma }))}\norm{ D( \vf\psi w)}^{\theta}_{L_{2}((0,t_0-t_1);L_{2}(B_{ r \sigma }))}\right]^{\frac{\tilde{q}_2}{q_2}}\norm{\vf \psi w}_{L_{2}((0,t_0-t_1);L_{2}(B_{ r \sigma }))}^{1-\frac{\tilde{q}_2}{q_2}},
\]
where $\theta = \frac{N}{2\tilde{q}_2}$. Recalling $\tilde{q}_2/q_2 = 1-d$ and using Young's inequality we get
\[
\norm{\vf \psi w}_{L_{2q'_{1}}((0,t_0-t_1);L_{2q'_2}(B_{ r \sigma }))} \leq C(N,\tilde{q}_2) \norm{\vf \psi w}_{L_{2}((0,t_0-t_1);L_{2}(B_{ r \sigma }))}^{d}
\]
\eqq{
\times \left[\norm{l}^{-\frac{1}{2}}_{L_{p_0}(0,\vdr)}\norm{\vf \psi w}_{L_{2p_0}((0,t_0-t_1);L_{2}(B_{ r \sigma }))} + \norm{ D( \vf\psi w)}_{L_{2}((0,t_0-t_1);L_{2}(B_{ r \sigma }))} \right]^{1-d}\norm{l}^{\frac{(1-\theta)(1-d)}{2}}_{L_{p_0}(0,\vdr)}.
}{fffa}
In case $q_2=\infty$, using the interpolation estimate and noticing that in this case $\tilde{q}_1/q_1 = 1-d$ we arrive at (\ref{fffa}) with $\theta = 0$.
In any case, inserting this estimate in (\ref{inhabef}) gives
\[
\int_{0}^{t_0-t_1} \int_{B_{r\sigma}} \vf^{2}\psi^{2}w^{2}\frac{|\tilde{f}|}{b}\, dx\, ds \leq C(N,\tilde{q}_2)  r^{\frac{N}{q_2}-2}(\vdr)^{\frac{1}{q_1}} \norm{\vf \psi w}_{L_{2}((0,t_0-t_1);L_{2}(B_{ r \sigma }))}^{2d}
\]
\eqq{
\times \left[\norm{l}^{-1}_{L_{p_0}(0,\vdr)}\norm{\vf \psi w}^2_{L_{2p_0}((0,t_0-t_1);L_{2}(B_{ r \sigma }))} + \norm{ D( \vf\psi w)}^2_{L_{2}((0,t_0-t_1);L_{2}(B_{ r \sigma }))} \right]^{1-d}\norm{l}^{(1-\theta)(1-d)}_{L_{p_0}(0,\vdr)}.
}{fff}
 Hence, combining (\ref{defeF}),    (\ref{pierw1}), (\ref{fa}), (\ref{inha}) and the last estimate we obtain
\begin{align}\label{estiF1}
&\int_{0}^{t_{0}-t_{1}}\hspace{-0.5cm} F(s)\,ds  \leq c  (1+\abs{1+\beta})\Big(\frac{1}{r^{2}(\rho-\rho')^{2}}\int_{0}^{t_{0}-t_{1}}\hspace{-0.2cm}\int_{B_{\rho \sigma r}}\hspace{-0.3cm}w^{2}\,dx\,ds + r^{\frac{N}{q_2}-2} (\vdr)^{\frac{1}{q_{1}}}\norm{w}_{L_{2}((0,t_0-t_1);L_{2}(B_{ \rho \sigma r  }))}^{2d} \nonumber \\&
\times \left[\norm{l}^{-1}_{L_{p_0}(0,\vdr)}\norm{\vf \psi w}^2_{L_{2p_0}((0,t_0-t_1);L_{2}(B_{ r \sigma }))} + \norm{ D( \vf\psi w)}^2_{L_{2}((0,t_0-t_1);L_{2}(B_{ r \sigma }))} \right]^{1-d}\norm{l}^{(1-\theta)(1-d)}_{L_{p_0}(0,\vdr)}  \Big),
\end{align}
where $c=c(\eta,\delta,\Lambda,\nu,\overline{c},N,\tilde{q}_2)$.
Using $\rho\sigma \le 1$ and positivity as well as monotonicity of $l$ we have
\eqq{
\norm{l}^{p_0}_{L_{p_0}((0,\rho\sigma\eta\Phi(2r)))} \leq \norm{l}^{p_0}_{L_{p_0}((0,\eta\Phi(2r)))} \leq \max\{1,\eta\} \norm{l}^{p_0}_{L_{p_0}((0,\Phi(2r)))},
}{estiPhieta}
where  we applied the substitution $\tilde{s}:=s/\eta$ in the case $\eta > 1$.
Hence, estimates (\ref{sup10}) - (\ref{pierw1}) imply
\[
\norm{l}^{-1}_{L_{p_0}(0,\vdr)}\norm{\vf \psi w}^2_{L_{2p_0}((0,t_0-t_1);L_{2}(B_{ r \sigma }))} + \norm{ D( \vf\psi w)}^2_{L_{2}((0,t_0-t_1);L_{2}(B_{ r \sigma }))}
\]
\eqq{
\leq c(\nu,\eta) \left(|1+\beta|\int_{0}^{t_{0}-t_{1}} F(s)\,ds + \frac{1}{r^{2}\sigma^{2}(\rho-\rho')^{2}}\int_{0}^{t_{0}-t_{1}}\int_{B_{\rho r\sigma}}w^{2}\,dx\,ds\right).
}{estiF2}
Combining (\ref{estiF1}) and (\ref{estiF2}) and using Young's inequality gives
\[
\norm{l}^{-1}_{L_{p_0}(0,\vdr)}\norm{\vf \psi w}^2_{L_{2p_0}((0,t_0-t_1);L_{2}(B_{ r \sigma }))} + \norm{ D( \vf\psi w)}^2_{L_{2}((0,t_0-t_1);L_{2}(B_{ r \sigma }))}
\]
\eqq{
\leq c (1+\abs{1+\beta})^\zeta \int_{0}^{t_{0}-t_{1}}\int_{B_{\rho \sigma r}}w^{2}\,dx\,ds \left[\frac{1}{r^{2}(\rho-\rho')^{2}} + \norm{l}^{(1-\theta)\frac{1-d}{d}}_{L_{p_0(0,\vdr)}} (r^{-2+\frac{N}{q_2}}\vdr^{\frac{1}{q_1}})^{\frac{1-d}{d}}
\right],
}{q1q2}
where $\zeta = \frac{2(1-d)}{d}$,  $c=c(\eta,\delta,\Lambda,\nu,\overline{c},N,\tilde{q}_2)$.
If $q_1$ and $q_2$ are finite, we apply Lemma \ref{scaling}, and recall that $\theta = \frac{N}{2\tilde{q}_2}$  thereby obtaining
\[
\norm{l}^{(1-\theta)\frac{1-d}{d}}_{L_{p_0}(0,\vdr)} (r^{-2+\frac{N}{q_2}}\vdr^{\frac{1}{q_1}})^{\frac{1-d}{d}}\leq c(p_0,\overline{c}) \Big(r^{2}(\vdr)^{-\frac{1}{p'_0}}\Big)^{(1-\frac{N}{2\tilde{q}_2})\frac{1-d}{d}} (r^{-2+\frac{N}{q_2}}\vdr^{\frac{1}{q_1}})^{\frac{1-d}{d}}.
\]
We note that since $\tilde{q}_2/q_2 = 1-d$, we have
\[
\left[2(1-\frac{N}{2\tilde{q}_2}) + (-2+\frac{N}{q_2})\right]\left[\frac{1-d}{d}\right] = \frac{N}{q_2}\left(1-\frac{q_2}{\tilde{q}_2}\right)\frac{1-d}{d} = \frac{N}{q_2}\left(1-\frac{1}{1-d}\right)\frac{1-d}{d} = -\frac{N}{q_2}.
\]
Similarly, using $\tilde{q}_1/q_1 = 1-d$ we may calculate
\[
\left[-\frac{1}{p'_0}(1-\frac{N}{2\tilde{q}_2})+\frac{1}{q_1})\right]\frac{1-d}{d} = \frac{1}{q_1}\left[1-\frac{q_1}{\tilde{q}_1}\right]\frac{1-d}{d} = -\frac{1}{q_1}.
\]
Hence
\eqq{
 \norm{l}^{(1-\theta)\frac{1-d}{d}}_{L_{p_0}(0,\vdr)} (r^{-2+\frac{N}{q_2}}\vdr^{\frac{1}{q_1}})^{\frac{1-d}{d}}\leq c(p_0,\overline{c}) r^{-\frac{N}{q_2}}(\vdr)^{-\frac{1}{q_1}} \leq c(p_0,\overline{c}) r^{-2},
}{q1q22}
where in the last estimate we apply Proposition \ref{estiprop}. We note that if either $q_1=\infty$ or $q_2 = \infty$ (in the second case $\theta = 0$) from (\ref{q1q2}) we similarly obtain (\ref{q1q22}).
Thus, using the properties of $\varphi$
\eqnsl{ &
\norm{l}^{-1}_{L_{p_0}(0,\vdr)}\norm{ \psi w}^2_{L_{2p_0}((t_2-t_1,t_0-t_1);L_{2}(B_{ r \sigma }))} + \norm{ D( \psi w)}^2_{L_{2}((t_2-t_1,t_0-t_1);L_{2}(B_{ r \sigma }))}
  \\
& \leq c(\eta,\delta,\Lambda, \nu,\overline{c},N,p_0,q_1,q_2)  \frac{(1+\abs{1+\beta})^{\zeta}}{r^{2}(\rho-\rho')^{2}}\int_{0}^{t_{0}-t_{1}}\int_{B_{\rho \sigma r}}w^{2}\,dx\,ds .}{estiwwl2p}

Let us denote
\eqq{
\kappa= \frac{2p_0+N(p_0-1)}{2+N(p_0-1)}, \hd \hd \vartheta = \frac{N(p_0-1)}{N(p_0-1)+2p_0}.
}{defkappatheta}
Applying the embedding (\ref{sobolev}) with $a = 2\kappa $ and $b = 2\kappa $ we obtain
\[
\norm{\psi w}_{L_{2\kappa}((t_{2}-t_{1},t_{0}-t_{1});L_{2\kappa}(B_{r\sigma}))}
\leq C   \norm{D(\psi w)}^{\vartheta}_{L_{2}((t_{2}-t_{1},t_{0}-t_{1});L_{2}(B_{r\sigma}))}  \norm{\psi w}_{L_{2p_0}((t_{2}-t_{1},t_{0}-t_{1});L_{2}(B_{r\sigma}))}^{1-\vartheta}
\]
\eqq{
\leq C\left[\norm{l}^{\frac{1}{2}}_{L_{p_0}(0,\vdr)}\norm{ \psi w}_{L_{2p_0}((t_2-t_1,t_0-t_1);L_{2}(B_{ r \sigma }))} + \norm{ D( \psi w)}_{L_{2}((t_2-t_1,t_0-t_1);L_{2}(B_{ r \sigma }))}\right]\norm{l}^{\frac{1-\vartheta}{2}}_{L_{p_0}(0,\vdr)},
}{interapara}
where $C>0$ depends only on $N,p_0$. Combining (\ref{estiwwl2p})  and (\ref{interapara}) gives
\eqq{
\norm{\psi w}_{L_{2\kappa}((t_{2}-t_{1},t_{0}-t_{1});
L_{2\kappa}(B_{r\sigma}))}
 \leq c(\eta,\delta,\Lambda, \nu,\overline{c},N,p_0,q_1,q_2)\frac{1}{K(r)} \frac{(1+\abs{1+\beta})^{\zeta/2}}{\rho-\rho'}\left(\int_{0}^{t_{0}-t_{1}}\int_{B_{\rho \sigma r}}w^{2}dxd\tau\right)^{\frac{1}{2}},
}{mainMoser1}
where
\eqq{
K(r):= r\norm{l}_{L_{p}((0,\Phi(2r)))}^{\frac{\vartheta-1}{2}}.
}{defKr}
We denote  $\gamma=|1+\beta|$ and from the properties of $\psi$ we have
\[
\norm{\psi w}_{L_{2\kappa}((t_{2}-t_{1},t_{0}-t_{1});
L_{2\kappa}(B_{r\sigma}))} \geq \| u_b^{-1}\|_{L_{\kappa \gamma }(V_{\rho'})}^{\frac{\gamma}{2}}
\]
and
\[
\left( \int_{0}^{t_{0}-t_{1}} \int_{B_{\rho\sigma r}} w^{2}dxds \right)^{\frac{1}{2}} = \| u_b^{-1} \|_{L_{\gamma}(V_{\rho})}^{\frac{\gamma}{2}}.
\]
Therefore, (\ref{mainMoser1}) leads to the estimate
\[
\|u_b^{-1}\|_{L_{\gamma\kappa}(V_{\rho'})} \leq \left(\frac{C^{2}(1+\gamma)^{\zeta}}{(\rho-\rho')^{2}(K(r))^2}\right)^{\frac{1}{\gamma}} \|u_b^{-1}\|_{L_{\gamma}(V_{\rho})},\quad 0<\rho'<\rho\le 1.
\]
Note that $\kappa>1$ and $\zeta>2$. We apply the first  Moser iteration lemma (Lemma~\ref{moserit1}) to get
\[
\esssup_{V_{\vsi}}u_b^{-1}\leq \left(\frac{M_{0}}{(1-\vsi)^{\frac{\zeta\kappa}{\kappa-1}}(K(r))^{2\frac{\kappa}{\kappa-1}}}\right)^{\frac{1}{\gamma}}\|u_b^{-1}\|_{L_{\gamma}(V_{1})}, \hd \gamma\in (0,1], \hd \vsi \in (0,1),
\]
where $M_{0}=M_{0}( \eta,\delta,\Lambda, \nu,\overline{c},N,p_0,q_1,q_2)$.
Using (\ref{defkappatheta}) and Lemma \ref{scaling}  we obtain
\eqq{
(K(r))^{2\frac{\kappa}{\kappa-1}} = \norm{l}_{L_{p_0}((0,\Phi(2r)))}^{\frac{-p_0}{p_0-1}}r^{N}r^{\frac{2p_0}{p_0-1}} \geq c r^{N}\vdr.
}{Krdopotegi}
Hence,
\[
\esssup_{V_{\vsi}}u_b^{-1}\leq \left(\frac{M_{0}}{(1-\vsi)^{\frac{\zeta\kappa}{\kappa-1}}r^{N}\vdr}\right)^{\frac{1}{\gamma}}\|u_b^{-1}\|_{L_{\gamma}(V_{1})}, \hd \gamma\in (0,1], \hd \vsi \in (0,1),
\]
where $M_{0}=M_{0}(\eta,\delta,\Lambda, \nu,\overline{c},N,p_0,q_1,q_2)$. Thus, if we take
$\vsi=\sigma'/\sigma$ and notice that $V_{\vsi}=U_{\sigma'}$, $\nuNj(U_{1})=\eta\Phi(2r)r^{N}$ and
\[ \frac{1}{1-\vsi}\,=\,\frac{\sigma}{\sigma-\sigma'}\,\le
\frac{1}{\sigma-\sigma'},\]
then we obtain for $r \leq r^{*}$
\[
\esup_{U_{\sigma'}}{u_b^{-1}} \le
\Big(\frac{M_0 \nuNj(U_{1})^{-1}}{(\sigma-\sigma')^{\tau_0}}\Big)^{1/\gamma}
\|u_b^{-1}\|_{L_{\gamma}(U_\sigma)},\quad \gamma\in (0,1],
\]
where $M_{0}=M_{0}( \eta,\delta,\Lambda, \nu,\overline{c},N,p_0,q_1,q_2)$ and $\tau_0=\tau(N,p_0,d)>0$. Hence the proof is complete.

\end{proof}
%*************************************************************************************

${}$

%*************************************************************************
\begin{satz} \label{superest2}
Let $\Omega\subset \iR^N$ be a bounded
domain. Suppose the assumptions (H1)--(H3) and (\ref{ak1}) are satisfied and $\kappa$ is given by (\ref{defkappatheta}). Let
$\eta>0$ and $\delta\in (0,1)$ be fixed,
and let $r^{*}\in (0,r_0/2)$ be the number provided by Lemma \ref{scaling}.
Then for any $\tilde{p}\in(0,\kappa)$,  for any $t_0\in [0,T)$
and $r\in (0, r^{*}]$ with $t_0+\eta \vdr \le T$, any ball
$B_r(x_0)\subset\Omega$ and any
nonnegative weak supersolution $u$ of (\ref{MProb}) in $(0,t_0+\eta
\vdr )\times B_r(x_0)$ with $u_0\ge 0$ in $B_r(x_0)$,  there holds
\eqq{
\|u_b\|_{L_{\tilde{p}}(U_{\sigma'}')}\le \left( \frac{C \nuNj(U'_1)^{-1} }
{(\sigma-\sigma')^{\gamma_0}}\right)^
{\frac{1}{\gamma}-\frac{1}{\tilde{p}}}
\!\!\!\|u_b\|_{L_{\gamma}(U'_\sigma)},\;\; \delta\le \sigma'<\sigma\le 1,\;
0<\gamma\le \frac{\tilde{p}}{\kappa}.
}{fin3}
Here $U'_\sigma=(t_0,t_0+\sigma\eta \vdr)\times B_{\sigma r}(x_0)$, $\sigma\in (0,1]$,
$C=C(\nu,\Lambda,\delta,\eta,N,\tilde{p},p_0,q_1,q_2,\overline{c})$ and
$\gamma_0=\gamma_0(N, p_0)$.
\end{satz}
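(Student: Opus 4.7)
The theorem is the ``upper half'' counterpart of Theorem~\ref{superest1}: instead of bounding $u_b^{-1}$ pointwise on a backward box, we bound $u_b$ in $L_{\tilde p}$ on a forward box $U'_\sigma=(t_0,t_0+\sigma\eta\Phi(2r))\times B_{\sigma r}(x_0)$. My plan is to establish the basic one-step reverse H\"older inequality
\[
\|u_b\|_{L_{\gamma\kappa}(U'_{\sigma'})}\le\Big(\frac{C(1+|1+\beta|)^\zeta}{(\sigma-\sigma')^{\gamma_0}}\Big)^{1/\gamma}\|u_b\|_{L_\gamma(U'_\sigma)},\qquad 0<\gamma=1+\beta\le\frac{\tilde p}{\kappa}<1,\ \delta\le\sigma'<\sigma\le 1,
\]
from which the conclusion~(\ref{fin3}) follows at once from the second Moser iteration lemma (Lemma~\ref{moserit2}) with $p_0=\tilde p<\kappa$. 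The factor $\nuNj(U'_1)^{-1}$ appearing in~(\ref{fin3}) emerges exactly as in Theorem~\ref{superest1} through $K(r)^{-2\kappa/(\kappa-1)}\lesssim r^N\Phi(2r)\asymp\nuNj(U'_1)$ from~(\ref{Krdopotegi}) together with $\nuNj(U'_1)=\nu_N(B_r)\,\eta\Phi(2r)$. So the entire task is to prove the single-step estimate.

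\textbf{Establishing the reverse H\"older step.}
The derivation runs in close parallel to the proof of Theorem~\ref{superest1}, with three changes: (i)~since the hypothesis already provides $u\ge 0$ on the whole region $(0,t_0+\eta\Phi(2r))\times B_r$, no temporal shift is needed and we simply set $s=t-t_0$, working on $s\in(0,\eta\Phi(2r))$; (ii)~we test the Yosida-regularized supersolution inequality with $v=\psi^2\tilde u_b^\beta$ for $\beta\in(-1,0)$, so that $\gamma=1+\beta\in(0,1)$ and powers of $u_b$ (not of $u_b^{-1}$) appear; (iii)~the temporal cut-off $\varphi\in C^1([0,\eta\Phi(2r)])$ is now taken with $\varphi\equiv 1$ on $[0,\rho'\sigma\eta\Phi(2r)]$, $\varphi\equiv 0$ on $[\rho\sigma\eta\Phi(2r),\eta\Phi(2r)]$, and $|\dot\varphi|\le 4/[(\rho-\rho')\sigma\eta\Phi(2r)]$, so $\dot\varphi\le 0$; the spatial cut-off $\psi$ is taken as in Theorem~\ref{superest1}. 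The fundamental identity~(\ref{fundidentity}) is applied with the convex function $H(y)=-y^{1+\beta}/(1+\beta)$ (note $H''(y)=-\beta y^{\beta-1}>0$ for $\beta\in(-1,0)$), which produces exactly the formula~(\ref{sup1}). Substituting in the tested inequality, rearranging, and using the ellipticity~(H2) as in the passage from~(\ref{sup2}) to~(\ref{sup9}), multiplying by $\varphi^2\ge 0$ and convolving with $l$---here invoking Lemma~\ref{comm2} in the nonincreasing-cutoff direction, where for nonnegative $v$ and nonincreasing $\varphi^2$ the correction $\int_0^s\dot k_n(s-\tau)(\varphi^2(s)-\varphi^2(\tau))v(\tau)\,d\tau\ge 0$ (as $\dot k_n\le 0$), yielding the one-sided bound $l*(\varphi^2\partial_s(k*v))\ge \varphi^2 v$---and passing to $n\to\infty$, we arrive at the energy-type estimate
\[
\int\varphi^2\psi^2 w^2\,dx+\frac{2\nu|\beta|}{1+\beta}\,l*\!\int\varphi^2\psi^2|Dw|^2\,dx\le (l*F)(s),\qquad w=\tilde u_b^{(1+\beta)/2},
\]
with $F$ of the same type as~(\ref{defeF}); the crucial sign $\beta(1+\beta)<0$ produces the positive coercive coefficient in front of $|Dw|^2$. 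The inhomogeneity $f$ is absorbed exactly as in~(\ref{inhabef})--(\ref{fff}) via H\"older's inequality, the parabolic interpolation~(\ref{sobolev}) with exponents $2\tilde q_1',\,2\tilde q_2'$ from~(\ref{qtilde}), Young's inequality, Lemma~\ref{scaling} and Proposition~\ref{estiprop}. Finally, the parabolic Sobolev embedding~(\ref{sobolev}) with $a=b=2\kappa$, as in~(\ref{interapara}), turns the resulting $L_{2p_0}(L_2)\cap L_2(H^1)$-control into an $L_{2\kappa}(L_{2\kappa})$-bound on $\psi\varphi w$; unwinding $w=\tilde u_b^{(1+\beta)/2}$ is the reverse H\"older step.

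\textbf{Main obstacle.}
The principal technical difficulty compared with Theorem~\ref{superest1} is the careful tracking of signs: because now $1+\beta>0$ and $\beta<0$, the coefficient $-\frac{1}{1+\beta}$ multiplying $\int\psi^2\partial_s(k_n*\tilde u_b^{1+\beta})$ in the analogue of~(\ref{sup2}) has the opposite sign from the case $\beta<-1$, so one must normalise correctly (by multiplying by $\pm(1+\beta)$) before convolving with $l$. Equally delicate is the sign of the commutator produced by the time cut-off: since $\varphi$ is now nonincreasing, the old application of Lemma~\ref{comm} no longer delivers the favourable direction, and one instead invokes the identity form provided by Lemma~\ref{comm2}, where for nonincreasing $\varphi^2$ and nonnegative $v$ the correction is automatically nonnegative and gives us the clean one-sided bound $l*(\varphi^2\partial_s(k*v))\ge \varphi^2 v$ required to close the argument. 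Everything else---the Young absorption of the cross-gradient term, the control of the $k_n$-contribution through Lemma~\ref{kernels} and Lemma~\ref{ba}, and the treatment of the $\dot\varphi$-term as in~(\ref{inha})---is a mechanical repetition of the computations in the proof of Theorem~\ref{superest1}, with the same exponents $\zeta=2(1-d)/d$ and $\gamma_0=\gamma_0(N,p_0)$ emerging from the arithmetic.
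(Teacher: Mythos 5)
Your plan correctly identifies the overall scaffolding (reverse H\"older step $\Rightarrow$ Lemma~\ref{moserit2}), but the central step---the ``energy-type estimate''---is not reachable by the route you describe, and this is precisely where the paper's proof takes a genuinely different, more elaborate path.

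The sign bookkeeping breaks in the following way. With $\beta\in(-1,0)$ you multiply the tested inequality by $(1+\beta)>0$ so that the leading term in the analogue of~(\ref{L1}) is $-\int_{B_{r\sigma}}\psi^2\varphi^2\,\partial_s(k_n\ast \tilde u_b^{1+\beta})\,dx$, i.e.\ it carries a \emph{minus} sign (unlike Theorem~\ref{superest1}, where multiplying by $-(1+\beta)>0$ gives a plus sign and $\varphi$ is \emph{non}decreasing so Lemma~\ref{comm} applies in the favourable direction). Your Lemma~\ref{comm2}/monotonicity argument indeed yields, for nonincreasing $\varphi^2$ and nonnegative $v$,
\[
\varphi^2\,\partial_s(k_n\ast v)\;\ge\;\partial_s\big(k_n\ast[\varphi^2 v]\big),
\]
and convolving with $l$ and letting $n\to\infty$ yields $l\ast(\varphi^2\partial_s(k\ast v))\ge\varphi^2 v$. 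But since the relevant term enters the weak inequality with a \emph{minus} sign and sits on the small side of the ``$\le$'', this produces a \emph{lower} bound on $\varphi^2 v$ (equivalently an upper bound on $-\varphi^2 v + l\ast\text{(coercive)}$), not the upper bound
\[
\int\varphi^2\psi^2 w^2\,dx+\frac{2\nu|\beta|}{1+\beta}\,l\ast\!\int\varphi^2\psi^2|Dw|^2\,dx\le (l\ast F)(s)
\]
that you claim. Concretely: from~(\ref{L3}) one has $\partial_s(k_n\ast W)\ge \text{coercive}-F_n$; convolving with $l$ gives $h_n\ast W\ge l\ast(\text{coercive})-l\ast F_n$, which bounds the gradient term but leaves $W$ uncontrolled from above. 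There is no way to ``flip'' this into an upper bound for $W$ by convolution alone.

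The paper's actual proof acknowledges this: after~(\ref{L3}) it does not convolve in this way, but instead introduces $G_n:=\partial_s(k_n\ast W)+F_n\ge 0$, uses the sharper decomposition $0\le h_n\ast W\le l\ast G_n+l\ast[-F_n]_+$, applies Young's inequality on $L_{p_0}((0,t_*))$ for a carefully chosen $t_*$, and shows $\|[-F_n]_+\|_{L_1}\to 0$ while selecting a subsequence so that $(k_{n_m}\ast W)(t_*)\to (k\ast W)(t_*)$. The resulting bounds~(\ref{L5})--(\ref{L6}) contain the additional term $(k\ast W)(t_*)$, which the paper estimates separately via~(\ref{estikW}) using Lemma~\ref{kernels} and Lemma~\ref{ba}; that term is absent from your claimed estimate, which is a further indication that your one-line convolution argument overshoots. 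You need to reproduce the paper's $G_n$, $[-F_n]_+$ and $t_*$-selection machinery; without it the one-step reverse H\"older inequality is not established.
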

%*************************************************************************
\begin{proof} At first, we adapt some steps from the previous proof. We then follow again the approach from \cite{base} and \cite{harnackdistr} and use the key Lemma \ref{scaling}. Let us note that by H\"older's inequality, it is enough to prove (\ref{fin3}) for $\tilde{p}>1$. To shorten the notation, we write again $B_{r}:=B_{r}(x_{0})$.

Let $r \in (0,r^{*})$ and let us fix $\sigma'$, $\sigma$ such that $\delta\le \sigma'<\sigma\le 1$. For $\rho\in (0,1]$ we set
$V'_\rho=U'_{\rho\sigma}$. For any fixed $0<\rho'<\rho\le 1$, let
$t_1=t_0+\rho'\sigma\eta\vdr$ and $t_2=t_0+\rho\sigma\eta\vdr$, so $0\le
t_0<t_1<t_2$. We shift the time by means of ${s}=t-t_0$ and set
$\tilde{g}(s)=f(s+t_0)$, $s\in (0,t_2-t_0)$, for functions $g$
defined on $(t_0,t_2)$.

Proceeding as in the proof of Theorem \ref{superest1}, now with $\beta\in (-1,0)$, we note that (\ref{sup1}) simplifies to
\[
-\tilde{u_{b}}^{\beta}\partial_{s}(k_{n}\ast \tilde{u_{b}}) \ge
-\,\frac{1}{1+\beta}\,\partial_{s}
(k_{n}\ast\tilde{u_{b}}^{1+\beta}),\quad
\mbox{a.a.}\;(s,x)\in (0,t_2-t_0)\times B_r,
\]
hence we obtain with $\psi\in C_0^1(B_{r\sigma})$ as above
\begin{align}
-\,\frac{1}{1+\beta}\,& \int_{B_{r\sigma}}\psi^2\partial_{s}
(k_{n}\ast\tilde{u_{b}}^{1+\beta})\,dx+|\beta|\int_{B_{r\sigma}}\big((h_n\ast
[ADu_b])\,\tilde{}\;|\psi^2 \tilde{u_{b}}^{\beta-1}D
\tilde{u_{b}}\big)\,dx \nonumber\\
\le  & \,2\int_{B_{r\sigma}}\big((h_n\ast [ADu_b])\,\tilde{}\;|\psi D\psi
\,\tilde{u_{b}}^{\beta}\big)\,dx - \int_{B_{r\sigma}}\psi^2 (h_n *f)\,\tilde{}\;\tilde{u_{b}}^{\beta}\,dx ,\quad\quad
\mbox{a.a.}\;s\in(0,t_2-t_0). \label{L1}
\end{align}

We choose a cut-off function $\varphi\in C^1([0,t_2-t_0])$ such that $0\le
\varphi\le 1$, $\varphi=1$ in $[0,t_1-t_0]$, $\varphi=0$ in
$[t_1-t_0+(t_2-t_1)/2,t_2-t_0]$, and $0\le -\dot{\varphi}\le
4/(t_2-t_1)$. We multiply (\ref{L1}) by $1+\beta>0$ and by
$\varphi^{2}$. Then, applying Lemma \ref{comm2} to the first term we arrive at
\begin{align}
-\int_{B_{r\sigma}}  &
\partial_{s}(k_{n}\ast
[\varphi^2\psi^2\tilde{u_{b}}^{1+\beta}]\big)\,dx+|\beta|(1+\beta)\,
\int_{B_{r\sigma}}\big(\tilde{A}D\tilde{u_{b}}|\psi^2 \tilde{u_{b}}^{\beta-1}D
\tilde{u_{b}}\big)\varphi^2\,dx \nonumber\\
\le & \,\int_0^s
\dot{k}_{n}(s-\tau)\big(\varphi^2(s)-\varphi^2(\tau)\big)
\big(\int_{B_{r\sigma}}\psi^2\tilde{u_{b}}^{1+\beta}\,dx\big)(\tau)\,d\tau +\mathcal{R}_n(s) \nonumber\\
& \;\,+2(1+\beta)\,\int_{B_{r\sigma}}\big(\tilde{A}D\tilde{u_{b}}|\psi D\psi
\,\tilde{u_{b}}^{\beta}\big)\varphi^2\,dx- (1+\beta)\int_{B_{r\sigma}}\psi^2(h_n *f)\,\tilde{}\;\tilde{u_{b}}^{\beta}\varphi^2\,dx ,\quad
\mbox{a.a.}\;s\in(0,t_2-t_0), \label{L2}
\end{align}
where
\begin{align*}
\mathcal{R}_n(s)= &\,\,-|\beta|(1+\beta)\, \int_{B_{r\sigma}}\big((h_n\ast
[ADu_b])\,\tilde{}\;-\tilde{A}D\tilde{u_{b}}|\psi^2 \tilde{u_{b}}^{\beta-1}D
\tilde{u_{b}}\big)\varphi^2\,dx\\
&\,+2(1+\beta)\,\int_{B_{r\sigma}}\big((h_n\ast
[ADu_b])\,\tilde{}\;-\tilde{A}D\tilde{u_{b}}|\psi D\psi
\,\tilde{u_{b}}^{\beta}\big)\varphi^2\,dx,\quad
\mbox{a.a.}\;s\in(0,t_2-t_0).
\end{align*}

We set again $w=\tilde{u_{b}}^{\frac{\beta+1}{2}}$ and estimate
as in the proof of Theorem \ref{superest1}, using (H1), (H3) and
(\ref{sup8}), to the result
\begin{align}
-\int_{B_{r\sigma}}  &
\partial_{s}(k_{n}\ast
[\varphi^2\psi^2w^2]\big)\,dx+\,\frac{2\nu
|\beta|}{1+\beta}\,\int_{B_{r\sigma}}\varphi^2
\psi^2|Dw|^2\,dx \nonumber\\
\le & \,\int_0^s
\dot{k}_{n}(s-\tau)\big(\varphi^2(s)-\varphi^2(\tau)\big)
\big(\int_{B_{r\sigma}}\psi^2w^2\,dx\big)(\tau)\,d\tau\nonumber +\mathcal{R}_n(s)\\
& \;\,+\,\frac{2\Lambda^2(1+\beta)}{\nu |\beta|}\, \int_{B_{r\sigma}}
|D\psi|^2\varphi^2 w^2\,dx + (1+\beta)\abs{\int_{B_{r\sigma}} \psi^2(h_n *f)\,\tilde{}\;\frac{w^2}{\tilde{u_{b}}}\varphi^2\,dx} ,\quad
\mbox{a.a.}\;s\in(0,t_2-t_0). \label{L3}
\end{align}
Remember that $k_{n}=k\ast h_n$. If we denote the right-hand side of (\ref{L3}) by $F_n(s)$ and introduce
\[
W(s)=\int_{B_{r\sigma}}\varphi(s)
^2\psi(x)^2w(s,x)^2\,dx,
\]
it follows from (\ref{L3}) that
\[
G_n(s):=\partial_s [k* (h_n\ast W)](s)+F_n(s)\ge 0,\quad\quad
\mbox{a.a.}\;s\in(0,t_2-t_0).
\]
We know that $h_n$ is nonnegative for every $n\in \iN$. Hence, applying (\ref{sup5}), we have
\[
0\le h_n\ast W =l\ast \partial_s[ k* (h_n\ast W)]\le
l\ast G_n+l\ast [-F_n(s)]_+
\]
a.e. in $(0,t_2-t_0)$, where $[y]_+$ stands for the positive part
of $y\in \iR$. Then, for any
$t_*\in[t_2-t_0-(t_2-t_1)/4,t_2-t_0]$, Young's
inequality leads to
\begin{equation} \label{L4}
\|h_n\ast W\|_{L_{p_0}((0,t_*))}\le
\|l\|_{L_{p_{0}}((0,t_*))}\big(\|G_n\|_{L_1((0,t_*))}+
\|[-F_n]_+\|_{L_1((0,t_*))}\big).
\end{equation}
Positivity of $G_n$ implies
\eqq{
\|G_n\|_{L_1((0,t_*))}=(k_{n}\ast
W)(t_*)+\int_0^{t_*}\!\!\!F_n(s)\,ds.
}{Gnsuma}
Furthermore, $\mathcal{R}_n\rightarrow 0$ in $L_1((0,t_2-t_0))$ as
$n\to \infty$ and since $k_{n}$ and $\varphi$ are nonincreasing, there holds
$[-F_n]_+ \leq [-\mathcal{R}_n]_{+}$ and   $  \|[-F_n]_+\|_{L_1((0,t_*))}\to 0$ as
$n\to\infty$. Further,
\eqnsl{
\int_0^{t_*}\!\!&\!\int_0^s
\dot{k}_{n}(s-\tau)\big(\varphi^2(s)-\varphi^2(\tau)\big)
\big(\int_{B_{r\sigma}}\psi^2w^2\,dx\big)(\tau)\,d\tau\,ds\\
=&\,\int_0^{t_*}
k_{n}(t_*-\tau)\big(\varphi^2(t_*)-\varphi^2(\tau)\big)
\big(\int_{B_{r\sigma}}\psi^2w^2\,dx\big)(\tau)\,d\tau\\
&\,-2\int_0^{t_*}\!\!\!\dot{\varphi}(s)\vf(s)\int_0^s
k_{n}(s-\tau)
\big(\int_{B_{r\sigma}}\psi^2w^2\,dx\big)(\tau)\,d\tau\,ds\\
\le&\,-2\int_0^{t_*}\!\!\!\dot{\varphi}(s)\int_0^s
k_{n}(s-\tau)
\big(\int_{B_{r\sigma}}\psi^2w^2\,dx\big)(\tau)\,d\tau\,ds,
}{kndot}
because $\varphi$ is nonincreasing. Since
$k_{n}\ast W\to k\ast W$ in
$L_1((0,t_2-t_0))$, we may choose
$t_*\in[t_2-t_0-(t_2-t_1)/4,t_2-t_0]$ and a
subsequence such that $(k_{n_m}\ast W)(t_*)\to (k\ast
W)(t_*)$ as $m\to \infty$. Then (\ref{Gnsuma}) leads to
\[
\limsup_{m\rightarrow \infty} \|G_{n_{m}}\|_{L_{1}((0,t_*))}\leq (k*W)(t_*)+\limsup_{m\rightarrow \infty} \int_0^{t_*}\!\!\!F_{n_{m}}(s)\,ds.
\]
Applying the previous estimate we get
\[
\limsup_{m\rightarrow \infty} \int_0^{t_*}\!\!\!F_{n_{m}}(s)\,ds \leq 2
\limsup_{m\rightarrow \infty}  \int_0^{t_*}\!\!\!- \dot{\varphi}(s)\int_0^s
k_{n_{m}}(s-\tau)
\big(\int_{B_{r\sigma}}\psi^2w^2\,dx\big)(\tau)\,d\tau\,ds
\]
\[
+\frac{2\Lambda^2(1+\beta)}{\nu |\beta|}\, \int_{0}^{t_*}\int_{B_{r\sigma}}
|D\psi|^2\varphi^2 w^2\,dx ds +\limsup_{m\rightarrow \infty} \int_{0}^{t_*} |\mathcal{R}_{n_{m}}(s)|ds
\]
\[
+ (1+\beta)\limsup_{m\rightarrow \infty} \int_{0}^{t_*} \int_{B_{r\sigma}}\abs{\psi^2(h_{n_m} *f)\,\tilde{}\;\frac{w^2}{\tilde{u_{b}}}\varphi^2}\,dx ds
\]
\[
\leq 2\int_0^{t_*}\!\!\!- \dot{\varphi}(s)\int_0^s
k(s-\tau)
\big(\int_{B_{r\sigma}}\psi^2w^2\,dx\big)(\tau)\,d\tau\,ds
+\frac{2\Lambda^2(1+\beta)}{\nu |\beta|}\, \int_{0}^{t_*}\int_{B_{r\sigma}}
|D\psi|^2\varphi^2 w^2\,dx ds
\]
\[
+(1+\beta)\int_{0}^{t_*} \int_{B_{r\sigma}}\psi^2\frac{\abs{f}}{b}w^{2}\varphi^2\,dx ds.
\]
Since  $  \|[-F_n]_+\|_{L_1((0,t_*))}\to 0$, the estimates above together with (\ref{L4}) imply
\[
\|W\|_{L_{p_0}((0,t_*))} \leq \|l\|_{L_{p_0}((0,t_*))} \Big((k\ast
W)(t_*)+\|F\|_{L_1((0,t_*))}\Big),
\]
where
\begin{align}
F(s) & =\,\frac{2\Lambda^2(1+\beta)}{\nu |\beta|}\, \int_{B_{r\sigma}}
|D \psi|^2\varphi^2 w^2\,dx-2\dot{\varphi}(s)\big(k\ast
\int_{B_{r\sigma}}\psi^2w^2\,dx\big)(s)
\nonumber\\
& \quad\quad +(1+\beta)\int_{0}^{t_*} \int_{B_{r\sigma}}\psi^2\frac{\abs{f}}{b}w^{2}\varphi^2\,dx ds.
\label{Fsdef}
\end{align}
Hence,
\begin{equation} \label{L5}
\Big(\int_{0}^{t_2-t_0} (\int_{B_{r\sigma}}
[\vf(s)\psi(x)w(s,x)]^2\,dx)^{p_{0}}\,ds\Big)^{1/p_0}\le
\|l\|_{L_{p_0}((0,t_{2}-t_{0}))}\Big((k\ast
W)(t_*)+\|F\|_{L_1((0,t_2-t_0))}\Big).
\end{equation}

On the other hand, we can integrate (\ref{L3}) over $(0,t_*)$, then apply (\ref{kndot}) and
pass to the limit with $m$ for the same subsequence as
above, thereby getting
\begin{equation} \label{L6}
\int_{0}^{t_2-t_0}\!\!\!\int_{B_{r\sigma}}
\vf^{2}\psi^2|Dw|^2\,dx\,ds\le\,\frac{1+\beta}{2\nu
|\beta|}\,\Big((k\ast
W)(t_*)+\|F\|_{L_1((0,t_2-t_0))}\Big).
\end{equation}
Observe that due to $t_*\in [(\frac{3}{4}\rho+\frac{1}{4}\rho')\sigma \eta \Phi(2r), \rho\sigma \eta \Phi(2r)]$ and $\varphi=0$ on $[\frac{1}{2}(\rho+\rho')\sigma \eta \Phi(2r), \rho\sigma \eta \Phi(2r)]$, we have
\eqnsl{
(k*W)(t_{*})= & \int_{0}^{\frac{1}{2}(\rho+\rho')\sigma \eta \Phi(2r)} k(t_*-\tau) \varphi^2(\tau ) \left( \int_{B_{r\sigma}}
\psi^2 w^2\,dx\right) (\tau ) d\tau \\
& \leq k\left(t_* -\frac{1}{2}(\rho+\rho')\sigma \eta \Phi(2r)\right) \int_{0}^{t_{2}-t_{0}} \int_{B_{\rho r\sigma} } w^{2} dx d\tau \\
& \leq k_{1}\left( \frac{1}{4}(\rho-\rho')\sigma \eta \Phi(2r)\right) \int_{0}^{t_{2}-t_{0}} \int_{B_{\rho r\sigma} } w^{2} dx d\tau \\
& \leq \frac{4 \max\{1,\eta^{-1}\}}{\delta}\frac{k_1(\vdr)}{(\rho-\rho')}\int_{0}^{t_{2}-t_{0}}\int_{B_{\rho \sigma r}}w^{2}dxd\tau,
}{estikW}
where we used  Lemma \ref{kernels} and Lemma \ref{ba}.

Next, we estimate the $L_1$-norm of (\ref{Fsdef}). Using properties of $\vf$,  Young's inequality for convolutions, monotonicity of $k$ and Lemma \ref{kernels} we arrive at
\[
\int_{0}^{t_{2}-t_{0}} - \dot{\varphi}(s) \left( k* \int_{B_{r\sigma} } \psi^{2} w^{2}dx   \right)(s)ds \leq \frac{4}{t_{2}-t_{1}} \int_{0}^{t_{2}-t_{0}}   k* \int_{B_{r\sigma} } \psi^{2} w^{2}dx   ds
\]
\[
\leq  \frac{4}{(\rho - \rho ') \sigma \eta \Phi(2r)} (1*k)(\rho\sigma\eta\vdr)  \int_{0}^{t_{2}-t_{0}}  \int_{B_{ r\sigma} } \psi^2(x) w^{2}(x,s)dx ds
\]
\[
\leq  \frac{4\overline{c}}{(\rho - \rho ')}\frac{\max\{1,\eta^{-1}\}}{\delta} k_1(\vdr)\int_{0}^{t_{2}-t_{0}}  \int_{B_{ r\sigma} } \psi^2(x) w^{2}(x,s)dx ds.
\]
Thus, by employing (\ref{zn1}), we get
\eqq{\int_{0}^{t_{2}-t_{0}} - \dot{\varphi}(s) \left( k* \int_{B_{r\sigma} } \psi^{2} w^{2}dx   \right)(s)ds \leq
\frac{\overline{c}\max\{1,\eta^{-1}\}}{\delta(\rho - \rho ')^{2} r^{2}} \int_{0}^{t_{2}-t_{0}}    \int_{B_{\rho r\sigma} }  w^{2}dx   ds.}{estivphidot}

 Finally,
\eqq{
\int_{0}^{t_{2}-t_{0}}\int_{B_{r\sigma}}\abs{D \psi}^2 \varphi^2 w^{2}dxds \leq \frac{4}{r^{2}\sigma^{2}(\rho-\rho')^{2}}\int_{0}^{t_{2}-t_{0}}\int_{B_{\rho r\sigma}}w^{2}dxds.
}{estinabpsi}

Hence, from (\ref{Fsdef}), (\ref{estivphidot}), (\ref{estinabpsi}) and the estimate analogous to (\ref{fff}), we obtain
\begin{align}\label{estiFlj}
&\int_{0}^{t_{2}-t_{0}} F(s)\,ds  \leq c  \frac{(2+\beta)}{|\beta|}\Big(\frac{1}{r^{2}(\rho-\rho')^{2}}\int_{0}^{t_{2}-t_{0}}\hspace{-0.2 cm}\int_{B_{\rho \sigma r}}\hspace{-0.3cm} w^{2}\,dx\,ds + r^{\frac{N}{q_2}-2} (\vdr)^{\frac{1}{q_{1}}}\norm{w}_{L_{2}((0,t_2-t_0);L_{2}(B_{ \rho \sigma r  }))}^{2d} \nonumber \\&
\cdot \left[\norm{l}^{-1}_{L_{p_0}(0,\vdr)}\norm{\vf \psi w}^2_{L_{2p_0}((0,t_2-t_0);L_{2}(B_{ r \sigma }))} + \norm{ D( \vf\psi w)}^2_{L_{2}((0,t_2-t_0);L_{2}(B_{ r \sigma }))} \right]^{1-d}\norm{l}^{(1-\theta)(1-d))}_{L_{p_0}(0,\vdr)}  \Big),
\end{align}
where $\theta = 0$ in case $q_2=\infty$ and otherwise $\theta = \frac{N}{2\tilde{q}_2}$, where  $\tilde{q}_2$ is given by (\ref{qtilde}) and $c=c(\eta,\delta,\Lambda,\nu,\overline{c},N,\tilde{q}_2)$.

Using (\ref{L5}), (\ref{L6})(\ref{estikW}) and (\ref{zn1}) we find that
\[
\norm{l}^{-1}_{L_{p_0}(0,\vdr)}\norm{\vf \psi w}^2_{L_{2p_0}((0,t_2-t_0);L_{2}(B_{ r \sigma }))} + \norm{ D( \vf\psi w)}^2_{L_{2}((0,t_2-t_0);L_{2}(B_{ r \sigma }))}
\]
\eqq{
 \leq c(\nu,\eta,\delta)\left( \frac{1+\beta}{|\beta|r^{2}(\rho-\rho')}\int_{0}^{t_{2}-t_{0}}\int_{B_{\rho \sigma r}}w^{2}dx ds + \frac{1+\beta}{|\beta|}\int_{0}^{t_{2}-t_{0}} F(s)\,ds \right). }{estildpphiw}
Combining (\ref{estiFlj}) and (\ref{estildpphiw}) and applying Young's inequality we obtain, as in the proof of Theorem \ref{superest1}
\[
\norm{l}^{-1}_{L_{p_0}(0,\vdr)}\norm{\vf \psi w}^2_{L_{2p_0}((0,t_2-t_0);L_{2}(B_{ r \sigma }))} + \norm{ D( \vf\psi w)}^2_{L_{2}((0,t_2-t_0);L_{2}(B_{ r \sigma }))}
\]
\[
\leq c \frac{(2+\beta)^\zeta}{|\beta|^\zeta} \int_{0}^{t_{2}-t_{0}}\int_{B_{\rho \sigma r}}w^{2}\,dx\,ds \left[\frac{1}{r^{2}(\rho-\rho')^{2}} + \norm{l}^{(1-\theta)\frac{1-d}{d}}_{L_{p_0(0,\vdr)}} (r^{-2+\frac{N}{q_2}}\vdr^{\frac{1}{q_1}})^{\frac{1-d}{d}}
\right],
\]
where $\zeta = \frac{2(1-d)}{d}$ and $c=c(\eta,\delta,\Lambda,\nu,\overline{c},N,\tilde{q}_2)$.
Proceeding, exactly as in the proof of Theorem \ref{superest1} and using the properties of $\varphi$ we arrive at
\eqq{
\|w\|_{L_{2\kappa}((0,t_{1}-t_{0})\times B_{\rho'r\sigma})} \leq  c(\eta,\delta,\Lambda,\nu,\overline{c},N,p_0,q_1,q_2)\frac{(2+\beta)^{\zeta/2}}{|\beta|^{\zeta/2}(\rho-\rho')K(r)}\|w\|_{L_{2}((0,t_{2}-t_{0})\times B_{\rho r\sigma})}
}{estiVrho}
with $\kappa$, $K$ given by (\ref{defkappatheta}) and (\ref{defKr}), respectively.

If we denote $\gamma:=1+\beta$  we may write
\[
\|w\|_{L_{2\kappa}((0,t_{1}-t_{0})\times B_{\rho'r\sigma})} =  \| u_b \|^{\frac{\gamma}{2}}_{L_{\gamma \kappa }(V_{\rho'}')} \hd \m{ and } \hd\|w\|_{L_{2}((0,t_{2}-t_{0})\times B_{\rho r\sigma})} =  \| u_b \|^{\frac{\gamma}{2}}_{L_{\gamma  }(V_{\rho}')}
\]

 and therefore (\ref{estiVrho}) yields
\[
\| u_b \|_{L_{\gamma \kappa}(V'_{\rho'})} \leq \left(\frac{C}{|\beta|^{\zeta}(\rho-\rho')^{2}(K(r))^{2}}\right)^{\frac{1}{\gamma}} \| u_b \|_{L_{\gamma  }(V'_{\rho})},
\]
where $C=C(\eta,\delta,\Lambda,\nu,\overline{c},N,p_0,q_1,q_2)$.

For any $\tilde{p} \in (0, \kappa)$ we have $\frac{\tilde{p}}{\ka}<1$, so  for any
$\gamma = 1+\beta \in (0,\frac{\tilde{p}}{\ka}]$ we get the estimate
\eqq{
\| u_b \|_{L_{\gamma \kappa }(V'_{\rho'})} \leq \left(\frac{C}{(\rho-\rho')^{2}(K(r))^{2}}\right)^{\frac{1}{\gamma}} \| u_b \|_{L_{\gamma  }(V'_{\rho})}.
}{mainestiVprim}
where $C=C(\eta,\Lambda, \delta,\nu, N,\tilde{p}, p_{0},q_1,q_2,d,\overline{c})$. Note that $C$ does not depend on $\gamma$, because $1+\beta \le \frac{\tilde{p}}{\kappa}$, implies that $\beta$ is cut-off from zero.

Next, we multiply (\ref{mainestiVprim}) by $(\eta \om_{N}r^{N}\vdr)^{-\frac{1}{\gamma \kappa}}$, where $\om_{N}$ is the measure of the unit ball in $\R^{N}$, and we have
\eqns{
\left(\int_{V_{\rho'}'} |u_b|^{\kappa \gamma}\frac{1}{\eta \om_{N}r^{N}\vdr}dxdt\right)^{\frac{1}{\gamma \kappa}} &  \\
\leq \left(\frac{C}{(\rho-\rho')^{2}(K(r))^{2}}\right)^{\frac{1}{\gamma}} & \left(\int_{V_{\rho}'} |u_b|^{ \gamma}\frac{1}{\eta \om_{N}r^{N}\vdr}dxdt\right)^{\frac{1}{\gamma}} \left(\eta \om_{N} r^{N}\vdr\right)^{\frac{(\kappa-1)}{\kappa \gamma}}.
}
Hence, if we denote by $d\muf_{N+1} $ the measure $ (\eta \om_{N} r^{N}\vdr)^{-1} dxdt $, we obtain for $0<\rho'<\rho\leq 1, \hd \gamma\in (0,\frac{\tilde{p}}{\kappa}]$
\[
\|u_b\|_{L_{\gamma\kappa}(V'_{\rho'},d\muf_{N+1})} \leq \left(\frac{C}{(\rho-\rho')^{2}(K(r))^{2}(\eta \om_{N}r^{N}\vdr)^{-\frac{\kappa-1}{\kappa}}}\right)^{\frac{1}{\gamma}}
\|u_b\|_{L_{\gamma}(V'_{\rho},d\muf_{N+1})},
\]
for $0<\rho'<\rho\leq 1$ and $\gamma\in (0,\frac{\tilde{p}}{\kappa}]$. Note that $\muf_{N+1}(V_{1}')= \muf_{N+1}(U_{\sigma}')= \sigma^{N+1}\leq 1$ so that we may apply the second Moser lemma (see Lemma~\ref{moserit2}) to the result
\[
\|u_b\|_{L_{\tilde{p}}(V'_{\vs},d\muf_{N+1})} \leq \left(\frac{C^{\frac{\kappa(\kappa+1)}{\kappa-1}} \cdot 2^{\frac{2\kappa^{3}}{(\kappa-1)^{3}}} }{(1-\vs)^{\frac{2\kappa(\kappa+1)}{\kappa-1}}[(K(r))^{2}(\eta \om_{N}r^{N}\vdr)^{-\frac{\kappa-1}{\kappa}}]^{(\frac{\kappa(\kappa+1)}{\kappa-1})}}\right)^{\frac{1}{\gamma}-\frac{1}{\tilde{p}}}
\|u_b\|_{L_{\gamma}(V'_{1},d\muf_{N+1})},
\]
where  $ \vs\in (0,1)$ and $ \gamma\in (0,\frac{\tilde{p}}{\kappa}]$.  We would like to come back with normalization. To this end we multiply by  $(\eta \om_{N}r^{N}\vdr)^{\frac{1}{\tilde{p}}} =(\eta \om_{N}r^{N}\vdr)^{\frac{1}{\gamma}}(\eta \om_{N}r^{N}\vdr)^{\frac{1}{\tilde{p}}-\frac{1}{\gamma}}$. Then we have
\[
\|u_b\|_{L_{\tilde{p}}(V'_{\vs})} \leq
\left(\frac{C^{\frac{\kappa(\kappa+1)}{\kappa-1}} \cdot 2^{\frac{2\kappa^{3}}{(\kappa-1)^{3}}} }{(1-\vs )^{\frac{2\kappa(\kappa+1)}{\kappa-1}}\eta \om_{N}r^{N}\vdr [(K(r))^{2}(\eta \om_{N}r^{N}\vdr)^{-\frac{\kappa-1}{\kappa}}]^{(\frac{\kappa(\kappa+1)}{\kappa-1})}}\right)^{\frac{1}{\gamma}-\frac{1}{\tilde{p}}}
\|u_b\|_{L_{\gamma}(V'_{1})}.
\]
Observe that from (\ref{Krdopotegi}) and Lemma \ref{scaling} we get for $r \leq r^{*}$,
\eqns{
\left[(K(r))^{2}\left(\eta \om_{N}r^{N}\vdr\right)^{-\frac{\kappa-1}{\kappa}}\right]^{\frac{\kappa}{\kappa-1}} & =
(K(r))^{\frac{2\kappa}{\kappa-1}}\frac{1}{\eta \om_{N}r^{N}\vdr} \\
& = \frac{r^{\frac{2p_0}{p_0-1}}}{\eta \om_{N}\vdr \| l\|_{L_{p_0}((0,\vdr))}^{\frac{p_0}{p_0-1}}}\geq \frac{C(\overline{c},p_0)}{\eta \om_{N}}.
}
Therefore, for $r \leq r^{*}$, we have
\[
\|u_b\|_{L_{\tilde{p}}(V'_{\vs})} \leq
\left(\frac{C }{(1-\vs )^{\gamma_{0}}\eta \om_{N}r^{N}\vdr }\right)^{\frac{1}{\gamma}-\frac{1}{\tilde{p}}}
\|u_b\|_{L_{\gamma}(V'_{1})} \hd \m{ for } \hd \vs \in (0,1), \hd \gamma\in (0,\frac{\tilde{p}}{\kappa}],
\]
where $C=C(\eta,\Lambda, \delta,\nu, N, \tilde{p},p_0,q_1,q_2,\overline{c})$ and $\gamma_{0}= \gamma_{0}( N, p_0)$.  If we take $\vs=\sigma'/\sigma$, then $V_{\vs}'=U_{\sigma'}'$. Hence, if we  notice that $\nuNj(U_{1}') = \eta \om_{N} r^{N} \vdr$, then
\eqnsl{
\|u_b\|_{L_{\tilde{p}}(U_{\sigma'}')}\le \Big(\frac{C  \nuNj(U_{1}')^{-1}}{(\sigma-\sigma')^{\gamma_{0}}  }\Big)^{1/\gamma-1/\tilde{p}}
\|u_b\|_{L_{\gamma}(U'_\sigma)}, \hd \delta< \sigma'<\sigma\leq 1,  \quad 0<\gamma\le \tilde{p}/\kappa
}{L11}
and the proof is finished.

\end{proof}
%*************************************************************************
%***********************************************************************************
\subsection{Logarithmic estimates} \label{logsection}
%**********************************************************************************
In this section we prove the weak $L_1$ estimates for the logarithm of positive supersolutions, which are required for the application of the Bombieri-Giusti lemma.
Theorem \ref{localweakHarnack}  follows from Theorem \ref{logest}, Theorem \ref{superest1} and Theorem \ref{superest2} by suitable application of Bombieri-Giusti lemma (Lemma \ref{abslemma}). Since, the proof is the same as the proof of Theorem 1.1 in \cite[section 3.4]{harnackdistr} we omit it. \\
The proof of Theorem \ref{logest}, in general, follows the reasoning from \cite{base} and \cite{harnackdistr}. However, dealing with the problem with unbounded inhomogeneity $f$, which is harmless in a classical parabolic problem, requires a significant change of the approach in our case. Concerning the result itself, here we assume that not only $r$ but also the number $\tau$ is small enough. It is a technical assumption, which does not influence the final regularity result (Theorem \ref{holder}). In what follows,
by $r^{*}\in (0,r_0/2)$ we mean the minimum of the number $r^{*}$ provided by Lemma \ref{scaling} and
$\Phi^{-1}(\tilde{t}_0)$, where $\tilde{t}_0$ comes from
\eqref{ak2}.

\begin{satz} \label{logest}
Let  $T>0$ and $\Omega\subset \iR^N$ be a bounded
domain. Suppose the assumptions (H1)--(H3) and (\ref{ak1}),(\ref{ak2}) are satisfied. Let $\delta,\,\eta\in(0,1)$ be fixed. Then there exists a $\tau^{*}=\tau^*(q_1, p_0, \delta,N,\nu,\Lambda,\overline{c}) > 0$ such that for any $0<r \leq r^{*}$, $0< \tau \leq \tau^{*}$, for any $t_0\ge
0$ with $t_0+\tau \vr\le T$, any ball
$B_r(x_0)\subset\Omega$, and any weak supersolution $u\geq 0$ of (\ref{MProb}) in $(0,t_0+\tau \vr)\times
B_r(x_0)$ with $u_0\ge 0$ in $B_r(x_0)$, there is a constant $c=c(u_b)$ such that
\begin{equation} \label{logestleft}
\nuNj\big(\{(t,x)\in K_-: \log u_b(t,x)>c(u_b)+\lambda\}\big)\le C
\vr \nuN(B_r)\lambda^{-1},\quad \lambda>0,
\end{equation}
and
\begin{equation} \label{logestright}
\nuNj\big(\{(t,x)\in K_+: \log u_b(t,x)<c(u_b)-\lambda\}\big)\le C
\vr \nuN(B_r)\lambda^{-1},\quad \lambda>0,
\end{equation}
where $K_-=(t_0,t_0+\eta \tau \vr)\times B_{\delta r}(x_0)$ and
$K_+=(t_0+\eta \tau \vr,t_0+\tau \vr)\times
B_{\delta r}(x_0)$. Here the constant $C$ depends only on $\delta, \eta,
\tau, N, \nu, p_0, \overline{c},\tilde{c}$, and $\Lambda$.
\end{satz}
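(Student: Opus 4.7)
The plan is to adapt the logarithmic-test strategy of \cite{base} and \cite[Thm.~3.4]{harnackdistr}, substantially reworking the analysis to accommodate the new, unbounded inhomogeneity $f$. All arguments are carried out on the time-shifted problem, starting from the regularized supersolution inequality (\ref{shiftprob}) with $\tilde u_b(s,x)=u_b(t_0+s,x)$. Test it with $v=\psi^{2}/\tilde{u}_b$, where $\psi\in C_c^{\infty}(B_r(x_0))$ is a purely spatial cut-off with $\psi\equiv 1$ on $B_{\delta r}(x_0)$, $0\le\psi\le 1$, $|D\psi|\le C/((1-\delta)r)$, and convex super-level sets so that Proposition~\ref{WeiPI} applies. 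Apply the fundamental identity (\ref{fundidentity}) to the convex function $H(y)=-\log y$ (the memory term is then nonnegative and may be dropped), estimate the elliptic term via (H2) and Young's inequality as in the proof of Theorem~\ref{superest1}, and pass to the limit $n\to\infty$. Setting $V(s)=\|\psi\|_{L_2}^{-2}\int_{B_r}\psi^{2}\log\tilde u_b(s,\cdot)\,dx$, $\Xi(s)=\|\psi\|_{L_2}^{-2}\int\psi^{2}|D\log\tilde u_b|^{2}\,dx$, and $\Theta(s)=\|\psi\|_{L_2}^{-2}\int\psi^{2}|\tilde f|/\tilde u_b\,dx$, one arrives at a scalar differential inequality coupling $\partial_s(k\ast V)(s)$, $V(s)k(s)$, $\Xi(s)$, $\Theta(s)$ and a zero-order term of order $r^{-2}$.

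Fix $c(u_b)$ as the $\psi^{2}$-weighted average of $\log\tilde u_b$ at an appropriately chosen time slice separating $K_-$ from $K_+$ and set $\widetilde V=V-c(u_b)$. The weighted Poincar\'e inequality (Proposition~\ref{WeiPI}) bounds $\Xi(s)$ from below by $c_N r^{-2}$ times the $\psi^{2}$-weighted $L_2$-variance of $\log\tilde u_b(s,\cdot)$. Use this together with Lemma~\ref{comm2} to re-express the $V(s)k(s)$ term, and split $\Theta(s)$ via the universal bound $\tilde u_b^{-1}\le b^{-1}$, spatial H\"older at exponent $q_2$, and the definition of $b$, to obtain an abstract nonlocal inequality of the form advertised in the introduction,
\begin{equation*}
\partial_{s}(k\ast \widetilde V)(s)+\theta(s)\,\widetilde V(s)\ge g(s),\qquad s\in(0,\tau\Phi(r)),
\end{equation*}
with $\theta\in L_{q_1}(0,\tau\Phi(r))$ built from the spatial $L_{q_2}$-norm of $\tilde f$, and $g\in L_1$ controlled by $r^{-2}$ times a bounded constant.

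The main obstacle is the analysis of this inequality with a merely $L_{q_1}$-coefficient $\theta$: in the bounded-$f$ cases of \cite{base,harnackdistr} the coefficient is constant, so the standard nonlocal Gronwall technology of \cite{VZ} applies directly. My plan is to convolve with $l$, apply $k\ast l=1$ and Lemma~\ref{comm}, and then use H\"older in time together with (\ref{ak1}), Lemma~\ref{scaling} and Proposition~\ref{estiprop} to obtain an integral inequality in which the $\theta\widetilde V$-term appears with a multiplicative constant behaving like a positive power of $\tau$. The smallness condition $\tau\le\tau^{*}(q_1,p_0,\delta,N,\nu,\Lambda,\overline c)$ is imposed precisely so that this constant is strictly less than $1$ and can be absorbed on the left-hand side, yielding a uniform pointwise bound $|\widetilde V(s)|\le C$ on $(0,\tau\Phi(r))$.

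This uniform bound on $\widetilde V$, together with the $L_2$-variance control from the previous paragraph, provides a uniform spatial $L_1$-bound for $\log\tilde u_b(s,\cdot)-c(u_b)$ on $B_{\delta r}$, and Markov's inequality delivers (\ref{logestleft}) on $K_-$ with the prefactor $C\,\Phi(r)\,\nuN(B_r)\,\lambda^{-1}$. The companion estimate (\ref{logestright}) on $K_+$ is obtained by running the whole scheme with $H(y)=\log y$ (concave): the memory term in (\ref{fundidentity}) then has the wrong sign, and is controlled by the structural assumption (\ref{ak2}) in its equivalent form (\ref{ak2prim}), exactly as in \cite[Thm.~3.4]{harnackdistr}. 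The third paragraph above is the hardest step; the remaining arguments are technically involved but largely follow the classical Moser logarithmic scheme.
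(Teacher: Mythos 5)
Your overall scaffolding (test with $\psi^{2}\tilde u_b^{-1}$, apply \eqref{fundidentity} with $H(y)=-\log y$, use the weighted Poincar\'e inequality to control the spatial variance, push the unbounded coefficient $\theta\in L_{q_1}$ by choosing $\tau$ small so that the operator $l*(\theta\,l*\cdot)$ has norm $<1/2$) is broadly aligned with the paper's proof. However, there is a genuine gap at the heart of the third and fourth paragraphs.

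You claim to deduce a \emph{uniform pointwise} bound $|\widetilde V(s)|\le C$ on $(0,\tau\Phi(r))$, and then a \emph{uniform} spatial $L_1$-bound for $\log\tilde u_b(s,\cdot)-c(u_b)$, from which Markov's inequality would finish the proof. Neither of these uniform bounds holds, even in the classical parabolic case $k=\delta_0$, $f=0$. The weighted mean $V(s)$ is unbounded as $s\to 0^{+}$ (from one side) no matter how $c(u_b)$ is chosen; what actually holds is only a \emph{weak $L_1$} estimate for the exceedance set $\{s:|V(s)-c(u_b)|>\lambda\}$. Indeed, in the paper's argument the $I_4$-estimate produces only the lower bound $e^{\tilde W(s)}\ge \tfrac{\tilde c\,s\,l(s)}{2\rho}(k*e^W)(\eta\rho)$, which permits $c(u_b)-\tilde W(s)\to\infty$ as $s\to 0^{+}$; and the $I_2$-estimate yields the integral bound $\int_{J_-}e^{W-c(u_b)}\,dt\le 2A\eta\rho$, not a sup-bound. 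Your absorption argument (power-of-$\tau$ smallness swallowing the $\theta\widetilde V$-term) is the right mechanism, but it can only produce such integral or weak-type estimates; a one-sided (supersolution) inequality involving $\partial_s(k*\widetilde V)$ does not yield pointwise control of $\widetilde V$ at all.

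The same issue affects the spatial part: the Poincar\'e estimate gives $\int_{B_{\delta r}}(\log\tilde u_b(s,\cdot)-V(s))^2\,dx\lesssim r^2\,\Xi(s)$ with $\Xi$ the weighted Dirichlet energy of $\log\tilde u_b$, and $\Xi$ is only bounded after integration in time, not pointwise in $s$. To obtain \eqref{logestleft}--\eqref{logestright} one must therefore split each exceedance set into a spatial part (where $w(t,x)-W(t)>\lambda/2$) and a temporal part (where $W(t)-c(u_b)>\lambda/2$), as in the paper's decomposition into $I_1,I_2,I_3,I_4$. The spatial parts $I_1,I_3$ require a second application of the fundamental identity \eqref{fundidentity} to carefully truncated concave functions $\bar H$ of $e^W$ (not just Markov's inequality); the temporal parts $I_2,I_4$ require the integral weak-$L_1$ analysis for $W$. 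Your proposal compresses both of these into a nonexistent pointwise bound, so as written it would fail. Relatedly, your choice of $c(u_b)$ as a point evaluation of the weighted mean differs from the paper's convolution-based choice $c(u_b)=\log\bigl((k*e^W)(\eta\rho)/(A\eta\rho\, k_1(\eta\rho))\bigr)$, which is essential for the nonlocal monotonicity argument used to estimate $I_2$ (namely $(k*\Psi)(t)\le(k*\Psi)(\eta\rho)$).
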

%*************************************************************************
\begin{proof} To shorten the notation we write again $B_{r} = B_{r}(x_{0})$. Let $r \in (0,r^{*}]$ and fix $1 < p \leq p_0$. Then, in particular $\Phi(r^{*}) \leq 1$. Since $u_0\ge 0$ in $B_r$ and $u$ is a nonnegative weak
supersolution we may assume without loss of generality that $u_0=0$. Furthermore, as in \cite{base} and \cite{harnackdistr}, by making a suitable time-shift, we may restrict ourselves to the case $t_0=0$ and discuss the problem on $J= [0,\tau \Phi(r)]$. We thus have
\begin{equation} \label{log1}
\int_{B_r} \Big(v \partial_t(k_{n}\ast u_b)+(h_n\ast
[ADu_b]|Dv)\Big)\,dx \ge \,\int_{B_r} h_n *f v dx,\quad\mbox{a.a.}\;t\in J,\,n\in \iN,
\end{equation}
for any nonnegative test function $v\in \oH^1_2(B_r)$.
We define the cut-off function $\psi\in C^1_0(B_r)$ such that supp$\,\psi\subset B_r$, $\psi=1$ in
$B_{\delta r}$, $0\le \psi \le 1$, $|D \psi|\le 2/[(1-\delta)r]$ and
the domains $\{x\in B_r:\psi(x)^2 \ge \hat{b}\}$ are convex for all $\hat{b}\le
1$. Then, for $t\in J$ we choose the test function $v=\psi^2 u_b^{-1}$.
 Then
\[ Dv=2\psi D\psi \,u_b^{-1}-\psi^2 u_b^{-2}Du_b.\]
Inserting this relation in (\ref{log1}) we obtain for a.a. $t\in
J$
\begin{align} \label{log1a}
-\int_{B_r} \psi^2 u_b^{-1}\partial_t & (k_{n}\ast
u_b)\,dx+\int_{B_r}\big(ADu_b|u_b^{-2}Du_b\big)\psi^2\,dx\nonumber\\ &\le
2\int_{B_r}\big(ADu_b|u_b^{-1}\psi D\psi\big)\,dx - \int_{B_r} (h_n *f)\cdot u_b^{-1} \psi^2\,dx +\mathcal{R}_n(t),
\end{align}
where
\[
\mathcal{R}_n(t)=\int_{B_r}\big(h_n\ast [ADu_b]-ADu_b|Dv\big)\,dx.
\]

By the assumption (H1) and Young's inequality, we may estimate
\[
\big|2\big(ADu_b|u_b^{-1}\psi D\psi\big)\big|\le 2\Lambda \psi
|D\psi|\,|Du_b| u_b^{-1}\le \frac{\nu}{2}\,\psi^2|Du_b|^2
u_b^{-2}+\frac{2}{\nu}\,\Lambda^2|D\psi|^2.
\]
Using this, assumption (H2) and the estimate $|D \psi|\le 2/[(1-\delta)r]$ in
(\ref{log1a}) leads to
\begin{equation} \label{log2}
-\int_{B_r} \psi^2 u_b^{-1}\partial_t(k_{n}\ast
u_b)\,dx+\frac{\nu}{2}\,\int_{B_r} |Du_b|^2 u_b^{-2} \psi^2\,dx \le \frac{8
\Lambda^2 \nuN(B_r)}{\nu (1-\delta)^2 r^2}\,+\mathcal{R}_n(t)- \int_{B_r} (h_n *f)\cdot u_b^{-1} \psi^2\,dx,
\end{equation}
for a.a. $t\in J$. Setting $w=\log u_b$, we have $Dw=u_b^{-1} Du_b$. The weighted Poincar\'e
inequality from Proposition \ref{WeiPI} with weight $\psi^2$ yields
\begin{equation} \label{log3}
\int_{B_r} (w-W)^2 \psi^2 dx \le \frac{8 r^2 \nuN(B_r)}{\int_{B_r} \psi^2
dx}\,\int_{B_r} |Dw|^2 \psi^2 dx,\quad \mbox{a.a.}\;t\in J,
\end{equation}
where
\[ W(t)=\,\frac{\int_{B_r} w(t,x) \psi(x)^2 dx}{\int_{B_r} \psi(x)^2
dx}\,,\quad\quad \mbox{a.a.}\;t\in J.
\]
The estimates (\ref{log2}) and (\ref{log3}) imply that
\begin{align*}
-\int_{B_r} \psi^2 u_b^{-1}\partial_t(k_{n}\ast u_b)
\,dx &+\,\frac{\nu \int_{B_r} \psi^2 dx}{16r^2 \nuN(B_r)}\,\int_{B_r} (w-W)^2
\psi^2 dx \\
& \le \frac{8 \Lambda^2 \nuN(B_r)}{\nu (1-\delta)^2
r^2}\,+\mathcal{R}_n(t) + \frac{1}{b}\int_{B_r} |h_n *f|\psi^2\,dx,
\end{align*}
which entails that
\begin{align}
\frac{-\int_{B_r} \psi^2 u_b^{-1}\partial_t(k_{n}\ast u_b)
\,dx}{\int_{B_r} \psi^2 dx} & +\,\frac{\nu}{16r^2 \nuN(B_r)}\,\int_{{B_{\delta r}}} (w-W)^2 dx
\nonumber\\
&\le \frac{C_1}{r^2}\,+S_n(t) + \frac{1}{\int_{B_r} \psi^2 dx}\frac{1}{b}\int_{B_r} |h_n *f|\psi^2\,dx ,
\quad \mbox{a.a.}\;t\in J,
\label{log4}
\end{align}
with some constant
$C_1=C_1(\delta,N,\nu,\Lambda)$ and
$S_n(t)=\mathcal{R}_n(t)/\int_{B_r} \psi^2 dx$.

The fundamental identity (\ref{fundidentity}) with $H(y)=-\log y$
reads (with the spatial variable $x$ being suppressed)
\begin{align*}
-u_b^{-1}\partial_t(k_{n} & \ast
u_b)=-\partial_t(k_{n}\ast \log u_b)+(\log
u_b-1)k_{n}(t)\nonumber\\
&+\int_0^t \Big(-\log u_b(t-s)+\log
u_b(t)+\frac{u_b(t-s)-u_b(t)}{u_b(t)}\Big)[-\dot{k}_{n}(s)]\,ds.
\end{align*}
Recalling $w=\log u_b$, we thus have
\begin{align} \label{log5}
-u_b^{-1}\partial_t(k_{n}  \ast u_b)= & \,
-\partial_t(k_{n}\ast
w)+(w-1)k_{n}(t)\nonumber\\
& \,\,+\int_0^t
\Psi\big(w(t-s)-w(t)\big)[-\dot{k}_{n}(s)]\,ds,
\end{align}
where $\Psi(y)=e^y-1-y$. Due to convexity of $\Psi$, it follows from
Jensen's inequality that
\[
\frac{\int_{B_r} \psi^2 \Psi\big(w(t-s,x)-w(t,x)\big)\,dx}{\int_{B_r}
\psi^2 dx} \ge \Psi \Big( \frac{\int_{B_r} \psi^2
\big(w(t-s,x)-w(t,x)\big)\,dx}{\int_{B_r} \psi^2 dx}\Big).
\]
Using this and (\ref{log5}) we obtain
\begin{align}
\frac{-\int_{B_r} \psi^2 u_b^{-1}\partial_t(k_{n}\ast u_b)
\,dx}{\int_{B_r} \psi^2 dx} & \ge -\partial_t(k_{n}\ast
W)+(W-1)k_{n}(t)\nonumber\\
&\quad +\int_0^t
\Psi\big(W(t-s)-W(t)\big)[-\dot{k}_{n}(s)]\,ds \nonumber\\
& = -e^{-W} \partial_t(k_{n}\ast e^W), \label{log6}
\end{align}
where in the last equality we applied again (\ref{log5}), but with $u_b$
replaced by $e^W$. From (\ref{log4}) and (\ref{log6}) we conclude
that
\begin{align}
\frac{\nu}{16r^2 \nuN(B_r)}\,\int_{B_{\delta r}} (w-W)^2 dx & \le e^{-W}
\partial_t(k_{n}\ast
e^W)+\,\frac{C_1}{r^2} \nonumber\\
&\quad\; +S_n(t)+\frac{1}{\int_{B_r} \psi^2 dx}\frac{1}{b}\int_{B_r} |h_n *f|\psi^2\,dx,\quad \mbox{a.a.}\;t\in J.
\label{log7}
\end{align}
The last inequality will be the starting point for our subsequent considerations.

At first, we choose
\eqq{
c(u_b) = \log \left(\frac{(k*e^{W})(\eta \tau \vr)}{A\eta \tau \vr\, k_1(\eta \tau \vr)}\right),
}{cwahl}
where $A$ is a positive constant depending only on $\tilde{c}$, which comes from (\ref{ak2}), and will be chosen later.
This definition makes sense, since $k\ast e^W\in C(J)$.
The latter is a consequence of $k\ast u_b\in C(J;L_2(B_r))$
and
\[
e^{W(t)}\le \,\frac{\int_{B_r} u_b(t,x) \psi(x)^2 dx}{\int_{B_r} \psi(x)^2
dx}\,,\quad\quad \mbox{a.a.}\;t\in J,
\]
where we again apply Jensen's inequality.

Similarly as in \cite{base} and \cite{harnackdistr}, to prove (\ref{logestleft}) and (\ref{logestright}), we use the inequalities
\begin{align}
\nuNj(\{(t,x) & \in K_-:\; w(t,x)>c(u_b)+\lambda\})\nonumber\\
\le &\;\nuNj(\{(t,x)\in K_-:
w(t,x)>c(u_b)+\lambda\;\,\mbox{and}\,\;W(t)\le c(u_b)+\lambda/2 \})\nonumber\\
&\; +\nuNj(\{(t,x)\in K_-:\,W(t)> c(u_b)+\lambda/2
\})=:I_1+I_2,\quad \lambda>0,\label{mainleft}\\
\nuNj(\{(t,x) & \in K_+:\; w(t,x)<c(u_b)-\lambda\})\nonumber\\
\le &\;\nuNj(\{(t,x)\in K_+:
w(t,x)<c(u_b)-\lambda\;\,\mbox{and}\,\;W(t)\ge c(u_b)-\lambda/2 \})\nonumber\\
&\; +\nuNj(\{(t,x)\in K_+:\,W(t)< c(u_b)-\lambda/2
\})=:I_3+I_4,\quad \lambda>0.\label{mainright}
\end{align}
We will estimate each of the four terms $I_j$ separately,
beginning with $I_1$ and $I_3$. Here, in contrast to $I_2$ and $I_4$, the reasoning does not differ much from the one carried out in \cite{base} and \cite{harnackdistr}. We nevertheless present a proof for the sake of completeness. Set $J_-:=(0,\eta
\tau \vr)$, $J_+:=(\eta\tau \vr,\tau
\vr)$.

{\bf Estimate of $I_{1}$.}
We introduce the following notation: $J_1(\lambda)=\{t\in
J_-:\,c(u_b)-W(t)+\lambda/2\ge 0\}$ and $\Omega^-_t(\lambda)=\{x\in
B_{\delta r}:\,w(t,x)>c(u_b)+\lambda\},\,t\in J_1(\lambda)$, where $c(u_b)$
is given by (\ref{cwahl}). Then
\[ w(t,x)-W(t)>c(u_b)-W(t)+\lambda\ge \lambda/2,\quad x\in
\Omega^-_t(\lambda), \hd t\in J_1(\lambda),\] and (\ref{log7}) imply that
a.e.\ in $J_1(\lambda)$
\begin{align}
&\frac{\nu}{16r^2
\nuN(B_r)}\, \,\nuN\big(\Omega^-_t(\lambda)\big)
\nonumber \\
&\le
\frac{1}{(c(u_b)-W+\lambda)^2}\,\Big(e^{-W}
\partial_t(k_{n}\ast
e^W)+\,\frac{C_1}{r^2}\,+S_n+\frac{1}{\int_{B_r} \psi^2 dx}\frac{1}{b}\int_{B_r} |h_n *f|\psi^2\,dx\Big). \label{log8}
\end{align}
Set $\chi(t,\lambda)=\nuN\big(\Omega^-_t(\lambda)\big)$, if $t\in
J_1(\lambda)$, and $\chi(t,\lambda)=0$ in case $t\in J_-\setminus
J_1(\lambda)$. Further, we introduce $H(y)=(c(u_b)-\log y+\lambda)^{-1},\,0<y\le
y_*:=e^{c(u_b)+\lambda/2}$. We have $H'(y)= (c(u_b)-\log
y+\lambda)^{-2}y^{-1}$ and
\[ H''(y)=\,\frac{1}{(c(u_b)-\log y+\lambda)^2 y^2}\,\Big(\frac{2}{c(u_b)-\log
y+\lambda}-1\Big),\quad 0<y\le y_*.\] Thus $H$ is
concave in $(0,y_*]$ whenever $\lambda\ge 4$. In
what follows, we will assume that $\lambda\ge 4$.

We extend $H$ to a $C^1$ function on $(0,\infty)$ in such a way that the extension, denoted by $\bar{H}$, is concave, $0\le\bar{H}'(y)\le
\bar{H}'(y_*),\,y_*\le y \le 2 y_*$, and $\bar{H}'(y)=0,\,y\ge 2
y_*$. Note that for $y\in(0,y_*]$
\begin{equation} \label{log8aa}
y\bar{H}'(y)=\,\frac{1}{(c(u_b)-\log y+\lambda)^2}\,\le
\,\frac{1}{(c(u_b)-\log y_*+\lambda)^2}\,\le \,\frac{4}{\lambda^2}\,\le
\,\frac{1}{\lambda},
\end{equation}
while in case $y\in[y_*,2y_*]$ we may simply estimate
\[
y\bar{H}'(y)\le 2y_*\bar{H}'(y_*)\le \,\,\frac{2}{\lambda}.
\]
Therefore
\begin{equation} \label{log8a}
 0\le y\bar{H}'(y)\le
\,\frac{2}{\lambda},\quad y>0.
\end{equation}

Next, we shall show that
\begin{equation} \label{log8b}
\bar{H}(y)\le \,\frac{3}{\lambda},\quad y>0.
\end{equation}
Indeed, by monotonicity of $\bar{H}$ and since
$\bar{H}'(y)=0$ for all $y\ge 2 y_*$, the claim follows if the
inequality is valid for all $y\in [y_*,2y_*]$. For such $y$ we
have by (\ref{log8aa}) and by concavity of $\bar{H}$
\[
\bar{H}(y)\le \bar{H}(y_*)+\bar{H}'(y_*)(y-y_*)\le
\bar{H}(y_*)+y_*\bar{H}'(y_*)\le \,\frac{3}{\lambda}.
\]
Moreover, we have
\[ e^{W(t)} H'(e^{W(t)})=\,\frac{1}{(c(u_b)-W(t)+\lambda)^2
},\quad \mbox{a.a.}\;t\in J_1(\lambda).\] Since $\bar{H}'\ge 0$
and
\[ e^{-W}
\partial_t(k_{n}\ast
e^W)+C_1 r^{-2}+S_n +\frac{1}{\int_{B_r} \psi^2 dx}\frac{1}{b}\int_{B_r} |h_n *f|\psi^2\,dx\ge 0,
\quad \mbox{a.a.}\;t\in J_-,
\]
by virtue of (\ref{log7}), we
infer from (\ref{log8}) and (\ref{log8a}) that
\begin{align} \label{log9}
&\frac{\nu}{16r^2 \nuN(B_r)}\,\,\chi(t,\lambda)  \le
e^W\bar{H}'(e^{W})\Big(e^{-W}\partial_t(k_{n}\ast
e^W)+\,\frac{C_1}{r^2}\,+S_n+\frac{1}{\int_{B_r} \psi^2 dx}\frac{1}{b}\int_{B_r} |h_n *f|\psi^2\,dx\Big)\nonumber\\
& \le \bar{H}'(e^{W})\partial_t(k_{n}\ast
e^W)+\,\frac{2C_1}{\lambda r^2}\,+\,\frac{2|S_n(t)|}{\lambda}+\frac{2}{\lambda\int_{B_r} \psi^2 dx}\frac{1}{b}\int_{B_r} |h_n *f|\psi^2\,dx,
\quad\mbox{a.a.}\; t\in J_-.
\end{align}
The concavity of $\bar{H}$, together with the fundamental identity
(\ref{fundidentity}), yields
\begin{align*}
\bar{H}'(e^{W})\partial_t(k_{n}\ast e^W) & \le
\partial_t\Big(k_{n}\ast \bar{H}\big(e^W\big)\Big)
+ \Big(-\bar{H}(e^{W})+\bar{H}'(e^{W})
e^W\Big)k_{n}\\
& \le \partial_t\Big(k_{n}\ast
\bar{H}\big(e^W\big)\Big)+\,\frac{2}{\lambda}\,k_{n},\quad
\mbox{a.a.}\; t\in J_-,
\end{align*}
which, applied in (\ref{log9}), gives a.e.\ in $J_-$
\begin{equation} \label{log10}
\frac{\nu}{16r^2 \nuN(B_r)}\,\,\chi(t,\lambda) \le
\partial_t\Big(k_{n}\ast \bar{H}\big(e^W\big)\Big)
+\,\frac{2}{\lambda}\,k_{n}+ \,\frac{2C_1}{\lambda
r^2}\,+\,\frac{2|S_n(t)|}{\lambda}+ \frac{2}{\lambda\int_{B_r} \psi^2 dx}\frac{1}{b}\int_{B_r} |h_n *f|\psi^2\,dx\,.
\end{equation}
Using (\ref{log8b}) we have
\[ \Big(k_{n}\ast
\bar{H}\big(e^W\big)\Big)(\eta\rho)\le
\,\frac{3}{\lambda}\,\int_0^{\eta\rho}k_{n}(t)\,dt.
\]
Thus, integrating (\ref{log10}) over $J_-=(0,\eta \rho)$, sending $n\to \infty$, we arrive at
\begin{align*}
\int_{J_1(\lambda)} & \nuN   \big(\Omega^-_t(\lambda)\big) \,dt = \int_0^{\eta\rho} \chi(t,\lambda)\,dt \\
&
\leq \frac{16 r^{2}\nuN(B_r)}{\nu}\left(\frac{5}{\lambda} \eta\rho \frac{(1*k)(\eta \rho)}{\eta \rho} + \frac{2C_{1}}{\lambda r^{2}} \eta \rho +\frac{2}{\lambda\int_{B_r} \psi^2 dx}\frac{1}{b}\int_{0}^{\eta \rho}\int_{B_r} |f|\psi^2\,dxdt
\right).
\end{align*}
By H\"older's inequality,
\[
\int_{0}^{\eta\rho}\frac{1}{\int_{B_r} \psi^2 dx }\int_{B_r}\frac{|f|}{b}\psi^2\,dx dt \leq (\delta r)^{-N}\frac{1}{b}\norm{f}_{L_{q_{1}}(0,T);L_{q_2}(\Omega)}(\eta\rho)^{\frac{1}{q'_1}}r^{\frac{N}{q'_2}}.
\]
Hence, recalling the definition of $b$ and using Lemma \ref{fixy} we find that
\eqq{
\int_{0}^{\eta\rho}\frac{1}{\int_{B_r} \psi^2 dx }\int_{B_r}\frac{|f|}{b}\psi^2\,dx dt \leq c(\delta,N,\eta,\tau,p_0,\overline{c})r^{-2}\Phi(r).
}{inhzz}
Thus, applying Lemma \ref{kernels} and (\ref{zn1}) we obtain
%\[
%\int_{J_1(\lambda)}\nuN   \big(\Omega^-_t(\lambda)\big) \,dt \leq \frac{16 r^{2}\nuN(B_r)}{\nu \lambda } \left(5 \overline{c}
%\eta\rho k_1(\eta\rho)+ \frac{2C_{1}\eta\rho}{ r^{2}} + \delta^{-N}(\eta \tau)^{\frac{1}{q'_1}}\frac{\Phi(r)}{r^{2}}\right).
%\]
%Using (\ref{zn1}) we obtain
\[
I_{1}=\int_{J_1(\lambda)}\nuN   \big(\Omega^-_t(\lambda)\big) \,dt \leq c(\eta,\tau, \nu,\delta, N,p_0, \overline{c})\frac{\vr\nuN(B_r)}{\lambda},\quad\lambda\ge
4.
\]
For $\lambda <4$ we simply have
\[
I_{1} \leq |K_{-}|= (1-\eta) \tau \vr \delta^{N} \nuN(B_r) \leq 4\tau \delta^{N} \frac{\vr\nuN(B_r)}{\lambda},
\]
where we denoted by $|\cdot| $ the one-dimensional Lebesgue measure, in order to abbreviate the notation.
All in all we see that
\begin{equation} \label{I1est}
I_1\le \,c(\eta,\tau, \nu,\delta, N,p_0, \overline{c})\frac{ \vr\nuN(B_r)}{\lambda},\quad\lambda>0.
\end{equation}

{\bf Estimate of $I_{3}$.}
This estimate does not provide any new difficulties related to the consideration of a general kernel $k$ and an unbounded inhomogenity $f$. Thus we skip a large part of the proof. Here, as in \cite{base} and \cite{harnackdistr}, we shift the time by putting $s=t-\eta\rho$ and
denote the corresponding transformed functions as above by
$\tilde{W}$, $\tilde{w}$, ... and so forth. We set $\tilde{J}_+:=(0,(1-\eta)\rho)$, $J_2(\lambda)=\{s\in
\tilde{J}_+:\tilde{W}(s)-c(u_b)+\lambda/2\ge 0\}$ and
$\Omega_{s}^+(\lambda)=\{x\in B_{\delta r}:
\tilde{w}(s,x)<c(u_b)-\lambda\},\,s\in J_2(\lambda)$. Then we define
$\chi(s,\lambda)=\nuN\big(\Omega^+_{s}(\lambda)\big)$, if $s\in
J_2(\lambda)$, and $\chi(s,\lambda)=0$ in case $s\in
\tilde{J}_+\setminus J_2(\lambda)$. Performing the same calculations as in \cite{base}[Theorem 3.3.] and \cite{harnackdistr}[Theorem 3.3.] we arrive at
\nic{
It remains to derive the desired estimate for $I_3$. To this
purpose we shift again the time by putting $s=t-\eta\rho$, and
denote the corresponding transformed functions as above by
$\tilde{W}$, $\tilde{w}$, ... and so forth. Set further
$\tilde{J}_+:=(0,(1-\eta)\rho)$. By the time-shifting property
(\ref{shiftprop}) and by positivity of $e^W$, relation
(\ref{log7}) then implies
\begin{equation} \label{log10a}
\frac{\nu}{16r^2 \nuN(B_r)}\,\int_{B_{\delta r}}
(\tilde{w}-\tilde{W})^2 dx \le e^{-\tilde{W}}
\partial_s(k_{n}\ast
e^{\tilde{W}})+\,\frac{C_1}{r^2}\,+\tilde{S}_n(s) +\frac{1}{\int_{B_r} \psi^2 dx}\frac{1}{b}\int_{B_r} (h_n *|f|)^{\tilde{}}\psi^2\,dx,\quad
\mbox{a.a.}\;s\in \tilde{J}_+.
\end{equation}
Next, set $J_2(\lambda)=\{s\in
\tilde{J}_+:\tilde{W}(s)-c(u_b)+\lambda/2\ge 0\}$ and
$\Omega_{s}^+(\lambda)=\{x\in B_{\delta r}:
\tilde{w}(s,x)<c(u_b)-\lambda\},\,s\in J_2(\lambda)$. For $s\in
J_2(\lambda)$, we have
\[ \tilde{W}(s)-\tilde{w}(s,x)\ge
\tilde{W}(s)-c(u)+\lambda\ge \lambda/2,\quad
x\in\Omega_{s}^+(\lambda),
\]
and thus (\ref{log10a}) yields that a.e. in $J_2(\lambda)$
\begin{equation} \label{log12}
\frac{\nu}{16r^2
\nuN(B_r)}\,\,\nuN\big(\Omega^+_s(\lambda)\big)\le
\frac{1}{(\tilde{W}-c(u_b)+\lambda)^2}\,\Big(e^{-\tilde{W}}
\partial_s(k_{n}\ast
e^{\tilde{W}})+\,\frac{C_1}{r^2}\,+\tilde{S_n}+\frac{1}{\int_{B_r} \psi^2 dx}\frac{1}{b}\int_{B_r} (h_n *|f|)^{\tilde{}}\psi^2\,dx\Big).
\end{equation}

We proceed now similarly as above for the term $I_1$. Set
$\chi(s,\lambda)=\nuN\big(\Omega^+_{s}(\lambda)\big)$, if $s\in
J_2(\lambda)$, and $\chi(s,\lambda)=0$ in case $s\in
\tilde{J}_+\setminus J_2(\lambda)$. We consider this time the
convex function $H(y)=(\log y-c(u_b)+\lambda)^{-1}$ for $y\ge
y_*:=e^{c(u_b)-\lambda/2}$ with derivative $H'(y)=-(\log
y-c(u_b)+\lambda)^{-2} y^{-1}<0$. We define a $C^1$ extension $\bar{H}$
of $H$ on $[0,\infty)$ by means of
\[
\bar{H}(y)=\left\{ \begin{array}{l@{\;:\;}l}
H'(y_*)(y-y_*)+H(y_*) & 0\le y< y_* \\
H(y) & y\ge y_*.
\end{array} \right.
\]
Evidently, $-\bar{H}$ is concave in $[0,\infty)$ and
\begin{equation} \label{log13}
0\le-\bar{H}'(y)y \le \,\frac{1}{(\log
y_*-c(u_b)+\lambda)^2}\,\le\,\frac{1}{(\lambda/2)^2}\,\le
\,\frac{4}{\lambda},\quad y \ge 0,\;\lambda\ge 1 .
\end{equation}
We will assume $\lambda\ge 1$ in the subsequent lines.
Observe that
\[ -e^{\tilde{W}(s)} H'(e^{\tilde{W}(s)})=\,\frac{1}{(\tilde{W}(s)-c(u_b)+\lambda)^2
},\quad \mbox{a.a.}\;s\in J_2(\lambda).\]
Since $-\bar{H}'\ge 0$,
and $e^{-\tilde{W}}
\partial_s(k_{n}\ast
e^{\tilde{W}})+C_1 r^{-2}+\tilde{S_n}+\frac{1}{\int_{B_r} \psi^2 dx}\frac{1}{b}\int_{B_r} (h_n *|f|)^{\tilde{}}\psi^2\,dx\ge 0$ on $\tilde{J}_+$ due
to (\ref{log10a}), it thus follows from (\ref{log12}) and
(\ref{log13}) that
\begin{align} \label{log14}
\frac{\nu}{16r^2 \nuN(B_r)}\,\,\chi(s,\lambda) & \le
-e^{\tilde{W}}\bar{H}'(e^{\tilde{W}})\Big(e^{-\tilde{W}}\partial_s(k_{n}\ast
e^{\tilde{W}})+\,\frac{C_1}{r^2}\,+\tilde{S}_n+\frac{1}{\int_{B_r} \psi^2 dx}\frac{1}{b}\int_{B_r} (h_n *|f|)^{\tilde{}}\psi^2\,dx\Big)\nonumber\\
& \le -\bar{H}'(e^{\tilde{W}})\partial_s(k_{n}\ast
e^{\tilde{W}})+\,\frac{4C_1}{\lambda
r^2}\,+\,\frac{4|\tilde{S}_n(s)|}{\lambda}+\frac{4}{\lambda \int_{B_r} \psi^2 dx}\frac{1}{b}\int_{B_r} (h_n *|f|)^{\tilde{}}\psi^2\,dx, \quad\mbox{a.a.}\;
s\in \tilde{J}_+.
\end{align}
By concavity of $-\bar{H}$, the fundamental identity
(\ref{fundidentity}) provides the estimate
\begin{align*}
-\bar{H}'(e^{\tilde{W}}) & \partial_s(k_{n}\ast
e^{\tilde{W}})  \le -\partial_s\Big(k_{n}\ast
\bar{H}\big(e^{\tilde{W}}\big)\Big)+\Big(\bar{H}(e^{\tilde{W}})-
\bar{H}'(e^{\tilde{W}})e^{\tilde{W}}\Big)k_{n} \\
& \le -\partial_s\Big(k_{n}\ast
\bar{H}\big(e^{\tilde{W}}\big)\Big)+\bar{H}(0)k_{n} \le
-\partial_s\Big(k_{n}\ast
\bar{H}\big(e^{\tilde{W}}\big)\Big)
+\,\frac{6}{\lambda}\,k_{n},
\end{align*}
a.e. in $\tilde{J}_+$, which when combined with (\ref{log14})
leads to
}
\begin{align*}
& \frac{\nu}{16r^2 \nuN(B_r)}\,\,\chi(s,\lambda)\\ &\le
-\partial_s\Big(k_{n}\ast
\bar{H}\big(e^{\tilde{W}}\big)\Big)
+\,\frac{6}{\lambda}\,k_{n}+\,\frac{4C_1}{\lambda
r^2}\,+\,\frac{4|\tilde{S}_n(s)|}{\lambda}+\frac{4}{\lambda\int_{B_r} \psi^2 dx}\frac{1}{b}\int_{B_r} (|h_n *f|)^{\tilde{}}\psi^2\,dx,
\end{align*}
for a.a. $s\in \tilde{J}_+$ and $\lambda \geq 1$. We integrate this estimate over
$\tilde{J}_+$, send $n\to \infty$, apply (\ref{inhzz}) and then Lemma~\ref{kernels} and (\ref{zn1}) to get
\[
I_{3}=\int_{J_2(\lambda)}\nuN\big(   \Omega^+_s(\lambda)\big) \,ds
  = \int_0^{(1-\eta)\rho} \!\!\!\!\chi(s,\lambda)\,ds
  \]
  \[
  \leq \frac{16r^2\nuN(B_r)}{\nu}\,\Big(\,\frac{6}{\lambda}\,\int_{0}^{(1-\eta)\rho} k(s)ds +\,\frac{4C_1(1-\eta)\rho}{\lambda
  r^2} + c(\delta,N,\eta,\tau,p_0,\overline{c})\frac{\vr}{\lambda r^{2}}\,\Big)
\]
%\[
%= \,\frac{16r^2\nuN(B_r)}{\nu}\,\Big(\,\frac{6}{\lambda}\,\rho \frac{(1*k)(\rho)}{\rho}+\,\frac{4[C_1(1-\eta)\tau+\delta^{-N}(1-\eta)^{\frac{1}{q'_1}}\tau^{\frac{1}{q'_1}}]\vr}{\lambda
%  r^2}\,\Big)
%\]
%\[
%\leq c(\tau , C_{1}, \nu,\delta,N,\overline{c}) \frac{r^{2}\nuN(B_r)}{\lambda} \Big( \kjr \vr+\,\frac{\vr}{
%  r^2}\,\Big)
%\]
\[
\leq  c(\tau , C_{1}, \nu,\delta,N, \eta, p_0, \overline{c}) \frac{\nuN(B_r) \vr}{\lambda},\quad \lambda\ge 1.
\]
 For $\lambda \leq 1$ we have
\[
I_{3}\leq |K_{+}|=(1-\eta)\tau \vr \delta^{N} \nuN(B_r)\leq \tau \delta^{N} \frac{\vr \nuN(B_r)}{\lambda},
\]
and therefore, we obtain
\begin{equation} \label{I3est}
I_3\le \, c(\tau , C_{1}, \nu,\delta, N, p_0, \overline{c}) \frac{\vr\nuN(B_r)}{\lambda},\quad\lambda>0.
\end{equation}

Now we pass to the estimates for $W$. Here, the presence of an unbounded inhomogeneity term forces us to significantly modify the approach from \cite{base} and \cite{harnackdistr}.
At first, we introduce for $\lambda>0$ the sets
$J_-(\lambda):=\{t\in J_-:\,W(t)> c(u_b)+\lambda \}$ and
$J_+(\lambda):=\{t\in J_+:\,W(t)< c(u_b)-\lambda \}$.

{\bf Estimate of $I_{2}$.}
Let us denote $\overline{w}:=e^{W}$. Multiplying (\ref{log7}) by $e^{W}$ we arrive at
\[
\partial_t (k_n * \overline{w}) + \frac{C_{1}}{r^{2}} \overline{w} + S_{n}(t)\overline{w} + \frac{1}{\int_{B_r}\psi^{2}dx}\frac{1}{b}\int_{B_r}|h_n*f|\psi^{2}dx\,\overline{w} \geq 0.
\]
Recalling  the definition of $b$ and the properties of $\psi$ we see that
\[
\frac{1}{\int_{B_r}\psi^{2}dx}\int_{B_{r}}\frac{(|(h_n*f)(t,x)|)}{b}dx \leq \frac{2\omega^{\frac{1}{q'_1}}_{N}r^{\frac{N}{q'_{2}}}||(h_{n}*f)(t,\cdot)||_{L_{q_2}(\Omega)}}{\omega_N \delta^{N}r^{N}r^{2}r^{-\frac{N}{q_{2}}}(\vdr)^{-\frac{1}{q_{1}}}\norm{f}_{L_{q_{1}}((0,T);L_{q_{2}}(\Omega))}}
\]
\[
\leq c(\delta,N)\frac{(\vdr)^{\frac{1}{q_1}}}{r^{2}}\frac{||(h_{n}*f)(t,\cdot)||_{L_{q_2}(\Omega)}}{{\norm{f}_{L_{q_{1}}((0,T);L_{q_{2}}(\Omega))}}}.
\]
Thus, we obtain
\eqq{
\partial_t (k_n * \overline{w}) +\theta_n(t) \overline{w} + S_{n}(t)\overline{w} \geq 0,
}{difeq}
where we denote
\eqq{
\theta_n(t):= \frac{C_1}{r^2}+c(\delta,N)r^{-2}(\vdr)^{\frac{1}{q_1}}\frac{\norm{(h_n*f)(t,\cdot)}_{L_{q_2}(\Omega)}}{\norm{f}_{L_{q_1}((0,T);L_{q_2}(\Omega))}}.
}{defth}
Note that $\theta_n \rightarrow \theta$ in $L_{q_1}(0,\eta\rho)$, where
\[
\theta(t):= \frac{C_1}{r^2}+c(\delta,N)r^{-2}(\vdr)^{\frac{1}{q_1}}\frac{\norm{f(t,\cdot)}_{L_{q_2}(\Omega)}}{\norm{f}_{L_{q_1}((0,T);L_{q_2}(\Omega))}}.
\]
Setting $\rho=\tau \vr$ and integrating the inequality \eqref{difeq} from $t$ to $\eta \rho$ gives
\[
(k_n * \overline{w})(\eta\rho) - (k_n * \overline{w})(t)  + \int_{t}^{\eta\rho}\theta_n(s)\overline{w}(s)ds + \int_{t}^{\eta\rho}S_{n}(s)\overline{w}(s)ds  \geq 0.
\]

We may choose a subsequence $n_m$ such that $(k_{n_m} * \overline{w})(\eta\rho)\rightarrow (k* \overline{w})(\eta\rho)$ for almost all $r$ (recall that $\rho=\tau \vr$) and $(k_{n_m} * \overline{w})(t)\rightarrow (k* \overline{w})(t)$ for almost all $t \in J_{-}$. We proceed for such $r$ and $t$. Then we obtain
\eqq{
(k* \overline{w})(t) + \int_{0}^{t}\theta(s) \overline{w}(s)ds \leq (k* \overline{w})(\eta \rho) +\int_{0}^{\eta\rho}\theta(s) \overline{w}(s)ds \hd \m{ for } \hd a.a \hd t \in (0,\eta\rho).
}{chibound}
On the other hand,
 (\ref{difeq}) may be rewritten as
 \[
 \partial_t (k_n * \overline{w}) +\theta_n(t) h_n*\overline{w} + \theta_n(t)(\overline{w} - h_n*\overline{w}) + S_{n}(t)\overline{w}=:g_n \geq 0.
 \]
 Convolving with $l$ gives
\eqq{
 h_n*\overline{w} + l*(\theta_n [h_n*\overline{w}]) + l*(\theta_n(\overline{w} - h_n*\overline{w})) + l*( S_{n}\overline{w}) = l*g_n.
}{lgn}
We multiply (\ref{lgn}) by $\theta_n$ and convolve with $l$ to the result
\begin{align*}
 l*(\theta_n [h_n*\overline{w}]) & =  l*(\theta_n [l*g_n]) - l*(\theta_n [l*(\theta_n [h_n*\overline{w}])])\\
 & \quad - l*(\theta_n[l*(\theta_n(\overline{w} - h_n*\overline{w}))])  - l*(\theta_n [l*( S_{n}\overline{w})]).
\end{align*}
Inserting this into (\ref{lgn}) leads to
\[
 h_n*\overline{w} +l*(\theta_n [l*g_n]) - l*(\theta_n l*(\theta_n h_n*\overline{w}))- l*(\theta_nl*(\theta_n(\overline{w} - h_n*\overline{w})))
 \]
 \eqq{
 - l*(\theta_n l*( S_{n}\overline{w}))+ l*(\theta_n(\overline{w} - h_n*\overline{w})) + l*( S_{n}\overline{w}) = l*g_n.
}{lgna}

Using only the fact that $g_n \geq 0$, we will show that for $n$ large enough, $\tau$ small enough and $t \in (0,\eta\tau \vr)$
\[
l*(\theta_n [l* g_n])(t) \leq  \frac{1}{2}l*g_n(t).
\]
Indeed, we have
\begin{align*}
l*(\theta_n [l* g_n])(t) & = \int_{0}^{t}l(t-y)\theta_n(y)\int_{0}^{y}l(y-\xi)g_n(\xi)d\xi dy \\
& \leq \norm{\theta_n}_{L_{q_{1}}(0,\eta\tau\Phi(r))} \left(\int_{0}^{t}\left[l(t-y)\int_{0}^{y}l(y-\xi)g_n(\xi)d\xi\right]^{q'_{1}} dy\right)^{\frac{1}{q'_1}}\\
&\leq \norm{\theta_n}_{L_{q_{1}}(0,\eta\tau\Phi(r))}  \int_{0}^{t}g_n(\xi)\left(\int_{\xi}^{t}[l(t-y)l(y-\xi)]^{q'_{1}} dy\right)^{\frac{1}{q'_1}} d\xi,
\end{align*}
where we applied H\"older's and Minkowski's inequality. We note that
\begin{align*}
&\left(\int_{\xi}^{t}[l(t-y)  l(y-\xi)]^{q'_{1}} dy\right)^{\frac{1}{q'_1}}  \leq
\left(\int_{\frac{\xi+t}{2}}^{t}(l(t-y))^{q'_{1}} dy\right)^{\frac{1}{q'_1}}l(\frac{\xi+t}{2} - \xi)\nonumber\\ &\quad\quad\quad\quad\quad\quad \quad\quad +
\left(\int_{\xi}^{\frac{\xi+t}{2}}(l(y-\xi))^{q'_{1}} dy\right)^{\frac{1}{q'_1}}l(t-\frac{\xi+t}{2})\\
&\quad\quad\quad\quad \leq 2l(\frac{t-\xi}{2})\left(\int_{0}^{\frac{t-\xi}{2}}(l(y))^{q'_{1}} dy\right)^{\frac{1}{q'_1}} \leq
 2l(\frac{t-\xi}{2})\left(\int_{0}^{t}(l(y))^{q'_{1}} dy\right)^{\frac{1}{q'_1}}\\
 & \quad\quad\quad\quad\leq 2l(\frac{t-\xi}{2})\left(\int_{0}^{(1-\eta)\tau \Phi(r)}(l(y))^{q'_{1}} dy\right)^{\frac{1}{q'_1}}.
\end{align*}
Using Remark \ref{ak31}, Lemma \ref{lpoint} and Lemma \ref{scaling} we deduce that
\begin{align*}
\left(\int_{\xi}^{t}[l(t-y)l(y-\xi)]^{q'_{1}} dy\right)^{\frac{1}{q'_1}} & \leq c(\overline{c},p_0)\,l(\frac{t-\xi}{2}) \tau^{\frac{1}{p'_0}-\frac{1}{q_1}} \norm{l}_{L_{q'_1}(0,\Phi(r))} \\& \leq c(\overline{c},p_0)\, \tau^{\frac{1}{p'_0}-\frac{1}{q_1}}  r^{2}(\Phi(r))^{-\frac{1}{q_1}} l(t-\xi).
\end{align*}
Furthermore, using that $\eta \in (0,1)$ we obtain for $\tau \in (0,1)$ and $n$ large enough that
\begin{align*}
\norm{\theta_n}_{L_{q_{1}}(0,\eta\tau\Phi(r))} &\leq 2\norm{\theta}_{L_{q_{1}}(0,\eta\tau\Phi(r))} \leq c(q_{1})\Big[\frac{C_{1}}{r^{2}}(\Phi(r))^{\frac{1}{q_{1}}} + c(\delta,N)\frac{(\vdr)^{\frac{1}{q_{1}}}}{r^{2}}\Big]\\
& \leq c(q_1,\delta,C_{1},N)\frac{(\vdr)^{\frac{1}{q_{1}}}}{r^{2}}.
\end{align*}
The previous estimates and $C_{1}=C_{1}(\delta,N,\nu,\Lambda)$ now lead to
\eqq{
l*(\theta_n l* g_n)(t) \leq  c(p_0, q_1,\delta,\nu,\Lambda, N, \overline{c}) \tau^{\frac{1}{p'_0}-\frac{1}{q_1}}\int_{0}^{t}l(t-\xi)g_n(\xi)d\xi.
}{iterest}
We recall that by (H3) $\frac{1}{p'_0} >\frac{1}{ q_1}$. Thus, there exists a $\tau^{*}\in (0,1)$ depending on $q_1,p_0,\delta,\nu,\Lambda, N, \overline{c}$ such that for sufficiently large $n$, any $\tau \in (0,\tau^{*}]$ and for $t \in (0,\eta\tau \Phi(r))$, there holds
\[
l*(\theta_n l* g_n)(t) \leq \frac{1}{2}(l*g_n)(t).
\]
Inserting this estimate in (\ref{lgna}) and using that $l*(\theta_n l*(\theta_n h_n*\overline{w}))\ge 0$ we arrive at
\begin{align}
 h_n*\overline{w} \geq & \;\frac{1}{2}l*g_n
 +l*(\theta_n l*( S_{n}\overline{w}))- l*( S_{n}\overline{w})\nonumber\\
 & \;- l*(\theta_n(\overline{w} - h_n*\overline{w}))+l*(\theta_n l*(\theta_n(\overline{w} - h_n*\overline{w}))),
\label{i2lim}
\end{align}
for sufficiently large $n$. Now, we would like to pass to the limit with $n$.
 Note that
 \[
 \norm{l*(\theta_n l*(\theta_n(\overline{w} - h_n*\overline{w})))}_{L_{1}(0,\eta\rho)}  \leq \norm{l}_{L_{1}(0,\eta\rho)}\norm{\theta_n}_{L_{q_1}(0,\eta\rho)}\norm{l*(\theta_n(\overline{w} - h_n*\overline{w}))}_{L_{q'_1}(0,\eta\rho)}
 \]
 \[
 \leq  \norm{l}_{L_{1}(0,\eta\rho)}\norm{\theta_n}^{2}_{L_{q_1}(0,\eta\rho)}\norm{l}_{L_{q'_1}(0,\eta\rho)}\norm{\overline{w} - h_n*\overline{w}}_{L_{q'_1}(0,\eta\rho)} \rightarrow 0\quad \m{ as } n\rightarrow \infty.
 \]
 %and
 %\[
 %\norm{\theta_{n}h_n*\overline{w} - \theta\overline{w} }_{L_{1}(0,\eta\rho)} \leq \norm{\theta_{n}}_{L_{q_1}(0,\eta\rho)}\norm{h_n*\overline{w} -\overline{w} }_{L_{q'_1}(0,\eta\rho)} + \norm{\overline{w} }_{L_{q'_1}(0,\eta\rho)} \norm{\theta_{n}-\theta}_{L_{q_1}(0,\eta\rho)} \rightarrow 0 \m{ as } n \rightarrow \infty.
 %\]
 Thus, the two last terms on the right-hand side of (\ref{i2lim}) tend to zero in $L_{1}((0,\eta\rho))$.
 Noting that $S_n \rightarrow 0$ in $L_{2}((0,\eta\rho))$ and $\theta_n \rightarrow \theta$ in $L_{q_1((0,\eta\rho))}$,
 from (\ref{lgn}) we obtain that $l*g_n \rightarrow \overline{w} + l*(\theta \overline{w})$ in $L_{1}((0,\eta\rho))$. Thus, passing to the limit with $n$ in the $L_1$- sense in (\ref{i2lim})  we arrive at
\[
 \overline{w} \geq \frac{1}{2}( \overline{w} + l*(\theta \overline{w})).
\]

Let us denote $\Psi:= \overline{w} + l*(\theta \overline{w})$. Then obviously $\overline{w} \leq \Psi$, and by (\ref{chibound}) $(k*\Psi)(t) \leq (k*\Psi)(\eta\rho)$ for almost all $t \in (0,\eta\rho)$.
Then, for $\lambda>0$ we obtain
\begin{align*}
e^\lambda | J_-(\lambda) | & =
e^\lambda\big|\{t\in J_-:\,
e^{W(t)}>e^{c(u_b)}e^{\lambda}\}\big|=\int_{J_-(\lambda)}e^\lambda \,dt\\
& \le \int_{J_-(\lambda)}e^{W(t)-c(u_b)} \,dt\le
\int_{J_-}e^{W(t)-c(u_b)} \,dt =\frac{A \eta \rho\, k_1(\eta\rho) }{(k*e^{W})(\eta \rho)} \int_{0}^{\eta \rho}e^{W(t)}dt. \\
& = A\eta\rho\,\frac{(1*\overline{w})(\eta\rho)}{(1*l)(\eta\rho) (k*\overline{w})(\eta\rho)} \leq 2A \eta\rho\,\frac{(l*k*\Psi)(\eta\rho)}{(1*l)(\eta\rho) (k*\Psi)(\eta\rho)}\\
& \leq 2A \eta\rho\,\frac{(1*l)(\eta\rho) (k*\Psi)(\eta\rho)}{(1*l)(\eta\rho) (k*\Psi)(\eta\rho)} = 2A \eta\rho.
\end{align*}

Hence, we have
\begin{equation} \label{I2est}
I_2=| J_-(\lambda/2)| \nuN( B_{\delta r})\le\,\frac{c(\tau,\delta,\eta,\tilde{c})}{\lambda}\,\vr\nuN(B_r),\quad \lambda>0.
\end{equation}

{\bf Estimate of $I_{4}$.} Finally, we come to $I_4$. For $m>0$ we introduce the family of functions $H_m$ on $\iR$ as follows: $H_m(y)=y$, $y\le m$, and $H_m(y)=m+(y-m)/(y-m+1)$, $y\ge m$.
Such construction provides that $H_m$ is increasing, concave, and bounded above by $m+1$.
Further, we have $H_m\in C^1(\iR)$, and by concavity
\begin{equation} \label{Hmprop}
0\le yH_m'(y)\le H_m(y)\le m+1,\quad y\ge 0.
\end{equation}
Then, the fundamental identity (\ref{fundidentity}) implies that
\[
\partial_t\Big(k_{n}\ast
H_m\big(e^W\big)\Big) \geq H_{m}'(e^{W})\partial_t\Big(k_{n}\ast
e^W\Big),
\]
and thus multiplying (\ref{log7}) by $e^W H_m'\big(e^W\big)$ and employing
 the last inequality we find that for a.a.\ $t\in J$
\begin{equation} \label{West1}
\partial_t\Big(k_{n}\ast
H_m\big(e^W\big)\Big)+\,\frac{C_1}{r^2}\,H_m\big(e^W\big)\ge - S_n
e^W H_m'\big(e^W\big) -\frac{H_m(e^{W})}{\int_{B_r} \psi^2 dx}\frac{1}{b}\int_{B_r} |h_n *f|\psi^2\,dx.
\end{equation}

We multiply (\ref{West1}) by a nonnegative $\Psi \in C^{1}(\overline{J_+})$ with $\Psi(\rho) = 0$ and integrate over $J_{+}$. Then
\[
-\Psi(\eta\rho)(k_n*H_{m}(e^{W}))(\eta\rho)-\int_{J_{+}} \dot{\Psi} [ k_n*H_{m}(e^{W})]dt
\]
\eqq{
+ \int_{J_{+}}\Psi \left(\,\frac{C_1}{r^2}\,H_m\big(e^W\big) + \frac{H_m(e^{W})}{\int_{B_r} \psi^2 dx}\frac{1}{b}\int_{B_r} |h_n *f|\psi^2\,dx\right)dt \geq -\int_{J_{+}}  S_n
e^W H_m'\big(e^W\big) \Psi dt.
}{etat}
Similarly as in the estimate of $I_2$, we have
\[
\frac{1}{b}\frac{H_m(e^{W})}{\int_{B_r} \psi^2 dx}\int_{B_r} (|h_n *f|)\psi^2\,dx \leq
\frac{c(\delta,N)r^{-2}(\vdr)^{\frac{1}{q_1}} H_m(e^{W})}{\norm{f}_{L_{q_1}((0,T);L_{q_2}(\Omega))}}\norm{h_n*f}_{L_{q_2}(\Omega)}.
\]
Applying this estimate in (\ref{etat}) and then sending $n\to \infty$ (and taking an appropriate subsequence if necessary) we arrive at
\[
-\Psi(\eta\rho)(k*H_{m}(e^{W}))(\eta\rho)-\int_{J_{+}} \dot{\Psi} [k*H_{m}(e^{W})]\,dt
\]
\[
+ \int_{J_{+}}\Psi(t) \left(\,\frac{C_1}{r^2}\,H_m\big(e^W\big)(t) + c(\delta,N)r^{-2}(\vdr)^{\frac{1}{q_1}}\frac{H_m(e^{W})(t)}{\norm{f}_{L_{q_1}((0,T);L_{q_2}(\Omega))}}\norm{f(t,\cdot)}_{L_{q_2}(\Omega)}\right)dt \geq 0.
\]

We perform a time-shift by the change of variables $s=t-\eta\tau
\vr=t-\eta \rho$, for $t \in J_{+}$. We denote $\tilde{g}(s)=g(s+\eta \rho)$,
$s\in (0,(1-\eta)\rho)$, for functions $g$ defined on $J_+$.
Then
\[
-\Psi(\eta\rho)(k*H_{m}(e^{W}))(\eta\rho)-\int_{0}^{\rho-\eta\rho} \dot{\tilde{\Psi}}[ k*H_{m}(e^{\tilde{W}})]\,ds - \int_{0}^{\rho-\eta\rho} \dot{\tilde{\Psi}}(s)  \int_{0}^{\eta\rho}k(s+\eta\rho-\sigma)H_{m}(e^{W(\sigma)})d\sigma ds
\]
\eqq{
+ \int_{0}^{\rho-\eta\rho}\tilde{\Psi}(s) \left(\,\frac{C_1}{r^2}\,H_m\big(e^{\tilde{W}}\big) + c(\delta,N)r^{-2}(\vdr)^{\frac{1}{q_1}}\frac{H_m(e^{\tilde{W}})}{\norm{f}_{L_{q_1}((0,T);L_{q_2}(\Omega))}}\norm{\tilde{f}(s,\cdot)}_{L_{q_2}(\Omega)}\right)ds \geq 0.
}{etas}
In order to abbreviate the notation, let us denote
\[
\tilde{u}_{m} = H_m\big(e^{\tilde{W}}\big), \hd \hd  u_m= H_m\big(e^{W}\big)
\]
and
\[
\Upsilon_{m}(s) = \int_0^{\eta\rho}\big[-\dot{k}(s+\eta\rho-\sigma)\big]
u_m(\sigma)\,d\sigma, \hd \hd \theta(s) = \frac{C_1}{r^2}+c(\delta,N)r^{-2}(\vdr)^{\frac{1}{q_1}}\frac{\norm{\tilde{f}(s,\cdot)}_{L_{q_2}(\Omega)}}{\norm{f}_{L_{q_1}((0,T);L_{q_2}(\Omega))}}.
\]
Observe that
\begin{equation} \label{Upsbound}
0\le \Upsilon_{m}(s) \leq \norm{u_m}_{L_{\infty}((0,\eta\rho))}[k(s)-k(s+\eta\rho)]\le \norm{u_m}_{L_{\infty}((0,\eta\rho))}k(s),
\end{equation}
thus $\Upsilon_{m} \in L_{1}((0,(1-\eta)\rho))$. Hence we may integrate by parts the third term on the left-hand side of (\ref{etas}) to the result
\[
-\int_{0}^{\rho-\eta\rho} \dot{\tilde{\Psi}}  [k*\tilde{u}_{m}]ds
+ \int_{0}^{\rho-\eta\rho}\tilde{\Psi} \theta \tilde{u}_{m}\, ds \geq   \int_{0}^{\rho-\eta\rho} \tilde{\Psi} \Upsilon_m\, ds.
\]

Now, we proceed as in the proof of \cite[Lemma 3.1]{Za2}. We take $\tilde{\Psi}(s) = \int_{s}^{\rho-\eta\rho}h_n(\sigma-s)\varphi(\sigma)d\sigma$ with a nonnegative $\varphi \in C^{1}([0,\rho-\eta\rho])$, $\varphi(\rho-\eta\rho) = 0$ and arbitrary $n \in \mathbb{N}$. Then, by Fubini's theorem
\[
-\int_{0}^{\rho-\eta\rho} \dot{\varphi} [k_n*\tilde{u}_{m}]\,ds
+ \int_{0}^{\rho-\eta\rho}\varphi [ h_n*(\theta \tilde{u}_{m})]\, ds \geq   \int_{0}^{\rho-\eta\rho} \varphi [h_n*\Upsilon_m]\, ds.
\]
Integrating first by parts and then using the fact that $\varphi$ is arbitrary, we arrive at
\eqq{
\partial_{s}(k_n*\tilde{u}_m) + h_{n}*(\theta\tilde{u}_{m}) \geq h_n*\Upsilon_m \quad\mbox{a.e. on}\; (0,(1-\eta)\rho).
}{Westt2}
\nic{
\begin{equation} \label{West2}
\partial_s\Big(k_{n}\ast
H_m\big(e^{\tilde{W}}\big)\Big)+\,\frac{C_1}{r^2}\,H_m\big(e^{\tilde{W}}\big)\ge
\Upsilon_{n,m}(s)- \tilde{S}_n e^{\tilde{W}}
H_m'\big(e^{\tilde{W}}\big) - \frac{1}{b}\frac{H_m(e^{\tilde{W}})}{\int_{B_r} \psi^2 dx}\int_{B_r} (|h_n *f|)^{\tilde{}}\psi^2\,dx,
\end{equation}
with the history term
\[
\Upsilon_{n,m}(s)=\int_0^{\eta\rho}\big[-\dot{k}_{n}(s+\eta\rho-\sigma)\big]
H_m\big(e^{W(\sigma)}\big)\,d\sigma, \hd s\in (0,(1-\eta)\rho).
\]

Applying the last estimate together with (\ref{Hmprop}) in (\ref{West2}) leads to
\begin{equation} \label{Westt1}
\partial_s\Big(k_{n}\ast
H_m\big(e^{\tilde{W}}\big)\Big)+\theta_n(s)\,H_m\big(e^{\tilde{W}}\big)\ge
\Upsilon_{n,m}(s)- \abs{\tilde{S}_n}
H_m\big(e^{\tilde{W}}\big),
\end{equation}
where $\theta_n$ is defined as in (\ref{defth}).
In order to abbreviate the notation further, let us denote
\[
\tilde{u}_{m} = H_m\big(e^{\tilde{W}}\big), \hd \hd  u_m= H_m\big(e^{W}\big).
\]
Then the inequality (\ref{Westt1}) may be written as follows
\eqq{
\partial_s (k_{n}*\tilde{u}_{m})(s) + \theta_n(s)\tilde{u}_m(s) \geq \Upsilon_{n,m}(s)- \abs{\tilde{S}_n(s)}\tilde{u}_m(s), \hd \hd s \in (0,(1-\eta)\rho),
}{Westt2}
where
\[
\tilde{u}_m \in L_{\infty}(0,(1-\eta)\rho), \hd \m{ and } \hd  \Upsilon_{n,m} \in L_{\infty}(0,(1-\eta)\rho),
\]
because
\[
|\Upsilon_{n,m}(s)| \leq \norm{u_m}_{ L_{\infty}(0,\eta\rho)}[k_{n}(s+\eta\rho)-k_{n}(s)]
\]
and $k_{n}\in H^{1}_{1}(0,T)$.
}

We would like to compare $\tilde{u}_{m}$ with the solution to
\eqq{
v(s) + l*(\theta v)(s) = (l*\Upsilon_{m})(s), \hd \hd s \in (0,(1-\eta)\rho).
}{Westt3}

Equation (\ref{Westt3}) has exactly one solution belonging to $L_{p_{0}}((0,(1-\eta)\rho))$ (see \cite[Chapter 9]{GLS} or use a simple fixed point argument).
\nic{Indeed, let us define operator $P$
\[
(Pv)(s) = (l*\Upsilon_{m})(s) - l*(\theta v)(s).
\]
$P$ maps $L_{p_{0}}(0,(1-\eta)\rho)$ into itself because
\[
\norm{l*\Upsilon_{m}}_{L_{p_{0}}(0,(1-\eta)\rho)} \leq \norm{l}_{L_{p_{0}}(0,(1-\eta)\rho)}\norm{\Upsilon_{m}}_{L_{1}(0,(1-\eta)\rho)}
\]
and
\[
 \norm{l*(\theta v)} _{L_{p_{0}}(0,(1-\eta)\rho)} \leq \norm{l}_{L_{p_{0}}(0,(1-\eta)\rho)}\norm{\theta v}_{L_{1}(0,(1-\eta)\rho)} \leq \norm{l}_{L_{p_{0}}(0,(1-\eta)\rho)}\norm{v}_{L_{p_{0}(0,(1-\eta)\rho)}}
\norm{\theta}_{L_{p'_{0}(0,(1-\eta)\rho)}}
\]
\[
\leq \norm{l}_{L_{p_{0}}(0,(1-\eta)\rho)}\norm{v}_{L_{p_{0}(0,(1-\eta)\rho)}}
\norm{\theta}_{L_{q_{1}(0,(1-\eta)\rho)}},
\]
where we applied (H3). Similarly, we have for $v_{1},v_{2} \in L_{p_{0}}(0,(1-\eta)\rho)$
\[
\norm{Pv_{1} - Pv_{2}}_{L_{p_{0}}(0,(1-\eta)\rho)} \leq \norm{l}_{L_{p_{0}}(0,(1-\eta)\rho)}
\norm{\theta}_{L_{q_{1}(0,(1-\eta)\rho)}}\norm{v_1-v_2}_{L_{p_{0}(0,(1-\eta)\rho)}},
\]
so $P$ is contractive on $L_{p_{0}}(0,(1-\eta)\rho)$ for small $r$ (recall that $\rho = \vr\tau)$.
Thus, there exists exactly one solution of (\ref{Westt3}) belonging to $L_{p_{0}}(0,(1-\eta)\rho)$.}

Convolving (\ref{Westt3}) with $k_{n}$ yields
\[
(k_{n}*v)(s) + h_{n}*1*(\theta v)(s) = (h_{n}*1*\Upsilon_{m})(s).
\]
Evidently, $h_{n}*1*(\theta v)$, $h_{n}*1*\Upsilon_{m} \in H_{1}^{1}((0,(1-\eta)\rho))$ and thus also $k_{n}*v\in H_{1}^{1}((0,(1-\eta)\rho))$ and we may differentiate to obtain
\eqq{
\partial_{s}(k_{n}*v)(s) + h_{n}*(\theta v)(s) = (h_{n}*\Upsilon_{m})(s).
}{Westt4}
Now, we subtract (\ref{Westt4}) from (\ref{Westt2})
\[
\partial_{s}(k_{n}*(\tilde{u}_{m}-v))(s) + \theta(s)(\tilde{u}_{m}-v)(s) \geq y_n,
\]
where
\[
y_{n}(s) := y_{n}(\tilde{u},v)(s):=h_{n}*(\theta v)(s) - \theta(s) v(s) + \theta(s)\tilde{u}_m(s)-h_n*(\theta \tilde{u}_m)(s).
\]
Note that $y_{n}\rightarrow 0$ in $L_{1}((0,(1-\eta)\rho))$.
To abbreviate the notation further, let us write $g := \tilde{u}_{m}-v.$ Here, we skip the index $m$ since $m$ is fixed in this part of the argument.
Then we may write
\eqq{
\partial_{s}(k_{n}*g)(s) + \theta(s)g(s) \geq y_{n}(s), \quad s \in (0,(1-\eta)\rho).
}{Westt5}
\nic{
Indeed, we have
\[
\tilde{S_{n}}\rightarrow 0 \hd \m{ in } \hd L_{2}(0,(1-\eta)\rho), \hd \hd  h_{n}*(\theta v)\rightarrow \theta v \hd \m{ in } \hd L_{1}(0,(1-\eta)\rho),\hd \theta_{n} \rightarrow \theta \m{ in } \hd L_{q_1}(0,(1-\eta)\rho)
\]
and
\[
\norm{\Upsilon_{n,m} - (h_n*\Upsilon_{m})}_{L_{1}(0,(1-\eta)\rho)} \leq
\norm{\Upsilon_{n,m} - \Upsilon_{m}}_{L_{1}(0,(1-\eta)\rho)} +
\norm{\Upsilon_{m} - (h_n*\Upsilon_{m})}_{L_{1}(0,(1-\eta)\rho)}\rightarrow 0.
\]
}
In order to compare $\tilde{u}_m$ with $v$, we would like to obtain from (\ref{Westt5}) that $g$ is nonnegative. However, since in (\ref{Westt5}) we deal with the quite delicate space $L_1$ and have an unbounded variable coefficient, we cannot apply known comparison arguments. Thus, we proceed very carefully.

Differentiating the convolution we have
\[
k_{n}(0)g(s) + \int_{0}^{s}\dot{k}_{n}(s-\xi)g(\xi)d\xi + \theta(s)g(s) \geq y_{n}(s),
\]
and since $k_{n}$ and $\theta$ are positive it follows that
\eqq{
g(s) \geq \frac{y_{n}(s)}{k_{n}(0)+\theta(s)} + \frac{1}{k_{n}(0)+\theta(s)}\int_{0}^{s}[-\dot{k}_{n}](s-\xi)g(\xi)d\xi.
}{Westt6}
Let us denote
\[
(K_{n}g)(s):= \frac{1}{k_{n}(0)+\theta(s)}\int_{0}^{s}[-\dot{k}_{n}](s-\xi)g(\xi)d\xi.
\]
Applying the positive operator $K_n$ $M>1$ times to (\ref{Westt6}) we arrive at
\eqq{
g(s) \geq \sum_{i=0}^{M}(K_{n}^i \frac{y_{n}}{k_{n}(0)+\theta(\cdot)})(s) + (K_{n}^{M+1}g)(s).
}{Westt7}

Consider now a fixed $n$. Since $\theta(s) \geq C_{1}r^{-2}$, we have
\begin{align*}
\norm{K_{n}g}_{L_{1}((0,(1-\eta)\rho))} & \leq \frac{1}{k_{n}(0)+C_{1}r^{-2}}\norm{[-\dot{k}_{n}]*g}_{L_{1}((0,(1-\eta)\rho))} \\
& \leq \frac{1}{k_{n}(0)+C_{1}r^{-2}}\norm{-\dot{k}_{n}}_{L_{1}((0,(1-\eta)\rho))}\norm{g}_{L_{1}((0,(1-\eta)\rho))}
\end{align*}
and
\[
\norm{-\dot{k}_{n}}_{L_{1}((0,(1-\eta)\rho))} = \int_{0}^{(1-\eta)\rho}[-\dot{k}_{n}](\xi)\,d\xi = k_{n}(0) - k_{n}((1-\eta)\rho) \leq k_{n}(0),
\]
which shows that
\[
\norm{K_{n}g}_{L_{1}((0,(1-\eta)\rho))} \leq  \frac{k_{n}(0)}{k_{n}(0)+C_{1}r^{-2}}\norm{g}_{L_{1}((0,(1-\eta)\rho))},
\]
and thus
\[
\norm{K^{i}_{n}g}_{L_{1}((0,(1-\eta)\rho))} \leq  \left(\frac{k_{n}(0)}{k_{n}(0)+C_{1}r^{-2}}\right)^{i}\norm{g}_{L_{1}((0,(1-\eta)\rho))}
\quad \mbox{for all}\;i \in \mathbb{N}.
\]
Therefore, for any fixed $n$, $K_{n}^{M}g\rightarrow 0$ in $L_{1}((0,(1-\eta)\rho))$ as $M\rightarrow \infty$. Passing to the limit in (\ref{Westt7}) we arrive at
\[
g(s) \geq \sum_{i=0}^{\infty}(K_{n}^i \frac{y_{n}}{k_{n}(0)+\theta(\cdot)})(s).
\]
Note that for every $i \in \mathbb{N}$
\[
\norm{K_{n}^i \frac{y_{n}}{k_{n}(0)+\theta(\cdot)}}_{L_{1}((0,(1-\eta)\rho))} \leq \norm{y_{n}}_{L_{1}((0,(1-\eta)\rho))}\frac{1}{k_{n}(0)+C_{1}r^{-2}}\left(\frac{k_{n}(0)}{k_{n}(0)+C_{1}r^{-2}}\right)^{i},
\]
and thus
\[
\norm{\sum_{i=0}^{\infty}(K_{n}^i \frac{y_{n}}{k_{n}(0)+\theta(\cdot)})}_{L_{1}((0,(1-\eta)\rho))} \leq \norm{y_{n}}_{L_{1}((0,(1-\eta)\rho))}\frac{1}{k_{n}(0)+C_{1}r^{-2}}\sum_{i=0}^{\infty}\left(\frac{k_{n}(0)}{k_{n}(0)+C_{1}r^{-2}}\right)^{i}
\]
\[
= \norm{y_{n}}_{L_{1}((0,(1-\eta)\rho))}\frac{1}{k_{n}(0)+C_{1}r^{-2}}\frac{1}{1-\frac{k_{n}(0)}{k_{n}(0)+C_{1}r^{-2}}}=
\norm{y_{n}}_{L_{1}((0,(1-\eta)\rho))} \frac{1}{C_{1}r^{-2}}.
\]
Hence
\[
\sum_{i=0}^{\infty}(K_{n}^i \frac{y_{n}}{k_{n}(0)+\theta(\cdot)}) \rightarrow 0 \quad \m{ in }\; L_{1}((0,(1-\eta)\rho))\quad \mbox{as}\;n\to \infty.
\]
This shows that $g\ge 0$ a.e.\ in $(0,(1-\eta)\rho)$,
which in turn gives the desired inequality
\begin{equation} \label{comparison}
 \tilde{u}_{m} \geq v \hd \m{ a.e. in } \hd (0,(1-\eta)\rho).
\end{equation}

Now, we would like to estimate the solution $v$ of (\ref{Westt3}) from below. We will use a similar reasoning as for the estimate of $I_2$.
Multiplying (\ref{Westt3}) by $\theta$ and convolving with $l$ we have
\[
l*(\theta v) + l*(\theta(l*\theta v)) = l*(\theta l* \Upsilon_{m}),
\]
so
\eqq{
v = l*\Upsilon_{m} - l*(\theta l* \Upsilon_{m}) + l*(\theta(l*\theta v)).
}{Westt8}
We claim that for sufficiently small $\tau$ there holds
\begin{equation} \label{halfestimate}
l*(\theta l* \Upsilon_{m})(s) \leq \frac{1}{2} l*\Upsilon_{m}(s),\quad s\in(0,(1-\eta)\tau \vr).
\end{equation}
In fact, proceeding as for the estimate of $I_2$ we arrive at (\ref{iterest}) with $g_n \geq 0$ replaced by $\Upsilon_m \geq 0$.
By (H3), $\frac{1}{p'_0} >\frac{1}{ q_1}$, and thus there exists a $\tau^{*}>0$ depending only on $q_1,\delta,\nu,\Lambda, N, \overline{c}$ such that \eqref{halfestimate} holds true for any $\tau \in (0,\tau^{*}]$.

On the other hand, it is not difficult to see (cf.\ also \cite[Chapter 9]{GLS}) that $v$ may be represented as
\[
v(s) = \sum_{i=0}^{\infty}(-1)^{i}\big(l*[\theta\cdot]\big)^{(i)} l* \Upsilon_{m},
\]
where the series converges not only in $L_{p_0}$, but even in $L_\infty((0,(1-\eta)\tau \vr))$. In fact, in view of
\eqref{Upsbound} and by \eqref{pc} we have
\[
0\le l\ast {\Upsilon}_m \le \norm{u_m}_{L_{\infty}((0,\eta\rho))},
\]
which, together with \eqref{halfestimate}, yields
\begin{align*}
0\le \big(l*[\theta\cdot]\big)^{(i)} l* \Upsilon_{m}\le \frac{1}{2^{i}}\,l* \Upsilon_{m}\le \frac{1}{2^{i}}\,
\norm{u_m}_{L_{\infty}((0,\eta\rho))},\quad i\in\iN.
\end{align*}
Writing the series as
\begin{equation} \label{seriesdiff}
v(s)=\sum_{i=0}^{\infty}\big(l*[\theta\cdot]\big)^{(2i)}\Big( l* \Upsilon_{m}-l*(\theta [l* \Upsilon_{m}])
\Big)
\end{equation}
and using \eqref{halfestimate} shows that $v$ is nonnegative, and thus from (\ref{Westt8}) (or alteratively from
\eqref{seriesdiff}) it follows that
\[
v(s) \geq \frac{1}{2} (l* \Upsilon_{m})(s),\quad s \in (0,(1-\eta)\tau \vr),
\]
provided that $\tau \leq \tau^{*}$. Combining the last
inequality and \eqref{comparison} yields
\[
H_{m}(e^{\tilde{W}})(s) \geq \frac{1}{2} (l* \Upsilon_{m})(s).
\]

By Fubini's theorem, we have
\eqq{
(l \ast
\Upsilon_{m})(s) = \int_{0}^{\eta\rho}H_m\big(e^{W(\xi)}\big)\int_{0}^{s} l(s-\sigma)\big[-\dot{k}(\sigma+\eta\rho-\xi)\big]d\sigma d\xi, \hd s\in (0,(1-\eta)\rho).
}{West33}
In order to estimate the inner integral we make use of the property that $r^{*}$ satisfies $\Phi(r^{*}) \leq \tilde{t}_0$, where $\tilde{t}_0$ comes from the assumption (\ref{ak2}), cf.\ the definition of $r^{*}$
prior to the statement of Theorem \ref{logest}. Then using estimate~(\ref{ak2prim}), we obtain %we may estimate similarly as in the paper for single order for $t \in (0,(1-\eta)\tau \vr)$ and $\theta = \frac{C_{1}}{r^{2}}$
\[
\int_{0}^{s} l(s-\sigma)\big[-\dot{k}(\sigma+\eta\rho-\xi)\big]d\sigma
=s \izj l(s(1-w))\big[-\dot{k}(sw+\eta\rho-\xi)\big]dw
\]
\[
\geq  s \,l(s) \izj \big[-\dot{k}(sw+\eta\rho-\xi)\big]dw =s\,l(s)\frac{1}{s}[k(\eta\rho-\xi) - k(s+\eta\rho-\xi)]
\]
\[
\geq  \tilde{c}\, l(s) k(\eta\rho-\xi)\frac{s}{s+\eta\rho-\xi}.
\]
It follows that
\[
H_{m}(e^{\tilde{W}(s)})
\geq \frac{\tilde{c}}{2}l(s)\int_0^{\eta\rho} H_m\big(e^{W(\xi)}\big) k(\eta\rho-\xi)\,d\xi \,\frac{s}{s+\eta\rho}.
\]
Evidently, $H_m(y)\nearrow y$ as $m\to \infty$ for all $y\in \iR$.
Thus, by sending $m\to \infty$ and applying Fatou's lemma we conclude
that
\begin{equation} \label{West5}
e^{\tilde{W}(s)}\ge \frac{\tilde{c}\, s\, l(s)}{2\rho}\big(k\ast
e^{W}\big) (\eta\rho),\quad \mbox{a.a.}\;s\in (0,(1-\eta)\rho).
\end{equation}

Let $\lambda>0$. We employ (\ref{West5}) to estimate as follows.
\[
\lambda |J_+(\lambda)| = \int_{J_+(\lambda)} \lambda dt
 \le \int_{J_+(\lambda)} \big(c(u_b)-W(t)\big)\, dt= \int_{J_+(\lambda)-\eta\rho} \big(c(u_b)-\tilde{W}(s) \big)\,ds\\
\]
\[
\leq \int_{J_+(\lambda)-\eta\rho} \Big[\log \left(\frac{(k*e^{W})(\eta \rho)}{ A\eta\rho\,k_1(\eta\rho)}\right)- \log \left(\frac{\tilde{c}(k*e^{W})(\eta \rho)sl(s)}{2\rho}\right)\Big]\,ds
= \int_{J_+(\lambda)-\eta\rho} \log \left(\frac{2(1*l)(\eta\rho)}{A\tilde{c}\eta sl(s)}\right)ds.
\]
We note that for any  $A \leq \frac{2}{\tilde{c}}$ the expression under the integral is nonnegative on the whole interval $(0,(1-\eta)\rho)$. Indeed, using Lemma \ref{ba} we have for any $s \in (0,(1-\eta)\rho)$
\[
\frac{(1*l)(\eta \rho)}{\eta sl(s)} \geq \frac{(1*l)(\eta \rho)}{\eta (1*l)(\rho)} =
\frac{k_1(\rho)}{\eta k_1(\eta\rho)} \geq 1.
\]
We choose $A=\frac{2}{ \tilde{c}}$ and continue estimating as follows.
\[
\lambda |J_+(\lambda)| \leq \int_{0}^{(1-\eta)\rho} \log \left(\frac{ (1*l)(\eta\rho)}{\eta sl(s)} \right)ds \leq
\int_{0}^{(1-\eta)\rho} \log \left(\frac{ \overline{c}(1*l)(\eta\rho)}{\eta (1*l)(s)} \right)ds
\]
\[
= \rho c(\overline{c},\eta) +(1-\eta)\rho \log \left((1*l)(\eta \rho)\right) - \int_{0}^{(1-\eta)\rho}\log \left((1*l)(s)\right)ds
\]
\[
=\rho c(\overline{c},\eta) + (1-\eta)\rho \log \left( (1*l)(\eta \rho)\right) - (1-\eta)\rho \log \big( (1*l)((1-\eta) \rho)\big) + \int_{0}^{(1-\eta)\rho} \frac{sl(s)}{(1*l)(s)}ds
\]
\[
\leq  \rho c(\overline{c},\eta) +(1-\eta)\rho \log \left(\frac{k_1((1-\eta) \rho)}{k_1(\eta \rho)}\right) + (1-\eta)\rho,
\]
where in the second inequality we applied (\ref{ak1}) with $p=1$.
Applying Lemma (\ref{ba})  we arrive at

\eqnsl{
\lambda|J_{+}(\lambda)| \leq
c(\eta,\tau,\bar{c}) \vr.
}{estidwadwa}
Hence,
\begin{equation} \label{I4est}
I_4=|J_+(\lambda/2)|\nuN(
B_{\delta r})\le\,\frac{c(\eta,\tau,\bar{c})
\delta^N}{\lambda}\,\vr\nuN(B_r),\quad \lambda>0.
\end{equation}

Finally, combining (\ref{mainleft}), (\ref{mainright}), and
(\ref{I2est}), (\ref{I4est}), (\ref{I1est}), (\ref{I3est})
we obtain the assertion.
\end{proof}
%***********************************************************************************

%***********************************************************************************
Having established Theorem \ref{superest1}, Theorem \ref{superest2} and Theorem \ref{logest}, the final step of the proof of Theorem \ref{localweakHarnack} is now the same as in the proof of Theorem 1.1 in \cite[Section 3.4]{harnackdistr}.
 $\square$
%***********************************************************************************

%************************************************************

\section{Proof of the H\"older regularity}
In this section we prove Theorem \ref{holder}. The methods utilized in this section are analogous to those used in \cite{harnackdistr}, i.e.\ we will deduce the H\"older regularity of weak solutions to (\ref{MProb}) from the weak Harnack estimate. However, since we consider a more general setting, some modifications are necessary. The proof heavily relies on the key assumption (\ref{ak1}), which is crucial for Proposition \ref{estiprop} and Lemma \ref{fixy}. Furthermore, to control the behaviour of the truncated derivative of $k$ we use the additional assumption (\ref{asholder}).

In order to simplify the notation we define $\bvr:=\Phi(2r)$. Furthermore, in this section, by $l$ we denote an integer number, not to confuse with the kernel $l$ associated with the kernel $k$.

Let $u$ be a bounded weak solution to
\eqq{
\partial_t (k*(u-u_{0}))-\mbox{div}\,\big(A(t,x)Du\big)=f \m{ in } (0,2\eta\bvr)\times B(x_{0},2r),
}{osc4}
where we assume that $u_{0} \in L_{\infty}(B(x_{0},2r))$, $r\in (0,r^{*}]$  and $r^{*}$ comes from Theorem \ref{localweakHarnack}. Note that this ensures that $\bvr \leq 1$.
As before, we also often write $B_r(x)$ instead of $B(x,r)$.
We define $F(t,x) = f(t,x) + u_{0}(x)k(t)$ and normalize the equation by setting
\[
v=\frac{u}{2D}, \quad G = \frac{F}{2D},
\]
\eqq{
D=\|u\|_{L_{\infty}((0,2\eta\bvr)\times B_{2r}(x_{0}))} +  r^{2-\frac{N}{q_2}}(\bvr)^{-\frac{1}{q_1}}\|F\|_{L_{q_1}((\frac{\eta}{2}\bvr,2\eta\bvr);L_{q_2}( B_{2r}(x_{0})))}.}{osc3}
Then $v$ is a weak solution to
\[
\partial_t (k* v)-\mbox{div}\,\big(A(t,x)Dv\big)=G \m{ in } (0,2\eta\bvr)\times B(x_{0},2r)
\]
and
\eqq{
\|v\|_{L_{\infty}((0,2\eta\bvr)\times B_{2r}(x_{0}))} \leq \frac{1}{2}, \hd \|G\|_{L_{q_1}((\frac{\eta}{2}\bvr,2\eta\bvr);L_{q_2}( B_{2r}(x_{0})))}\leq \frac{(\bvr)^{\frac{1}{q_1}}}{2 r^{2-\frac{N}{q_2}}},\hd  \eosc_{(0,2\eta\bvr)\times B_{2r}(x_{0})} v \leq 1.
}{vges}

We emphasize that in (\ref{vges}), similarly as in \cite{harnackdistr}, it is crucial to take a sub-cylinder that has positive
distance from the initial time for the $G$-term.
We fix $(t_{1},x_{1}) \in (\eta\bvr,2\eta\bvr) \times B_{r}(x_{0})$ and $\theta \in (0,1)$, $\theta \leq 2\tau^{*}$, where $\tau^{*}$ comes from Theorem \ref{localweakHarnack}. Let further $Q_{dom}:=(0,2\eta\bvr) \times B_{2r}(x_{0})$. In order to prove a suitable oscillation estimate we will consider a family of nested cylinders.
If $r_0$ given by (\ref{r0}) is infinite the construction of these cylinders does not differ from the one introduced in \cite{harnackdistr}. Otherwise, we have to slightly modify the approach, having in mind that the domain of $\Phi$ is bounded. The modification is mainly of technical nature.
Thus, if $r_0$ is finite, we choose $\tilde{r} = \tilde{r}(\eta,\theta) > 0$ such that
\eqq{
\tilde{r} = \frac{1}{16}\sqrt{\frac{\theta}{\eta}}r_0.
}{rtilde}
Then, we consider the family of nested cylinders for $r \in (0,r^{**})$, where $r^{**}:=\min\{\tilde{r},r^{*}\}$,
\[
Q(\rho) = (t_{1}-\theta \overline{\Phi}(\rho r),t_{1}) \times B_{\rho r}(x_{1}), \hd \rho = 2^{-l}, \hd l \in \mathbb{Z},\hd l \geq -3-\left\lfloor \log_{2}\left(\sqrt{\frac{\eta}{\theta}}\right)\right\rfloor.
\]
In case $r_0 = \infty$, we just consider $r \in (0,r^{*})$ and the entire family of cylinders indexed with $l \in \mathbb{Z}$.
   Let $\tilde{l} \geq 0$ denote the integer that corresponds to the largest of those cylinders $Q(2^{-l})$ that are properly  contained in  $Q_{dom}$. Then
\eqq{
|x_{1}-x_0|+2^{-\tl}r < 2r \m{\hd  and \hd  } t_{1}-\theta\overline{\Phi}(2^{-\tl}r) > 0.
}{alter2}

\no Applying Proposition \ref{philambda} one can show, in the same way as in \cite{harnackdistr}, that there exists $\gamma = \gamma(\theta,\eta) \geq 0$ such that
\eqq{
\tilde{l} \leq \gamma(\theta,\eta).
}{ltildees}
\nic{Indeed, if $\tl$ corresponds to the largest cylinder contained in $Q_{dom}$, it means that $Q(2^{-(\tl-1)})$ is not contained in $Q_{dom}$. Hence, either
\eqq{
|x_{1}-x_{0}|+2^{-(\tl-1)}r \geq  2r \m{\hd  or \hd  } t_{1}-\theta\overline{\Phi}(2^{-(\tl-1)}r) \leq  0.
}{alter1}
In the first case we get
\[
2^{-(\tl-1)} \geq  2-\frac{|x_{1}-x_{0}|}{r} > 1, \m{ thus } \tl< 1,
\]
while in the second case
\[
\overline{\Phi}(2^{-(\tl-1)}r) \geq  \frac{t_{1}}{\theta} > \frac{\eta \bvr}{\theta}.
\]
If $\tilde{l} \geq 1$, then applying Proposition (\ref{philambda}) we obtain
\[
\frac{\eta \bvr}{\theta} < \overline{\Phi}(2^{-(\tl-1)}r) \leq 2^{-2(\tl-1)}\overline{\Phi}(r)
\]
and thus
%Since $\ki$ is decreasing the inequality above is equivalent with
%\eqq{
%\ki(\overline{\Phi}(2^{-(\tl-1)}r) ) < \ki\left(\frac{\eta \bvr}{\theta}\right),
%}{trzyjeden}
%but we have
%\[
%\ki\left(\frac{\eta \bvr}{\theta}\right) = \izj \left(\frac{\theta}{\eta}\right)^{\al}(\bvr)^{-\al}\dd \leq \max\left\{\frac{\theta}{\eta},1 \right\} \ki(\bvr).
%\]
%Applying (\ref{zn1}) we deduce that if (\ref{trzyjeden}) is satisfied, then
%\[
%\frac{1}{4}r^{-2}4^{\tl-1} \leq \frac{1}{4} \max\left\{\frac{\theta}{\eta},1 \right\}r^{-2},
%\]

\[
\tl \leq 1 + \log_{4}\max\left\{\frac{\theta}{\eta},1 \right\}
\]
and we arrive at (\ref{ltildees}).
}

We next introduce the numbers $\kappa \in (0,1)$ and $l_{0}\geq \tl, l_{0} \in \mathbb{N}$, that will both be chosen later and define
\[
a_{l}:=\essinf_{Q_{dom}\cap Q(2^{-l})}v, \hd \hd  b_{l} = a_{l}+2^{-(l-l_{0})\kappa} \hd \m{ for } \hd l \leq l_{0}.
\]
Then, by definition, for all $j \leq l_{0}$, $j \in \mathbb{Z}$ ($j \geq  -3-\left\lfloor \log_{2}\left(\sqrt{\frac{\eta}{\theta}}\right)\right\rfloor$ in case $r_0 < \infty$) we have
\eqq{a_{j} \leq v \leq b_{j} \hd \m{ a.e. in  } \hd Q_{dom}\cap Q(2^{-j}), \hd b_{j}-a_{j} = 2^{-(j-l_{0})\kappa},}{abclaim}
because $\eosc_{Q_{dom}} v \leq 1$ and $b_{j}-a_{j} \geq 1$ for $j \leq l_{0}$. We would like to construct a  nondecreasing sequence $a_{j}$ and a nonincreasing sequence $b_{j}$ such that the property (\ref{abclaim}) is satisfied for $j > l_{0}$.
The proof is carried out by induction. Firstly, we note that for $j\leq l_{0}$ the condition (\ref{abclaim}) trivially holds. Now, let $l \geq l_{0}$ and  assume that (\ref{abclaim}) holds for all $j\leq l$. From this, we will deduce that there exist $a_{l+1} \geq a_{l}$ and $b_{l+1}\leq b_{l}$ such that (\ref{abclaim}) holds  also for $j=l+1$.

We set $\tilde{t} = t_{1} - \theta \overline{\Phi}(2^{-l}r)$. Then $(\tilde{t},t_{1}) \times B(x_{1},2^{-l}r) = Q(2^{-l})$. At first, we will establish an estimate for
the memory term, analogous to \cite{harnackdistr}[Lemma 4.1].
\begin{lemma} Let $l\ge l_0$ and
suppose that (\ref{abclaim}) holds for all $j \leq l$  ($j \geq -3-\left\lfloor \log_{2}\left(\sqrt{\frac{\eta}{\theta}}\right)\right\rfloor$ in case $r_0 < \infty$), and put $m_{l} := \frac{a_{l}+b_{l}}{2}$. Then we have
\eqq{
|v(t,x)-m_{l}| \leq (b_{l}-m_{l})\left(2\cdot 2^{\kappa}\left(\frac{1}{\theta}\frac{\ki(t_{1}-\tilde{t})}{\ki(t_{1}-t)}\right)^{\frac{\kappa}{2}}-1\right)\quad \m{ for a.a. } (t,x) \in (0,\tilde{t})\times B_{2^{-l}r}(x_{1}).
}{history}
\end{lemma}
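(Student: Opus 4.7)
The plan is to locate the point $(t,x)$ inside a strictly larger concentric cylinder $Q(2^{-l'})$ for an appropriate $l'\le l-1$, invoke the induction hypothesis \eqref{abclaim} to bound $v(t,x)$ there, and then translate the index gap $l-l'$ into the ratio $X:=\tfrac{1}{\theta}\tfrac{\ki(t_{1}-\tilde{t})}{\ki(t_{1}-t)}$ via the defining identity $\ki(\Pk(\rho))=(2\rho)^{-2}$ together with Lemma~\ref{ba}.

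First I would choose $l'$ as the largest integer $\le l-1$ for which $(t,x)\in Q(2^{-l'})$. The spatial inclusion $x\in B_{2^{-l}r}(x_{1})\subset B_{2^{-l'}r}(x_{1})$ is automatic, so the condition reduces to $t_{1}-t<\theta\Pk(2^{-l'}r)$; it is ruled out for $j=l$ by the assumption $t<\tilde{t}$, and for a.e.\ $t$ it gives the two-sided sandwich
\begin{equation*}
\theta\Pk(2^{-(l'+1)}r)\,\le\,t_{1}-t\,<\,\theta\Pk(2^{-l'}r).
\end{equation*}
(The scale choice \eqref{rtilde} of $\tilde{r}$ ensures, in the case $r_{0}<\infty$, that $2^{-l'}r$ stays inside the domain of $\Phi$.) Once $l'$ is fixed, \eqref{abclaim} applied at level $l'$ yields $a_{l'}\le v(t,x)\le b_{l'}$ a.e., and the monotonicities $a_{l'}\le a_{l}$, $b_{l'}\ge b_{l}$ together with the identities $b_{j}-a_{j}=2^{-(j-l_{0})\kappa}$ and $b_{l}-m_{l}=2^{-(l-l_{0})\kappa-1}$ give the elementary but tight estimate
\begin{equation*}
|v(t,x)-m_{l}|\,\le\,(b_{l}-m_{l})+(b_{l'}-a_{l'})-(b_{l}-a_{l})\,=\,2^{-(l'-l_{0})\kappa}-2^{-(l-l_{0})\kappa-1}.
\end{equation*}

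The heart of the argument is the conversion of $2^{l-l'}$ into a power of $X$. Using $\ki(\Pk(\rho))=(2\rho)^{-2}$, Lemma~\ref{ba} with $y=\theta\in(0,1)$ (so that $\max\{1,y^{-1}\}=\theta^{-1}$), and $t_{1}-\tilde{t}=\theta\Pk(2^{-l}r)$, the sandwich on $t_{1}-t$ together with the monotonicity of $\ki$ produces
\begin{equation*}
\frac{4^{l'-1}}{r^{2}}\,<\,\ki(t_{1}-t)\,\le\,\frac{4^{l'}}{\theta r^{2}},\qquad
\frac{4^{l-1}}{r^{2}}\,<\,\ki(t_{1}-\tilde{t})\,\le\,\frac{4^{l-1}}{\theta r^{2}}.
\end{equation*}
Pairing the lower bound of the numerator with the upper bound of the denominator in the definition of $X$ gives $X\ge 4^{l-l'-1}$, hence $2^{l-l'}\le 2X^{1/2}$, so that
\begin{equation*}
2^{-(l'-l_{0})\kappa}\,=\,2^{(l-l')\kappa}\,2^{-(l-l_{0})\kappa}\,\le\,2^{\kappa}X^{\kappa/2}\,2^{-(l-l_{0})\kappa}\,=\,2\cdot 2^{\kappa}X^{\kappa/2}(b_{l}-m_{l}).
\end{equation*}
Substituting this into the previous display delivers \eqref{history}.

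The main obstacle is the conversion step: the factor $\theta^{-1}$ in the very definition of $X$ is exactly what absorbs the distortion produced by Lemma~\ref{ba} when pushing $\theta$ in and out of the argument of $\ki$, so that the powers of $\theta$ cancel cleanly against the geometric factors $4^{j}$ coming from the cylinder radii. A secondary technical point is the edge behaviour in the case $r_{0}<\infty$: if $(t,x)$ would force $l'$ below the smallest admissible cylinder index, one falls back on the trivial bound $|v-m_{l}|\le 1$ from \eqref{vges}, and a direct check (using $X\ge 4^{l-l_{\min}}$ in that regime) shows that this crude estimate is already dominated by the right-hand side of \eqref{history} once $l_{0}$ is chosen sufficiently large.
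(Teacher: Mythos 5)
Your proof is correct and follows essentially the same route as the paper's: locate $(t,x)$ in the largest admissible nested cylinder, invoke the induction hypothesis at that level, and convert the index gap into a power of $\tfrac{1}{\theta}\,k_1(t_1-\tilde t)/k_1(t_1-t)$ via $k_1(\overline{\Phi}(\cdot))=(2\cdot)^{-2}$ and Lemma~\ref{ba} --- your two-sided bounds on the two $k_1$-values reproduce exactly the chain of inequalities the paper records in (\ref{twoes}). The aside about a fallback in the case $r_0<\infty$ is moot: as you note (and as the paper verifies explicitly via Proposition~\ref{philambda}), the choice \eqref{rtilde} of $\tilde r$ keeps the index $l'$ within the admissible range, so the contingency never arises.
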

\begin{proof}
Fix $(t,x) \in (0,\tilde{t})\times B_{2^{-l}r}(x_{1})$ and notice that $\frac{t_{1}-t}{\theta} \geq \frac{t_{1}-\tilde{t}}{\theta} = \overline{\Phi}(2^{-l}r)$. Since $\overline{\Phi}$ is increasing, continuous and onto $[0,\infty)$ and by (\ref{rtilde}), there exists $l_{*} \leq l$ such that
\eqq{
\overline{\Phi}(2^{-l_{*}}r) \leq \frac{t_{1}-t}{\theta} < \overline{\Phi}(2^{-(l_{*}-1)}r).
}{lstar}
Indeed, if $r_0=\infty$, the claim is obvious. In case $r_0 < \infty$, it is enough to find $l_{*}-1 \geq -3-\left\lfloor \log_{2}\left(\sqrt{\frac{\eta}{\theta}}\right)\right\rfloor$ such that the second estimate in (\ref{lstar}) holds. If there exists $l_* > 1$ that satisfies (\ref{lstar}) the proof is finished. Otherwise, from Proposition \ref{philambda}
\[
\overline{\Phi}(2^{-(l_{*}-1)}r) \geq \bvr 2^{-2(l_{*}-1)}.
\]
On the other hand
\[
\frac{t_{1}-t}{\theta} \leq \frac{2\eta\bvr}{\theta}.
\]
Hence, the second inequality in (\ref{lstar}) is satisfied for $l_{*} \leq - \left\lceil\log_{2}\left(\sqrt{\frac{\eta}{\theta}}\right)\right\rceil$.
Thus (\ref{lstar}) is proven and in particular we have
\[
t_{1}-\theta \overline{\Phi}(2^{-(l_{*}-1)}r) <t < t_{1}.
\]

The inequalities above, combined with $B_{2^{-l}r}(x_{1})\subset B_{2^{-(l_{*}-1)}r}(x_{1})$, imply that $(t,x) \in Q(2^{-(l_{*}-1)})$. Since $l_{*}-1 < l$ we may apply the induction hypothesis to get
\[
v(t,x) - m_{l} \leq b_{l_{*}-1} - m_{l} \leq b_{l_{*}-1}-a_{l_{*}-1}+a_{l} - m_{l} = 2^{-(l_{*}-1-l_{0})\kappa} - \frac{1}{2} 2^{-(l-l_{0})\kappa}
\]
\[
=(b_{l}-m_{l})(2\cdot 2^{-(l_{*}-1-l)\kappa}-1).
\]
We recall that $t_{1}-\tilde{t} = \theta \overline{\Phi}(2^{-l}r)$ and $t_{1}-t \geq \theta \overline{\Phi}(2^{-l_{*}}r)$. Using this, together with (\ref{zn1}), Lemma \ref{ba} and the fact that $\ki$ is decreasing and $\theta \in (0,1)$, we have
\eqq{
2^{-2(l_{*}-l)} = \frac{\frac{1}{4}2^{2l}r^{-2}}{\frac{1}{4}2^{2l_{*}}r^{-2}}
=\frac{\ki(\overline{\Phi}(2^{-l}r))}{\ki(\overline{\Phi}(2^{-l_{*}}r))} \leq \frac{1}{\theta}
\frac{\ki(\theta\overline{\Phi}(2^{-l}r))}{\ki(\theta\overline{\Phi}(2^{-l_{*}}r))}
\leq   \frac{1}{\theta} \frac{\ki(t_{1}-\tilde{t})}{\ki(t_{1}-t)}.
}{twoes}
This way we obtain the upper bound. Analogously,
\[
v(t,x) - m_{l} \geq a_{l_{*}-1} - m_{l} \geq a_{l_{*}-1}-b_{l_{*}-1}+b_{l} - m_{l} = -2^{-(l_{*}-1-l_{0})\kappa} + \frac{1}{2} 2^{-(l-l_{0})\kappa}
\]
\[
=-(b_{l}-m_{l})(2\cdot 2^{-(l_{*}-1-l)\kappa}-1),
\]
and applying (\ref{twoes}) we obtain the lower bound.
This proves the lemma.
\end{proof}
Next, we will construct smaller cylinders inside $Q(2^{-l})$. As in \cite{harnackdistr},
we introduce $\theta_{1},\theta_{2}$ such that $\frac{1}{4} < \theta_{1}<\theta_{2}<1$ and we set
\eqq{
t_{**} = t_{1}-\theta\theta_{2}\overline{\Phi}(2^{-l}r), \hd  t_{*} = t_{1}-\theta\theta_{1}\overline{\Phi}(2^{-l}r), \hd Q^{-} :=(t_{**},t_{*})\times B_{2^{-(l+1)}r}(x_{1}).
}{ab3}
Applying Proposition \ref{philambda}, we can show, analogously as in \cite{harnackdistr}, that
\eqq{
\tilde{t} < t_{**}<t_{*}<t_{1}-\theta\overline{\Phi}(2^{-(l+1)}r).
}{tstars}
Thus, $Q^{-}$ and $Q(2^{-(l+1)}) = (t_{1}-\theta\overline{\Phi}(2^{-(l+1)}r),t_{1}) \times B_{2^{-(l+1)}r}(x_{1})$ are disjoint and contained in $Q(2^{-l})$.
\nic{
Only the last inequality in (\ref{tstars}) needs explanation. By the definition of $t_{*}$,  it is equivalent with
\[
\overline{\Phi}(2^{-(l+1)}r) < \theta_{1}\overline{\Phi}(2^{-l}r).
\]
Actually we will show that there exists $b \in (0,1)$ which depends only on $\theta_{1}$ such that
\eqq{
\overline{\Phi}(2^{-(l+1)}r) < b\theta_{1}\overline{\Phi}(2^{-l}r).
}{boxdist}
\nic{Since $\ki$ is decreasing the last one is equivalent with
\[
\ki(b\theta_{1}\overline{\Phi}(2^{-l}r)) < \ki(\overline{\Phi}(2^{-(l+1)}r)) = 2^{2l}r^{-2}.
\]
We note that
\[
\ki(b\theta_{1}\overline{\Phi}(2^{-l}r))  \leq (b\theta_{1})^{-1}\ki(\overline{\Phi}(2^{-l}r)) = (b\theta_{1})^{-1}\cdot 2^{2l}\frac{1}{4}r^{-2}.
\]
Since $\theta_{1} \in (\frac{1}{4},1)$, one may find $b \in (0,1)$ such that
\[
(b\theta_{1})^{-1}\cdot 2^{2l}r^{-2} < 4\cdot 2^{2l}r^{-2}
\]
i.e. for $b\in (1/4\theta_{1},1)$ the inequality (\ref{boxdist}) holds and we arrive at (\ref{tstars}).}

From Proposition \ref{philambda} we infer that
\[
\overline{\Phi}(2^{-(l+1)}r)  \leq \frac{1}{4}\overline{\Phi}(2^{-l}r)
\]
Since $\theta_{1} \in (\frac{1}{4},1)$, one may find $b \in (0,1)$ such that (\ref{boxdist}) holds.
}

We will now discuss the two cases $(A)$ and $(B)$:
\eqq{
(A) \hd \nuNj(\{(t,x)\in Q^{-}:v(t,x) \leq m_{l}\}) \geq \frac{1}{2}\nuNj(Q^{-})
}{A}
\eqq{
(B)\hd  \nuNj(\{(t,x)\in Q^{-}:v(t,x) \leq m_{l}\}) \leq \frac{1}{2}\nuNj(Q^{-}).
}{B}
In both cases, we apply the weak Harnack inequality for a certain shifted problem with cylinders $Q_{-} \supseteq Q^{-}-t_{**}$ and $Q_{+} \supseteq Q(2^{-(l+1)})-t_{**}$, where $\cdot-t_{**}$ denotes the shift only in time variable. This will lead to the required estimates in the cylinder $Q(2^{-(l+1)})$.

Suppose (A) holds. Set $w = b_{l}-v$. Then, the induction hypothesis implies $w \geq 0$ on $Q(2^{-l})$.   Moreover,
\eqq{
\partial_{t}(k*w)(t,x) - \divv (A(t,x)D w(t,x)) = b_{l}\cdot k(t) - G(t,x)
}{trzy1}
in a weak sense for $(t,x) \in (0,2\eta\bvr)\times B_{2r}(x_{0})$.
Let $t \in (t_{**},t_{1})$. We make a time-shift, setting $s=t-t_{**}$ and  $\ti{w}(s,x)= w(s+\tss,x)$. Then we have
\[
(k*w)\hspace{0.05cm} \ti{} \hspace{0.05cm}  (s,x)= (k*w)(s+\tss,x) = \left(\int_{0}^{\ti{t}}+\int_{\ti{t}}^{\tss}+\int_{\tss}^{s+\tss} \right)k(s+\tss - \tau )w(\tau,x)d\tau
\]
\[
= \int_{0}^{\ti{t}}  k(s+\tss - \tau )w(\tau,x)d\tau +\int_{0}^{\tss - \ti{t}}  k(s+(\tss -\ti{t})- p )w(p+\tit,x )dp + \int_{0}^{s} k(s-p )\ti{w}(p,x)dp,
\]
where in the second integral we substitute $p:=\tau - \tit$ and in the third $p:=\tau - \tss$ and \m{$s\in (0, t_{1}-\tss)$.} Differentiating with respect to $s$ yields
\begin{align*}
\partial_{s}(k*w)\hspace{0.05cm} \ti{} \hspace{0.05cm}  (s,x) & = \int_{0}^{\ti{t}}  \dot{k}(s+\tss - \tau )w(\tau,x)d\tau\\
&\quad +\int_{0}^{\tss - \ti{t}}  \dot{k}(s+(\tss -\ti{t})- p )w(p+\tit,x )dp + \partial_{s} (k*\ti{w})(s,x).
\end{align*}
Thus, from (\ref{trzy1}) we obtain

\[
\partial_{s}(k*\tilde{w})(s,x) - \divv(\tilde{A}(s,x)D\tilde{w}(s,x)) = -\int_{0}^{\tss - \ti{t}}  \dot{k}(s+(\tss -\ti{t})- p )w(p+\tit,x )dp
\]
\[
-\int_{0}^{\ti{t}}  \dot{k}(s+\tss - \tau )w(\tau,x)d\tau + b_{l}k(t_{**}+s) - \tilde{G}(s,x), \hd \hd s \in (0,t_{1}-t_{**}).
\]
Note that $(\tilde{t},t_{**}) \times B_{2^{-l}r}(x_{1}) \subset Q(2^{-l})$, because $\tit = t_{1}- \theta \overline{\Phi}(2^{-l}r)$ and $\tss<t_{1}$. Since $w \geq 0 $ on $Q(2^{-l})$, the first term on the RHS is nonnegative and we see that  $\tilde{w}$ satisfies (in a weak sense)
\[
\partial_{s}(k*\tilde{w})(s,x) - \divv(\tilde{A}(s,x)D\tilde{w}(s,x))
\]
\eqq{ \geq
-\int_{0}^{\ti{t}}  \dot{k}(s+\tss - \tau )w(\tau,x)d\tau + b_{l}k(t_{**}+s) - \tilde{G}(s,x)=:\Psi(s,x), \hd (s,x) \in (0,t_{1}-t_{**})\times B_{2^{-l}r}(x_{1}).
}{defPsi}
Hence
\eqq{
\partial_{s}(k*\tilde{w}) - \divv(\tilde{A}D\tilde{w}) \geq -\Psi^{-} \hd \m{ in \hd } (0,t_{1}-t_{**})\times B_{2^{-l}r}(x_{1}),
}{nad}
where $\Psi=\Psi^{+}- \Psi^{-}$ and $\Psi^{-}\geq 0$ denotes the negative part of $\Psi$.

%***************************************************************

Since $w$ is nonnegative in $Q(2^{-l})$,  $\tw(s,x)=w(t_{**}+s, x)\geq 0$ on $\Qdl-t_{**}$. Furthermore,
\begin{align*}
\Qdl-t_{**} & =(t_{1}-\theta \vkrl - t_{**}, t_{1}-t_{**})\times \blrj\\
& =(\theta(\theta_{2}-1) \vkrl , \theta \theta_{2}\vkrl)\times \blrj .
\end{align*}
In particular, $\tw \geq 0 $ on $(0, t_{1}-t_{**})\times \blrj $. Thus, $\tw$ is a nonnegative weak supersolution to (\ref{nad}) in $(0, t_{1}-t_{**})\times \blrj $. Therefore, we may apply Theorem~\ref{localweakHarnack} to $\tw$ with the parameters $u_{0}:=0$, $r:=2^{-l}r$, $\delta:=\frac{3}{4}$, $x_{0}:=x_{1}$, $p:=1$ and we obtain
\eqq{\frac{1}{\nuNj(Q_{-})}\int_{Q_{-}} \tw dxds \leq C\left[ \essinf_{Q_{+}} \tw + 2^{-l(2-\frac{N}{q_2})}r^{2-\frac{N}{q_2}}(\overline{\Phi}(2^{-l}r))^{-\frac{1}{q_1}} \| \Psi^{-} \|_{L_{q_1}((0,2\tau \vkrl);L_{q_2} (\blrj)}
\right],
}{zharnacka}
where
\[
Q_{-}:=Q_{-}(0,x_{1}, \dml, \frac{3}{4} )= \left(0,\frac{3}{4} \tau \vkrl \right)\times B_{2^{-(l+1)}\cdot \frac{3}{2}r} (x_{1}),
\]
\[
Q_{+}:=Q_{+}(0,x_{1}, \dml, \frac{3}{4} )= \left(\frac{5}{4} \tau \vkrl, 2\tau \vkrl \right)\times B_{2^{-(l+1)}\cdot \frac{3}{2}r} (x_{1})
\]
and $C=C(\Lambda, \tau , \nu, N,p_0,q_1,q_2,\overline{c},\tilde{c})$, provided $2\tau \vkrl \leq \theta \theta_{2}\vkrl $, i.e. $\tau \leq \tau^{*}$ and $2 \tau  \leq \theta \theta_{2}$. Since $\essinf_{Q_{+}}\tw = b_{l}- \esssup_{Q_{+}}\tv = b_{l}- \esssup_{Q_{+}+t_{**}} v$, we have
\begin{align}
&\frac{1}{\nuNj(Q_{-})} \int_{Q_{-}} \tw \,dx\,ds\nonumber\\
&\leq  C\left[ b_{l}-\esssup_{Q_{+}+t_{**}} v + 2^{-l(2-\frac{N}{q_2})}r^{2-\frac{N}{q_2}}(\overline{\Phi}(2^{-l}r))^{-\frac{1}{q_1}} \| \Psi^{-} \|_{L_{q_1}((0,2\tau \vkrl);L_{q_2} (\blrj)}
\right].
\label{ab1}
\end{align}
As in \cite{harnackdistr} we choose $\theta_2 \in (\max\{\theta_1,\frac{2}{3}\},1)$  and $\tau = \jd \theta\theta_{2}$. Then, due to Proposition \ref{philambda} we have
\eqq{\Qdll \subseteq Q_{+}+t_{**}.}{zawieranie}
Note that such choice of $\tau \in (0,\tau^{*}]$ is possible since we assumed $\theta \leq 2 \tau^{*}$.
\nic{
Now, we shall determine $\tau\in (0,\jd \theta\theta_{2}]$ and $\theta_{2}\in (\theta_{1},1)$ such that
\eqq{\Qdll \subseteq t_{**}+Q_{+}.}{zawieranie}
Since
\[
t_{**}+Q_{+}= \left(t_{1}- \theta\theta_{2} \vkrl  + \frac{5}{4} \tau \vkrl  , t_{1}- \theta\theta_{2}\vkrl+ 2\tau \vkrl \right)\times \bllrt,
\]
and
\[
\Qdll = (t_{1}- \theta\vkrll , t_{1})\times \bllrj,
\]
the inclusion (\ref{zawieranie}) holds, provided
\eqq{\theta\theta_{2}\vkrl - \frac{5}{4}\tau \vkrl \geq \theta \vkrll \hd \m{ and \hd } \theta\theta_{2}\leq 2\tau. }{pierwsza}
The second inequality  determines $\tau=\jd \theta\theta_{2}$ and
consequently we get the condition for $\theta_{2}$
\eqq{\frac{3}{8}\theta_{2} \vkrl \geq  \vkrll. }{nateta}
Note that such choice of $\tau \in (0,\tau^{*}]$ is possible since we assumed $\theta \leq 2 \tau^{*}$. From Proposition (\ref{philambda}) we have
\[
\vkrll \leq \frac{1}{4}\vkrl.
\]
\nic{The last inequality is equivalent to
\[
\ki\left(\frac{3\theta_{2}}{8} \vkrl\right) \leq \ki\left(\vkrll \right)= 2^{2l}r^{-2}.
\]
The left-hand side of the above inequality may be estimated as follows
\[
\ki\left(\frac{3\theta_{2}}{8} \vkrl\right) =\int_{0}^{1}  \left( \frac{3\theta_{2}}{8} \right)^{-\al} \vkrl^{-\al} \dd
\]
\[
\leq \left( \frac{3\theta_{2}}{8} \right)^{-1} \int_{0}^{1}   \vkrl^{-\al} \dd = \frac{8}{3\theta_{2}}  \ki(\vkrl) = \frac{8}{3\theta_{2}} 2^{2(l-1)}r^{-2}.
\]
}
Therefore, (\ref{nateta}) is satisfied if we choose $\theta_{2}\in \left( \max\{  \theta_{1}, \frac{2}{3} \},1\right)$.
Hence, we fix such $\theta_{2}$ and for $\tau = \jd \theta\theta_{2}$ we have (\ref{zawieranie}).
}

Combining (\ref{ab1}) and (\ref{zawieranie}) we arrive at the following estimate
\begin{align}
&\frac{1}{\nuNj(Q_{-})}  \int_{Q_{-}}  \tw \,dx\,ds \nonumber\\
& \leq  C\left[ b_{l}-\esssup_{\Qdll} v +2^{-l(2-\frac{N}{q_2})}r^{2-\frac{N}{q_2}}(\overline{\Phi}(2^{-l}r))^{-\frac{1}{q_1}}  \| \Psi^{-} \|_{L_{q_1}((0,2\tau \vkrl);L_{q_2} (\blrj)}
\right],
\label{ab2}
\end{align}
where $C=C(\Lambda, \nu,N,\theta,\theta_2,p_0,q_1,q_2,\overline{c},\tilde{c})$.

Now, we would like to estimate from below the term on the left-hand side of (\ref{ab2}) by $(b_{l}-m_{l})$. Note that $\nuNj(Q_{-})=\left(\frac{3}{2}\right)^{N+1}\frac{\theta_{2}}{4(\theta_{2}-\theta_{1})}\nuNj(Q^{-})$ (see (\ref{ab3})) and from the assumption (A) we obtain
\begin{align*}
\jd \nuNj(Q_{-}) &= \left(\frac{3}{2}\right)^{N+1}\frac{\theta_{2}}{4(\theta_{2}-\theta_{1})} \jd \nuNj(Q^{-})\\
& \leq  \left(\frac{3}{2}\right)^{N+1}\frac{\theta_{2}}{4(\theta_{2}-\theta_{1})} \nuNj(\{(t,x)\in Q^{-}:\hd v(t,x)\leq m_{l} \}).
\end{align*}
Thus,
\begin{align*}
 \left(\frac{2}{3}\right)^{N+1} &  \frac{2(\theta_{2}-\theta_{1})}{\theta_{2}}\nuNj(Q_{-}) \leq  \nuNj(\{(s,x)\in Q^{-}-\tss:\hd \tv (s,x)\leq m_{l} \})\\
& = \nuNj(\{(s,x)\in Q^{-}-\tss:\hd b_{l}-m_{l} \leq \tw (s,x) \})\\
& =  (b_{l}-m_{l})^{-1}
\int_{ \{(s,x)\in Q^{-}-\tss:\hd b_{l}-m_{l}
\leq \tw (s,x) \}  } (b_{l}- m_{l}) dxds\\
&\leq (b_{l}-m_{l})^{-1}
\int_{ Q^{-}-\tss  } \tw(s,x) dxds
\leq  (b_{l}-m_{l})^{-1}
\int_{  Q_{-}  } \tw(s,x) dxds,
\end{align*}
provided
\eqq{Q^{-}-\tss \subseteq Q_{-}.}{pop1}
The inclusion (\ref{pop1}) holds iff $\frac{5}{8}\theta_{2}\leq \theta_{1}$. Assuming further that $\theta_{1}$ and $\theta_{2}$ also  satisfy the last condition we arrive at
\eqq{\jd (b_{l}-m_{l})\leq \left(\frac{3}{2}\right)^{N+1} \frac{\theta_{2}}{4(\theta_{2}-\theta_{1})} \frac{1}{\nuNj(Q_{-})} \int_{Q_{-}} \tw dxds.}{ab4}
The estimates (\ref{ab2}) and (\ref{ab4}) lead to
\[
\jd (b_{l}-m_{l}) \leq  C\left[ b_{l}-\esssup_{\Qdll} v+2^{-l(2-\frac{N}{q_2})}r^{2-\frac{N}{q_2}}(\overline{\Phi}(2^{-l}r))^{-\frac{1}{q_1}} \| \Psi^{-} \|_{L_{q_1}((0,2\tau \vkrl);L_{q_2} (\blrj)}
\right],
\]
and thus
\eqq{
\esssup_{\Qdll } v \leq b_{l}+2^{-l(2-\frac{N}{q_2})}r^{2-\frac{N}{q_2}}(\overline{\Phi}(2^{-l}r))^{-\frac{1}{q_1}}\| \Psi^{-} \|_{L_{q_1}((0,2\tau \vkrl);L_{q_2} (\blrj)}  - \frac{1}{2C} (b_{l}-m_{l}),
}{wazne2}
where $C=C(\Lambda, \nu , N, \theta, \theta_{1}, \theta_{2},p_0,q_1,q_2,\overline{c},\tilde{c})$.

%*************************************************************************
\nic{Set
\[
\tilde{Q}^{-}:=\tilde{Q}^{-}(2^{-(l+1)}):=(0,t_{*}-t_{**})\times B_{2^{-(l+1)}r}(x_{1}),
\]
\[
\tilde{Q}^{+}:=\tilde{Q}^{+}(2^{-(l+1)}):=(t_{1}-\theta\overline{\Phi}(2^{-(l+1)}r)-t_{**},t_{1}-t_{**})\times B_{2^{-(l+1)}r}(x_{1})
\]
We apply Theorem \ref{localweakHarnackinhomo} to $\tilde{w}$ in a local box $\hat{Q}:=(0,t_{1}-t_{**})\times B_{2^{-l}r}(x_{1})$ with scaling parameter $\rho = 2^{-l}r$. \\
Explanation to myself: Using definitions of $t_{*}, t_{**}$ our boxes look like
\[
\tilde{Q}^{-}=(0,\theta(\theta_{2}-\theta_{1})\overline{\Phi}(2^{-l}r)) \times B_{2^{-(l+1)}r}(x_{1}),
\]
\[
\tilde{Q}^{+} = (\overline{\Phi}(2^{-l}r)\theta[\theta_{2}-\frac{\overline{\Phi}(2^{-(l+1)}r)}{\overline{\Phi}(2^{-l}r)}],\theta\theta_{2}\overline{\Phi}(2^{-l}r))\times B_{2^{-(l+1)}r}(x_{1}).
\]
From  (\ref{boxdist}) we obtain that
\[
\tilde{Q}^{+} \subset Q^{+}:= (\overline{\Phi}(2^{-l}r)\theta[\theta_{2}-b\theta_{1}],\theta\theta_{2}\overline{\Phi}(2^{-l}r))\times B_{2^{-(l+1)}r}(x_{1}).
\]
%I think that for weak Harnack we have to show that there exists %$A,B>0$ independent of $r$ and $l$ such that
%\[
%\theta(\theta_{2}-\theta_{1}) < A<\theta[\theta_{2}-\frac{\Phi(2^{-(l+1)}r)}{\Phi(2^{-l}r)}] < B< \theta\theta_{2}.
%\]
%The existence of $A$ follows from (\ref{boxdist}). To prove the existence of $B$ it is enough to show that there exists $a \in (0,1)$ independent on $r,l$ such that
\[
\frac{\Phi(2^{-(l+1)}r)}{\Phi(2^{-l}r)} > a.
\]
This is equivalent with
\[
r^{-2}2^{2l}\cdot 4 = k(\Phi(2^{-(l+1)}r)) < k(a\Phi(2^{-l}r)).
\]
We note that
\[
k(a\Phi(2^{-l}r)) \geq \int_{\gmb}^{1}\mg a^{-\al}\Phi(2^{-l}r)^{-\al}d\al \geq a^{-\gmb}\int_{\gmb}^{1}\mg \Phi(2^{-l}r)^{-\al}d\al \geq c(\mu)a^{-\gmb}k(\Phi(2^{-l}r)),
\]
where in the last estimate we used (\ref{estioddolu}). Thus it is enough to have $a$ satisfying
\[
4 < a^{-\gmb} c(\mu), \m{ hence } \frac{1}{a} > (\frac{4}{c(\mu)})^{\frac{1}{\gmb}}.
\]
In the first estimate we make use of assumption (A).
\[
\frac{1}{2}(b_{l}-m_{l}) \leq \frac{1}{|\tilde{Q}^{-}|}\int_{\tilde{Q}^{-}} \tilde{w}dxds
\leq C(essinf_{Q^{+}}\tilde{w} + 4\cdot2^{-2l}r^{2}|\Psi^{-}|_{L_{\infty}(\hat{Q})})
\]
\[
\leq C(essinf_{\tilde{Q}^{+}}\tilde{w} +4\cdot 2^{-2l}r^{2}|\Psi^{-}|_{L_{\infty}(\hat{Q})}) \leq
C(b_{l}-\esssup_{\tilde{Q}^{+}}\tilde{v} + 4\cdot 2^{-2l}r^{2}|\Psi^{-}|_{L_{\infty}(\hat{Q})}).
\]
and thus
\eqq{
\esssup_{\tilde{Q}^{+}}\tilde{v} \leq b_{l}-\frac{1}{2C}(b_{l}-m_{l}) + 4\cdot 2^{-2l}r^{2}|\Psi^{-}|_{L_{\infty}(\hat{Q})}.
}{vsupes}
}

Let us estimate $\Psi^{-}$, which was defined in (\ref{defPsi}). Employing the estimate (\ref{history}) we have
\[
\int_{0}^{\tilde{t}}-\dot{k}(s+t_{**}-\tau)w(\tau,x) d\tau =
\int_{0}^{\tilde{t}}-\dot{k}(s+t_{**}-\tau)(b_{l}-v(\tau,x))d\tau
\]
\[
= \int_{0}^{\tilde{t}}-\dot{k}(s+t_{**}-\tau)[b_{l}-m_{l}+m_{l}-v(\tau,x)]d\tau
\]
\[
\geq -(b_{l}-m_{l})\int_{0}^{\tilde{t}}-\dot{k}(s+t_{**}-\tau)\left[2\cdot 2^{\kappa}\left(\frac{1}{\theta}\frac{\ki(t_{1}-\tilde{t})}{\ki(t_{1}-\tau)}\right)^{\frac{\kappa}{2}}-2\right]d\tau
\]
\[
=  -(b_{l}-a_{l})\int_{0}^{\tilde{t}}-\dot{k}(s+t_{**}-\tau)\left[2^{\kappa}\left(\frac{1}{\theta}\frac{\ki(t_{1}-\tilde{t})}{\ki(t_{1}-\tau)}\right)^{\frac{\kappa}{2}}-1\right]d\tau
\]
\[
=-(b_{l}-a_{l})(t_{1}-\tilde{t})\int_{1}^{\frac{t_{1}}{t_{1}-\tilde{t}}}-\dot{k}(t_{**}-t_{1}+s+p(t_{1}-\tilde{t}))\left[ 2^{\kappa}\left(\frac{1}{\theta}\frac{\ki(t_{1}-\tilde{t})}{\ki(p(t_{1}-\tilde{t}))}\right)^{\frac{\kappa}{2}}-1\right]dp
\]
\[
\geq -(b_{l}-a_{l})(t_{1}-\tilde{t})\int_{1}^{\frac{t_{1}}{t_{1}-\tilde{t}}}-\dot{k}\left(\left[\frac{t_{**}-t_{1}}{t_{1}-\tilde{t}}+p\right](t_1 - \tilde{t})\right)[2^{\kappa}\left(\frac{p}{\theta}\right)^{\frac{ \kappa}{2}}-1]dp,
\]
where in the last inequality we used
\[
\ki(p(t_{1}-\tilde{t})) \geq p^{-1}\ki(t_{1}-\tilde{t}).
\]
which follows from Lemma \ref{ba} with $x=(t_1 - \tilde{t})p, y = p^{-1}$,  $p \geq 1$.
Since $\frac{t_{**}-t_{1}}{t_{1}-\tilde{t}} = -\theta_{2}$ and $t_{1}-\tilde{t} = \theta\overline{\Phi}(2^{-l}r)$, we thus obtain
\[
\int_{0}^{\tilde{t}}-\dot{k}(s+t_{**}-\tau)w(\tau,x) d\tau \geq -(b_{l}-a_{l})\theta\overline{\Phi}(2^{-l}r) \int_{1}^{\frac{t_{1}}{\theta\overline{\Phi}(2^{-l}r)}}-\dot{k}\left((p-\theta_{2})(\theta\overline{\Phi}(2^{-l}r))\right)[2^{\kappa}\left(\frac{p}{\theta}\right)^{\frac{ \kappa}{2}}-1]dp.
\]
We apply the assumption (\ref{asholder}) with $x = (p-\theta_2)$ and $y = \theta\overline{\Phi}(2^{-l}r)$ to get
\[
\int_{0}^{\tilde{t}}-\dot{k}(s+t_{**}-\tau)w(\tau,x) d\tau
\]
\[
\geq -(b_{l}-a_{l})c(\theta,\eta)k_1(\theta\overline{\Phi}(2^{-l}r))\int_{1}^{\frac{t_{1}}{\theta\overline{\Phi}(2^{-l}r)}}(p-\theta_2)^{-\beta-1}[2^{\kappa}\left(\frac{p}{\theta}\right)^{\frac{ \kappa}{2}}-1]dp
\]
\[
\geq -(b_{l}-a_{l})c(\theta,\eta)k_1(\theta\overline{\Phi}(2^{-l}r))\int_{1}^{\infty}(p-\theta_2)^{-\beta-1}\left[\left(\frac{4p}{\theta}\right)^{\frac{ \kappa}{2}}-1\right]dp,
\]
and the last expression tends to zero as $\kappa$ goes to zero by the dominated convergence theorem.

Finally, for $(s,x)\in (0,\theta\theta_{2} \vkrl)\times \blrj$
\eqq{
\int_{0}^{\tilde{t}}-\dot{k}(s+t_{**}-\tau)w(\tau,x) d\tau \geq -(b_{l}-a_{l})k_1(\overline{\Phi}(2^{-l}r)) \ve(\theta,\theta_{2},\eta,\kappa) \m{ where } \ve \rightarrow 0 \m{ as } \kappa \rightarrow 0.
}{hw}
Combining (\ref{hw}) with (\ref{zn1}) we obtain
\[
2^{-l(2-\frac{N}{q_2})}r^{2-\frac{N}{q_2}}(\overline{\Phi}(2^{-l}r))^{-\frac{1}{q_1}} \|(\int_{0}^{\tilde{t}}-\dot{k}(\cdot+t_{**}-\tau)w(\tau,\cdot) d\tau)^{-}\|_{L_{q_1}((0,\theta\theta_{2} \vkrl);L_{q_2}(\blrj))}
\]
\eqq{
\leq 2^{-2l}(b_{l}-a_{l})\ve(\theta,\theta_{2},\eta,\kappa).
}{estiHw}
Having $\Gf(s,x)= G(s+\tss,x)$,\hd  $s\in (0,t_{1}-\tss)$, $x\in \blrj$, we may estimate
\[
\| \Gf \|_{L_{q_1}((0,t_{1}-\tss);L_{q_2}(\blrj))}\leq \| G  \|_{L_{q_1}((\tilde{t},t_{1});L_{q_2} (\blrj))} = \| G \|_{L_{q_1,q_2}(Q(2^{-l}r))}.
\]
We would like to apply (\ref{vges}), thus it should be verified that
\eqq{\Qdl=(\tilde{t}, t_{1})\times \blrj \subseteq \left(\frac{\eta}{2}\Pk(r) ,2\eta\Pkr \right)\times B_{2r}(x_{0}).}{zawieranie1}
Obviously, $\blrj\subseteq B_{2r}(x_{0})$, because $x_{1}\in B_{r}(x_{0})$ and $l\geq l_{0} \geq \lf \geq 0 $. Thus, for $t_{1}\in (\eta \Pkr, 2\eta \Pkr)$ and $\tilde{t} = t_1-\theta\overline{\Phi}(2^{-l}r)$ the inclusion (\ref{zawieranie1}) holds if
\eqq{\theta\vkrl \leq \frac{\eta}{2} \Pkr.}{zaw2}
The parameters $\theta, \eta>0$ are fixed, so proceeding as earlier we deduce that there exists  $l_{0}\geq \lf$ sufficiently large such that (\ref{zaw2}) holds for $l\geq l_{0}$ and $l_{0}= l_{0}(\eta, \theta)$.
For such $l_{0}$, using (\ref{vges}) we obtain
\[
\| \Gf\|_{L_{q_1} ((0,t_{1}-\tss);L_{q_2}(\blrj)) }\leq \frac{(\bvr)^{\frac{1}{q_1}}}{2 r^{2-\frac{N}{q_2}}}.
\]
Thus, applying Lemma \ref{fixy} with $x=2\cdot 2^{-l}r$, $y=2^{l}$ we have
\[
2^{-l(2-\frac{N}{q_2})}r^{2-\frac{N}{q_2}} (\overline{\Phi}(2^{-l}r))^{-\frac{1}{q_{1}}}\| \Gf\|_{L_{q_1} ((0,t_{1}-\tss);L_{q_2}(\blrj)) } \leq 2^{-l(2-\frac{N}{q_2})-1} \left(\frac{\overline{\Phi}(r)}{\overline{\Phi}(2^{-l}r)}\right)^{\frac{1}{q_1}}
\]
\eqq{
\leq c\cdot 2^{-l(2-\frac{N}{q_2})-1}( 2^{2l})^{\frac{p'_0}{q_1}} \leq c \cdot 2^{-2l(1-\frac{N}{2q_2}-\frac{p'_0}{q_1})-1} = c\cdot2^{-2ld - 1},
}{estigf}
where we used relation (H3) and $c$ is a positive constant depending on $p_0$ and $\overline{c}$.

Now we pass to the estimate of the $b_{l}$-term in $\Psi$. Since $l \geq l_{0}\geq \tilde{l}$ we have
\[
t_{**}+s \geq \tss = t_{1}-\theta\theta_{2}\overline{\Phi}(2^{-l}r) \geq t_{1}-\theta\theta_{2}\overline{\Phi}(2^{-\tl}r) = \theta\overline{\Phi}(2^{-\tl}r)\left[\frac{t_{1}}{\theta\overline{\Phi}(2^{-\tl}r)}-\theta_{2}\right] \geq \theta\overline{\Phi}(2^{-\tl}r)[1-\theta_{2}],
\]
where the last inequality is a consequence of (\ref{alter2}). Next, $a_{l_{0}}\leq a_{l} \leq b_{l} \leq b_{l_{0}}$ implies $b_{l}\geq -|a_{l_{0}}| $ hence,  applying Lemma \ref{kernels} and Lemma \ref{ba} we may estimate as follows
\[
b_{l}k(t_{**}+s) \geq -|a_{l_{0}}|k_1(t_{**}+s) \geq -|a_{l_{0}}|k_1\left(\theta\overline{\Phi}(2^{-\tl}r)[1-\theta_{2}]\right) \geq -|a_{l_{0}}| \frac{1}{\theta(1-\theta_2)}k_1(\overline{\Phi}(2^{-\tl}r))
\]
\[
\geq -\jd c(\theta,\theta_{2})k_1(\overline{\Phi}(2^{-\tl}r))\geq -\jd c(\theta,\theta_{2})\ki(\overline{\Phi}(2^{-\tl}r)) \geq -  \jd c(\theta,\theta_{2})2^{2\tl-2}r^{-2},
\]
where we used (\ref{zn1}) and the normalization condition $|a_{l_{0}}| \leq \|v\|_{L_{\infty}(Q_{dom})} \leq \frac{1}{2}$  (see (\ref{vges})).
Thus,
\eqq{
\norm{b_{l}k(t_{**}+\cdot)}_{L_{q_1}(0,\theta\theta_2 \overline{\Phi}(2^{-l}r)); L_{q_2}(B_{2^{-l}r}(x_1))} \leq c(\theta,\theta_2)2^{2(\tilde{l}-1)}r^{-2}r^{\frac{N}{q_2}}2^{-\frac{lN}{q_2}}(\overline{\Phi}(2^{-l}r)))^{\frac{1}{q_1}}.
}{estibl}

Combining (\ref{defPsi}), (\ref{estiHw}), (\ref{estibl}) and (\ref{estigf}) we obtain
\[
2^{-l(2-\frac{N}{q_2})}r^{2-\frac{N}{q_2}}(\overline{\Phi}(2^{-l}r))^{-\frac{1}{q_1}}\|\Psi^{-}\|_{L_{q_1}(0,\theta\theta_2 \overline{\Phi}(2^{-l}r)); L_{q_2}(B_{2^{-l}r}(x_1))}
\]
\eqq{
\leq (b_{l}-a_{l})\ve(\mu, \theta, \eta, \kappa) + c(\theta,\theta_{2})2^{2(\tl-l)} + c(\overline{c},p_0)2^{-2ld - 1}.
}{ab8}
Inserting this result into (\ref{wazne2}) we obtain the bound on the essential supremum of $v$ on a smaller cylinder,
\eqq{
\esssup_{\Qdll}v \leq b_{l}-\frac{1}{4C}(b_{l}-a_{l}) + (b_{l}-a_{l})\ve(\mu, \theta, \eta, \kappa) + c(\theta,\theta_{2})2^{2(\tl-l)} +  c(\overline{c},p_0)2^{-2ld - 1}=:\beta_{l+1}.
}{estisup1}
We define
\[
a_{l+1} = a_{l}, \hd b_{l+1} = a_{l}+2^{-(l+1-l_{0})\kappa}.
\]
Then, (\ref{abclaim}) gives $b_{l+1}\leq b_{l}$ and
\[
a_{l+1}=a_{l}\leq \essinf_{\Qdl} v \leq \essinf_{\Qdll} v \leq v \hd \m{ on \hd } \Qdll.
\]
Thus, in order to show that in case (A) (\ref{abclaim}) holds for $j=l+1$, it is enough to prove $\beta_{l+1}\leq b_{l+1}$.
We will choose $\kappa$ small enough and $l_{0}$ big enough so that $\beta_{l+1}\leq b_{l+1}$. The calculations here are similar to those carried out in \cite{harnackdistr}, however, here the term coming from the estimate of the $G$-term is different. We have
\[
\beta_{l+1} \leq b_{l+1} \iff (b_{l}-a_{l})\left(1-\frac{1}{4C}+\ve(\mu, \theta, \eta, \kappa)\right)+c(\theta,\theta_{2})2^{2(\tl-l)} +  c(\overline{c},p_0)2^{-2ld - 1} \leq 2^{-(l+1-l_{0})\kappa}.
\]
Applying the induction hypothesis (\ref{abclaim}) for $j=l$, it is enough to show that
\[
2^{\kappa}\left(1-\frac{1}{4C}+\ve(\mu, \theta, \eta, \kappa)\right)+c(\theta,\theta_{2})2^{\kappa+ 2(\tl-l)+(l-l_{0})\kappa} +  c(\overline{c},p_0) 2^{\kappa -2ld -1+(l-l_{0})\kappa} \leq 1,
\]
i.e.
\[
2^{\kappa}\left(1-\frac{1}{4C}+\ve(\mu, \theta, \eta, \kappa)\right)+2^{\kappa+(l-l_{0})(\kappa-d) -dl_{0}}\left[ c(\theta,\theta_{2})2^{2\lf}2^{(d-2)l} +  c(\overline{c},p_0) 2^{-dl-1}\right] \leq 1.
\]
Since $l \geq l_{0}$, $\tilde{l}\leq \gamma(\theta,\eta)$  it is enough to show that for $\kappa \in (0,d)$
\[
2^{\kappa}\left(1-\frac{1}{4C}+\ve(\mu, \theta, \eta, \kappa)\right)+2^{\kappa -dl_{0}}\left[ c(\theta,\theta_{2})2^{2\gamma(\theta,\eta)} +  c(\overline{c},p_0) 2^{ -1}\right] \leq 1.
\]
We recall that here $C=C(\Lambda, \nu, N, \theta,\theta_{1}, \theta_{2},p_0,q_1,q_2,\overline{c},\tilde{c})$.
\nic{
Then by induction hypothesis
\[
\beta_{l+1} \leq b_{l+1} \iff (b_{l}-a_{l})(1-\frac{1}{4C}+\ve)+\frac{1}{2}c(\theta,\theta_{2})2^{-2(l-\tl)} + 2^{-2l -1} \leq 2^{-(l+1-l_{0})\kappa}.
\]
\eqq{
\iff 2^{\kappa}(1-\frac{1}{4C}+\ve)+2^{\kappa-(l-l_{0})(2-\kappa)}(\frac{1}{2}c(\theta,\theta_{2})2^{-2(l_{0}-\tl)}+2^{-2l_{0}-1}) \leq 1.
}{lessone}
We note that
\[
2^{\kappa-(l-l_{0})(2-\kappa)}(\frac{1}{2}c(\theta,\theta_{2})2^{-2(l_{0}-\tl)}+2^{-2l_{0}-1}) = 2^{l(\kappa-2)-l_{0}\kappa+\kappa}(\frac{1}{2}c(\theta,\theta_{2})2^{2\tl}+\frac{1}{2})
\]
and
\[
l(\kappa-2)-l_{0}\kappa+2l_{0} = (l-l_{0})(\kappa-2) \leq 0.
\]
Since $l \geq l_{0}$ and $\tilde{l}\leq \gamma(\theta,\eta)$ it is enough to show that
\[
2^{\kappa}(1-\frac{1}{4C}+\ve)+2^{\kappa-2l_{0}}(c(\theta,\theta_{2})2^{2\gamma}+\frac{1}{2}) \leq 1.
\]}
Since $\ve( \theta, \eta, \kappa)\rightarrow 0$ as $\kappa \rightarrow 0$ we may choose $\kappa$ small enough so that the first summand is smaller then $1-\frac{1}{8C}$. We fix such $\kappa$. Then we choose $l_{0}\geq \tl$ so large that the second summand is smaller then $\frac{1}{8C}$. In this way we obtain $\beta_{l+1}\leq b_{l+1}$, thus (\ref{abclaim}) is satisfied for $j=l+1$.

The reasoning in the case (B) is analogous.
We define $w=v-a_{l}$ and shifting as before we arrive at
\[
\partial_s (k*\tilde{w})(s,x) - \divv(\tilde{A}(s,x)D\tilde{w}(s,x))
\]
\[
\geq H(w)(s,x) - a_{l}k(t_{**}+s) + \tilde{G}(s,x)=:\Psi(s,x), \hd (s,x) \in (0,t_{1}-t_{**})\times B_{2^{-l}r}(x_{1}).
\]
We apply Theorem~\ref{localweakHarnack} with the same parameters and sets to $\tilde{w}$ and we obtain   the analogue of~(\ref{ab2}),
\eqq{
\frac{1}{\nuNj(Q_{-})} \int_{Q_{-}} \tw dxds \leq  C\left[ \essinf_{\Qdll} v -a_{l}+ 2^{-l(2-\frac{N}{q_2})}r^{2-\frac{N}{q_2}}(\overline{\Phi}(2^{-l}r))^{-\frac{1}{q_1}} \| \Psi^{-} \|_{L_{q_1}((0,2\tau \vkrl);L_{q_2} (\blrj)}
\right].
}{ab5}
Proceeding as earlier,  from the assumption (B) we obtain
\[
\jd \nuNj(Q_{-})= \left(\frac{3}{2}\right)^{N+1}\frac{\theta_{2}}{4(\theta_{2}-\theta_{1})} \jd \nuNj(Q^{-})\leq \left(\frac{3}{2}\right)^{N+1} \frac{\theta_{2}}{4(\theta_{2}-\theta_{1})}  \nuNj(\{(t,x)\in Q^{-}:\hd v(t,x)> m_{l} \}).
\]
Thus,
\[
 \left(\frac{2}{3}\right)^{N+1} \frac{2(\theta_{2}-\theta_{1})}{\theta_{2}}\nuNj(Q_{-}) \leq  \nuNj(\{(s,x)\in  Q^{-}-\tss:\hd \tv (s,x)> m_{l} \})
\]
\[
= \nuNj(\{(s,x)\in  Q^{-}- \tss:\hd m_{l}-a_{l} < \tw (s,x) \})
=  (m_{l}-a_{l})^{-1}
\int_{ \{(s,x)\in Q^{-}-\tss:\hd m_{l}-a_{l} < \tw (s,x) \}  } (m_{l}-a_{l}) dxds
\]
\[
\leq   (m_{l}-a_{l})^{-1}
\int_{  Q^{-}-\tss  } \tw(s,x) dxds
\leq   (m_{l}-a_{l})^{-1}
\int_{ Q_{-}  } \tw(s,x) dxds,
\]
where in the last inequality we applied (\ref{pop1}). Consequently, we get
\eqq{\jd (m_{l}-a_{l})\leq \left(\frac{3}{2}\right)^{N+1}\frac{\theta_{2}}{4(\theta_{2}-\theta_{1})} \frac{1}{\nuNj(Q_{-})} \int_{Q_{-}} \tw dxds.}{ab6}
From (\ref{ab5}) and (\ref{ab6}) we obtain
\[
\frac{1}{2}(m_{l}-a_{l}) \leq   C\left[ \essinf_{\Qdll} v -a_{l}+2^{-l(2-\frac{N}{q_2})}r^{2-\frac{N}{q_2}}(\overline{\Phi}(2^{-l}r))^{-\frac{1}{q_1}} \| \Psi^{-} \|_{L_{q_1}((0,2\tau \vkrl);L_{q_2} (\blrj)}
\right],
\]
Thus,
\eqq{
   a_{l} + \frac{1}{2C}(m_{l}-a_{l}) - 2^{-l(2-\frac{N}{q_2})}r^{2-\frac{N}{q_2}}(\overline{\Phi}(2^{-l}r))^{-\frac{1}{q_1}} \| \Psi^{-} \|_{L_{q_1}((0,2\tau \vkrl);L_{q_2} (\blrj)} \leq  \essinf_{\Qdll} v .
}{ab7}
Proceeding as in case $(A)$, we obtain the same estimate for the function $\Psi^{-}$ as in the previous case and we arrive at (\ref{ab8}). Hence,
\eqq{
\al_{l+1}:= a_{l} + \frac{1}{4C}(b_{l}-a_{l}) -  (b_{l}-a_{l})\ve(\mu, \theta, \eta, \kappa) - c(\theta,\theta_{2})2^{2(\tl-l)} -  c(\overline{c},p_0)2^{-2ld - 1} \leq \essinf_{\Qdll} v .
}{ab9}
In this case we set $b_{l+1}=b_{l}$ and $a_{l+1} = b_{l}-2^{-(l+1-l_{0})\kappa}$. Then using (\ref{abclaim}) for $j=l$ we get
\[
a_{l} = b_{l} - 2^{-(l-l_{0})\kappa} \leq b_{l} - 2^{-(l+1-l_{0})\kappa}= a_{l+1}
\]
and
\[
b_{l+1}-a_{l+1}= 2^{-(l+1-l_{0})\kappa}.
\]
Furthermore,
\[
v\leq \esssup_{\Qdll}{ v }\leq \esssup_{\Qdl} v \leq b_{l+1}=b_{l} \hd \m{ on } \hd \Qdll.
\]
Thus, if we show that $a_{l+1}\leq \al_{l+1}$, then  $a_{l+1}\leq v$ on $\Qdll$ and (\ref{abclaim}) holds for $j=l+1$ in case (B).  We note that
\[
a_{l+1} \leq \al_{l+1} \iff (b_{l}-a_{l})\left(1-\frac{1}{4C}+\ve(\mu, \theta, \eta, \kappa)\right)+c(\theta,\theta_{2})2^{2(\tl-l)} +  c(\overline{c},p_0)2^{-2ld - 1}\leq 2^{-(l+1-l_{0})\kappa},
\]
thus we obtained the same condition as in case (A). Proceeding further as in case (A) we deduce that (\ref{abclaim}) is satisfied for $j=l+1$ in case (B).
By induction  (\ref{abclaim}) holds for all $j \in \mathbb{Z}$ and
\eqq{
\eosc_{Q_{dom}\cap Q(2^{-j})} v \leq 2^{-(j-l_{0})\kappa} \hd \m{ for } \hd j\in \mathbb{Z}.
}{osc1}
Recalling that $u=2D v$ we get
\[
\eosc_{Q_{dom}\cap Q(2^{-j})}{u} \leq 2D 2^{-(j-l_{0})\kappa} = C_{H}2^{-j\kappa}D,
\]
where we denoted $C_{H}:=2^{l_{0}\kappa+1}$.
Hence, we arrive at the following oscillation estimate
\eqq{
\eosc_{ Q(2^{-j})}{u} \leq C_{H}2^{-j\kappa}D \hd \m{ for \hd }  j \geq\tl.
}{osc2}

We will make this estimate continuous via a standard argument. We define the cylinders
\[
\tQ(\rho r) = (t_{1}-\theta\overline{\Phi}(\rho r),t_{1}) \times B(x_{1},\rho r) \hd \m{ for } \hd  \rho \in (0,\rho_{0}), \hd \rho_{0} = 2^{-\gamma(\theta,\eta)},
\]
where  $\gamma(\theta,\eta)$ comes from (\ref{ltildees}). Then there exists $j_{*} \geq \tilde{l}$ such that $2^{-(j_{*}+1)} < \rho \leq 2^{-j_{*}}$. Then from (\ref{osc2}) we get
\[
\eosc_{\tQ(\rho r)} u \leq \eosc_{Q(2^{-j_{*}} )} u  \leq C_{H}2^{-j_{*}\kappa}D \leq C_{H}2^{\kappa}\rho^{\kappa}D=:\tilde{C}\rho^{\kappa}D.
\]
Let $\tilde{\rho} = \rho r$ and recall that $D$ was defined in (\ref{osc3}).  Then for every $(t_{1},x_{1}) \in (\eta\bvr,2\eta\bvr) \times B(x_{0},r)$ we have
\[
\eosc_{\tQ(\tilde{\rho})} u\leq \tilde{C}\left(\frac{\tilde{\rho}}{r}\right)^{\kappa}\left(\|u\|_{L_{\infty}((0,2\eta\bvr)\times B_{2r}(x_{0}))} + r^{2-\frac{N}{q_2}}(\vdr)^{-\frac{1}{q_1}}\|F\|_{L_{q_1}((\frac{\eta}{2}\bvr,2\eta\bvr);L_{q_2}( B_{2r}(x_{0})))}\right).
\]
We note that
\[
r^{2-\frac{N}{q_2}}(\vdr)^{-\frac{1}{q_1}}\|F\|_{L_{q_1}((\frac{\eta}{2}\bvr,2\eta\bvr);L_{q_2}( B_{2r}(x_{0})))}
\]
\[
\leq r^{2-\frac{N}{q_2}}(\vdr)^{-\frac{1}{q_1}}\|f\|_{L_{q_1}((\frac{\eta}{2}\bvr,2\eta\bvr);L_{q_2}( B_{2r}(x_{0})))}  + r^{2}\|u_{0}\|_{L_{\infty}(B(x_{0},2r))}k\left(\frac{\eta}{2}\bvr \right)
\]
\[
\leq r^{2}\|f\|_{L_{q_1}((\frac{\eta}{2}\bvr,2\eta\bvr);L_{q_2}( B_{2r}(x_{0})))} + c(\eta)\|u_{0}\|_{L_{\infty}(B(x_{0},2r))},
\]
where we used the estimate
\[
r^{2}k\left(\frac{\eta}{2}\bvr \right)  \leq c(\eta )r^{2}\ki\left(\vr \right) =c(\eta).
\]
Let us now fix $\theta=\tau^{*}$, where $\tau^{*}$ comes from Theorem \ref{localweakHarnack}.
We have proven that there exists $\tau^{*}>0$ and $r^{**}=r^{**}(\tau^{*},\eta) > 0$ such that  for each  $r\in (0,r^{**})$,  there exists $\gamma=\gamma(\eta, \tau^{*})>0$,   $\tilde{C}>0$ and $\kappa\in (0,1)$, which depend on $\Lambda, \nu, N, \tau^{*},\theta_{1}, \theta_{2},p_0,q_1,q_2,\overline{c},\tilde{c}$ such that for any weak solution of (\ref{osc4}) and any $(t_{1},x_{1})\in (\eta\bvr, 2\eta\bvr)\times B(x_{0},r)$
\eqnsl{\eosc_{(t_{1}- \tau^{*} \Pkro, t_{1} )\times B(x_{1},\rok)}{u} & \\
\leq \tilde{C}\left(\frac{\tilde{\rho}}{r}\right)^{\kappa}(\|u\|_{L_{\infty}((0,2\eta\bvr)\times B_{2r}(x_{0}))} +& r^{2}\|f\|_{L_{q_1}((\frac{\eta}{2}\bvr,2\eta\bvr);L_{q_2}( B_{2r}(x_{0})))} + \|u_{0}\|_{L_{\infty}(B(x_{0},2r))}),}{osc5}
where $\rok\in (0,\rho_{0}r)$, $\rho_{0}= 2^{-\gamma(\eta, \tau^{*})}$. Applying Proposition \ref{estiprop} we obtain
\eqq{\eosc_{(t_{1}- \tau^{*} c  {\rok}^{2p'_0}, t_{1} )\times B(x_{1},\rok)}{u} \leq \eosc_{(t_{1}- \tau^{*} \Pkro, t_{1} )\times B(x_{1},\rok)}{u},}{osc6}
hence (\ref{osc5}) implies
\[
\sup_{\begin{array}{l} x_{1}\in B(x_{0},r) \\ t_{1} \in (\eta \bvr, 2 \eta \bvr)  \end{array}} \esssup_{\begin{array}{l} 0<t_{1}-t_{2}< \theta c (2^{-\gamma(\eta, \tau^{*})}r)^{2p'_0}  \\ |x_{1}-x_{2}|<2^{-\gamma(\eta, \tau^{*})}r\end{array}} \frac{|u(t_{1},x_{1})-u(t_{2},x_{2}) |}{\left(|t_{1}-t_{2}|^{\frac{1}{2p'_0}}+|x_{1}-x_{2}| \right)^{\kappa} }
\]
\[
\leq \tilde{C}\left(\frac{1}{r}\right)^{\kappa}(\|u\|_{L_{\infty}((0,2\eta\bvr)\times B_{2r}(x_{0}))} + r^{2}\|f\|_{L_{q_1}((\frac{\eta}{2}\bvr,2\eta\bvr);L_{q_2}( B_{2r}(x_{0})))} + c(\eta)\|u_{0}\|_{L_{\infty}(B(x_{0},2r))}).
\]
Then a standard argument yields the H\"older continuity of the weak solution $u$ on the set $(\eta\bvr, 2\eta\bvr)\times B(x_{0},r)$.

To finish the proof of Theorem~\ref{holder}, for a given subset $V\subset \Om_{T}$ separated from the parabolic boundary of $\Om_{T}$ it is enough to choose a finite  covering of $V$ by a family of sets $(t_m+\eta\bvr, t_m+2\eta\bvr)\times B(x_{n},r)=:Q_{n,m}$, where $r\in (0,r^{**})$ and $\eta$ are sufficiently small. Then for $(t,x) \in Q_{n,m}$ we introduce the shifted time $s=t-t_m$ and set $\tilde{g}(s) = g(s+t_m)$ for $s \in (\overline{\Phi}(r)\eta,2\overline{\Phi}(r)\eta)$. Then $\tilde{u}(s,x)  = u(s+t_m,x)$ is a weak solution to
\eqq{
\partial_s (k*(\tilde{u}-u_0))(s,x) + \divv (\tilde{A}(s,x)D\tilde{u}(s,x)) = \tilde{f}(s,x) + \int_{0}^{t_m}\dot{k}(s+t_m-\tau)(u(\tau,x)-u_0(x))d\tau.
}{osc7}
Since $u$ is bounded the right-hand-side of (\ref{osc7}) is bounded on $Q_{n,m}$ and
\[
\norm{\int_{0}^{t_m}\dot{k}(\cdot+t_m-\tau)(u(\tau,\cdot) - u_0(x))d\tau }_{L_{\infty}((\frac{\eta}{2}\overline{\Phi}(r), 2\eta \overline{\Phi}(r))\times B(x_{n}, 2r) )}
\]
\[
\leq (\norm{u}_{L_{\infty}(\Omega_{T})} + \norm{u_0}_{L_{\infty}(\Omega)})k\left(\overline{\Phi}(r)\frac{\eta}{2}\right) \leq c(\eta)(\norm{u}_{L_{\infty}(\Omega_{T})} + \norm{u_0}_{L_{\infty}(\Omega)}) r^{-2}.
\]
Thus, $u$ is H\"older continuous on each  $Q_{n,m}$, hence $u$ is H\"older continuous on $V$ and the estimate (\ref{holderkoniec}) holds.
\section{Appendix}
Here we show that all the examples of kernels listed in the introduction satisfy the assumptions (\ref{pc}) - (\ref{asholder}). We begin with two simple remarks.
\begin{bemerk1}\label{remdif}
The inequality in the assumption (\ref{ak1}) may be deduced from the following stronger differential inequality: there exists $\vartheta \in (0,1)$ such that for every $p \in [1,p_0]$ and every $0\leq t \leq t_0$
\eqq{
-t\dot{l}(t) \leq \frac{\vartheta}{p}l(t).
}{diffak1}
Indeed, since $l \in L_{p}((0,t_0))$, the inequality in (\ref{ak1}) follows from
\[
l^{p}(t) \leq \overline{c}(l^{p}(t)+pt(l(t))^{p-1}\dot{l}(t)), \hd \hd t \in (0,t_0],
\]
which is satisfied due to (\ref{diffak1}) with $\overline{c} = \frac{1}{1-\vartheta}$.
\end{bemerk1}
\begin{bemerk1}
The inequality in assumption (\ref{ak2}) follows from convexity of $k$, together with $k(2t) \leq \theta k(t)$, for some $\theta \in (0,1)$ and all $t \in (0,\tilde{t}_{0})$. We note that all the kernels from our examples are completely monotone and thus convex.
\end{bemerk1}

\begin{bei}
We consider the $\mathscr{PC}$- pair
\[
(k,l) = \left(\frac{t^{-\al}}{\Gamma(1-\al)}e^{-\gamma t}, \frac{t^{\al-1}}{\Gamma(\al)}e^{-\gamma t} + \gamma \izt e^{-\gamma \tau}\frac{\tau^{\al-1}}{\Gamma(\al)}d\tau \right), \hd \hd \gamma \geq 0, \hd \al \in (0,1).
\]
Note that, $l \in L_{p_0}(0,t_0)$ for every $t_0 < \infty$ and every $p_0 < \frac{1}{1-\al}$. To show the estimate in (\ref{ak1}), we will prove the stronger estimate (\ref{diffak1}). Since $\dot{l}(t) = \frac{\al-1}{\Gamma(\al)}t^{\al-2}e^{-\gamma t}$, we have
\[
-t\dot{l}(t) \leq (1-\al)l(t),
\]
hence for any $p_0 < \frac{1}{1-\al}$, we may choose $\vartheta =(1-\al) p_0 < 1$ such that for any $p \in [1,p_0]$ and any $t > 0$ (\ref{diffak1}) holds.
Furthermore,
\[
-\dot{k}(t) = \frac{t^{-\al}e^{-\gamma t}}{\Gamma(1-\al)}(\al t^{-1}+\gamma) \geq \al \frac{k(t)}{t}
\]
and thus (\ref{ak2}) is satisfied.
It remains to show (\ref{asholder}). We have
\[
-\dot{k}(xy) = \frac{x^{-\al}y^{-\al}e^{-\gamma x y}}{\Gamma(1-\al)}(\frac{\al}{xy}+\gamma) \leq \frac{x^{-\al}y^{-\al}}{\Gamma(1-\al)}(\frac{\al}{xy}+\gamma).
\]
Since $1 \leq e^{\gamma}e^{-\gamma y}$ for $y \leq 1$,
\[
-\dot{k}(xy) \leq e^{\gamma} k(y)(\frac{\al x^{-\al-1}}{y} + \gamma x^{-\al}) \leq e^{\gamma}\frac{k_1(y)}{y} (\al x^{-\al-1} +\gamma x^{-\al}y),
\]
where we applied $k(y) \leq k_1(y)$ (see Lemma \ref{kernels}). Finally, for $xy \leq D$, we have $x^{-\al}y \leq D x^{-\al-1}$, which finishes the proof.
\end{bei}

\begin{bei} \label{distexamp}
The $\mathscr{PC}$ - pair $(k,l)$ discussed in \cite{harnackdistr} satisfies the assumptions (\ref{ak1}) - (\ref{asholder}). Let us recall the construction of the kernel $k$ associated with distributed order fractional time derivatives.
 Let $\{\alpha_{n}\}_{n=1}^{M}$ satisfy
\[
0<\al_{1}<\al_{2}<\dots <\al_{M}<1,
\]
$q_{n}$, $n=1, \dots, M$, be nonnegative numbers, and $w\in L_{1}((0,1))$ be nonnegative. We define the measure $\mu$ on the Borel sets in $\iR$ by
\eqq{d\mu=\sum_{n=1}^{M}q_{n}d\delta(\cdot - \al_{n})+w d \nu_{1},  }{bo1}
where $\delta(\cdot - \al_{n})$ is the Dirac measure at $\alpha_n$. Here we allow the first or the second component in the above
representation to vanish, but we always assume that $\mu \not \equiv 0$. Then we define
\begin{equation} \label{kintro}
 k(t):=\izj \frac{t^{-\al} }{\Gamma(1-\al)} \dd,\quad t>0.
\end{equation}
Then
\[
l(t) = \frac{1}{\pi}\izi e^{-pt} H(p)dp, \hd H(p) = \frac{\izj p^{\al} \sin (\pi \al)\dd}{(\izj p^{\al} \sin (\pi \al)\dd)^{2} + (\izj p^{\al} \cos (\pi \al)\dd)^{2}}.
\]
 From \cite[Lemma 2.5]{harnackdistr} we know that there exists a number $\gmb \in (0,1)$ such that the kernel $l$ belongs to $L_{p}((0,1))$ for any $p < \frac{1}{1-\gmb}$. In order to show the estimate in (\ref{ak1}) we note that, again from  \cite[Lemma 2.5]{harnackdistr}, for $t>0$ small enough
 \[
 \izt l^{p}(\tau)d\tau \leq c \frac{t}{(1*k)^{p}(t)}.
 \]
 By \cite[Lemma 2.1]{harnackdistr}, we also have for sufficiently small $t > 0$ that $l(t) > c \frac{1}{(1*k)(t)}$. Combining these estimates yields (\ref{ak1}). The  assumption (\ref{ak2}) holds for every $t \in (0,1)$, due to the simple estimate (38)  in \cite{harnackdistr}:
 \[
 \izj \frac{\al}{\Gamma(1-\al)}t^{-\al}\dd \geq c(\mu)\izj \frac{1}{\Gamma(1-\al)}t^{-\al}\dd \hd \m{ for } \hd t \in (0,1).
 \]
To show that the kernel $k$ satisfies the assumption (\ref{asholder}) we note that
 \[
 -\dot{k}(xy) = \izj \frac{\al}{\Gamma(1-\al)}x^{-\al-1}y^{-\al-1}\dd.
 \]
 We choose $\gamma \in (0,\frac{1}{2})$ such that $\int_{2\gamma}^{1}\mu(\al)d\al > 0$ and estimate as follows.
 \[
 -\dot{k}(xy) \leq \frac{1}{xy} \int_{\gamma}^{1} \frac{1}{\Gamma(1-\al)}x^{-\al}y^{-\al}\dd + \frac{1}{xy} \int_{0}^{\gamma}\frac{1}{\Gamma(1-\al)}x^{-\al}y^{-\al}\dd.
 \]
 Since $x >1$ we have
 \[
 \int_{\gamma}^{1} \frac{1}{\Gamma(1-\al)}x^{-\al}y^{-\al}\dd \leq x^{-\gamma}k(y) \leq x^{-\gamma}k_1(y).
 \]
Since for $y\in (0,1)$ there holds
\[
y^{-\gamma} = y^{\gamma}y^{-2\gamma}\left(\int_{2\gamma}^{1}\frac{1}{\Gamma(1-\al)} \dd \right) \left(\int_{2\gamma}^{1}\frac{1}{\Gamma(1-\al)} \dd\right)^{-1}
\]
\[
\leq y^{\gamma} \left(\int_{2\gamma}^{1}y^{-\al}\frac{1}{\Gamma(1-\al)} \dd\right) \left(\int_{2\gamma}^{1}\frac{1}{\Gamma(1-\al)} \dd \right)^{-1}
\leq y^{\gamma} c(\mu)k(y),
\]
we may estimate as
 \[
 \int_{0}^{\gamma}\frac{x^{-\al}y^{-\al}}{\Gamma(1-\al)}\dd \leq y^{-\gamma}\int_{0}^{\gamma}\frac{x^{-\al}}{\Gamma(1-\al)}\dd \leq c(\mu) k(y) y^{\gamma}\int_{0}^{\gamma}\frac{1}{\Gamma(1-\al)}\dd
 \leq c(\mu) D^{\gamma} k_1(y) x^{-\gamma},
 \]
 which leads to (\ref{asholder}).
\end{bei}

\begin{bei}
Let us consider the pair from Example 1.1 with switched kernels, i.e.
\[
(k,l) = \left(\frac{t^{\al-1}}{\Gamma(\al)}e^{-\gamma t} + \gamma \izt e^{-\gamma \tau}\frac{\tau^{\al-1}}{\Gamma(\al)}d\tau, \frac{t^{-\al}}{\Gamma(1-\al)}e^{-\gamma t} \right), \hd \hd \gamma \geq 0, \hd \al \in (0,1).
\]
Now, it is easy to see that $l \in L_{p_0}((0,t_0))$ for every $t_0 < \infty$ and every $p_0 < \frac{1}{\al}$.
Furthermore,
\[
-\dot{l}(t) = \frac{t^{-\al}e^{-\gamma t}}{\Gamma(1-\al)}(\al t^{-1}+\gamma),
\]
thus, for every $p_0 < \frac{1}{\al}$, there exists $t_0 = \frac{1}{2\gamma}[\frac{1}{p_0}-\al]$ and $\vartheta = \frac{1}{2}(\al p_0 +1) \in (0,1)$ such that, for any $t \in (0,t_0)$ and any $p \in (1,p_0]$
 \[
-t\dot{l}(t) = (\al+\gamma t)l(t) \leq \frac{1}{2}(\al+\frac{1}{p_0})l(t) =\frac{\vartheta}{p_0} \leq \frac{\vartheta}{p}l(t).
\]
Hence, by the Remark \ref{remdif} the assumption (\ref{ak1}) is satisfied.
To show (\ref{ak2}) we note that
\[
-t\dot{k}(t) = \frac{1-\al}{\Gamma(\al)}t^{\al-1}e^{-\gamma t}
\]
and for $t \in (0,1]$
\[
k(t) \leq \frac{t^{\al-1}e^{-\gamma t}}{\Gamma(\al)}(1+\frac{\gamma t e^{\gamma t}}{\al}) \leq -t\dot{k}(t) \frac{1+\frac{\gamma e^{\gamma}}{\al}}{1-\al}
\]
and (\ref{ak2}) is satisfied with $\tilde{c} = \frac{1-\al}{1+\frac{\gamma}{\al}e^{\gamma}}$.
It remains to show (\ref{asholder}), we have
\[
-\dot{k}(xy) = \frac{(1-\al)x^{\al-1}y^{\al-1}e^{-\gamma x y}}{xy\Gamma(\al)} \leq \frac{(1-\al)e^{\gamma}x^{\al-1}y^{\al-1}e^{-\gamma y}}{xy\Gamma(\al)} \leq (1-\al)e^{\gamma}\frac{k(y)}{xy} x^{\al-1}
\]
applying Lemma \ref{kernels}, we obtain that the assumption (\ref{asholder}) is satisfied.
\end{bei}

\begin{bei}
 We may also switch the kernels in Example 1.2, however in order to have $l \in L_{p}$ for some $p > 1$ we have to assume that $\supp \mu \subset [0,\al_{*}]$ for some $\al_* \in (0,1)$. So let us discuss the pair
 \[
 (k,l) = \left(\frac{1}{\pi}\izi e^{-pt} H(p)dp, \izj \frac{t^{-\al} }{\Gamma(1-\al)} \dd \right),
\]
where
\[
H(p) = \frac{\izj p^{\al} \sin (\pi\al)\dd}{(\izj p^{\al} \sin (\pi\al)\dd)^{2} + (\izj p^{\al} \cos (\pi\al)\dd)^{2}}
\]
and $\supp \mu \subset [0,\al_{*}]$ for some $\al_* \in (0,1)$. Then $l \in L_{p_0}((0,1))$ for every $p_0 < \frac{1}{\al_{*}}$. Furthermore,
\[
-t\dot{l}(t) =  \izj \frac{\al t^{-\al} }{\Gamma(1-\al)} \dd \leq \al_{*}l(t) \leq \frac{\vartheta}{p}l(t),
\]
for every $p \in (1,p_0]$ and $\vartheta = p_0 \al_{*}$. Hence, (\ref{ak1}) follows from Remark \ref{remdif}.
In the subsequent calculations we will use the following simple estimate from \cite[Remark 2.2.]{harnackdistr}: for some fixed $\gmb > 0$ which satisfies $\int_{\gmb}^{1}\dd > 0$,  there exists $c=c(\mu)$ such that for every $x \in (0,1]$
\eqq{
\izj x^{-\al}\dd \leq c \int_{\gmb}^{1}x^{-\al}\dd.
}{gml}
To show (\ref{ak2}) we note that
\[
-\pi t\dot{k}(t) = \izi pte^{-pt}H(p)dp = \frac{1}{t} \izi w e^{-w}H(\frac{w}{t})dw \geq \frac{1}{t} \int_{1}^{\infty}  e^{-w}H(\frac{w}{t})dw.
\]
Since
\[
\pi k(t) = \frac{1}{t} \int_{0}^{\infty}  e^{-w}H(\frac{w}{t})dw,
\]
it is enough to show that there exists $c>0$ such that
\eqq{
\int^{1}_{0}  e^{-w}H(\frac{w}{t})dw \leq c \int_{1}^{\infty}  e^{-w}H(\frac{w}{t})dw.
}{coscos}
From the definition of $H$ we have
\[
\int^{1}_{0}  e^{-w}H(\frac{w}{t})dw \leq \izj \frac{1}{\izj w^{\al}t^{-\al} \sin(\pi \al)\dd}dw \leq \frac{1}{1-\al_{*}}\frac{1}{\int_{\gmb}^{\al_{*}}t^{-\al} \sin(\pi \al)\dd} \leq c(\mu) \frac{1}{\int_{0}^{\al_{*}}t^{-\al} \dd},
\]
where in the last estimate we applied (\ref{gml}). Similarly, applying (\ref{gml}) we have
\[
\int_{1}^{\infty}  e^{-w}H(\frac{w}{t})dw \geq c(\mu) \int_{1}^{\infty}  e^{-w} \frac{\int_
{\gmb}^{\al_{*}}w^{\al}t^{-\al}\dd}{(\int_{0}^{\al_{*}}w^{\al}t^{-\al}\dd)^{2}}dw
\]
\[
\geq c(\mu) \int_{1}^{\infty}  e^{-w} \frac{1}{\int_{0}^{\al_{*}}w^{\al}t^{-\al}\dd}dw \geq c(\mu) \frac{1}{\int_{0}^{\al_{*}}t^{-\al} \dd}
\]
and we arrive at (\ref{coscos}). It remains to show (\ref{asholder}).
We note that here,
\[
k_{1}(y) = \frac{1}{(1*l)(y)} = \frac{1}{\izj \frac{y^{1-\al}}{\Gamma(2-\al)}\dd}.
\]
For $x \geq 1$ and $y <  1$, we may write
\[
-\pi \dot{k}(xy) = \izi pe^{-pxy}H(p)dp = (xy)^{-2}\izi we^{-w}H(\frac{w}{xy})dw
\]
\[
\leq (xy)^{-2}\izi we^{-w}\frac{1}{\izj w^{\al}(xy)^{-\al} \sin(\pi \al)\dd}dw \leq c(\mu)
(xy)^{-1} x^{\al_{*}-1}\izi we^{-w}\frac{1}{\int_{\gmb}^{\al_{*}} w^{\al}y^{1-\al} \dd}dw
\]
\[
\leq c(\mu) (xy)^{-1} x^{\al_{*}-1} \frac{1}{\int_{0}^{\al_{*}} y^{1-\al} \dd} \leq (xy)^{-1} x^{\al_{*}-1} k_{1}(y),
\]
where we applied (\ref{coscos}). This shows (\ref{asholder}).
\end{bei}

%*******************************************************************


\begin{thebibliography}{99}
%\addcontentsline{toc}{Chapter}{{\bf Bibliography}}
%**********************************************************************************
{\footnotesize
%**********************************************************************************
%************************************************************************************
\bibitem{AS} Aronson, D. G., Serrin J.: Local behavior of solutions of quasilinear parabolic equations. Archive for Rational Mechanics and Analysis {\bf 25} (1967), 81--122.
%*****************************************************************
\bibitem{Caf} Allen, M., Caffarelli, L., Vasseur, A.: A parabolic problem with a fractional time derivative. Archive for Rational Mechanics and Analysis {\bf 221} (2016), 603--630.
%\bibitem{Ba} Bazhlekova, E.: {\em Fractional evolution equations in
%Banach spaces}. Dissertation, Technische Universiteit Eindhoven,
%2001.
%************************************************************************************
%\bibitem{Laplace} Bobylev, A.V, Cercignani, C.: The Inverse Laplace Transform of Some Analytic Functions with an Application to the Eternal Solutions of the Boltzmann Equation.  Appl. Math. Lett. {\bf 15} (2002), 807–-813.
%************************************************************************************
\bibitem{BomGiu} Bombieri, E., Giusti, E.: Harnack's inequality
for elliptic differential equations on minimal surfaces. Invent.
Math. {\bf 15} (1972), 24--46.
%************************************************************************************
\bibitem{Phil1} Cl\'{e}ment, Ph.: On abstract Volterra equations in Banach spaces with completely positive kernels. Infinite-dimensional systems (Retzhof, 1983), 32--40, Lecture Notes in Math., {\bf 1076}, Springer, Berlin, 1984.
%************************************************************************************
%\bibitem{CLS} Cl\'{e}ment, Ph.; Londen, S.-O.; Simonett, G.:
%Quasilinear evolutionary equations and continuous interpolation
%spaces. J. Differ. Eq. {\bf 196} (2004), 418--447.
%***********************************************************************************
\bibitem{CN} Cl\'{e}ment, Ph.; Nohel, J.\ A.: Asymptotic behavior of
solutions of nonlinear Volterra equations with completely positive
kernels. SIAM J. Math. Anal. {\bf 12} (1981), 514--534.
%**********************************************************************************
\bibitem{CP} Cl\'{e}ment, Ph.; Pr\"uss, J.: Completely positive measures and
Feller semigroups. Math. Ann. {\bf 287} (1990), 73--105.
%*********************************************************************************
%\bibitem{CP2} Cl\'{e}ment, Ph.; Pr\"uss, J.: Global existence for a
%semilinear parabolic Volterra equation. Math. Z. {\bf 209} (1992),
%17--26.
%**********************************************************************************
\bibitem{CZ} Cl\'{e}ment, Ph.; Zacher, R.: A priori estimates for
weak solutions of elliptic equations. Technical Report (2004),
Martin-Luther University Halle-Wittenberg, Germany.
%**********************************************************************************
%\bibitem{DB} DiBenedetto, E.: {\em Degenerate parabolic
%equations}. Springer, New York, 1993.
%************************************************************************************
%\bibitem{DKSZ}
%Dier, D.; Kemppainen, J.; Siljander, J.; Zacher, R.: On the parabolic Harnack inequality for non-local diffusion equations. Math. Z. {\bf 295} (2020), 1751--1769.
%*******************************************************************************************


%\bibitem{Koch} Eidelman, S.\ E.; Kochubei, A.\ N.: Cauchy problem for
%fractional diffusion equations. J. Differ. Eq. {\bf 199} (2004),
%211--255.
%***********************************************************************************
%\bibitem{GilTrud} Gilbarg, D., Trudinger, N.: {\em Elliptic
%partial differential equations of second order}. Springer, 1977.
%************************************************************************************
\bibitem{Grip1} Gripenberg, G.: Volterra integro-differential
equations with accretive nonlinearity. J. Differ. Eq. {\bf 60}
(1985), 57--79.
%************************************************************************************
\bibitem{GLS} Gripenberg, G.; Londen, S.-O.; Staffans, O.:
{\em Volterra integral and functional equations.} Encyclopedia of
Mathematics and its Applications, {\bf 34}. Cambridge University
Press, Cambridge, 1990.
%*************************************************************************************
%\bibitem{Hilfer1} Hilfer, R.: Fractional time evolution. in: Hilfer, R.\ (Ed.) {\em Applications of fractional
%calculus in physics},  87--130, World Sci. Publ., River Edge, NJ,
%2000.
%**************************************************************************************
%\bibitem{Hilfer2} Hilfer, R.: On fractional diffusion and continuous time random
%walks. Phys. A  {\bf 329} (2003), 35--40.
%************************************************************************************
%\bibitem{Kass1} Kassmann, M.: The classical Harnack inequality fails
%for non-local operators. Preprint.
%************************************************************************************
%\bibitem{KST} Kilbas, A.\ A.; Srivastava, H.\ M.; Trujillo, J.\ J.:
%{\em Theory and applications of fractional differential equations}.
%Elsevier, 2006.
%****************************************************************************
\bibitem{JPY}
Jia, J.; Peng, J.; Yang, J.:
Harnack's inequality for a space-time fractional diffusion equation and applications to an inverse source problem.
J.\ Differential Equations {\bf 262} (2017), 4415--4450.
%******************************************************************
\bibitem{KaMi}
Kassmann, M.; Mimica, A.: Intrinsic scaling properties for nonlocal operators. J.\ Eur.\ Math.\ Soc. (JEMS) {\bf 19} (2017), 983--1011.
%**************************************************************
%\bibitem{decay} Kubica, A., Ryszewska K.: Decay of solutions to parabolic-type problem with distributed order  Caputo time derivative. J. Math. Anal. Appl. 465 (2018) 75-99.
%\bibitem{nasza} Kubica, A., Ryszewska K.: Fractional diffusion equation with the generalized Caputo derivative. Journal of Integral Equations and Applications,Volume 31, Number 2, 2019.
%*****************************************************************
\bibitem{harnackdistr} Kubica, A.; Ryszewska K.; Zacher, R.: H\"older continuity of weak solutions to evolution equations with
distributed order fractional time derivative. Math.\ Ann.\ (2024). https://doi.org/10.1007/s00208-024-02806-y

%************************************************************************************
%\bibitem{KochDO} Kochubei, A.\ N.: Distributed order calculus and equations of
%ultraslow diffusion. J. Math. Anal. Appl.\ {\bf 340} (2008),
%252--281.
%********************************************************************
%\bibitem{LSU}
%Ladyzenskaja, O.\ A.; Solonnikov, V.\ A.; Uralceva, N.\ N.: {\em
%Linear and quasilinear equations of parabolic type.} Translations of
%Mathematical Monographs {\bf 23}, American Mathematical Society,
%Providence, R.I. 1968.
%***********************************************************************************
\bibitem{Lm} Lieberman, G. M.: {\em Second order parabolic
differential equations.} World Scientific, London, 1996.
%************************************************************************************
%\bibitem{Metz} Metzler, R.; Klafter, J.: The random walk's guide to
%anomalous diffusion: a fractional dynamics approach. Phys. Rep. {\bf
%339} (2000), 1--77.
%*************************************************************************************
%\bibitem{Mosell} Moser, J.: A new proof of De Giorgi's theorem
%concerning the regularity problem for elliptic differential
%equations. Comm. Pure Appl. Math. {\bf 13} (1960), 457--468.
%************************************************************************************
\bibitem{Moser64} Moser, J.: A Harnack inequality for parabolic
differential equations. Comm. Pure Appl. Math. {\bf 17} (1964),
101--134. Correction in Comm. Pure Appl. Math. {\bf 20} (1967),
231--236.
%************************************************************************************
%\bibitem{Moser71} Moser, J.: On a pointwise estimate for parabolic
%differential equations. Comm. Pure Appl. Math. {\bf 24} (1971),
%727--740.
%************************************************************************************
%***********************************************************************************
%\bibitem{JanI} Pr\"uss, J.: {\em Evolutionary Integral Equations and Applications}. Monographs in Mathematics {\bf 87}, Birkh\"auser, Basel, 1993.
%************************************************************************************
%\bibitem{Roma} Roman, H.\ E.; Alemany P.\ A.: Continuous-time random
%walks and the fractional diffusion equation. J. Phys. A: Math. Gen.\
%{\bf 27} (1994), 3407--3410.
%************************************************************************************
\bibitem{SalCoste} Saloff-Coste, L.: {\em Aspects of Sobolev-type
inequalities}. London Mathematical Society Lecture Note Series {\bf
289}, Cambridge University Press, 2002.
%*************************************************************************************
%\bibitem{Sca} Scalas, E.; Gorenflo, R.; Mainardi, F.: Fractional
%calculus and continuous-time finance. Physica A {\bf 284} (2000),
%376--384.
%*************************************************************************************
%\bibitem {Silv} Silvestre, L.: H\"older estimates for solutions of
%integro-differential equations like the fractional Laplace. Indiana
%Univ. Math. J. {\bf 55} (2006), 1155--1174.
%*************************************************************************************
%\bibitem{SCK}
%Sokolov, I.\ M.; Chechkin, A. V., Klafter, J.: Distributed-order fractional kinetics. Acta Phys. Polon. B {\bf 35}
%(2004), 1323–-1341.
%*****************************************************************
%\bibitem{Trud} Trudinger, N.\ S.: Pointwise estimates and quasilinear parabolic
%equations. Comm. Pure Appl. Math. {\bf 21} (1968), 205--226.
%************************************************************************************
\bibitem{VZd} Vergara, V.; Zacher, R.: A priori bounds for degenerate and singular evolutionary partial integro-differential equations. Nonlinear Analysis: Theory, Methods and Applications {\bf 73} (2010), 3572--3585.
%******************************************************************************************
\bibitem{VZ} Vergara, V.; Zacher, R.: Lyapunov functions and
convergence to steady state for differential equations of fractional
order. Math. Z. {\bf 259} (2008), 287--309.
%************************************************************************************************
%\bibitem{Voller} Voller V. R., Falcini F., Garra R.: Fractional Stefan problems exhibiting lumped
%and distributed latent-heat memory effects. Physical Review E, 87(4):042401, 2013.
%************************************************************************************
%*************************************************************************************
%************************************************************************************
%\bibitem{ZaHoelder} Zacher, R.: A De Giorgi-Nash type theorem for time fractional diffusion equations.
%Submitted to Math.\ Ann.
%*************************************************************************************
%\bibitem{Za} Zacher, R.: A weak Harnack inequality for fractional differential equations.
%J. Integral Equations Appl. {\bf 19} (2007), 209--232.
%*************************************************************************************************
\bibitem{Zhol} Zacher, R.: A De Giorgi–Nash type theorem for time fractional diffusion equations. Math.\ Ann.\ {\bf 356} (2013), 99--146.
%********************************************************************************************
\bibitem{base} Zacher, R.: A weak Harnack inequality for fractional evolution equations with discontinuous coefficients. Annali della Scuola Normale Superiore di Pisa-Classe di Scienze {\bf 12} (2013), 903--940.
%************************************************************************************
\bibitem{Za2} Zacher, R.: Boundedness of weak solutions to evolutionary partial
integro-differential equations with discontinuous coefficients. J.
Math. Anal. Appl. {\bf 348} (2008), 137--149.
%***********************************************************************************
%\bibitem{ZEQ} Zacher, R.: Maximal regularity of type $L_p$ for
%abstract parabolic Volterra equations. J. Evol. Equ. {\bf 5} (2005),
%79--103.
%*************************************************************************************
%\bibitem{ZQ}
%Zacher, R.: Quasilinear parabolic integro-differential equations
%with nonlinear boundary conditions.  Differential Integral Equations
%{\bf 19} (2006), 1129--1156.
%************************************************************************************
%\bibitem{Za3} Zacher, R.: The Harnack inequality for the
%Riemann-Liouville fractional derivation operator. Math. Inequal.
%Appl. {\bf 14} (2011), 35--43.
%************************************************************************************
%\bibitem{ZWH} Zacher, R.: Weak solutions of abstract evolutionary
%integro-differential equations in Hilbert spaces. Funkcialaj
%Ekvacioj {\bf 52} (2009), 1--18.
%************************************************************************************


}
%*********************************************************************************
\end{thebibliography}
\end{document}